\definecolor{reference}{rgb}{0.20,0.36,0.74}
\definecolor{citation}{rgb}{0,.40,.80}
\crefname{section}{\S \!\!}{\S\S \!\!}
\crefname{equation}{}{}
\crefname{enumi}{}{}
\crefname{appendix}{\S \!\!}{\S\S \!\!}  
\theoremstyle{plain}
\newtheorem{theorem}{Theorem}[section]
\newtheorem{mainthm}{Theorem}
\newtheorem{proposition}[theorem]{Proposition}
\newtheorem{lemma}[theorem]{Lemma}
\newtheorem{corollary}[theorem]{Corollary}
\newtheorem*{corollary*}{Corollary}
\newtheorem*{conjecture*}{Conjecture}
\theoremstyle{definition}
\newtheorem{definition}[theorem]{Definition}
\newtheorem*{definition*}{Definition}
\newtheorem{notation}[theorem]{Notation}
\newtheorem{remark}[theorem]{Remark}
\newtheorem{example}[theorem]{Example}
\newtheorem{warn}[theorem]{Warning}
\newtheorem*{example*}{Example}
\newtheorem*{remark*}{Remark}
\numberwithin{equation}{subsection}
\def\cA{\mathcal A}\def\cB{\mathcal B}\def\cC{\mathcal C}\def\cD{\mathcal D}
\def\cE{\mathcal E}\def\cF{\mathcal F}\def\cH{\mathcal H}
\def\cI{\mathcal I}\def\cJ{\mathcal J}
\def\cM{\mathcal M}\def\cN{\mathcal N}\def\cO{\mathcal O}\def\cP{\mathcal P}
\def\cQ{\mathcal Q}\def\cR{\mathcal R}\def\cS{\mathcal S}\def\cT{\mathcal T}
\def\cU{\mathcal U}\def\cW{\mathcal W}
\newcommand{\fX}{\mathfrak{X}}
\newcommand{\frM}{\mathfrak{M}}
\newcommand{\fg}{\mathfrak{g}}
\newcommand{\fgv}{\mathfrak{g}^\vee}
\newcommand{\frf}{\mathfrak{f}}
\newcommand{\ffv}{\mathfrak{f}^\vee}
\newcommand{\fd}{\mathfrak{d}}
\newcommand{\fdv}{\mathfrak{d}^\vee}
\providecommand{\leftsquigarrow}{%
  \mathrel{\mathpalette\reflect@squig\relax}%
}
\newcommand{\reflect@squig}[2]{%
  \reflectbox{$\m@th#1\rightsquigarrow$}%
}
\newcommand{\Fun}{{\sf Fun}}
\newcommand{\Hom}{{\sf Hom}}
\newcommand{\Cat}{{\sf Cat}}
\renewcommand{\Pr}{{\sf Pr}}
\newcommand{\st}{{\sf st}}
\newcommand{\Bet}{{\sf Bet}}
\newcommand{\Mnd}{{\sf Mnd}}
\newcommand{\St}{{\sf St}}
\newcommand{\Mod}{{\sf Mod}}
\newcommand{\Perf}{{\sf Perf}}
\newcommand{\pt}{{\sf pt}}
\newcommand{\id}{{\sf id}}
\newcommand{\Maps}{{\sf Map}}
\newcommand{\bit}[1]{\textbf{\textit{#1}}}
\newcommand{\cofib}{{\sf cofib}}
\newcommand{\fib}{{\sf fib}}
\renewcommand{\AA}{\mathbb{A}}
\newcommand{\CC}{\mathbb{C}}
\newcommand{\EE}{\mathbb{E}}
\newcommand{\GG}{\mathbb{G}}
\newcommand{\LL}{\mathbb{L}}
\newcommand{\PP}{\mathbb{P}}
\newcommand{\RR}{\mathbb{R}}
\renewcommand{\SS}{\mathbb{S}}
\newcommand{\ZZ}{\mathbb{Z}}
\newcommand{\kk}{\Bbbk}
\newcommand{\Aut}{\mathsf{aut}}
\newcommand{\Auts}{{\mathsf{Aut}}}
\newcommand{\End}{\mathsf{end}}
\newcommand{\Coh}{\mathsf{Coh}}
\newcommand{\IndCoh}{\mathsf{IndCoh}}
\newcommand{\Sph}{\mathsf{Sph}}
\newcommand{\Adj}{\mathsf{Adj}}
\newcommand{\bT}{\mathbf{T}}
\newcommand{\Perv}{\mathsf{Perv}}
\newcommand{\PS}{\mathsf{PervCat}}
\newcommand{\Sh}{\mathsf{Sh}}
\newcommand{\Sing}{\mathsf{Sing}}
\newcommand{\muSh}{\mu\mathsf{Sh}}
\newcommand{\muPerv}{\mu\mathsf{Perv}}
\newcommand{\Loc}{\mathsf{Loc}}
\newcommand{\Ind}{{\sf Ind}}
\newcommand{\LocCat}{{\sf LocCat}}
\newcommand{\Rep}{{\sf Rep}}
\newcommand{\redu}{/\!\!/}
\newcommand{\HP}{{\sf HP}}
\newcommand{\HH}{{\sf HH}}
\newcommand{\heart}{\heartsuit}
\newcommand{\FI}{t}
\newcommand{\mass}{m}
\newcommand{\DQ}{{\sf DQ}}
\newcommand{\dR}{{\sf dR}}
\newcommand{\Dmod}{{\sf Dmod}}
\newcommand{\BM}{{\sf BM}}
\newcommand{\Oskel}{\mathbb{L}_G(\FI,\mass)}
\newcommand{\StAdj}{\overline{\Adj}}
\newcommand{\PervCat}{\mathsf{PervCat}}
\newcommand{\CohCat}{\mathsf{CohCat}}
\newcommand{\LBet}{\mathcal{L}_\Bet}
\newcommand{\sgn}{\alpha}
\newcommand{\sgnsub}{I}
\newcommand{\sgvect}{2^{[n]}}
\newcommand{\bdd}{\text{-bdd}}
\newcommand{\unbdd}{\text{-unbdd}}
\newcommand{\unstab}{\text{-unstab}}
\newcommand{\htvar}{\mathfrak{M}}
\newcommand{\twoO}{2\mathcal{O}}
\newcommand{\sfA}{\mathsf{A}}
\newcommand{\sfB}{\mathsf{B}}
\newcommand{\Cotan}{\mathbf{L}}
\newcommand{\rT}{\mathrm{T}}
\newcommand{\rN}{\mathrm{N}}
\newcommand{\Conorm}{\mathrm{N}^*}
\newcommand{\sC}{{\sf C}}
\newcommand{\Good}{{\sf Good}}
\newcommand{\bP}{\mathbf{P}}
\title{Hypertoric 2-categories $\mathcal{O}$ and symplectic duality}
\author{Benjamin Gammage and Justin Hilburn}
\begin{document}

\maketitle

\begin{abstract}
We define 2-categories of microlocal perverse (resp. coherent) sheaves of categories on the skeleton of a hypertoric variety and show that the generators of these 2-categories
lift the projectives (resp. simples) in hypertoric category $\cO.$
We then establish equivalences of 2-categories categorifying the Koszul duality between Gale dual hypertoric categories $\cO.$ 
These constructions give a prototype for understanding symplectic duality via the fully extended 3d mirror symmetry conjecture.


\end{abstract}

\setcounter{tocdepth}{2}
\tableofcontents
 
\setcounter{section}{-1}

\section{Introduction}\label{sec:intro}
This paper inaugurates a program to understand features of categories $\cO$ in terms of a pair of categorifications arising from symplectic and algebraic geometry, realizing predictions from 3-dimensional supersymmetric gauge theory.
\subsection{Overview}\label{sec:overview}

The classical category $\cO$ was introduced in \cite{BGG} to study the representation theory of a semisimple Lie algebra $\fg.$ It addresses the difficulty that the category $\Rep(\fg)$ of all $\fg$-representations is too large to be manageable, whereas the category $\Rep^{\sf f.d.}(\fg)$ of finite-dimensional representations is too constrained to be useful. The BGG category $\cO$ contains the finite-dimensional $\fg$-representations and has enough projective objects but is still a finite-length abelian category. 



One of the many surprising properties of category $\cO$ is
the phenomenon of Koszul duality. For each category $\cO$ there is a dual category $\cO^\vee$ (which in this case is a category $\cO$ for the Langlands dual Lie algebra $\fgv$), such that the endomorphism algebra of the projectives in $\cO$ is equivalent to the Ext algebra of the simples in $\cO^\vee$. Moreover, within each category $\cO,$ the (derived) endomorphism algebras of projectives and simples are Koszul dual to each other. These relationships are summarized in the following diagram, where
the horizontal arrows are equivalences 
(with gradings understood appropriately)
and the vertical arrows are Koszul dualities:
\begin{equation}\label{eq:Koszul-diagram1-s01}
\begin{tikzcd}
    \End_{\cO}(P)\arrow[r, leftrightarrow, "\sim"]\arrow[d, dashed, leftrightarrow, "\mathsf{KD}"'] & 
    \End_{\cO^\vee}(L^\vee)\arrow[d, dashed, leftrightarrow,"\mathsf{KD}"]\\
    \End_{\cO^\vee}(P^\vee)\arrow[r,leftrightarrow, "\sim"] &
    \End_{\cO}(L).
\end{tikzcd}
\end{equation}

Category $\cO$ may be understood geometrically as a category of perverse sheaves or regular D-modules on the Schubert-stratified flag variety.
In \cite{BPW}, the cotangent bundle to the flag variety (where the conormal to the Schubert stratification lives) was reconceptualized as but one (quite distinguished) example in the broader class of 
{\em conical symplectic resolutions} (CSRs),
including ADE surface singularities, Nakajima quiver varieties, and slices in the affine Grassmannian. In \cite{BLPW16} it was proposed that a category $\cO$, in analogy with the construction of \cite{BGG}, be associated to each of these spaces. 
More specifically, a category $\cO$ is associated not to a CSR $X$, but to a {\em sectorial} CSR $(X\supset\LL),$ containing the data of $X$ together with a Lagrangian subspace $\LL$ determined as the stable set for a Hamiltonian vector field.
The Koszul duality of \Cref{eq:Koszul-diagram1-s01} now manifests as a relationship, called \bit{symplectic duality} in \cite{BLPW16}, relating invariants of seemingly unrelated 
pairs $(X\supset\LL)$ and $(X^\vee\supset\LL^\vee).$

Subsequently, symplectic duality was reframed as a phenomenon in supersymmetric gauge theory. This process began with an observation of Gukov and Witten:
given a massive 3-dimensional $\cN=4$ supersymmetric quantum field theory, 
the Higgs and Coulomb branches of its moduli space of vacua form a dual pair
of sectorial CSRs.
From this perspective, symplectic duality is a consequence of \bit{3d mirror symmetry,} an involution on the space of 3d $\cN=4$ theories first observed in \cite{IS96}. As in the more familiar 2d mirror symmetry, there is a non-obvious correspondence between the invariants of dual theories; in particular, the Higgs branch of one theory is exchanged with the Coulomb branch of its dual.
%

The physical meaning of category $\cO$ was clarified in \cite{BDGH}: Each 3d $\cN=4$ theory determines a pair of 3-dimensional topological field theories (TFTs) --- the 3d A and B-models\footnote{These theories are also known as 3d Donaldson-Seiberg-Witten theory and Rozansky-Witten theory, respectively.} --- which are exchanged by 3d mirror symmetry. 
After dimensionally reducing along a circle in the $\Omega$-background,\footnote{This may be understood mathematically as a periodic cyclic, i.e., Tate $S^1$-invariant, trace decategorification.}
the 3d A and B-models become equivalent. The category of boundary conditions for the resulting 2-dimensional TFT can be identified with the $\ZZ/2$-graded category $\cO$ for the Higgs branch --- or, by 3d mirror symmetry, for the Coulomb branch.\footnote{Other 2d reductions are discussed in \cite{BDGH}*{Section 7}, including one giving the graded lift of category $\cO$.} 

The process of dimensional reduction is lossy, and we propose instead to study the 3d A- and B-models directly, in terms of their 2-categories of boundary conditions.
The basic objects in each of these 2-categories are given by holomorphic Lagrangians, but otherwise the two 2-categories have a very different flavor.
Detailed expectations for the B-model 2-category have been given in \cites{KRS,KR,OR} and subsequently developed in many places; for our purposes, the most important are \cites{BZNP, Stefanich-QCoh,diFiore,Arinkin-notes}. On the other hand, the A-model 2-category is less well-understood. It should be a ``Fukaya-type'' 2-category, defined by counting $J$-holomorphic 2-disks and Fueter 3-disks, with Hom categories as studied in \cite{Doan-Rezchikov}, but serious analytic and conceptual difficulties have obstructed a full definition. This still hypothetical 2-category has also been studied by Bousseau \cite{Bousseau-holomorphicfloer} via its relation to 4d $\cN=2$ theories, and by Khan \cite{khan-thesis} from the perspective of the $\zeta$-instanton equation for the holomorphic area functional.

Inspired by similar theorems \cite{NZ,GPS3} for Fukaya categories,
we have conjectured in \cite{GMH} that the A-model 2-category is equivalent to one defined topologically via the perverse schobers of \cite{KS-schobers}. 
With 2-categories of boundary conditions for both the A- and B-models in hand, and guided by \cite{BDGH}, we are now able to state our prediction for the 2-categorical origins of category $\cO:$
\begin{conjecture*}
    To a sectorial conical symplectic resolution $(X\supset\LL)$, there is associated a pair of 2-categories
    \[
    \twoO_\sfA:=\mu\PervCat(\LL),\qquad
    \twoO_\sfB:=\mu\CohCat(\LL)
    \]
    of \bit{microlocal perverse (resp. coherent) schobers} on $\LL,$ with canonical generators $\cP$ and $\cS,$ respectively. After taking periodic cylic homology, these become the projectives and simples in 
    a $\ZZ/2$-graded version of category $\cO$:
    \[
    \HP(\End_{\twoO_\sfA}(\cP))\simeq \End_{\cO_{\ZZ/2}}(P),\qquad
    \HP(\End_{\twoO_\sfB}(\cS))\simeq \End_{\cO_{\ZZ/2}}(L).\qquad
    \]
\end{conjecture*}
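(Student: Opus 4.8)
The plan is to establish the conjecture for hypertoric varieties, where $X=\htvar_\cA$ is the hypertoric variety attached to a simple unimodular hyperplane arrangement $\cA$ --- equivalently, to a short exact sequence $0\to\ZZ^{k}\to\ZZ^{n}\to\ZZ^{d}\to 0$ of lattices --- and $\LL\subset\htvar_\cA$ is its skeleton (the stable set of a generic Hamiltonian $S^{1}$-action), a union of toric varieties glued along their toric boundary strata. The first step is to give explicit combinatorial models for the two 2-categories. Because $\LL$ is assembled from toric charts along a stratification indexed by the flats of $\cA$, a microlocal perverse (resp.\ coherent) schober on $\LL$ is equivalent to a descent datum: a diagram of stable categories over the exit-path poset of the stratification together with gluing functors. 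One then checks that $\mu\PervCat(\LL)$ and $\mu\CohCat(\LL)$ each admit a finite presentation in terms of the sign vectors of $\cA$, carrying distinguished generating objects $\cP_{\alpha}$ (A-side) and $\cS_{\alpha}$ (B-side) indexed by the feasible-and-bounded sign vectors $\alpha$ that label the indecomposable projectives $P_{\alpha}$ and the simples $L_{\alpha}$ of the hypertoric category $\cO_\cA$ of \cite{BLPW16}; this indexing is pinned down by requiring that the costalk/corepresentability pattern of $\cP_{\alpha}$ (resp.\ the stalk pattern of $\cS_{\alpha}$) categorify that of $P_{\alpha}$ (resp.\ $L_{\alpha}$).

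The second step is to compute the morphism categories. For each pair $\alpha,\beta$ one computes the stable categories $\Hom_{\twoO_\sfA}(\cP_{\alpha},\cP_{\beta})$ and $\Hom_{\twoO_\sfB}(\cS_{\alpha},\cS_{\beta})$ by \v{C}ech descent along the toric cover of $\LL$. On the B-side this reduces to a combinatorial computation in terms of (ind-)coherent sheaves on the toric strata and their intersections --- morphism categories between structure-sheaf-type and skyscraper-type objects. On the A-side the same computation runs through the nearby- and vanishing-cycle functors attached to the walls of $\cA$. Matching the two answers under Gale duality $\cA\leftrightarrow\cA^{\vee}$, via the 2-categorical Koszul equivalence that this paper establishes, yields that $\End_{\twoO_\sfA}(\cP):=\bigoplus_{\alpha,\beta}\Hom_{\twoO_\sfA}(\cP_{\alpha},\cP_{\beta})$ is a monoidal stable category with an explicit combinatorial presentation --- a categorification of the endomorphism algebra of the projectives in $\cO$ --- and likewise that $\End_{\twoO_\sfB}(\cS)$ categorifies the Ext-algebra of the simples.

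The third step is decategorification. I apply periodic cyclic homology, using that $\HP=(-)^{tS^{1}}\circ\HH$ is lax symmetric monoidal on stable categories, so that it sends $\End_{\twoO_\sfA}(\cP)$ and $\End_{\twoO_\sfB}(\cS)$ to $\ZZ/2$-graded algebras --- this is the Tate $S^{1}$-invariant trace decategorification of the introduction. On the B-side, the Hochschild--Kostant--Rosenberg computation for toric varieties identifies $\HP$ of $\IndCoh$ of each stratum with its $2$-periodicized de Rham cohomology; assembling these along the descent diagram produces a complex which, compared against the presentation of the Ext-algebra of simples in \cite{BLPW16}, is identified with $\End_{\cO_{\ZZ/2}}(L)$. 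On the A-side, the parallel computation identifies $\HP$ of the perverse-schober morphism categories with the $\ZZ/2$-graded $\End_{\cO_{\ZZ/2}}(P)$, using the microlocal-sheaf model for hypertoric category $\cO$. Compatibility of both identifications with the Koszul square \eqref{eq:Koszul-diagram1-s01} is then formal, given the 2-categorical duality of the second step.

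The main obstacle is the A-side of the last two steps: controlling the cyclic (mixed-complex) structure on the Hochschild homology of the monoidal category $\End_{\twoO_\sfA}(\cP)$ and evaluating its Tate construction. One must show that the $S^{1}$-action implementing the $\Omega$-background reduction is compatible with the gluing functors of the perverse schober in the expected way, so that $\HP$ degenerates onto a purity-forced associated graded and recovers $\End_{\cO}(P)$ on the nose, including its internal grading. The analogous B-side statement rests on firmer ground --- coherent sheaves of categories and their cyclic invariants being well developed in the references cited in the introduction --- but even there, the bookkeeping matching the resulting combinatorial algebra to the BLPW presentation of the Ext-algebra of simples still demands care.
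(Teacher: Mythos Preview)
Your outline aims at the right target --- the hypertoric case, which is what the paper actually proves as \Cref{mainthm:hp-decategorification} --- but the route you sketch differs from the paper's in a way that creates real difficulties you could avoid.

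The paper does \emph{not} define either 2-category by \v{C}ech descent over the skeleton $\LL$. On the A-side there is no ambient theory of perverse schobers to descend, and, as the paper stresses, $\mu\PervCat(\LL_G(\FI,\mass))$ depends on the ambient stack $\rT^*(\CC^n/G)$ and not just on $\LL$; a descent datum over $\LL$ alone will not see this. Instead the paper \emph{defines} $\mu\PervCat(\LL_G(\FI,\mass))$ as a subquotient of $\PervCat_{\LL_G}(\CC^n/G)=\Fun(\Sph^n_G,\St)$ --- take the sub-2-category on the corepresentables $\cP^\sgn_G$ for $\FI$-semistable $\sgn$ and quotient by those for $\mass$-unbounded $\sgn$ (\Cref{defn:betti-2cat-o}). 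On the B-side, $\mu\CohCat$ is presented as modules over a quotient of the convolution category $\Coh(X\times_Y X)$ by a singular-support ideal (\Cref{cor:mucohschob-as-quotient}). Theorem~A is then a one-line match of subquotients under the equivalence of \cite{GMH}.

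This matters most for decategorification. The obstacle you flag --- controlling the $S^1$-action on $\HH$ of $\End_{\twoO_\sfA}(\cP)$ and evaluating its Tate construction --- simply does not arise in the paper's approach. The paper computes $\HP(\End(\cP))$ for the \emph{un}-quotiented 2-category $\PervCat_\LL(\CC^n)$ first, and there the problem reduces by tensor product to $n=1$, where $\cA$ has an explicit spectral description (\Cref{eq:spectral-for-basic-inproof}) for which $\HP(\cA)\simeq\HH(\cA)\otimes_\kk\kk((u))$ and $\HH(\cA)\simeq A$ is checked by hand. Passage to the subquotient then uses only that $\HP$ is a localizing invariant. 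On the B-side, rather than HKR on toric strata, the paper invokes Chen's Atiyah--Segal completion for $\HP(\Coh(\fX/G))$ (\Cref{cor:atiyahsegal-quasismooth}) to land directly on 2-periodized Borel--Moore chains on the convolution space, which matches the presentation of $\End_{\cO^{\dR}}(L)$ from \Cref{thm:derham-o-quotient}. Your stratum-by-stratum assembly might be made to work, but it trades a single citation for a nontrivial gluing argument, and your A-side worry is a symptom of having chosen a harder path than necessary.
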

The 2-categories $\twoO_{(\sfA/\sfB)}$ are much richer than the usual 1-category $\cO$, which sees only the geometry of the Lagrangian $\LL.$
For instance, the $\EE_2$ (i.e., Drinfeld) center of $\twoO_\sfB$ is equivalent to the whole category $\Coh(X)$ of coherent sheaves on $X$; dually, the $\EE_3$ center of $\twoO_\sfA$ recovers the Coulomb branch construction from \cite{BFN1}. 

In terms of the above 2-categories, we can now interpret symplectic duality 
as one consequence of a more fundamental statement about boundary conditions in dual theories.
\begin{conjecture*}[Fully extended 3d mirror symmetry]
    Let $(X\supset\LL)$ and $(X^\vee\supset\LL^\vee)$ be a dual pair of sectorial conical symplectic resolutions. Then there are equivalences of 2-categories
    \[
    \twoO_\sfA\simeq \twoO_\sfB^\vee,\qquad
    \twoO_\sfB\simeq \twoO_\sfA^\vee
    \]
    exchanging the canonical generators.
\end{conjecture*}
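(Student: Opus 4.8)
The plan is to prove this in the \textbf{hypertoric case} treated in the body of the paper, where $X = X(\cA)$ and $X^\vee = X(\cA^\vee)$ are the hypertoric varieties attached to a Gale-dual pair of cooriented, simple, unimodular hyperplane arrangements and the sectorial Lagrangian $\LL\subset X$ is the hypertoric skeleton: a union of projective toric varieties $X_\alpha$ indexed by the feasible sign vectors $\alpha$, glued along their toric boundary strata. The starting point is a \textbf{descent presentation}. The skeleton carries a canonical stratification refining the decomposition into the $X_\alpha$, and both $\mu\PervCat(\LL)$ and $\mu\CohCat(\LL)$ should be computed as limits over the stratification poset of $\LL$ --- enhanced to a $2$-category recording its specialization and monodromy data --- with a prescribed local model on each stratum. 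On the $\sfB$-side the local model near a stratum is $\QCoh$ of an affine toric chart $\AA^{a}\times\GG_m^{b}$, with gluing $2$-functors given by restriction and pushforward along toric boundary inclusions; on the $\sfA$-side the local model is a $2$-category of microlocal perverse schobers on the chart $\CC^{a}\times(\CC^\ast)^{b}$ relative to its skeleton, with gluing $2$-functors given by nearby- and vanishing-cycle adjunctions and monodromy.

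The heart of the argument is a \textbf{local $2$-categorical mirror symmetry}. For a single node (the case $a=1$, $b=0$) the $\sfA$-model local model is the $2$-category of microlocal perverse schobers on $(\CC,0)$ --- governed by a spherical adjunction in the sense of \cite{KS-schobers} --- and this should be equivalent to the $\sfB$-model local model $\QCoh(\AA^{1})$ stratified by $\{0\}$ and $\GG_m$; for a single loop ($a=0$, $b=1$) the monodromic $2$-local system on $\CC^\ast$ should be equivalent to $\QCoh(\GG_m)$. Gale duality acts on sign vectors by the combinatorial duality that interchanges feasibility and boundedness, and on charts it interchanges precisely the $\AA^{1}$-directions of $X_\alpha$ (cut out by feasible coordinate hyperplanes) with the $\GG_m$-directions (the unbounded ones). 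Composing the local equivalences factor by factor therefore matches the $\sfA$-side local model of $\cA$ at $\alpha$ with the $\sfB$-side local model of $\cA^\vee$ at the Gale-dual sign vector $\alpha^\vee$. One then checks that these local equivalences are \emph{natural in the stratification}: the specialization and monodromy $2$-functors assembling the schober side correspond, under this dictionary, to the restriction and gluing $2$-functors assembling the coherent side. Naturality upgrades the pointwise equivalence of diagrams to an equivalence of limits, yielding $\twoO_\sfA \simeq \twoO_\sfB^\vee$; the companion statement $\twoO_\sfB \simeq \twoO_\sfA^\vee$ is the same assertion with $\cA$ and $\cA^\vee$ exchanged. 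As a consistency check, both sides carry the same center: the $\EE_3$-center of $\twoO_\sfA$ is $\Coh$ of the Coulomb branch of $X$, which is $X^\vee$, while the $\EE_2$-center of $\twoO_\sfB^\vee$ is $\Coh(X^\vee)$.

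It remains to track the canonical generators. On the $\sfA$-side $\cP$ is assembled from the ``projective'' microlocal schobers supported near the toric pieces $X_\alpha$ with $\alpha$ bounded and feasible, whose periodic cyclic homology computes the projectives $P_\alpha$; on the $\sfB$-side $\cS$ is assembled from the structure sheaves $\cO_{X_\alpha^\vee}$, whose periodic cyclic homology computes the simples $L_\alpha^\vee$. Since the Gale correspondence carries bounded feasible sign vectors for $\cA$ to feasible bounded sign vectors for $\cA^\vee$, and the local equivalence above is built to respect corepresentability, the equivalence $\twoO_\sfA\simeq\twoO_\sfB^\vee$ carries $\cP$ to $\cS$; applying $\HP$ to endomorphism categories then recovers the Koszul duality $\End_\cO(P)\simeq\End_{\cO^\vee}(L^\vee)$ of \cite{BLPW16}, so the theorem is a genuine categorification of hypertoric symplectic duality.

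The main obstacle is the combination of the \textbf{local $2$-categorical mirror symmetry} and the \textbf{coherence of the gluing}. Even granting the definitions of $\mu\PervCat$ and $\mu\CohCat$ from the body of the paper, one must show that the $\sfA$-model's elementary building block --- a perverse schober, i.e.\ a diagram of stable categories equipped with nearby- and vanishing-cycle adjunctions and a nontrivial monodromy automorphism encoding the sectorial data --- is genuinely equivalent \emph{as a $2$-category} to the coherent local block, and, crucially, that this equivalence intertwines the schober's monodromy with the toric-boundary gluing on the coherent side. This is the $2$-categorical shadow of the fact that Koszul duality for category $\cO$ is not formal: it trades a ``linear'' filtration on one side for a ``quadratic'' one on the other. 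Propagating this bookkeeping through the limit defining the $2$-categories --- and in particular controlling the higher coherences invisible to a naive sign-vector-by-sign-vector matching, so that one produces an honest equivalence of $2$-categories rather than a bijection on objects with compatible $\Hom$-categories --- is where the real work lies.
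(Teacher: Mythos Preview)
Your strategy is genuinely different from the paper's, and it contains a real gap.

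The paper does \emph{not} build the equivalence by descent over the skeleton of the hypertoric variety. Instead, both 2-categories are defined as \emph{subquotients} of the ambient (non-microlocal) 2-categories on the toric stacks $\CC^n/G$ and $\CC^n/F^\vee$, and the proof consists of two short steps. First, one imports the already-established equivalence
\[
\PervCat_{\LL_G}(\CC^n/G)\;\simeq\;\CohCat_{\LL_{F^\vee}}(\CC^n/F^\vee)
\]
from \cite{GMH}, which carries the corepresentable generator $\cP^\alpha_G$ to the module category $\cS^\alpha_{F^\vee}$. Second, one observes that the defining subquotient $\langle\cP^\alpha:\alpha\ \FI\text{-ss}\rangle/\langle\cP^\beta:\beta\ \mass\text{-unbdd}\rangle$ on the A-side is, under Gale duality (which swaps ``$\FI$-semistable'' with ``$\FI$-bounded'' and ``$\mass$-unbounded'' with ``$\mass$-unstable''), exactly the subquotient that \Cref{cor:mucohschob-as-quotient} identifies with $\mu\CohCat(\LL_{F^\vee}(\mass,\FI))$. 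No gluing over the skeleton is performed; the microlocal 2-categories are never presented as limits.

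Your descent approach has a concrete error in the local model. You assert that for a single loop ($a=0$, $b=1$) the A-side local model is ``the monodromic 2-local system on $\CC^\ast$,'' equivalent to the coherent model on $\GG_m$. The paper explicitly computes (see the discussion around \Cref{eq:microrestriction-differs-remark}) that
\[
\mu\PervCat_{\LL}(\CC^\times)\;\simeq\;\Mnd^{sph}\;\not\simeq\;\Auts\;\simeq\;\LocCat(\CC^\times):
\]
the A-side microlocalization remembers a spherical \emph{monad}, not merely its twist automorphism. This is precisely the phenomenon that forces $\mu\PervCat(\LL_G(\FI,\mass))$ to depend on the ambient stack $\rT^*(\CC^n/G)$ rather than on $\htvar_G(\FI)$ alone, and it is why a naive chart-by-chart gluing on the skeleton inside $\htvar_G(\FI)$ cannot recover the paper's 2-category. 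Your local equivalence in the loop direction therefore fails as stated, and with it the proposed limit comparison.
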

As a result, application of periodic cylic homology to the diagram
\begin{equation}\label{eq:2cat-Koszul-diagram-s01}
    \begin{tikzcd}
        \End_{\twoO_\sfA}(\cP)\arrow[r, leftrightarrow, "\sim"] &
        \End_{\twoO_\sfB^\vee}(\cS^\vee)\\
        \End_{\twoO_\sfA^\vee}(\cP)\arrow[r, leftrightarrow, "\sim"] &
        \End_{\twoO_\sfB}(\cS).
    \end{tikzcd}
\end{equation}
recovers the horizontal equivalences in \Cref{eq:Koszul-diagram1-s01}. These vertical Koszul dualities thus appear only after decategorifying from $2\cO$ to 1-category $\cO.$

In this paper, we prove the above conjectures in the case where $X,X^\vee$ are hypertoric varieties.

\subsection{Dual toric stacks}
Our story begins with the 3d mirror symmetry theorem formulated and proved in \cite{GMH}:
\begin{theorem}\label{thm:gmh-main}
    Let $\cS$ be the stratification of $\CC$ as $\CC=\CC^\times\sqcup 0.$
    Then there is an equivalence of stable 2-categories\footnote{To simplify notation here, we index our 2-categories here by the stratification $\cS$ rather than the union of Lagrangian conormals $\LL$ to $\cS,$ as we do elsewhere in the paper. For clarity, we also write ``sheaves of categories'' here for what elsewhere is often denoted by the more economical word ``schobers.'' More broadly, our notation throughout this paper differs from that in \cite{GMH}: We write $\PervCat$ (resp. $\CohCat$) for the 2-categories which were written there as $\Perv^{(2)}$ (resp. $\IndCoh^{(2)}$).}
    \begin{equation}\label{eq:gmh-basic}
    \PervCat_\cS(\CC)\simeq \CohCat^{\CC^\times}_\cS(\CC)
    \end{equation}
    between the 2-category of ``perverse sheaves of categories'' \cite{KS-schobers} on $\CC$ with a singularity at 0, and the 2-category of $\CC^\times$-equivariant ``coherent sheaves of categories'' \cite{Ari-talk} on $\CC$ with a singularity at $0.$

    More generally, let 
    \begin{equation}\label{eq:basic-exact-seq-intro}
\begin{tikzcd}
1 \arrow[r] & G \arrow[r]& (\CC^\times)^n \arrow[r] & F \arrow[r] & 1
\end{tikzcd}
    \end{equation}
    be an exact sequence of tori, and let $\cS$ denote the stratification of $\CC^n$ by coordinate hyperplanes. Then there is an equivalence of stable 2-categories
    \begin{equation}\label{eq:gmh-general}
       \PervCat^G_\cS(\CC^n)\simeq\CohCat^{F^\vee}_\cS(\CC^n).\footnote{Despite the apparent symmetry of the statement, note that the copies of $\CC^n$ on the two sides of \Cref{eq:gmh-general} are actually dual to each other, so that the right-hand $\CC^n$ is naturally a representation of $((\CC^\times)^n)^\vee$ and hence also $F^\vee.$}
    \end{equation}
\end{theorem}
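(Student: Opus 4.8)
The plan is to deduce the general equivalence \Cref{eq:gmh-general} from the base case \Cref{eq:gmh-basic} by a Künneth formula together with an equivariance manipulation, and to prove the base case by presenting both 2-categories through explicit ``gluing data'' and matching them via a categorified Mellin transform.

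\emph{The base case.} On the perverse side, the categorified form of Beilinson's gluing construction --- equivalently, the Kapranov--Schechtman classification of perverse schobers on a disk with one singular point --- identifies an object of $\PervCat_\cS(\CC)$ with a spherical adjunction: a presentable stable ``nearby cycles'' category $\Psi$, a ``vanishing cycles'' category $\Phi$, and an adjoint pair $\Phi \rightleftarrows \Psi$ whose associated monodromy autoequivalences are invertible; mapping categories in $\PervCat_\cS(\CC)$ are then computed as the appropriate lax limit of mapping categories of this data. On the coherent side, a $\CC^\times$-equivariant coherent sheaf of categories on $\CC = \AA^1$ (stratified into $0$ and $\CC^\times$) is described by restriction to strata: over the free orbit $\CC^\times$ one obtains a single presentable category, the generic fiber; over $0$ one obtains a category carrying an action of $\QCoh(B\CC^\times) = \Rep(\CC^\times)$, i.e.\ a module over $\ZZ$-graded vector spaces; and the two are linked by a specialization functor subject to a coherence (``goodness'') constraint. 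The equivalence matches the generic fibers, matches the category over $0$ with $\Phi$ (resp.\ $\Psi$, in the appropriate normalization), and --- the crux --- identifies the monodromy on the perverse side with the $\ZZ$-twist produced by the grading on the coherent side, via $\Rep(\CC^\times) \simeq \QCoh(\widehat{\CC^\times})$: a categorified Mellin transform exchanging the $\pi_1(\CC^\times) = \ZZ$-action (monodromy) with the grading by the character lattice $\widehat{\CC^\times} = \ZZ$. One then checks that invertibility of the monodromy corresponds to the coherence constraint and that the dictionary is compatible with composition, so that it upgrades to an equivalence of 2-categories.

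\emph{Reduction of the general case.} Both 2-categories obey a Künneth (Thom--Sebastiani) formula in the $n$ coordinate directions: $\PervCat_\cS(\CC^n) \simeq \PervCat_\cS(\CC)^{\otimes n}$ via external product of perverse schobers, and $\CohCat^{(\CC^\times)^n}_\cS(\CC^n) \simeq \CohCat^{\CC^\times}_\cS(\CC)^{\otimes n}$ (the two copies of $\CC^n$ are naturally dual, but each one-dimensional tensor factor is symmetric, so this plays no role). Combined with the base case this gives the maximally equivariant statement $\PervCat_\cS(\CC^n) \simeq \CohCat^{(\CC^\times)^n}_\cS(\CC^n)$. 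To insert $G$-equivariance on the left and cut the equivariance on the right down to $F^\vee$, dualize \Cref{eq:basic-exact-seq-intro} to $1 \to F^\vee \to (\CC^\times)^n \to G^\vee \to 1$, with associated character sequence $0 \to \widehat F \to \ZZ^n \to \widehat G \to 0$; under the Mellin dictionary, imposing $G$-equivariance on the perverse side (rigidifying the ``$G$-directions'' of the monodromy/equivariance data) is matched with cutting the $(\CC^\times)^n$-equivariance on the coherent side down to $F^\vee$-equivariance, compatibly with the splitting of $\ZZ^n$. Formally this is a (de-)equivariantization argument: realize $\PervCat^G_\cS(\CC^n)$ and $\CohCat^{F^\vee}_\cS(\CC^n)$ as (co)limits over the relevant torus of their non-equivariant versions and transport the established equivalence along it.

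\emph{Main obstacle.} The real difficulty should be matching the \emph{conditions} rather than the gluing data: one must prove that the spherical / invertible-monodromy condition cutting out perverse schobers corresponds, term by term, to the coherence (``goodness'') condition built into the definition of coherent sheaves of categories on a stratified space. A bare identification of gluing diagrams is routine, but these conditions encode the geometry, and showing that neither is strictly stronger than the other is delicate. A secondary obstacle is establishing the Künneth formula for perverse schobers with full 2-functoriality --- on mapping categories, not only objects --- and compatibly with both the spherical-adjunction model and the $G$-equivariant refinement. Once the categorified Mellin transform and the Künneth formula are in place, the remaining equivariance bookkeeping should be essentially formal.
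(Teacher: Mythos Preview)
This theorem is not proved in the present paper: it is quoted as a black box from \cite{GMH} (the text introduces it as ``the 3d mirror symmetry theorem formulated and proved in \cite{GMH},'' and the proof of \Cref{mainthm:2cats-O} invokes it as ``\cite{GMH}*{Theorem G}''). So there is no in-paper proof to compare against directly.

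That said, the paper says enough about the setup to infer the shape of the \cite{GMH} argument, and it differs from your plan. The paper presents $\PervCat_{\LL_G}(\CC^n/G)$ as $\Fun(\Sph^n_G,\St)$, so both sides are $2$-categories of modules over an explicit monoidal (endomorphism) category, and then describes the main theorem of \cite{GMH} as ``a calculation of their monoidal category of endomorphisms $\End(\bigoplus \cP^\sgn)$'': i.e., one computes $\End(\cP)$ on the A-side and identifies it with the convolution category $\Coh(X_{F^\vee}\times_{\AA^n/F^\vee}X_{F^\vee})$ defining the B-side, yielding a Morita equivalence. This bypasses the stratum-by-stratum matching you propose.

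Two specific comments on your plan. First, your B-side model (restriction to strata: a generic-fiber category over $\CC^\times$, a $\Rep(\CC^\times)$-module over $0$, a specialization functor with a ``goodness'' constraint) is not the definition used here; the paper defines $\CohCat$ only as module categories for $\Coh(X\times_Y X)$ (\Cref{defn:cohcats-general}) and explicitly notes that a general theory ``has not yet appeared in the literature.'' So your approach would first have to prove that your gluing model agrees with the convolution-algebra model, which is itself nontrivial. Second, your K\"unneth step is harmless on the A-side (it is literally the definition $\PervCat_\LL(\CC^n)=\Fun(\Sph^{\otimes n},\St)$) and your equivariance step matches the paper's mechanism (the universal twist of \Cref{defn:universal-twist} gives the $G$-action whose coinvariants produce $\Sph^n_G$); but the B-side K\"unneth and the ``sphericality $\leftrightarrow$ coherence'' matching you correctly flag as the main obstacle are precisely what the endomorphism-computation approach of \cite{GMH} sidesteps.
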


The stratifications involved in the statements of \Cref{thm:gmh-main} should be understood as singular-support conditions inside the cotangent bundle of $\rT^*\CC^n$ --- or, more properly, inside the stacky cotangent bundles $\rT^*(\CC^n/G)$ and $\rT^*(\CC^n/F^\vee).$ A first goal of this paper is to describe the equivalence \Cref{eq:gmh-general} microlocally, so that we may restrict to stable loci inside the stacky cotangent bundles.

\subsection{Hypertoric varieties and Lagrangian skeleta}

Throughout the paper, we fix the choice of exact sequence \Cref{eq:basic-exact-seq-intro}.
The stack $\rT^*(\CC^n/G)$ can also be written as a stacky Hamiltonian reduction,
\begin{equation}\label{eq:cotangent-stack}
\rT^*(\CC^n/G) = \mu^{-1}(0)/G,
\end{equation}
where $\mu:\rT^*\CC^n\to \fgv$ is the moment map for the Hamiltonian $G$-action on $\rT^*\CC^n.$ We can pass from the stack \Cref{eq:cotangent-stack} to a smooth variety by GIT.
Let $\FI\in \fgv_\ZZ$ be a character for the group $G$. 
The character $\FI$ specifies a $G$-equivariant line bundle $\cO(t)$ on $\rT^*(\CC^n/G)$ and hence on $\mu^{-1}(0),$ which allows us to perform a GIT quotient
\begin{equation}
    \htvar_G(\FI) := \mu^{-1}(0)/\!/_\FI G = \mu^{-1}(0)^{\FI-ss}/G.
\end{equation}

\begin{example} We highlight two special cases of the above construction:
    \begin{enumerate}
        \item For $G=\CC^\times$ embedded diagonally in $(\CC^\times)^n,$ at generic parameter $\FI$ the variety $\htvar_G(\FI)$ is $\rT^*\PP^{n-1}.$ 
        \item If $G$ is the kernel of the multiplication map $(\CC^\times)^n\to \CC^\times,$ then at generic parameter $\FI,$ the the variety $\htvar_G(\FI)$ is a resolution of the $A_{n-1}$ surface singularity $\CC^2/\ZZ/(n-1).$
    \end{enumerate}
\end{example}

The spaces $\htvar_G(\FI)$ were first introduced in \cite{Goto}, where they were called \bit{toric hyperk\"ahler manifolds}, and studied further in \cite{Konno} and in \cites{Biel-Dan,Haus-Sturm}; in \cite{Harada-Proudfoot} they were called \bit{hypertoric varieties}, to avoid confusion with toric varieties. These spaces are the simplest examples of symplectic resolutions, and often serve as a general testing ground for aspects of the general theory, including categories $\cO$ \cite{BLPW12} and Koszul duality \cite{BLPW10}, the quantum differential equation \cite{McBreen-Shenfeld}, and K-theoretic quasimap counts \cite{Smirnov-Zhou}.
 Following the program begun in \cite{GMH}, we seek to derive as much of this theory as possible from an analysis of 2-categories. In this paper, we will recover category $\cO,$ and the statement of Koszul duality for dual categories $\cO.$ 

In order to describe category $\cO$ for these varieties, it will be helpful to reframe the discussion in terms of a certain Lagrangian subspace.
\begin{definition}\label{def:Lagrangians-00}
    Let $\LL\subset \rT^*\CC^n$ be the singular Lagrangian given by the union of conormals to intersections of coordinate hyperplanes, and let $\LL_G$ be its image in $\rT^*(\CC^n/G).$
\end{definition}
The Lagrangian $\LL_G$ may be understood as a union of conormals to stacky hyperplane intersections in the stack $\CC^n/G$; alternatively, one may note that $\LL$ is contained in $\mu^{-1}(0)\subset \rT^*\CC^n,$ so that we can treat $\LL_G$ as its projection to $\mu^{-1}(0)/G.$
As a singular-support condition, the conic Lagrangian $\LL_G$ imposes the condition of local constancy along toric strata of $\CC^n/G$; we may therefore rewrite \Cref{eq:gmh-general} as the equivalence
\begin{equation}\label{eq:gmh-with-singularsupport}
    \PervCat_{\LL_G}(\CC^n/G) \simeq \CohCat_{\LL_{F^\vee}}(\CC^n/F^\vee)
\end{equation}
between perverse (resp. coherent) sheaves of categories with singular support along the Lagrangian $\LL_G$ (resp. $\LL_{F^\vee}$).

Our goal in this paper is to move from the Lagrangian $\LL_G,$ inside the stack $\rT^*(\CC^n/G),$ to a Lagrangian contained in the stable locus $\htvar_G(\FI).$ To this end, we will introduce a family of Lagrangians $\LL_G(\FI,\mass),$ depending on a pair of parameters. The first of these is the stability condition $\FI\in\fgv_\ZZ,$ and the latter is a cocharacter $\mass\in \frf_\ZZ$,
which determines an inclusion $\mass:\CC^\times\hookrightarrow F$ and therefore a Hamiltonian action of $\CC^\times$ on the symplectic stack $\rT^*(\CC^n/G)$. 
\begin{definition}
    For $(\FI,\mass)\in \fgv_\ZZ\times \frf_\ZZ,$ we write 
    \[
    \LL(\FI,\mass):=\{x\in\LL_G^{\FI-ss}\mid \lim_{\lambda\to \infty} \mass(\lambda)\cdot x\text{ exists}\}
    \]
   for the subspace of $\LL_G$ consisting of those points which are \bit{$\FI$-semistable} and \bit{$\mass$-bounded}.
\end{definition}
When $\FI,\mass=0,$ we recover our original Lagrangian $\LL_G(0,0)=\LL_G.$ In general, the Lagrangian $\LL_G(\FI,\mass)$ is a locally closed subset of $\LL_G$: turning on $\FI$ deletes the closed subset of $\FI$-unstable points, while turning on $\mass$ deletes the open subset of $\mass$-unbounded points.
\begin{remark}
    In the physics literature, $\FI$ is called the Fayet-Iliopolous (FI) parameter and $\mass$ the mass parameter. We prefer the names \bit{stability parameter} and \bit{attraction parameter}.
\end{remark}

Observe that the parameters $\FI$ and $\mass$ for $\LL_G$ play the opposite roles for $\LL_{F^\vee}.$ This is part of a general paradigm in mirror symmetry that parameters exchange roles under mirror duality (which in this case is implemented by dualizing the exact sequence \Cref{eq:basic-exact-seq-intro}). In order to lift this duality to the level of 2-categories, we will have to understand how the 2-categories in \Cref{eq:gmh-with-singularsupport} change when we replace $\LL_G$ (resp. $\LL_{F^\vee}$) with $\LL_G(\FI,\mass)$ (resp. $\LL_{F^\vee}(\mass,\FI)$). In other words, we need a good theory of {\em microlocalization} for perverse (resp. coherent) sheaves of categories.

On the A-side, we lack even a general non-microlocal theory for perverse sheaves of categories, so a full theory of microlocalization is out of reach. Nevertheless, inspired by the microlocal behavior of the 1-category of perverse sheaves, we can define directly our expectation for microlocal perverse sheaves of categories on the Lagrangian $\LL_G$. 
The key fact is categorical Kirwan surjectivity, which has been established for DQ-modules in \cite{BLPW16} and will be proven, in the hypertoric setting, for microlocal perverse sheaves in \cite{CGH}. This theorem implies that the category of microlocal perverse sheaves on $\LL_G(\FI,\mass)$ admits a description purely in terms of (non-microlocal) perverse sheaves.

We therefore imitate the construction of microlocal perverse sheaves on $\LL_G,$ which we recall in \Cref{sec:betti-cato}. For each component $\LL_G^\sgn$ of $\LL_G,$
we can define an object $\cP_G^\sgn$ in the 2-category $\PervCat_{\LL_G}(\CC^n/G)$ which corepresents the functor taking a perverse sheaf of categories to its microstalk at a smooth point of $\LL_G^\sgn.$ In addition,
if $\LL'_G\subset \LL_G$ is the closed embedding of a union of the components of $\LL_G,$ we have an embedding $\PervCat_{\LL'_G}(\CC^n/G)\hookrightarrow \PervCat_{\LL_G}(\CC^n/G).$
\begin{definition}[\Cref{defn:betti-2cat-o}]
    The 2-category $\mu\PervCat(\LL_G(\FI,\mass))$ of \bit{microlocal perverse sheaves of categories} on the Lagrangian $\LL_G(\FI,\mass)$ is defined as a subquotient of the 2-category $\PervCat_{\LL_G}(\CC^n/G),$
    \begin{equation}\label{eq:mupervcat-defn-intro}
    \mu\PervCat(\LL_G(\FI,\mass)):=
    \frac{\langle \cP_G^\sgn\rangle_{\sgn\in \FI\text{-ss}}}{
    \langle\cP_G^\sgn\rangle_{\sgn\in \mass\unbdd}
    }
    \end{equation}
    obtained by starting with the full sub-2-category on the objects $\cP^\sgn_G$ corresponding to $\FI$-semistable components of $\LL_G$ and quotienting by the objects $\cP^\sgn_G$ corresponding to $\mass$-unbounded components.
\end{definition}
\begin{remark}
    The 2-category $\mu\PervCat(\LL_G(\FI,\mass))$ defined in \Cref{eq:mupervcat-defn-intro} depends not only on the Lagrangian $\LL_G(\FI,\mass)$ but also on the ambient cotangent stack $\rT^*(\CC^n/G).$ 
    See \Cref{sec:Fueter} for a conjectural justification for this dependence in terms of generalized Seiberg-Witten theory.
\end{remark}

On the B-side, more is known, thanks to works in progress of Dima Arinkin and German Stefanich. Suppose the Lagrangian $\LL_{F^\vee}$ is the image in $\rT^*Y$ of the conormal $\Conorm_XY$ of a map $X\to Y.$ Then we can present $\CohCat_{\LL_{F^\vee}}(\CC^n/F^\vee)$ as the 2-category with a single object, whose endomorphisms are given by the monoidal category $\Coh(X\times_Y X).$ 
In other words,  we have a presentation 
\[
\CohCat_{\LL_{F^\vee}}(\CC^n/F^\vee)\simeq \Mod_{\Coh(X\times_Y X)}(\St)
\] 
as the 2-category of module categories over $\Coh(X\times_Y X).$ 

Given a conic open subset $U\subset \LL_{F^\vee},$ using \Cref{defn:map-F} we produce a singular-support condition $\tilde{U},$\footnote{This is $F^{-1}(U)$ in the notation of \Cref{sec:bside}.}
in the sense of \cite{AG}, for coherent sheaves on $X\times_Y X.$
\begin{definition}[\Cref{defn:cohcats-microlocal}]\label{defn:cohcats-microlocal-intro}
With $X\to Y$ as above, we define the 2-category of \bit{microlocal coherent sheaves of categories on $U$} as the 2-category
\[
\mu\CohCat(U):=\Mod_{\Coh_{\tilde{U}}(X\times_Y X)}(\St)
\]
of module categories for the monoidal category 
$\Coh_{\tilde{U}}(X\times_Y X)$ 
of coherent sheaves on the fiber product with singular support contained in $U$.
\end{definition}

To justify our notation, we prove that the 2-category so defined is invariant of the choice of map $X\to Y.$
\begin{proposition}[\Cref{prop:invariance-for-cohcats} and \Cref{prop:invariance-for-mucohcats}]
Let $X\to Y$ and $X'\to Y$ two maps whose conormals $\Conorm_XY$ and $\Conorm_{X'}Y$ have the same image in $\rT^*Y,$ and let $U\subset \rT^*Y$ be a conic open subset. Then the monoidal categories $\Coh_{\tilde{U}}(X\times_YX)$ and $\Coh_{\tilde{U}}(X'\times_Y X')$ are Morita equivalent.
\end{proposition}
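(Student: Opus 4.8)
The plan is to realize the claimed Morita equivalence by an explicit invertible bimodule, built from the ``mixed'' fiber product $X\times_Y X'$. Pullback along the projections and proper pushforward along the diagonals equip $\Coh(X\times_Y X)$ and $\Coh(X'\times_Y X')$ with their convolution monoidal structures, and in the same way make $\cM:=\Coh(X\times_Y X')$ into a $(\Coh(X\times_Y X),\Coh(X'\times_Y X'))$-bimodule, with $\cM':=\Coh(X'\times_Y X)$ the bimodule in the opposite order. I would carry this out first with $\IndCoh$ (or $\QCoh$) and the symmetric monoidal $2$-category of presentable stable categories, passing to compact objects --- hence to $\Coh$ and $\St$ --- only at the end; the finiteness needed so that pushforward preserves coherence and the projection formula holds is available because every stack here is a global quotient of an affine space by a torus, as in the hypertoric setting.

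It then remains to check that $\cM$ and $\cM'$ are mutually inverse, i.e. to exhibit equivalences of bimodules $\cM\otimes_{\Coh(X'\times_Y X')}\cM'\simeq\Coh(X\times_Y X)$ and $\cM'\otimes_{\Coh(X\times_Y X)}\cM\simeq\Coh(X'\times_Y X')$. Both are instances of the principle that convolution composes: for $A\to Y\leftarrow B$ and $B\to Y\leftarrow C$ there is a canonical equivalence $\Coh(A\times_Y B)\otimes_{\Coh(B\times_Y B)}\Coh(B\times_Y C)\simeq\Coh(A\times_Y C)$, which I would prove by resolving the relative tensor product by the two-sided bar construction, identifying its terms with $\Coh$ of iterated fiber products via K\"unneth for $\QCoh$ over a perfect stack, and then collapsing using the base-change properties of $\QCoh$ along $B\to Y$ --- the sort of statement established in the works of Arinkin and Stefanich invoked above for the basic properties of $\CohCat$. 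Specializing to $(A,B,C)=(X,X',X)$ and $(X',X,X')$ yields the two displayed equivalences, and hence \Cref{prop:invariance-for-cohcats}.

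For the microlocal statement \Cref{prop:invariance-for-mucohcats} I would run the identical argument with singular-support conditions imposed throughout, replacing plain K\"unneth by the Arinkin--Gaitsgory formalism of \cite{AG}. The essential point --- and, I expect, the main obstacle --- is that the assignment $U\mapsto\tilde U$ of \Cref{defn:map-F} is compatible with convolution and with the bimodule actions above, so that every functor appearing in the bar construction preserves the prescribed singular support, and that the conditions it produces on $X\times_Y X$ and on $X'\times_Y X'$ correspond to one another under $\cM$. This is exactly where the hypothesis that $\Conorm_X Y$ and $\Conorm_{X'}Y$ have the same image $Z\subset\rT^*Y$ is used in an essential way: it is what allows a singular-support condition coming from $U\subset Z$ to be transported between the two presentations. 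Granting this, $\cM$ acquires a natural singular-support condition making it an invertible $(\Coh_{\tilde U}(X\times_Y X),\Coh_{\tilde U}(X'\times_Y X'))$-bimodule by the same bar computation, and the non-microlocal half is close to formal by comparison. Verifying the singular-support bookkeeping --- through the calculus of \cite{AG} and the explicit geometry of the map $F$ --- is the step I expect to demand real work.
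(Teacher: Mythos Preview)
Your plan has the right overall shape --- the bimodule $\cM=\Coh(X\times_Y X')$ is exactly the one the paper uses --- but there is a genuine gap in the non-microlocal half. The ``general principle'' you state,
\[
\Coh(A\times_Y B)\otimes_{\Coh(B\times_Y B)}\Coh(B\times_Y C)\simeq\Coh(A\times_Y C),
\]
is \emph{false} for $\Coh$ (equivalently $\IndCoh$) without further hypotheses. Take $Y=\AA^1$, $A=C=\AA^1$ mapping by the identity, and $B=\{0\}$: the right-hand side is $\Coh(\AA^1)$, while the left-hand side is only $\Coh_{\{0\}}(\AA^1)$, coherent sheaves supported at the origin. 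The bar/K\"unneth argument you sketch is valid for $\QCoh$, but compact objects there are $\Perf$, not $\Coh$; for $\IndCoh$ the relative tensor over $\IndCoh(B\times_Y B)$ does \emph{not} collapse to $\IndCoh(A\times_Y C)$.

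The correct statement, which the paper takes from \cite{BZCHN}, is that this relative tensor product equals $\Coh_\Lambda(A\times_Y C)$, where $\Lambda=\Sing(A\times_Y B)*\Sing(B\times_Y C)$ is the convolution of singular supports. For $(A,B,C)=(X,X',X)$ one identifies $\Sing(X_{ij})$ with $L_i\times_{\rT^*Y}L_j$ (where $L_i=\Conorm_{X_i}Y$) and computes $\Lambda$ as the image of $L_1\times_{\rT^*Y}L_2\times_{\rT^*Y}L_1\to L_1\times_{\rT^*Y}L_1$. It is exactly the hypothesis $\overline{\Conorm_X Y}=\overline{\Conorm_{X'}Y}$ that forces this map to be surjective on points, so that $\Lambda=\Sing(X\times_Y X)$ and one recovers the full $\Coh(X\times_Y X)$. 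Thus the hypothesis is already essential in the non-microlocal case; it is not merely bookkeeping for the microlocal refinement, contrary to what your last paragraph suggests.

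For the microlocal statement the paper does not rerun the argument with support conditions imposed throughout, as you propose. Instead it notes that $\Coh_{\tilde U}$ is the quotient of $\Coh$ by the ideal $\cI_i$ generated by $\Coh_{F^{-1}(\Lambda)}$, and checks (again via the convolution formula for singular support) that $\cI_2\cM=\cM\cI_1$ inside $\cM$; the already-established Morita equivalence then descends to the quotients. This is shorter and sidesteps the question of whether the bar resolution behaves well termwise under singular-support truncation.
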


\begin{remark}
    As on the A-side, the above definition of microlocalization is very much provisional. Ultimately, microlocal coherent sheaves should be global sections of a sheaf of categories defined by stipulating that its stalks are calculated by the procedure of \Cref{defn:cohcats-microlocal-intro}. That \Cref{defn:cohcats-microlocal-intro} recovers the true microlocalization in the case of hypertoric varieties would then follow from a ``2-categorical spectral Kirwan surjectivity'' theorem. We leave the full development of this coherent microlocal theory to future work.
\end{remark}

Our first main theorem establishes our predicted mirror relationship between the two 2-categories we have defined.
\begin{mainthm}\label{mainthm:2cats-O}
    There is an equivalence of stable 2-categories
    \begin{equation}\label{eq:2catsO-main-intro}
        \mu\PervCat(\LL_G(\FI,\mass)) \simeq \mu\CohCat(\LL_{F^\vee}(\mass,\FI))
    \end{equation}
   between the 2-categories of microlocal perverse (resp. coherent) sheaves of categories on $\LL_G(\FI,\mass)$ (resp. $\LL_{F^\vee}(\mass,\FI)$), identifying the canonical generators of these 2-categories.
\end{mainthm}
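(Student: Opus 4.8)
\emph{Strategy.} Both 2-categories in \Cref{eq:2catsO-main-intro} are, by construction, subquotients of the global 2-categories of \Cref{eq:gmh-with-singularsupport}: on the A-side this is the definition \Cref{eq:mupervcat-defn-intro}, and on the B-side the presentation $\CohCat_{\LL_{F^\vee}}(\CC^n/F^\vee)\simeq\Mod_{\Coh(X\times_Y X)}(\St)$ exhibits $\mu\CohCat(\LL_{F^\vee}(\mass,\FI))=\Mod_{\Coh_{\tilde{U}}(X\times_Y X)}(\St)$ as the sub-2-category carved out by the singular-support condition $\tilde{U}$ on $\Coh(X\times_Y X)$. It therefore suffices to show that the equivalence $\Phi\colon\PervCat_{\LL_G}(\CC^n/G)\xrightarrow{\sim}\CohCat_{\LL_{F^\vee}}(\CC^n/F^\vee)$ of \Cref{thm:gmh-main} carries the first subquotient onto the second and matches the canonical generators.

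\emph{Step 1: images of the generators.} The main task is to compute $\Phi(\cP_G^\sgn)$ for each component $\LL_G^\sgn$ of $\LL_G$ and identify it with the canonical coherent generator $\cS_{F^\vee}^{\sgn^\vee}$ attached to the corresponding component of $\LL_{F^\vee}$, where $\sgn\mapsto\sgn^\vee$ is the Gale-dual involution of the index set $2^{[n]}$ (complementation of the defining arrangement of coordinate hyperplanes). Since $\cP_G^\sgn$ corepresents the microstalk at a generic point of $\LL_G^\sgn$, and $\cS_{F^\vee}^{\sgn^\vee}$ is characterized by the analogous property on the coherent side, the content is that $\Phi$ intertwines the two families of microstalk functors. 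I would extract this from the local structure of \Cref{thm:gmh-main}: near a generic point of a component, the relevant singular-support condition and the hypertoric geometry both factor as a product of a transverse rank-one picture (the $\CC=\CC^\times\sqcup 0$ case of \Cref{eq:gmh-basic}) with a trivial, unstratified direction; the microstalk depends only on the rank-one factor, on which $\Phi$ is understood explicitly.

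\emph{Steps 2--3: transporting the subquotient.} Once the generators are matched, $\Phi$ automatically identifies the full sub-2-category of $\PervCat_{\LL_G}(\CC^n/G)$ on any subset of the $\cP_G^\sgn$, together with its quotients, with the corresponding sub-2-categories of $\CohCat_{\LL_{F^\vee}}(\CC^n/F^\vee)$ on the $\cS_{F^\vee}^{\sgn^\vee}$. It remains to compare the two combinatorial recipes. Dualizing \Cref{eq:basic-exact-seq-intro} to $1\to F^\vee\to(\CC^\times)^n\to G^\vee\to 1$ turns the character $\FI\in\fgv_\ZZ$ of $G$ into a cocharacter of the B-side flavor torus $G^\vee$, and the cocharacter $\mass\in\frf_\ZZ$ of $F$ into a character of the B-side gauge torus $F^\vee$; tracing this through the definition of $\LL_G(\FI,\mass)$ shows that $\LL_G^\sgn$ is $\FI$-semistable (resp. $\mass$-unbounded) precisely when $\LL_{F^\vee}^{\sgn^\vee}$ is $\mass$-semistable (resp. $\FI$-unbounded). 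Hence $\Phi$ sends the subquotient \Cref{eq:mupervcat-defn-intro} to the subquotient of $\CohCat_{\LL_{F^\vee}}(\CC^n/F^\vee)$ on the $\mass$-semistable, $\FI$-bounded generators, and under the presentation $\CohCat_{\LL_{F^\vee}}(\CC^n/F^\vee)\simeq\Mod_{\Coh(X\times_Y X)}(\St)$ this subquotient is exactly $\Mod_{\Coh_{\tilde{U}}(X\times_Y X)}(\St)=\mu\CohCat(\LL_{F^\vee}(\mass,\FI))$, since imposing the singular-support condition $\tilde{U}$ for $U=\LL_{F^\vee}(\mass,\FI)$ annihilates precisely the generators indexed by the $\mass$-unstable or $\FI$-unbounded components --- a bookkeeping check in the explicit toric combinatorics. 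Chaining the three steps yields \Cref{eq:2catsO-main-intro}, with matching canonical generators.

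\emph{The hard part.} Step 1 is the real obstacle. The equivalence of \Cref{thm:gmh-main} is built globally on $\CC^n/G$ and is not manifestly microlocal, so deducing that it sends microstalks to microstalks --- equivalently, that it localizes along $\LL_G$ --- is essentially the microlocalized refinement of \Cref{thm:gmh-main} foreshadowed in \Cref{sec:betti-cato} and \cite{CGH}, and should form the technical heart of the argument. A secondary difficulty appears in the last identification of Steps 2--3: converting the Verdier inclusion $\Coh_{\tilde{U}}(X\times_Y X)\subset\Coh(X\times_Y X)$ of monoidal categories into the intended subquotient of module 2-categories requires sufficient exactness of $\Mod_{(-)}(\St)$, and this is exactly the point where the provisional status of \Cref{defn:cohcats-microlocal-intro} --- and the absence, for now, of a general spectral Kirwan surjectivity statement --- must be compensated by the explicit hypertoric description.
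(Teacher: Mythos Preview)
Your overall strategy---transport the A-side subquotient across the global equivalence $\Phi$ of \Cref{thm:gmh-main} and identify the result with the B-side microlocalization---is exactly the paper's. But you have misidentified where the work lies. Your Step~1 is \emph{not} the hard part: the matching $\cP_G^\sgn\mapsto\cS_{F^\vee}^\sgn$ is already part of the statement of \cite{GMH}*{Theorem G}, and the paper simply quotes it as input. There is no need to ``microlocalize'' $\Phi$ or to prove that it intertwines microstalk functors; both sides of \Cref{eq:2catsO-main-intro} are \emph{defined} as subquotients of the global 2-categories, so once generators match, the subquotients match automatically. Relatedly, there is no involution $\sgn\mapsto\sgn^\vee$: the same sign vector indexes corresponding generators on both sides, and your Gale-duality bookkeeping is off---the correct statement (\Cref{sec:hyperplane}) is that $\sgn$ is $\FI$-semistable for $G$ iff $\sgn$ is $\FI$-bounded for $F^\vee$, and $\sgn$ is $\mass$-unbounded for $G$ iff $\sgn$ is $\mass$-unstable for $F^\vee$.

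The actual content of the proof---what you flag as a ``secondary difficulty''---is showing that the B-side subquotient $\CohCat_{\LL_{F^\vee}^{\FI\bdd}}(\CC^n/F^\vee)/\CohCat_{\LL_{F^\vee}^{\FI\bdd,\mass\unstab}}(\CC^n/F^\vee)$ coincides with $\mu\CohCat(\LL_{F^\vee}(\mass,\FI))$ as defined via singular support. This is \Cref{prop:gen-by-idempotents} (the ideal in $\Coh(X\times_YX)$ generated by the matrix idempotents $E_\sgn$ for $\sgn\in\sgnsub$ is exactly $\Coh_{F^{-1}(\Lambda_\sgnsub)}(X\times_YX)$) together with \Cref{cor:mucohschob-as-quotient}, and it is established \emph{before} the proof of \Cref{mainthm:2cats-O}, not deferred. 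Once those are in hand, the proof of \Cref{mainthm:2cats-O} is a few lines.
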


\begin{remark}\label{rem:coefficients}
    Although in the introduction we work over $\CC$
    for the sake of symmetry, 
    \Cref{mainthm:2cats-O} remains true for an arbitrary choice of coefficients. As we explain in \Cref{sec:hyperplane}, the spaces on the right-hand side of \Cref{eq:2catsO-main-intro} can be defined for a general coefficient ring $\kk,$ which should then be taken as the coefficient ring for the microlocal perverse schobers we consider (which will still be defined 
    on the {\em complex} algebraic stack $\rT^*(\CC^n/G)$) on the left-hand side.
    This is part of the usual paradigm in mirror symmetry and the Langlands program; cf. the discussion in \cite{Dev-Satake} regarding the geometric Satake equivalence. Later, when we discuss category $\cO,$ we will specialize to coefficients $\kk=\CC$ to make contact with the original calculations of hypertoric category $\cO.$
\end{remark}

\begin{example}[\Cref{ex:2catO-A} and \Cref{ex:2catO-B}]\label{ex:2catso-intro}
    Let $G\simeq \CC^\times\hookrightarrow (\CC^\times)^2,$ embedded as the kernel of the multiplication map $(\CC^\times)^2 \xrightarrow{m} \CC^\times\simeq F.$ Then the 2-categories in \Cref{eq:2catsO-main-intro} can be described as follows:
    \begin{itemize}
        \item An object of $\mu\PervCat(\LL_G(\FI,\mass))$ is given by a spherical adjunction
        \[
        \begin{tikzcd}[column sep=2cm]
        \cC
        \arrow[yshift=0.9ex]{r}{F}
        \arrow[leftarrow, yshift=-0.9ex]{r}[yshift=-0.2ex]{\bot}[swap]{F^R}
        &
        \cD
        \end{tikzcd}
        \]
        equipped with the following extra data:
        \begin{itemize}
            \item An identification of the cotwist $\cofib(FF^R\to\id_\cD)$ with the 2-shift automorphism $[2]$;
            \item An extra monad $M$ on $\cC$;
            \item An identification of twists $\fib(\id_\cC\to M)\simeq \fib(\id_\cC\to F^RF).$
        \end{itemize}
            The Hom category between two such objects is the category of functors commuting with all the above structure.
        \item $\mu\CohCat(\LL_{F^\vee}(\mass,\FI))$ is given by the 2-category $\Mod_\cA(\St)$ of stable module categories over the monoidal category
        \[
        \cA:=\Coh\left( (\PP^1\sqcup 0)\times_{\PP^1} (\PP^1 \sqcup 0) \right).
        \]
        Using Koszul duality to write $\Coh(0\times_\PP^1 0)\simeq \Perf_{\kk[\beta]}$ where $\beta$ is a generator of degree 2, 
        and Beilinson's exceptional collection to identify $\Coh(\PP^1)$ with modules over the Kronecker quiver,
        we can rewrite $\cA$ as the matrix monoidal category
        \begin{equation}\label{eq:intro-firstexample-quivers}
        \cA\simeq \left(
        \begin{array}{ll}
        \Perf_{\bullet\rightrightarrows \bullet} & \Perf_\kk\\
        \Perf_\kk & \Perf_{\kk[\beta]}
        \end{array}
        \right),
        \end{equation}
        giving an explicit presentation of the generating objects, 1-morphisms, and 2-morphisms in the 2-category $\Mod_\cA(\St)$.
    \end{itemize}
   
   Under the equivalence \Cref{eq:2catsO-main-intro}, the generators of the rings whose perfect modules form the components of $\cA$ in \Cref{eq:intro-firstexample-quivers} correspond on the A-side to the structural morphisms among the spherical adjunctions and monads on the categories $\cC$ and $\cD$: namely, the arrows in the Kronecker quiver $\bullet\rightrightarrows\bullet$ correspond to the maps $T\rightrightarrows \id_\cC$ whose respective cofibers are the monads $M$ and $F^RF$ on $\cC,$ and the degree-2 map $\beta$ corresponds to the map $\id_\cD\to \cofib(FF^R\to \id_\cD)\simeq [2].$
\end{example}


\subsection{Category $\cO$ and decategorification}\label{sec:decat}
As mentioned in \Cref{mainthm:2cats-O}, each of our 2-categories comes equipped with a canonical generator. Considered as an object in the automorphic (resp. spectral) 2-category, we will denote this generator by $\cP$ (resp. $\cS$). By taking their endomorphisms, we obtain monoidal categories
$\End_{\mu\PervCat(\LL_G(\FI,\mass))}(\cP)$ and
$\End_{\mu\CohCat(\LL_{F^\vee}(\mass,\FI))}(\cS).$

For a dualizable category $\cC,$ we write $\HH(\cC)$ for the \bit{Hochschild homology} of $\cC,$ which may be understood as the trace of the identity functor $\id_\cC.$ The Hochschild homology comes equipped with an $S^1$-action, for which we may take invariants $\HH^{S^1}(\cC)$ or coinvariants $\HH_{S^1}(\cC)$ to obtain cyclic homologies, or Tate invariants $\HH^{t S^1}(\cC)$ to obtain the \bit{periodic cyclic homology}
\[
\HP(\cC):=\HH^{t S^1}(\cC)\simeq\cofib(\HH_{S^1}(\cC)\to \HH^{S^1}(\cC)).
\]

The functor $\HP(-)$ (like $\HH(-)$) is symmetric monoidal, so that for $\cC$ a monoidal category, $\HP(\cC)$ is an algebra. We will be interested in applying the functor $\HP(-)$ to the endomorphism categories of the generators $\cP,\cS$ discussed above, and relating the resulting algebras to hypertoric categories $\cO.$

\begin{remark}
    For $\cC$ a monoidal category, the Hochschild homology algebra $\HH(\cC)$ (referred to in \cite{BZCHN} as the ``na\"ive trace'') is a first approximation to a more sophisticated ``categorical trace,'' in a sense which is made precise by \cite{GKRV}*{Theorem 3.8.5}. 
    The naive trace is sufficient for our purposes here, but we will return to the calculation of categorical traces in future work, extending the categorical trace computation in \cite{GH-Betti} for the 2-categories \Cref{eq:gmh-basic}.
\end{remark}

As we have mentioned in \Cref{sec:overview}, 
categories $\cO$ are supposed to arise physically from reductions of 3d $\cN=4$ gauge theories.
From this perspective, one class of input data from which one may produce a category $\cO$ is a stack of the form $\rT^*(V/G),$ where $V$ is a representation of the reductive group $G$, together with a pair of parameters specifying, respectively, a GIT stable locus $\frM\subset\rT^*(V/G)$ and a Hamiltonian $\CC^\times$ action on this stable locus. The stable set for this $\CC^\times$ action, inside the GIT-stable locus, is a Lagrangian subspace $\LL.$ 

Given the data of $(\frM,\LL),$ one may define a pair of categories:
\begin{itemize}
\item The \bit{Betti category $\cO$} is the category $\cO^\Bet(\frM, \LL):=\muPerv_{\LL}(\LL)$ of microlocal perverse sheaves on $\LL.$ 
\item The \bit{de Rham category $\cO$} is a category $\cO^{\dR}(\frM,\LL):=\DQ^{{\sf reg}}(\LL)$ is a category of regular holonomic DQ-modules on $\LL.$
\end{itemize}


Each of these categories $\cO^{(*)}(\htvar,\LL)$ has a collection of simple objects $L_\sgn,$ with projective covers $P_\sgn.$ We write $L:=\bigoplus L_\sgn$ and $P:=\bigoplus P_\sgn$ for the direct sum of the simples (resp. projectives). 
We will use periodic cyclic homology to recover their endomorphism algebras,
up to collapsing the cohomological grading modulo 2.
In the following theorem, we write
$\cO^{(*)}_{\ZZ/2}(\htvar,\LL):=\cO^{(*)}(\htvar,\LL)\otimes_{\Mod_\kk}\Mod_{\kk((u))}$ for the 2-periodization of category $\cO.$
\begin{mainthm}\label{mainthm:hp-decategorification}
Let $\kk=\CC.$
There are equivalences of algebras
\begin{align}
\label{eq:mainthm-decat-a}
\HP(\End_{\mu\PervCat(\LL_G(\FI,\mass))}(\cP))&\simeq 
\End_{
\cO^\Bet_{\ZZ/2}(\htvar_G(\FI),\LL(\FI,\mass))
}
(P)
,\\
\label{eq:mainthm-decat-b}
\HP(\End_{\mu\CohCat(\LL_{F^\vee}(\mass,\FI))}(\cS))&\simeq 
\End_{
\cO^{\dR}_{\ZZ/2}(\htvar_{F^\vee}(\mass),\LL_{F^\vee}(\mass,\FI))
}
(L).
\end{align}
\end{mainthm}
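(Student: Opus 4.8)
The plan is to reduce both equivalences in \Cref{mainthm:hp-decategorification} to a computation of periodic cyclic homology of the relevant endomorphism categories, matching the output against the known presentations of hypertoric category $\cO$ from \cite{BLPW12,GDKPS} (or, in the de Rham case, \cite{BLPW10}). I would treat the two statements \Cref{eq:mainthm-decat-a} and \Cref{eq:mainthm-decat-b} separately but with parallel structure: in each case one has an explicit monoidal category $\End_{2\cO_?}(\cP)$ (resp. $\End_{2\cO_?}(\cS)$), built as a ``matrix'' monoidal category whose entries are $\Perf$ of finite quivers with relations, as in the sample computation of \Cref{ex:2catso-intro}; the task is to show that applying $\HP(-)$ to this matrix monoidal category yields the endomorphism algebra of the projectives (resp. simples) in 2-periodic category $\cO$, together with its algebra structure.

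The key steps, in order, are as follows. First, I would establish a general lemma: for a small $\kk$-linear dg (or stable $\infty$-) category $\cA$ presented by a quiver $Q$ with relations, $\HP(\Perf_\cA)$ computes the 2-periodization of the Hochschild/cyclic homology of the path algebra $\kk Q/R$, and this is compatible with the matrix-algebra structure, so that $\HP$ of a ``matrix monoidal category'' is the corresponding matrix algebra over the $\HP$'s of its diagonal entries with the off-diagonal bimodule contributions. This is where the identity $\HP(\cC)\simeq \cofib(\HH_{S^1}\to\HH^{S^1})$ and the symmetric monoidality of $\HP$ get used, together with the fact that for a proper smooth (or suitably finite) category $\HP$ agrees with 2-periodic de Rham / Hochschild cohomology. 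Second, I would identify, component by component, the $\HP$ of each building block: the $\Perf_{\kk[\beta]}$-type entries (Koszul dual to $\Coh$ of a point in $\PP^1$, with $\beta$ in degree $2$) contribute $\kk((u))$ with a shift, the Kronecker-quiver-type entries contribute the 2-periodization of $\mathrm{Ext}^\bullet$ between the relevant simples in $\cO$, and the off-diagonal $\Perf_\kk$ entries glue these together exactly as the morphism spaces between projectives (resp. simples) do. Third, I would match this, via the explicit combinatorial description of hypertoric category $\cO$ attached to the arrangement determined by $(\FI,\mass)$ (resp. $(\mass,\FI)$), with $\End_{\cO^\Bet_{\ZZ/2}(\htvar_G(\FI),\LL(\FI,\mass))}(P)$ on the A-side, and with $\End_{\cO^{\dR}_{\ZZ/2}(\htvar_{F^\vee}(\mass),\LL_{F^\vee}(\mass,\FI))}(L)$ on the B-side, invoking \Cref{mainthm:2cats-O} to transport the canonical generators correctly (so that $\cP\leftrightarrow\cS$ and hence projectives $\leftrightarrow$ simples, consistently with the Koszul-duality picture of \Cref{eq:Koszul-diagram1-s01}). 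For the B-side this uses that $\mu\CohCat$ is literally $\Mod_{\Coh_{\tilde U}(X\times_Y X)}(\St)$, so $\End_{2\cO_\sfB}(\cS) = \Coh_{\tilde U}(X\times_Y X)$ and $\HP$ of this coherent monoidal category, after Koszul duality, is the Ext-algebra of the simples; for the A-side one uses the microlocal description of \Cref{ex:2catso-intro} plus the fact (from \cite{CGH}) that microlocal perverse schobers on $\LL_G(\FI,\mass)$ are controlled by non-microlocal data, so that $\End_{2\cO_\sfA}(\cP)$ is again an explicit matrix category of $\Perf$'s of quivers.

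I expect the main obstacle to be the third step: precisely matching the algebra structure (not just the underlying graded vector space) of $\HP(\End_{2\cO}(\cP))$ with $\End_\cO(P)$, including the 2-periodic collapse. The subtlety is that $\End_\cO(P)$ carries a genuine $\ZZ$-grading (the Koszul grading), whereas $\HP$ only remembers the cohomological grading modulo $2$; one must check that no information is lost in a way that would spoil the product — i.e., that the Koszul grading on hypertoric category $\cO$ coincides (up to the standard shift) with the cohomological/weight grading seen by $\HP$, and that the $u$-linear structure on $\kk((u))$ matches the grading shifts built into the definition of $\cO^{(*)}_{\ZZ/2}$. Handling this carefully — and verifying that the degeneration of the noncommutative Hodge-to-de Rham spectral sequence holds for the (smooth, proper) endomorphism categories in play, so that $\HP$ is as large as expected — is the delicate point; the rest is a bookkeeping exercise organized by the hyperplane arrangement combinatorics, which I would carry out in the body of the paper rather than here.
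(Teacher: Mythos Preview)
Your overall shape is right, but you are missing the two structural tools that make the paper's argument go through cleanly, and on the B-side this is close to a genuine gap.

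On the B-side, the paper does not compute $\HP$ of the matrix category ``entry by entry'' and then invoke an ad hoc Koszul duality to get Ext of simples. Instead it uses a single theorem: for $\fX = X\times_{\AA^n}X$ with the torus action, there is an equivalence $\HP(\Coh(\fX/G))_{\hat e}\simeq \sC^{\BM,\dR}_*(\fX/G)_{\ZZ/2}$ (this is Preygel for algebraic spaces, extended to stacks via Chen's $\HP$-Atiyah--Segal completion; the completion is harmless here because the augmentation ideal sits inside the ideal being killed). The point is that the de Rham category $\cO$ is \emph{already known} (\Cref{thm:derham-o-quotient}) to have $\End(L)$ presented as the very same Borel--Moore convolution algebra quotient. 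So the B-side reduces to ``$\HP$ is localizing'' $+$ ``$\HP(\Coh)=$ Borel--Moore chains'' $+$ ``cite the existing presentation of $\cO^{\dR}$.'' Your phrase ``after Koszul duality, is the Ext-algebra of the simples'' is not a theorem one can invoke; without the $\HP\leftrightarrow \sC^{\BM}_*$ identification, you would have to redo the Webster/BLPW computation from scratch inside $\HP$, which is exactly what the paper's route avoids.

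On the A-side your component-by-component plan would eventually work, but the paper's argument is much shorter and you should adopt it. First prove the \emph{non-microlocal, non-equivariant} base case $\HP(\End_{\PervCat_\LL(\CC^n)}(\cP))\simeq \End_{\Perv_\LL(\CC^n)}(P)_{\ZZ/2}$; since both sides are $n$-fold tensor products this reduces to $n=1$, where it is a two-line calculation from the explicit $2\times 2$ matrix description. Check that this equivalence intertwines the $\ZZ^n$-actions (universal twist $\mapsto$ monodromy), hence descends to the $G$-equivariant version. Finally, both $\mu\PervCat(\LL_G(\FI,\mass))$ and $\muPerv(\LL_G(\FI,\mass))$ are defined as \emph{identical subquotients} of their non-microlocal versions (\Cref{defn:betti-2cat-o} and \Cref{cor:betti-cato-as-subquotient}), so the localizing property of $\HP$ transports the base equivalence through the subquotient in one step. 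This entirely sidesteps the ``matching the algebra structure entry by entry'' obstacle you flagged; that obstacle is real for your route but dissolves once you organize the argument around $\HP$ being a localizing invariant.
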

\begin{example}\label{ex:decategorification-intro}
    We return to the 2-categories discussed in \Cref{ex:2catso-intro}. In this case:
    \begin{itemize}
        \item The category \Cref{eq:mainthm-decat-a} is the (2-periodized) category of diagrams of vector spaces
        \[
        \begin{tikzcd}
            C \arrow[r, shift left, "x"] & D\arrow[l, shift left, "y"]
        \end{tikzcd}
        \]
        with the following relations:
        \begin{itemize}
            \item The sphericality condition on $F$ imposes that the linear maps $1_D-xy$ and $1_C-yx$ are invertible.
            \item The exact triangle $FF^R\to \id_\cD\to [2]$ decategorifies to the relation $xy=0.$ (This implies the invertibility conditions above.)
        \end{itemize}
        (The extra monad $M$ decategorifies to $yx$ and thus does not contain any new information.) We thus recover the usual quiver description for the category $\cO$ associated to $\widetilde{\CC^2/\ZZ/2}$ (which coincides with the classical BGG category $\cO$ for $SL(2)$).
        \item The category \Cref{eq:mainthm-decat-b} is a 2-category of modules over the ring $\HP(\cA)$, where $\cA$ is as in \Cref{ex:2catso-intro}. Using the identification
        \cite{Preygel-loopspaces}*{Theorem 1.1.2}
        \[
        \HP(\cA)=\HP
        \Coh\left( (\PP^1\sqcup 0)\times_{\PP^1} (\PP^1 \sqcup 0) \right)\simeq
        C_*^{BM}\left( (\PP^1\sqcup 0)\times_{\PP^1} (\PP^1 \sqcup 0) \right)_{\ZZ/2},
        \]
        we find that this category is the subcategory of (2-periodized) D-modules on $\PP^1$ generated by the D-modules $\cO_{\PP^1}$ and $\delta_0.$ 
        These are the simple objects in category $\cO$ for $\rT^*\PP^1,$ which once again agrees with the classical category $\cO$ for $SL(2).$
    \end{itemize}
\end{example}
\begin{remark}
    Although we have just explained that our 2-category (in the case described above) categorifies category $\cO$ for $SL(2),$ our hypertoric 2-category in this case should not be understood as the 2-category $\cO$ associated to $G=SL(2).$ That 2-category on the B-side should be equivalent not to coherent sheaves of categories on $G/B=\PP^1$ but rather to coherent sheaves of categories on $U\backslash G/B = \GG_a\backslash \PP^1.$ The 1-category, being topological, is insensitive to the quotient by the unipotent group, but the 2-category will notice it.
\end{remark}

\begin{remark}
    The decategorification in \Cref{mainthm:hp-decategorification} only recovers 2-periodized versions of category $\cO$. One might hope for an enhanced version of \Cref{mainthm:hp-decategorification} which recovers categories $\cO$ without collapsing to a $\ZZ/2$-grading. Such a result could be achieved by constructing a graded lift of the 2-categories $\cO$ we define in this paper. 
    We leave this question to future work.
\end{remark}

For a fixed hypertoric variety $\htvar_G(\FI),$ the two flavors of 2-category $\cO$ we define, namely $\mu\PervCat(\LL_G(\FI,\mass))$ and $\mu\CohCat(\LL_G(\FI,\mass))$ are very different, and certainly very far from being equivalent.
However, one surprising feature of \Cref{mainthm:hp-decategorification} is 
that they agree after $\HP$-decategorification:
\begin{corollary}\label{mainthm:koszul}
    There is an equivalence of categories
    \begin{align*}\label{eq:decat-samespace}
    \Perf_{\HP(\End_{\mu\PervCat(\LL_G(\FI,\mass))}(\cP))}
    &\simeq
    \cO^\Bet_{\ZZ/2}(\htvar_G(\FI),\LL_G(\FI,\mass))
    \\
    &\simeq 
    \cO^{\dR}_{\ZZ/2}(\htvar_G(\FI), \LL_G(\FI,\mass))\simeq
    \Perf_{\HP(\End_{\mu\CohCat(\LL_G(\FI,\mass))}(\cS))}.
    \end{align*}
\end{corollary}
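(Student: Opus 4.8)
The plan is to splice together a chain of four equivalences: the two outer ones come straight from \Cref{mainthm:hp-decategorification}, while the inner ones follow from derived Morita theory together with the coincidence of the Betti and de Rham incarnations of hypertoric category $\cO$. (Here I read $\cO^{(*)}_{\ZZ/2}$ as the $2$-periodization of the bounded derived category $D^b(\cO^{(*)})$.) The equivalence \eqref{eq:mainthm-decat-a} gives $\HP(\End_{\mu\PervCat(\LL_G(\FI,\mass))}(\cP))\simeq\End_{\cO^\Bet_{\ZZ/2}}(P)$, the endomorphism algebra of the sum of indecomposable projectives in the $2$-periodized Betti category $\cO$ attached to $(\htvar_G(\FI),\LL_G(\FI,\mass))$, so $\Perf_{\HP(\End_{\mu\PervCat}(\cP))}\simeq\Perf_{\End_{\cO^\Bet_{\ZZ/2}}(P)}$. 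For the coherent end I would apply \eqref{eq:mainthm-decat-b} not to the fixed sequence $1\to G\to(\CC^\times)^n\to F\to1$ but to its Gale dual $1\to F^\vee\to(\CC^\times)^n\to G^\vee\to1$: under this dualization the subgroup becomes $F^\vee$, the quotient becomes $G^\vee$, and the two parameters swap, so that $\htvar_{F^\vee}(\mass)$ and $\LL_{F^\vee}(\mass,\FI)$ become $\htvar_G(\FI)$ and $\LL_G(\FI,\mass)$; the resulting statement is $\HP(\End_{\mu\CohCat(\LL_G(\FI,\mass))}(\cS))\simeq\End_{\cO^\dR_{\ZZ/2}}(L)$, the derived endomorphism algebra of the sum of simples in the de Rham category $\cO$, so $\Perf_{\HP(\End_{\mu\CohCat}(\cS))}\simeq\Perf_{\End_{\cO^\dR_{\ZZ/2}}(L)}$. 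This is the only point where one invokes the symmetry of the hypertoric picture under exchanging $G$ and $F^\vee$.

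Next I would apply the standard fact that for a small idempotent-complete stable category $\cC$ and an object $E$ with $\cC=\mathrm{thick}(E)$, the functor $\Hom_\cC(E,-)$ induces an equivalence $\cC\simeq\Perf_{\End_\cC(E)}$. With $\cC=D^b(\cO^\Bet)$ and $E=P$: hypertoric category $\cO$ is a finite-length highest-weight category \cite{BLPW12}, hence of finite global dimension, so every object has a finite projective resolution and the $P_\sgn$ generate $D^b(\cO^\Bet)$; moreover $\End_{D^b(\cO^\Bet)}(P)=\End_{\cO^\Bet}(P)$ is discrete, $P$ being projective. With $\cC=D^b(\cO^\dR)$ and $E=L$: the simples generate $D^b$ of any finite-length abelian category (a bounded complex is assembled from its cohomology objects, each a finite iterated extension of simples, by finitely many cones), and $\End_{D^b(\cO^\dR)}(L)=\mathrm{Ext}^\bullet_{\cO^\dR}(L,L)$ is the (generally unbounded) $A_\infty$-algebra of self-extensions. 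Since base change along $\Mod_\kk\to\Mod_{\kk((u))}$ commutes with $\Perf(-)$ and with mapping spectra, these $2$-periodize to $\Perf_{\End_{\cO^\Bet_{\ZZ/2}}(P)}\simeq\cO^\Bet_{\ZZ/2}$ and $\Perf_{\End_{\cO^\dR_{\ZZ/2}}(L)}\simeq\cO^\dR_{\ZZ/2}$. It is worth noting that the Koszul duality between $\End(P)$ and $\mathrm{Ext}^\bullet(L,L)$, which one might have expected to power the corollary, is not used here: both perfect-module categories recover $D^b(\cO)$ simply because projectives and simples each generate it.

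It remains to identify $\cO^\Bet_{\ZZ/2}(\htvar_G(\FI),\LL_G(\FI,\mass))$ with $\cO^\dR_{\ZZ/2}(\htvar_G(\FI),\LL_G(\FI,\mass))$, i.e.\ that microlocal perverse sheaves and regular holonomic DQ-modules on a fixed hypertoric Lagrangian present the same abelian category. I would deduce this by matching both with the combinatorial quiver-with-relations model of hypertoric category $\cO$ from \cite{BLPW12}: the de Rham side is, up to $2$-periodization, essentially the original construction, while on the Betti side one extracts the same quiver and relations from the microlocal behaviour of perverse sheaves along the hypertoric skeleton --- a Riemann--Hilbert-type comparison. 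This identification is the main obstacle: \Cref{mainthm:hp-decategorification} already carries the genuinely new categorical content on each side, and the derived-Morita steps are formal, so the substance of the corollary is precisely the coincidence of the Betti and de Rham hypertoric categories $\cO$. Chaining the four equivalences in the displayed order then yields the claim.
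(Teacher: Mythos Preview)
Your proposal is correct and follows essentially the same route as the paper: the outer equivalences come from \Cref{mainthm:hp-decategorification} (with the Gale-dual substitution you note for the B-side, which the paper leaves implicit), and the middle identification is precisely the Riemann--Hilbert correspondence recorded as \Cref{prop:betti-is-derham-cato}. The Morita-theoretic steps you spell out (projectives and simples each generate $D^b(\cO)$, since hypertoric category $\cO$ is highest weight and hence of finite global dimension) are exactly what the paper takes for granted in passing from the algebra equivalences of \Cref{mainthm:hp-decategorification} to the categorical equivalences of the corollary; you could shorten the last paragraph by simply citing \Cref{prop:betti-is-derham-cato} rather than re-sketching its proof via the combinatorial model.
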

The above identification, given by the Riemann-Hilbert correspondence $\cO^\Bet\simeq \cO^{\dR}$, is a categorification of the fact that both matrix factorizations $\mathsf{MF}(X,f)$ and the Fukaya-Seidel category $\mathsf{FS}(X,f)$ categorify the vanishing cohomology $H^*(X,\varphi_f)$ of a function $f.$ 

\begin{remark}
\Cref{mainthm:koszul} shows that 
the Koszul dualities among categories $\cO$ appear only after decategorification:
after applying $\HP,$ the equivalence of \Cref{mainthm:koszul}
supplies diagonal arrows to the diagram \Cref{eq:2cat-Koszul-diagram-s01}.
In other words, the miraculous fact underlying Koszul duality between dual categories $\cO$ is that the A- and B-type 2-categories $2\cO$ for the same space, though a priori unrelated, have the same decategorification --- the 1-category $\cO$ --- with canonical generators going to projectives (resp. simples) of this 1-category.
We expect that essentially all the instances of Koszul duality appearing in geometric representation theory arise via decategorifications of this form (at least after passing to a $\ZZ/2$ grading). 
\end{remark}


\subsection{Predictions for Fueter theory}\label{sec:Fueter}
Conjecturally, $\mu\PervCat(\LL_G(\FI,\mass))$ calculates the 2-category of boundary conditions for gauged Fueter theory (i.e., generalized 3d Seiberg-Witten theory \cite{Taubes-generalizations}) associated to $\rT^*(\CC^n/G)$, with generic real FI and mass parameters $\FI,\mass$. 
Note that this theory is {\em not} the same as the A-twisted sigma model (i.e., ungauged Fueter theory) on the stable locus $\htvar(\FI)\subset \rT^*(\CC^n/G)$: the gauge theory (even with generic FI parameter) receives contributions from the unstable locus. This explains the dependence of the 2-category $\mu\PervCat(\LL_G(\FI,\mass))$ on the stack $\rT^*(\CC^n/G)$ and not just the variety $\htvar(\FI).$ Indeed, the factorization homology of this 2-category on a closed 2-manifold is predicted in \cite{BFK-surface} to recover information about the moduli space of quasimaps, which depends on the stacky presentation of $\htvar_G(\FI).$

So far, the Fueter 2-category associated to a symplectic variety or stack $\htvar,$ equipped with Lagrangian skeleton $\LL,$ is not well-understood or even defined (although preliminary work in the ungauged case is available in \cite{Doan-Rezchikov}). Heuristically, the objects of this 2-category should be holomorphic Lagrangians in $\htvar$; 1-morphisms should be intersection points between these (after a displacement by a wrapping Hamiltonian); and 2-morphisms should be $J_\theta$-holomorphic curves between these. 

Our 3d mirror symmetry comparison \Cref{mainthm:2cats-O} thus gives us detailed predictions for the behavior of the holomorphic Lagrangians in the gauged Fueter 2-category of $\rT^*(\CC^n/G)$. The canonical generators of the 2-category $\mu\PervCat(\LL_G(\FI,\mass))$ are expected to correspond to Lagrangian cocores or linking disks to the components of the skeleton $\LL_G(\FI,\mass)),$ and \Cref{mainthm:2cats-O} computes the Hom categories (and compositions) among these.

\begin{example}\label{ex:fueter}
    We return to the situation of \Cref{ex:2catso-intro}, where 
    \[
    \LL_G(\FI,\mass) = \PP^1 \cup \rT^*_\infty \PP^1 \subset \rT^*\PP^1\subset \rT^*(\CC^2/\CC^\times).
    \]
    The 2-category $\mu\PervCat(\LL_G(\FI,\mass))$ has two canonical generators, corepresenting microstalk functors at the two components of $\LL_G(\FI,\mass),$ and we computed their endomorphism category to be
    \[
    \cA\simeq \left(\begin{array}{ll}
    \Perf_{\bullet \rightrightarrows \bullet} & \Perf_\kk\\
    \Perf_\kk & \Perf_{\kk[\beta]}
    \end{array}\right).
    \]

    Up to remembering the degree shift in the bottom-right corner, we can write the matrix $\cA$ entirely in terms of representations over quivers: $\cA\simeq \Perf_\cQ,$ where
    \[
    \cQ\simeq \left(\begin{array}{cc}
    \bullet\rightrightarrows \bullet & \bullet\\
    \bullet & \bullet \circlearrowleft
    \end{array}\right).
    \]
    This presentation makes evident that each of the four Hom categories in $\cA$ has distinguished generating objects (the vertices) and generating 1-morphisms (the arrows).
    It is natural to conjecture that these correspond to the (wrapped) intersection points and the $J_s$-holomorphic disks, respectively, among Lagrangian cocores in the Fueter 2-category.
\end{example}

\subsection{Notation and conventions}\label{sec:notation}
Our categorical conventions are largely the same as in \cite{GMH}, and we refer to \S A there for more details. 
We use the ``implicit $\infty$'' convention: all constructions are understood in the homotopy-coherent sense; in particular, $(n-)$category always means $(\infty,n)$-category. 
Stable categories are always assumed $\kk$-linear for a fixed coefficient ring $\kk.$ 
We write $\St$ for the stable 2-category of small ($\kk$-linear) stable categories. 

By default, the categories we discuss are derived categories rather than the hearts of their t-structure. If $\cC$ is a category with t-structure, we write $\cC^\heart$ for its abelian heart.
So for instance, for $X$ an algebraic stack, we write $\Coh(X)$ for the derived category of coherent sheaves on $X$. Throughout the paper, ``stack'' will always mean a derived stack which is quasi-compact, almost of finite presentation, and with affine diagonal.

For $\Lambda\subset \rT^*X$ a Lagrangian, we write $\Perv_\Lambda(X)$ for the category of compact objects inside the derived category of the heart of the perverse t-structure on the presentable stable category of sheaves on $X$ with singular support in $\Lambda.$ 
(Thus, our usage of $\Perv_\Lambda(X)$ differs from the usual one in two ways: first, we treat this as a derived category rather than the heart of its t-structure, and second, we allow some objects with infinite-dimensional microstalks.) We retain the analogous convention for the category $\muPerv_\Lambda(\Lambda)$ of microlocal perverse sheaves.


Ideals in a monoidal category will always be assumed to be idempotent-complete full subcategories, closed under finite limits and colimits.

For a torus $G,$ we write $\fg$ for its Lie algebra (with dual $\fg^\vee)$, $\fg^\vee_\ZZ$ for the character lattice (and $\fg_\ZZ$ for the cocharacter lattice), and $\fg^\vee_\RR:=\fg^\vee_\ZZ\otimes \RR,$ $\fg_\RR:=\fg\otimes \RR.$ When $G$ has an action on a symplectic variety, the moment map is naturally valued in $\fg^\vee$. If $G$ is defined over $\CC,$ we write $G_c$ for the maximal compact subgroup; if $G_c$ has a Hamiltonian action on a (real) symplectic manifold, the moment map is valued in $\fg^\vee_\RR.$


{\bf Notational index:}
In \Cref{table:notation} we collect some notation used frequently to describe the geometry of hypertoric varieties.
In the following list, we write
    $\sgn \in \sgvect$ for a sign vector, and $\sgnsub\subset \sgvect$ for a collection of sign vectors.
    
{ 
    \renewcommand{\arraystretch}{1.3}
    \begin{table}[h]
    \begin{tabular}{@{}ll@{}}\toprule
         Notation & Definition \\\midrule
    $X_G^\sgn$  &  $\{z_i=0\mid i\notin \sgn\}/G\subset \AA^n/G$ \\
    $X_G$  &  $\bigsqcup_{\sgn\in\sgvect} X_G^\sgn$ \\
    $X_G^\sgnsub$ & $\bigsqcup_{\sgn\in\sgnsub} X_G^\sgn$ \\
    $\LL_G^\sgn$  &  $\Conorm_{X_G^\sgn}(\AA^n/G)\subset \rT^*(\AA^n/G)$ \\
    $\LL_G$  &  $\bigcup_{\sgn\in\sgvect}\LL_G^\sgn$ \\
    $\LL_G^\sgnsub$ & $\bigcup_{\sgn\in\sgnsub}\LL_G^\sgn$ \\
\bottomrule
\\
    \end{tabular}
    \caption{Index of notation for the basic subvarieties and conormal Lagrangians considered in this paper}
    \label{table:notation}
    \end{table}
}

{\bf Acknowledgements}
The authors are grateful to Aaron Mazel-Gee for collaborating on early stages of this project. BG would like to thank Elden Elmanto and Adeel Khan for suggestions about periodic cyclic homology and Laurent C\^ot\'e for many long discussions about microlocal perverse sheaves and category $\cO,$ and for collaboration on the companion paper \cite{CGH}. JH would like to thank Mat Bullimore, Tudor Dimofte, and Davide Gaiotto for their collaborations on \cite{BDGH}.
He would also like to thank Kevin Costelo, Niklas Garner, Wenjun Niu, and Philsang Yoo for many interesting discussions about $3$d $\mathcal{N}=4$ theories and thank Aleksander Doan, Ahsan Khan, and Semon Rezchikov for interesting discussions about Fueter maps.

BG acknowledges the support of an NSF Postdoctoral Research Fellowship, DMS-2001897. 

JH is part of the Simons Collaboration on Homological Mirror Symmetry supported by Simons Grant 390287. This research was supported in part by Perimeter Institute for Theoretical Physics. Research at Perimeter Institute is supported by the Government of Canada through the Department of Innovation, Science and Economic Development Canada and by the Province of Ontario through the Ministry of Research, Innovation and Science.

\section{Hyperplanes and hyperk\"ahler manifolds}\label{sec:hyperplane}
Throughout this paper, we fix an exact sequence of tori
\begin{equation}\label{eq:basic-exact-sequence}
\begin{tikzcd}
1 \arrow[r] & G \arrow[r,"i"]& D \arrow[r,"p"] & F \arrow[r] & 1,
\end{tikzcd}
\end{equation}
as well as an equivalence $D\simeq (\GG_m)^n.$ 
\subsection{Hyperplane arrangements}
From the exact sequence \Cref{eq:basic-exact-sequence} and a choice of parameters $(\FI,\mass)\in\fgv_\RR\times\frf_\RR,$ we will define a polarized hyperplane arrangment in $\frf^\vee_\RR$ --- that is, a hyperplane arrangement in $\frf^\vee_\RR$ together with an affine-linear functional on $\frf^\vee_\RR.$ 
We begin with the basic case where the parameters are 0 and $G=\{e\}$ is trivial:
\begin{definition}
    We write $\cH$ be the cooriented hyperplane arrangement in $\fdv_\RR\simeq\RR^n$ given by the $n$ coordinate hyperplanes, with their natural coorientations.
\end{definition}
Observe that the exact sequence \Cref{eq:basic-exact-sequence} induces an exact sequence
\[
\begin{tikzcd}
0 &\arrow[l] \mathfrak{g}^{\vee}_{\ZZ}& \arrow[l,"i^\vee_{\ZZ}"'] \mathfrak{d}^{\vee}_{\ZZ} &\arrow[l,"p^\vee_{\ZZ}"'] \mathfrak{f}^{\vee}_{\ZZ} &\arrow[l] 0.
\end{tikzcd}
\]
Given $\FI\in \fg^\vee_\RR,$ we write
\[
\ffv_\RR(\FI):=(i_\RR^\vee)^{-1}(t)\subset \fdv_\RR
\]
for the affine-linear subspace of $\fdv_\RR$ obtained as a translation of the subspace $\ffv_\RR$ by $\FI.$ We will assume that $\ffv_{\RR}(\FI)$ is not contained in any of the hyperplanes in the arrangement $\cH$.

%

\begin{definition}\label{defn:hyperplane-arrangement}
    Given $(\FI,\mass)\in \fg^\vee_\RR\times\frf_\RR,$ we define a polarized hyperplane arrangement $(\cH_G(\FI),\mass)$,
    where $\cH_G(\FI)$ is the hyperplane arrangement on $\frf_\RR^\vee(\FI)$ obtained by intersecting with the coordinate hyperplane arrangement $\cH,$ and
    the polarization $\mass$ is the affine-linear functional on
    $\frf_\RR^\vee(\FI)$ given by $\mass\in\frf_\RR$.
\end{definition}

We will frequently need to refer to the chambers in these hyperplane arrangements. 
Observe that one of the $2^n$ chambers in the cooriented hyperplane arrangement $\cH$
may be specified by labeling which of the $n$ coordinates on $\RR^n$ is positive and which is negative.
\begin{definition}
    A \bit{sign vector} is an element of the set $\sgvect.$ Writing $2=\{+,-\},$ we can think of a sign vector as a length $n$ word in $\{+,-\},$ hence the name. We will also think of sign vectors as subsets of $[n],$ with $(+,\ldots,+)$ corresponding to the full set $[n]=\{1,...,n\}$ and $(-,\ldots,-)$ corresponding to the empty set.

    For a sign vector $\sgn,$ we write $\cH^\sgn$ for the corresponding chamber of $\cH,$ and we write $\cH_G^\sgn(\FI)$ for the restriction of this chamber to $\frf_\RR^\vee(\FI).$
\end{definition}

\begin{definition}\label{defn:unstable-unbounded}
    We say that a sign vector $\sgn\in\sgvect$ is \bit{$\FI$-unstable} if the chamber $\cH_G^\sgn(\FI)$ does not appear in $\cH_G(\FI).$\footnote{In \cite{BLPW10}, these sign vectors are instead called \bit{infeasible}, using the language of linear programming. We prefer throughout to use language referencing the geometry of the hypertoric variety.}
%

    We say that a sign vector $\sgn\in \sgvect$ is \bit{$\mass$-bounded} if the restriction of $\mass$ to the chamber $\cH_G^\sgn(\FI)$ (or equivalently, to the chamber $\cH_G^{\sgn}(0)$) is bounded.
\end{definition}
Recall the following features of hyerplane arrangements:
\begin{definition}
    A hyperplane arrangement in $\RR^k$ is \bit{simple} if any $k$ hyperplanes intersect in a single point and if the intersection of any $k+1$ hyperplanes is empty.
    
    The arrangement $\cH_G(\FI)$ is \bit{unimodular} if there is some basis for $\fg_{\ZZ}$ where $p^{\vee}_{\ZZ}$ is given by a totally unimodular integer matrix.
\end{definition}

%

\begin{definition}
    We say that the parameters $(\FI,\mass)\in \fgv_\RR\times \frf_\RR$ 
    are \bit{generic} 
    if they satisfy the following conditions:
    \begin{itemize}
        \item $\FI$ is chosen so that the hyperplane arrangement $\cH_G(\FI)$ is simple.
        \item $\mass$ is chosen so that it is not constant on any 1-dimensional flat of $\cH_G(\FI)$ (or equivalently, on any 1-dimensional flat of $\cH_G(0)$).
    \end{itemize}
\end{definition}

\subsection{Gale duality}
Consider the exact sqeuence of tori obtained from \Cref{eq:basic-exact-sequence} by taking duals:
\begin{equation}
    \begin{tikzcd}
        1\arrow[r] &
        F^\vee \arrow[r,"p^\vee"]&
        D^\vee \arrow[r, "i^\vee"]&
        G^\vee \arrow[r]&
        1.
    \end{tikzcd}
\end{equation}
We may repeat the constructions of \Cref{sec:hyperplane} with $G$ replaced by $F^\vee.$ Now we use $\mass\in \frf$ as the parameter determining a translation of the subspace $\fg_\RR\subset \fd_\RR\simeq \RR^n,$ and by intersecting with the coordinate hyperplane arrangement in $\RR^n$ we obtain a hyperplane arrangement $\cH_{F^\vee}(\mass)$ in $\fg_\RR.$

\begin{definition}
    We say that the polarized hyperplane arrangements $(\cH_G(\FI),\mass)$ and $(\cH_{F^\vee}(\mass), \FI)$ are \bit{Gale dual} to one another.
\end{definition}

The basic theorem of linear programming is that the concepts of instability and unboundedness are related to each other by this duality.
\begin{theorem}[\cite{BLPW10}*{Theorem 2.4}]
    A sign vector $\sgn\in \sgvect$ is $\FI$-unstable for $(\cH_G(\FI),\mass)$ if and only if it is $\FI$-unbounded for $(\cH_{F^\vee}(\mass),\FI).$
\end{theorem}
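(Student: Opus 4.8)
The statement is, in essence, the duality theorem of linear programming (Farkas' lemma) repackaged for polarized hyperplane arrangements, and my plan is to reduce it to a theorem of the alternative. The first step is to unwind both sides into elementary convex geometry. Fix a sign vector $\sgn\subset[n]$ and write $\sgn_i\in\{\pm1\}$ for its entries. By \Cref{defn:unstable-unbounded}, $\sgn$ is $\FI$-unstable exactly when the affine slice $\ffv_\RR(\FI)=(i^\vee_\RR)^{-1}(\FI)\subset\fdv_\RR\simeq\RR^n$ contains no point $y$ with $\sgn_i y_i>0$ for all $i$; writing $\ffv_\RR(\FI)$ as the solution set $\{y\mid Ny=d\}$ of a linear system whose rows span the orthogonal complement $\fg_\RR=(\ffv_\RR)^\perp\subset\fd_\RR$, this is the infeasibility of the mixed system consisting of $Ny=d$ together with the $n$ strict inequalities $\sgn_i y_i>0$. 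On the Gale-dual side, $\sgn$ is $\FI$-unbounded for $(\cH_{F^\vee}(\mass),\FI)$ exactly when the polarization --- the (affine-)linear functional on $\fg_\RR(\mass)\subset\fd_\RR$ induced by $\FI$ --- fails to be bounded above on the chamber $\cH^\sgn_{F^\vee}(0)=\{v\in\fg_\RR\mid \sgn_i v_i\ge 0\ \forall i\}$, i.e. when some such $v\ne 0$ makes that functional non-positive (this is where the sign twist in the Gale-dual convention enters, as the next paragraph explains). As the theorem predicts, once both conditions are phrased through the cone at parameter $0$ neither actually involves $\mass$.

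The ``weak duality'' implication is then immediate, and in fact forces the sign conventions. For $y$ in the slice $\ffv_\RR(\FI)$ and $v=i_\RR(g)\in\fg_\RR\subset\fd_\RR$ one has, on the one hand, $\langle y,v\rangle=\langle i^\vee_\RR(y),g\rangle=\langle\FI,g\rangle$, so $\langle y,v\rangle$ depends only on $\FI$ and $v$; on the other hand $\langle y,v\rangle=\sum_i y_i v_i=\sum_i(\sgn_i y_i)(\sgn_i v_i)$, which is \emph{strictly positive} whenever $y$ lies in the chamber $\cH^\sgn$ and $v\ne0$ satisfies $\sgn_i v_i\ge0$ for all $i$. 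Hence the existence of such a ``dual certificate'' $v$ whose $\FI$-functional is non-positive is incompatible with $\cH_G^\sgn(\FI)$ being non-empty --- which is precisely the direction ``$\FI$-unbounded $\Rightarrow$ $\FI$-unstable.'' The converse is Farkas' lemma: to produce a certificate from infeasibility I would homogenize the affine constraint $Ny=d$ by adjoining a scaling variable $\tau>0$ with $Ny=\tau d$, converting primal feasibility into a homogeneous strict-positivity question on a linear subspace, and apply Gordan's theorem of the alternative (or the Motzkin transposition theorem, which handles the strict and non-strict constraints together). Reading off the alternative and substituting $v=N^\top z$, so that $\langle d,z\rangle=\langle a,N^\top z\rangle=\langle\FI,v\rangle$ for $a$ any lift of $\FI$, yields exactly the vector witnessing $\FI$-unboundedness on $\cH^\sgn_{F^\vee}(0)$.

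The routine but genuinely fiddly part --- and the only real obstacle to an airtight write-up --- is the bookkeeping of strict versus non-strict inequalities and of the sign/coorientation conventions on the Gale-dual arrangement. Gordan's alternative to a strictly positive system is only a \emph{non-trivial non-negative} dependence, so one must argue the resulting certificate may be chosen to make the $\FI$-functional \emph{strictly} of the required sign --- equivalently, to exclude the degenerate case where the certificate lies on a face with $\langle\FI,v\rangle=0$. This is exactly where the genericity hypotheses are used: simplicity of $\cH_G(\FI)$ forbids the coincidences (a flat of the arrangement contained in a coordinate hyperplane, or $\FI$ lying on a wall) that would manufacture such a degenerate certificate. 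With the conventions matched and genericity in force the two conditions agree on the nose; this is \cite{BLPW10}*{Theorem 2.4}, which one may of course simply invoke.
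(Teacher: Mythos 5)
The paper offers no proof of this statement at all --- it is quoted directly from \cite{BLPW10}*{Theorem 2.4}, and the proof there is precisely the linear-programming duality argument you outline (weak duality from the pairing $\langle y,v\rangle = \langle \FI,g\rangle$, strong duality from a theorem of the alternative). Your proposal is therefore correct and takes essentially the same route as the source; the only caution is that the ``non-positive'' sign in your dual certificate is forced by the sign flips built into the Gale-dual conventions of \cite{BLPW10} (the dual polarization is $-\xi$, not $\xi$), so it should be checked against those conventions rather than chosen freely.
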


\subsection{Toric hyperk\"ahler manifolds}
Before moving to hyperk\"ahler manifolds, we begin with toric symplectic algebraic stacks. To the exact sequence \Cref{eq:basic-exact-sequence}, we associate the stack
\[
\htvar_G:=\rT^*(\AA^n/G),
\]
where $D$ is the torus of diagonal automorphisms of $\AA^n,$ and $G$ acts on $\AA^n$ as a subtorus of $D$. This stack may be understood as the global quotient
\[
\rT^*(\AA^n/G) = \mu_G^{-1}(0)/G,
\]
where $\mu_G:\rT^*\AA^n\to \fgv$ is the moment map for the Hamiltonian action of the torus $G.$ This moment map factors as the composite
\[
\rT^*\AA^n\xrightarrow{\mu_D}\AA^n\simeq \fdv \xrightarrow{i^\vee} \fgv,
\]
with the first map given by
\[
\mu_D(x_1,\ldots,x_n, y_1,\ldots,y_n) = (x_1y_1,\ldots,x_ny_n).
\]

\begin{definition}
    Let $\FI\in \fgv_\ZZ$.
    The \bit{hypertoric variety}\footnote{If the hyperplane arrangement $\cH_G(\FI)$ is not unimodular, this is a slight misnomer, as we will treat $\htvar_G(\FI)$ as a DM stack in this case.}
    associated to the exact sequence \Cref{eq:basic-exact-sequence} and the parameter $\FI$ is the GIT quotient
    $\htvar_G(\FI):=\mu_G^{-1}(0)\redu_\FI G.$
    Equivalently, we may write $\htvar_G(\FI)=(\rT^*(\CC^n/G))^{\FI-ss}\subset \rT^*(\AA^n/G)$ as the $\FI$-semistable locus inside the stack $T^*(\AA^n/G).$
    When $\kk=\CC,$ the underlying complex-analyic space of $\htvar_G(\FI)$ is called a \bit{toric hyperk\"ahler manifold.}
\end{definition}

\begin{proposition}[\cite{Biel-Dan}*{Theorems 3.2 \& 3.3}]
    If $\FI$ is generic, then the space $\htvar_G(\FI)$ is a smooth Deligne-Mumford stack. If moreover the hyperplane arrangement $\cH_G(\FI)$ is unimodular, then there are no strictly semistable points, and $\htvar_G(\FI)$ is a smooth variety.
\end{proposition}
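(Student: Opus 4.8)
This is the standard GIT analysis of hypertoric varieties (indeed it is \cite{Biel-Dan}*{Theorems 3.2 \& 3.3}), and the plan is to reduce both assertions to a computation of the dimension of the $G$-stabilizers on the $\FI$-semistable locus, and then to invoke moment-map transversality.

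\textbf{Step 1 (combinatorial description of semistability).} First I would record the Hilbert--Mumford (King) criterion for the linear $G$-action on $\rT^*\AA^n$ through the coordinate torus $D$: the coordinate $x_i$ has weight $a_i:=i^\vee_\ZZ(e_i^*)\in\fgv_\ZZ$ and $y_i$ has weight $-a_i$, so a point $(x,y)$ is $\FI$-semistable exactly when $\FI$ lies in the cone generated by $\{a_i:x_i\neq 0\}\cup\{-a_i:y_i\neq 0\}$, and stable iff it lies in the interior. Intersecting with $\mu_G^{-1}(0)$ and translating through the exact sequence \Cref{eq:basic-exact-sequence} --- the usual identification of GIT parameters with affine translates of $\mathrm{im}\,p^\vee_\RR$ inside $\fdv_\RR$, as in \cite{Haus-Sturm} --- one checks that $(x,y)\in\mu_G^{-1}(0)$ is $\FI$-semistable precisely when the sign data recorded by the vanishing pattern of the $x_i,y_i$ corresponds to a face of the polarized arrangement $\cH_G(\FI)$ that actually occurs, i.e.\ is not $\FI$-unstable in the sense of \Cref{defn:unstable-unbounded}. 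In particular, for such a point the \emph{zero set} $Z:=\{i:x_i=y_i=0\}$ cuts out a nonempty flat $\bigcap_{i\in Z}\{z_i=0\}$ of $\cH_G(\FI)$.

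\textbf{Step 2 (stabilizers).} The $D$-stabilizer of $(x,y)$ is the coordinate subtorus $(\GG_m)^Z\subset D$, so $\mathrm{Stab}_G(x,y)=G\cap(\GG_m)^Z$, with Lie algebra $\fg_\RR\cap\RR^Z$. Under the identification of $\fg_\RR\subset\fd_\RR$ with $(\mathrm{im}\,p^\vee_\RR)^\perp$ one gets $\fg_\RR\cap\RR^Z=\bigl(\mathrm{im}\,p^\vee_\RR+\RR^{[n]\setminus Z}\bigr)^\perp$, which vanishes iff the flat $\bigcap_{i\in Z}\{z_i=0\}$ has codimension exactly $|Z|$ in $\ffv_\RR(\FI)$. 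When $\FI$ is generic the arrangement $\cH_G(\FI)$ is simple, which says precisely that every occurring flat has the expected codimension; combined with Step 1 this forces $\fg_\RR\cap\RR^Z=0$ at every $\FI$-semistable point, so all $G$-stabilizers on $\mu_G^{-1}(0)^{\FI-ss}$ are finite. If moreover $p^\vee_\ZZ$ is totally unimodular, then for every such $Z$ the sublattice $\fg_\ZZ+\ZZ^Z$ is saturated in $\fd_\ZZ$, so $G\cap(\GG_m)^Z$ is connected; being finite, it is trivial, and the $G$-action on the semistable locus is free.

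\textbf{Step 3 (smoothness; Deligne--Mumford versus variety).} Because $\mu_G$ is a moment map, at any point the image of $d\mu_G$ is the symplectic annihilator of the tangent space to the $G$-orbit, so $d\mu_G$ is surjective there iff the infinitesimal action $\fg\to T_{(x,y)}\rT^*\AA^n$ is injective, iff $\mathrm{Stab}_G(x,y)$ is finite. By Step 2 this holds throughout $\mu_G^{-1}(0)^{\FI-ss}$, so this locus is smooth of dimension $2n-\dim G$, and $\htvar_G(\FI)=[\mu_G^{-1}(0)^{\FI-ss}/G]$ is a smooth Deligne--Mumford stack of dimension $2\dim F$ (Deligne--Mumford because the torus $G$ acts with finite, hence in characteristic zero reduced, stabilizers). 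In the unimodular case the action is free by Step 2, and together with the combinatorics of Step 1 this forces every semistable point to be stable, so there are no strictly semistable points; thus $\htvar_G(\FI)$ is a geometric quotient of a smooth affine variety by a torus acting freely, hence a smooth quasi-projective variety.

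\textbf{Expected main obstacle.} The geometric inputs are routine: moment-map transversality for smoothness, and torus GIT for quasi-projectivity. The real work is Step 1 --- setting up the precise dictionary between Hilbert--Mumford (semi)stability of points of $\mu_G^{-1}(0)$ and the feasibility/boundedness combinatorics of the polarized arrangement $\cH_G(\FI)$, and verifying that ``$\FI$ generic'' matches ``$\cH_G(\FI)$ simple'' (finite stabilizers) while ``$\cH_G(\FI)$ unimodular'' upgrades finite to trivial stabilizers and forces semistable $=$ stable. Once that dictionary is in place, Steps 2 and 3 are linear algebra over $\RR$ and over $\ZZ$ together with the standard fact about symplectic reduction at a regular value.
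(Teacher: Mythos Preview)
The paper does not give its own proof of this proposition: it is stated with a citation to \cite{Biel-Dan}*{Theorems 3.2 \& 3.3} and no proof environment follows. Your sketch is the standard GIT/moment-map argument and is correct in outline; it is essentially what one finds in the cited reference, so there is nothing further to compare.
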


When $\kk=\CC,$ the complex-analytic spaces $\htvar_G(\FI)$ were first introduced in \cite{Goto}, and further studied in \cites{Konno, Biel-Dan,Haus-Sturm}.
As these spaces have traditionally been studied as complex manifolds, they are often constructed not as GIT quotients but as hyperk\"ahler quotients, as we now recall.

In the complex setting, not only is the action of $G$ on $T^*\CC^n$ Hamiltonian (with moment map $\mu_G$) for the holomorphic symplectic form on $T^*\CC^n,$
but the action of the compact torus $G_c$ is also Hamiltonian for the real K\"ahler form on $T^*\CC^n = \CC^{2n}.$ The real moment map for this action is the composite
\[
T^*\CC^n \xrightarrow{\mu_{D_c}} \RR^n\simeq \fdv_\RR\xrightarrow{i^\vee_\RR}\fgv_\RR,
\]
with the first map given by
\[
\mu_{D^c}(x_1,\ldots,x_n,y_1,\ldots,y_n) = 
\left(|x_1|^2-|y_1|^2, \ldots, |x_n|^2-|y_n|^2\right).
\]
%
The following is a standard application of the Kempf-Ness theorem: see for instance \cite{Nakajima-Hilbertschemes}*{Ch. 3}. 
\begin{lemma}
    For $\FI$ generic, there is an isomorphism of complex manifolds (or orbifolds if $\cH_G(\FI)$ is not unimodular)
    \[
    \htvar_G(\FI)\simeq \left(\mu_G^{-1}(0)\cap\mu_{G^c}^{-1}(\FI)\right)/G_c.
    \]
\end{lemma}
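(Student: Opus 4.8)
The plan is to apply the Kempf--Ness theorem in its standard form to the GIT quotient presentation $\htvar_G(\FI) = \mu_G^{-1}(0)\redu_\FI G$, and then identify the Kempf--Ness set with a level set of the real moment map $\mu_{G^c}$. First I would observe that $\mu_G^{-1}(0)$ is an affine $G$-variety (cut out of $\rT^*\CC^n = \CC^{2n}$ by the holomorphic moment map equations $x_iy_i = 0$ for the appropriate linear combinations), acted on by $G = (\CC^\times)^{\dim G}$ via its embedding $i\colon G\hookrightarrow D$, and that the character $\FI\in\fgv_\ZZ$ together with the ambient linear structure on $\CC^{2n}$ equips $\mu_G^{-1}(0)$ with a linearized ample line bundle $\cO(\FI)$. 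The Kempf--Ness theorem then gives a homeomorphism between the GIT quotient $\mu_G^{-1}(0)\redu_\FI G$ and the symplectic reduction $\big(\mu_G^{-1}(0)\cap \mu_{G^c}^{-1}(\FI)\big)/G_c$, where $\mu_{G^c}$ is the moment map for the $G_c$-action on $\CC^{2n}$ with respect to the flat K\"ahler form, shifted so that the relevant level is $\FI$; this is exactly the composite $\mu_{D^c}$ followed by $i^\vee_\RR$ recorded in the excerpt, since the moment map for a subtorus is the restriction (dualization) of the ambient one.

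The second step is to upgrade this homeomorphism to an isomorphism of complex manifolds (or orbifolds). For this I would invoke genericity of $\FI$: by the cited result of Bielawski--Dancer, $\htvar_G(\FI)$ is a smooth Deligne--Mumford stack (a manifold when $\cH_G(\FI)$ is unimodular), and the $G^c$-action on the Kempf--Ness set $\mu_G^{-1}(0)\cap\mu_{G^c}^{-1}(\FI)$ is locally free (it is free in the unimodular case, with finite stabilizers otherwise) because genericity forces all semistable points to be stable, equivalently all $G^c$-orbits in the level set to be of maximal dimension. Smoothness on both sides, together with the fact that the Kempf--Ness homeomorphism restricts to a diffeomorphism of the smooth loci and intertwines the complex structures inherited from $\CC^{2n}$, then promotes it to a biholomorphism of stacks. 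Concretely one checks that every $\FI$-semistable $G$-orbit meets the level set $\mu_{G^c}^{-1}(\FI)$ in a single $G_c$-orbit, and that near such a point the slice theorem for the $G_c$-action matches the GIT slice.

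The main obstacle I expect is bookkeeping the non-unimodular case, where $\htvar_G(\FI)$ is a genuine orbifold/DM stack rather than a manifold: one must be careful that the Kempf--Ness theorem, usually stated for free actions giving manifold quotients, applies verbatim with finite stabilizers, so that both sides are compared as orbifolds and the isomorphism is an isomorphism of stacks. Here I would lean on the orbifold version of Kempf--Ness (as in, e.g., the treatment for Nakajima quiver varieties referenced via \cite{Nakajima-Hilbertschemes}*{Ch.~3}), noting that genericity of $\FI$ guarantees semistable equals stable so the only subtlety is finite isotropy, which both the GIT and the hyperk\"ahler construction handle in the same way. The remaining verifications---that $\mu_{D^c}$ is indeed a moment map for $G^c$ acting on $\CC^{2n}$ with the standard K\"ahler form, and that the holomorphic moment map condition $\mu_G = 0$ is $G_c$-invariant so the intersection $\mu_G^{-1}(0)\cap\mu_{G^c}^{-1}(\FI)$ is a smooth $G_c$-manifold---are routine and follow from the explicit formulas already displayed in the excerpt.
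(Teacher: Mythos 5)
Your proposal is correct and follows the same route as the paper, which simply cites this as a standard application of the Kempf--Ness theorem (referring to Nakajima's treatment); your elaboration of the Kempf--Ness set, the genericity/stability argument, and the orbifold bookkeeping is exactly the standard argument being invoked.
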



One very useful feature of the spaces $\htvar_G(\FI)$ is the presence of a dilating $\GG_m$-action.
\begin{definition}\label{defn:scaling-torus}
    Let $\SS=\GG_m$ be the 1-dimensional torus which acts on $\rT^*\AA^n\simeq \AA^{2n}$ by scaling each of the coordinates with weight $1.$
\end{definition}
\begin{lemma}
    The action of $\SS$ on $\rT^*\AA^n$ descends to an action on $\htvar_G(\FI)$ which scales the symplectic form with weight 2.
\end{lemma}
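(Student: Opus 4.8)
The plan is to descend the $\SS$-action through the two steps that define $\htvar_G(\FI)$---the vanishing locus $\mu_G^{-1}(0)$ and then the GIT quotient by $G$---and afterwards to track the symplectic form through those same steps. First I would note that the $\SS$-action commutes with the $G$-action on $\rT^*\AA^n\simeq\AA^{2n}$: if $D$ acts on a dual pair of coordinates $(x_i,y_i)$ with weights $(\chi_i,-\chi_i)$, then so does its subtorus $G$, whereas $\SS$ scales all $2n$ coordinates with the single weight $1$, and these two actions visibly commute. Since $\mu_D(x,y)=(x_1y_1,\ldots,x_ny_n)$ is homogeneous of degree $2$, we have $\mu_D(\lambda\cdot(x,y))=\lambda^2\mu_D(x,y)$, so $\mu_G=i^\vee\circ\mu_D$ is homogeneous of weight $2$; in particular $\mu_G^{-1}(0)$ is $\SS$-stable.

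Next I would check that $\SS$ preserves the $\FI$-semistable locus. The linear $\SS$-action on $\AA^{2n}$ lifts to $\cO(\FI)$ compatibly with the $G$-linearization, giving a $G\times\SS$-linearization; since $\FI$-semistability is a condition on the $G$-action alone and $\SS$ commutes with $G$, for any $\lambda\in\SS$ and any $G$-invariant section $s$ of a power $\cO(N\FI)$ the section $(\lambda^{-1})^*s$ is again $G$-invariant, and $s$ is nonzero at $\lambda\cdot z$ iff $(\lambda^{-1})^*s$ is nonzero at $z$. Hence $\mu_G^{-1}(0)^{\FI-ss}$ is $\SS$-invariant, and the action descends to $\htvar_G(\FI)=\mu_G^{-1}(0)^{\FI-ss}/G$. (Equivalently, one could use the Kempf--Ness presentation from the previous lemma: the maximal compact $U(1)\subset\SS$ preserves $\mu_{G^c}^{-1}(\FI)$, since $\mu_{G^c}$ is homogeneous of weight $2$ only for unitary scalars, it acts holomorphically on $\htvar_G(\FI)$, and the full $\SS$ is its complexification.)

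For the scaling of the symplectic form, write $\omega=\sum_i dx_i\wedge dy_i$ for the canonical form on $\rT^*\AA^n$, so that $\lambda^*\omega=\lambda^2\omega$. The reduced form $\omega_\FI$ on $\htvar_G(\FI)$ is determined by $\iota^*\omega=\pi^*\omega_\FI$, where $\iota:\mu_G^{-1}(0)^{\FI-ss}\hookrightarrow\rT^*\AA^n$ and $\pi:\mu_G^{-1}(0)^{\FI-ss}\to\htvar_G(\FI)$; both maps are $\SS$-equivariant, so $\pi^*(\lambda^*\omega_\FI)=\lambda^*\iota^*\omega=\lambda^2\iota^*\omega=\pi^*(\lambda^2\omega_\FI)$, and injectivity of $\pi^*$ on $2$-forms (as $\pi$ is a smooth surjection, a $G$-bundle or gerbe onto the quotient) gives $\lambda^*\omega_\FI=\lambda^2\omega_\FI$.

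The only step that requires genuine care is the $\SS$-invariance of the semistable locus, since it involves the interplay of the $\SS$- and $G$-linearizations on $\cO(\FI)$; every other step is a direct homogeneity computation, so I expect this to be the main (and rather mild) obstacle.
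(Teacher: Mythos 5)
The paper states this lemma without proof, treating it as standard, so there is nothing to compare your argument against; your argument is correct and is the standard one. All three steps are sound: $\SS$ commutes with $G\subset D$ since both are diagonal subtori of the torus acting on $\AA^{2n}$; $\mu_G$ is homogeneous of weight $2$ under $\SS$, so $\mu_G^{-1}(0)$ is $\SS$-stable, and since $\SS$ commutes with the $G$-linearization the semistable locus is $\SS$-invariant; and injectivity of $\pi^*$ on forms transfers the weight-$2$ scaling of $\sum_i dx_i\wedge dy_i$ to the reduced form. One tiny infelicity in the parenthetical Kempf--Ness alternative: $\mu_{G^c}(\lambda\cdot z)=|\lambda|^2\,\mu_{G^c}(z)$, so for unitary $\lambda$ the real moment map is \emph{invariant} rather than ``homogeneous of weight $2$''; the conclusion that $U(1)\subset\SS$ preserves the level set $\mu_{G^c}^{-1}(\FI)$ is exactly what you need and is correct.
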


\subsection{Toric holomorphic Lagrangians}
We now discuss some interesting Lagrangian subspaces of $\htvar.$ As usual, our discussion begins with the basic case of $\htvar=\rT^*\CC^n.$
%
\begin{definition}\label{defn:xsgn}
We define the following collection of conormal Lagrangians in our hypertyric spaces, indexed by sign vectors $\sgn.$
\begin{itemize}
    \item Let $\sgn\in \sgvect$ be a sign vector.
We write
\[
X^\sgn:=\{(z_1,\ldots,z_n)\in \AA^n\mid z_i=0\text{ for }i\notin \sgn\}\subset \AA^n
\]
for the subspace of $\AA^n$ given by the intersection of coordinate hyperplanes.
We denote its image in $\AA^n/G$ by $X_G^\sgn,$ and we write
$
\LL_G^\sgn:=\Conorm_{X_G^\sgn}(\AA^n/G)
$
for its conormal.
    \item For $\sgnsub\subset \sgvect$ a collection of sign vectors, we write
    $X_G^\sgnsub:=\bigsqcup_{\sgn\in\sgnsub}X_G^\sgn$ and
    $\LL_G^\sgnsub:=\bigcup_{\sgn\in \sgnsub}\LL_G^\sgn.$
    \item We write $X_G:= \bigsqcup_{\sgn\in\sgvect}X_G^\sgn$ and
    $\LL_G:=\bigcup_{\sgn\in\sgvect}\LL_G^\sgn.$
    \end{itemize}
    As $X_G^\sgn\hookrightarrow \AA^n/G$ is a closed immersion, we may understand $\LL_G^\sgn$ as a subspace of $\rT^*(\AA^n/G).$ Similarly, we treat the singular Lagrangians $\LL_G^\sgnsub$ and $\LL_G$ as subspaces of $\rT^*(\AA^n/G)$ as well.
\end{definition}

We will be interested in descending these Lagrangians to the hypertoric varieties $\htvar_G(\FI),$ for generic $\FI\in\fgv_\ZZ.$
\begin{definition}
    We write
    \[
    \LL_G(\FI,0):=\LL_G\cap (\rT^*(\AA^n/G))^{\FI-ss}\subset \htvar_G(\FI)
    \]
    for the locus of $\FI$-semistable points in the Lagrangian $\LL_G.$
\end{definition}

\begin{proposition}
    The locus of $\FI$-unstable points
    \[
    \LL_G^{\FI\unstab}:=\LL_G\setminus \LL_G(\FI,0)\hookrightarrow \LL_G
    \]
    in the Lagrangian $\LL_G$ is given by the union $\LL_G^{\FI\unstab}=\bigcup_{\sgn\in \FI\unstab}\LL_G^\sgn,$ where the set of $\FI$-unstable sign vectors $\sgn$ 
    is defined in \Cref{defn:unstable-unbounded}.
\end{proposition}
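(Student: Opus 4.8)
The statement is essentially an unwinding of definitions together with the geometry of the GIT quotient, so the plan is to reduce the claim about the singular Lagrangian $\LL_G$ to the corresponding claim about the strata $X_G^\sgn$ of the base $\AA^n/G$, and then to a statement about toric orbits. First I would recall that $\LL_G = \bigcup_\sgn \LL_G^\sgn$ with $\LL_G^\sgn = \Conorm_{X_G^\sgn}(\AA^n/G)$, and that the semistability condition on a point of $\rT^*(\AA^n/G)$, with respect to the line bundle $\cO(\FI)$ pulled back from $\AA^n/G$, depends only on the image of that point in $\AA^n/G$ (the linearization is pulled back along the projection $\rT^*(\AA^n/G)\to \AA^n/G$). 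Hence $\LL_G^\sgn$ is either entirely $\FI$-semistable or entirely $\FI$-unstable, according to whether the generic point of the stratum $X_G^\sgn$ — equivalently any point, since $X_G^\sgn$ is a single $D/G$-orbit closure stratified into orbits, but semistability is constant on the open orbit — is $\FI$-semistable. So $\LL_G\setminus\LL_G(\FI,0) = \bigcup_{\sgn\in S}\LL_G^\sgn$ where $S$ is the set of sign vectors $\sgn$ whose associated stratum lies in the unstable locus, and the content is to identify $S$ with the set of $\FI$-unstable sign vectors of \Cref{defn:unstable-unbounded}.

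Next I would make that identification explicit via the standard toric/Kempf–Ness dictionary. By the Kempf–Ness description recalled just above (the lemma giving $\htvar_G(\FI)\simeq (\mu_G^{-1}(0)\cap \mu_{G_c}^{-1}(\FI))/G_c$), a point of $\mu_G^{-1}(0)$ is $\FI$-semistable precisely when its $G_c$-orbit meets the level set $\mu_{G_c}^{-1}(\FI)$. For a point lying over the stratum $X_G^\sgn$, the coordinates $z_i$ with $i\in\sgn$ are nonzero and those with $i\notin\sgn$ vanish (and on $\LL_G^\sgn$ the conormal directions are supported on the complementary coordinates, so they do not affect the $G$-moment map value, which is $0$ by construction). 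Computing the image of the real moment map $\mu_{D_c}$ on such a point and then applying $i_\RR^\vee$, one sees that the orbit meets $\mu_{G_c}^{-1}(\FI)$ if and only if the affine slice $\ffv_\RR(\FI)$ meets the (open) orthant cut out by the sign pattern $\sgn$ — i.e.\ if and only if the chamber $\cH_G^\sgn(\FI)$ is nonempty, which is exactly the condition that $\sgn$ is \emph{not} $\FI$-unstable. This is the one genuinely computational step; it is the hypertoric specialization of the elementary fact that a toric GIT quotient has the moment polytope as its image, and I expect it to be the main (though quite minor) obstacle — mostly bookkeeping to check that the cotangent directions along $\LL_G^\sgn$ contribute nothing and that one is reading off the correct orthant from $\sgn$.

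Finally I would assemble: combining the two steps gives $\LL_G^{\FI\unstab} = \bigcup_{\sgn\,\FI\text{-unstable}} \LL_G^\sgn$ as a set, and since each $\LL_G^\sgn$ is closed in $\LL_G$ and there are finitely many, this union is closed, matching the asserted closed embedding into $\LL_G$; the complementary open locus is $\LL_G(\FI,0)$ by definition. An alternative to the Kempf–Ness argument, if one prefers to stay algebraic, is to invoke the Hilbert–Mumford criterion directly on $\AA^n/G$: for a point with support pattern $\sgn$, destabilizing one-parameter subgroups of $D$ are those pairing non-positively with $\FI$ on the cone spanned by $\{e_i : i\in\sgn\}$, and $\FI$-instability of the point is equivalent to the nonexistence of a feasible point of $\ffv_\RR(\FI)$ in the corresponding orthant — again recovering \Cref{defn:unstable-unbounded}. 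Either route is short; the proposition is really a sanity check that the abstract GIT-theoretic definition of $\LL_G(\FI,0)$ agrees with the combinatorial recipe in terms of sign vectors.
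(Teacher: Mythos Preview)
The paper's own proof is a one-line citation to a GIT stability criterion, so your direct argument is doing more than the paper records. But your reduction in the first paragraph contains a real error that the later paragraphs do not repair.

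You assert that because the linearization $\cO(\FI)$ is pulled back along $\rT^*(\AA^n/G)\to\AA^n/G$, semistability of a point depends only on its image in the base, and hence each $\LL_G^\sgn$ is ``either entirely $\FI$-semistable or entirely $\FI$-unstable.'' This is false. The $G$-action on the cotangent fibers has weights opposite to those on the base, and these weights enter the Hilbert--Mumford criterion. Already for $n=1$, $G=\GG_m$ acting with weight $1$, and $\FI=-1$: the origin of $\AA^1$ is unstable, yet the point $(0,1)\in\rT^*\AA^1$ lying over it is semistable. Likewise, for $\FI=+1$ the component $\LL^{\{1\}}=\{(x,0)\}$ has semistable generic point but contains the unstable origin, so it is neither entirely semistable nor entirely unstable. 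What the proposition actually asserts is subtler: every unstable point of $\LL_G$, even one lying on a generically semistable component, is contained in \emph{some other} component $\LL_G^\beta$ with $\beta$ combinatorially $\FI$-unstable (in the example, the unstable origin of $\LL^{\{1\}}$ lies in the entirely unstable $\LL^{\emptyset}$).

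Your Kempf--Ness paragraph correctly computes the real moment-map image of the \emph{open orbit} of $\LL^\sgn$, and your Hilbert--Mumford alternative is the right tool, but neither is pushed far enough to supply the missing step. The clean fix is: given an unstable point $(x,y)\in\LL$ with $A=\{i:x_i\neq 0\}$, $B=\{i:y_i\neq 0\}$, take a destabilizing cocharacter $\lambda$ (so $\langle\FI,\lambda\rangle>0$, $a_i(\lambda)\geq 0$ on $A$, $a_i(\lambda)\leq 0$ on $B$) and set $\sgn=A\cup\{i\notin A\cup B:a_i(\lambda)\geq 0\}$; then $(x,y)\in\LL_G^\sgn$ and the same $\lambda$ destabilizes the generic point of $\LL_G^\sgn$, so $\sgn$ is $\FI$-unstable. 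This is the step your proposal gestures at but does not carry out.
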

\begin{proof}
    This follows from the stability criterion \cite{Hoskins-stratifications}*{Proposition 2.7}.
\end{proof}

The Lagrangian $\LL_G(\FI,0)\subset \htvar_G(\FI)$ admits a simple description. Observe that when $G=\{e\}$ is trivial, the Lagrangian $\LL=\LL_G\subset \rT^*\AA^n$ is a normal crossings variety, and when $\kk=\CC,$ the chambers in the hyperplane arrangement $\cH$ (all orthants) are actually the moment polytopes for the toric (actually, all $\CC^n$) components of $\LL.$ This fact persists after the GIT quotient:

\begin{theorem}[\cite{Biel-Dan}*{Theorem 6.5}]
    The component $(\LL_G^\sgn)^{\FI-ss}$ of $\LL_G(\FI,0)$ is a toric variety corresponding to the polytope $\cH_G^\sgn,$ and these components meet according to the incidences of chambers in $\cH_G.$ When $\kk=\CC,$ the chambers $\cH_G^\sgn$ are literally the moment polytopes for the restriction to $\LL_G(\FI,0)$ of the real moment map $\mu_{G^c}.$
\end{theorem}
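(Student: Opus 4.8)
The plan is to reduce the statement to the ordinary toric geometry of the GIT quotient $\AA^n /\!/_\FI G$, applied componentwise. First I would recall that for a single sign vector $\sgn$, the subvariety $X^\sgn_G \subset \AA^n/G$ is the image of a coordinate subspace $X^\sgn \cong \AA^{|\sgn|} \subset \AA^n$, so its conormal $\LL^\sgn_G = \Conorm_{X^\sgn_G}(\AA^n/G)$ sits inside $\rT^*(\AA^n/G) = \mu_G^{-1}(0)/G$ and is itself a quotient $N^\sgn/G$ of a coordinate subspace $N^\sgn \subset \rT^*\AA^n$ (the points with $z_i = 0$ for $i \notin \sgn$ and dual coordinate $w_i = 0$ for $i \in \sgn$). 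The key observation is that $N^\sgn$ is $G$-invariant and $G$ acts on it through the quotient torus it receives from $D$, so that $N^\sgn/G$ is again a toric stack. Passing to the $\FI$-semistable locus, $(\LL^\sgn_G)^{\FI-ss} = (N^\sgn)^{\FI-ss}/G$ is then a toric GIT quotient, hence a toric variety (or DM stack), and I must identify its fan/polytope.

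The main computation is to match the GIT data on $N^\sgn$ with the hyperplane arrangement $\cH^\sgn_G$. Here I would use the standard dictionary: the GIT quotient of $\AA^n$ by $G$ at character $\FI$ has moment polytope $\ffv_\RR(\FI) \cap \RR^n_{\geq 0}$ (after the affine identification $\ffv_\RR(\FI) = (i^\vee_\RR)^{-1}(\FI)$), and the $\CC^n$-component $X^\sgn$ of $\LL$ corresponds to the orthant-chamber $\cH^\sgn$ — this is exactly the $G=\{e\}$, $\FI=0$ statement already noted in the excerpt, that the chambers of $\cH$ are the moment polytopes of the components of $\LL = \bigcup X^\sgn$. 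Performing the GIT quotient by $G$ cuts the orthant $\cH^\sgn$ down to its intersection with the affine slice $\ffv_\RR(\FI)$, which is by definition the chamber $\cH^\sgn_G$ of $\cH_G(\FI)$; this is precisely $\cite{Biel-Dan}*{\text{Theorem 6.5}}$, which I would cite for the identification of $(\LL^\sgn_G)^{\FI-ss}$ as the toric variety of $\cH^\sgn_G$ and, over $\CC$, for the statement that $\mu_{G^c}$ restricts to the moment map with image $\cH^\sgn_G$.

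For the incidence claim — that the components $(\LL^\sgn_G)^{\FI-ss}$ meet according to incidences of chambers in $\cH_G$ — I would argue as follows. Two components $X^\sgn$ and $X^{\sgn'}$ of $\LL$ in $\AA^n$ intersect along $X^{\sgn \cap \sgn'}$, and the corresponding facets of the orthants $\cH^\sgn, \cH^{\sgn'}$ are glued along the wall $z_i = 0$ for the coordinates where $\sgn, \sgn'$ differ; this incidence structure is the normal-crossings combinatorics of $\cH$. Intersecting everything with $\ffv_\RR(\FI)$ and passing to the $\FI$-semistable locus is an exact operation on this incidence poset — a face of $\cH^\sgn$ survives in $\cH_G(\FI)$ iff it meets $\ffv_\RR(\FI)$ in the semistable region — so the nerve of the cover of $\LL_G(\FI,0)$ by its components is the nerve of the cover of $\cH_G(\FI)$ by its chambers. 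I expect the routine but slightly delicate point to be bookkeeping the semistability condition correctly at the level of faces (i.e., checking that a stratum of $\LL_G$ is $\FI$-semistable iff the corresponding face of $\cH$ meets the semistable part of $\ffv_\RR(\FI)$), which follows from the stability criterion $\cite{Hoskins-stratifications}*{\text{Proposition 2.7}}$ applied as in the preceding proposition of the excerpt; the genuinely substantive geometric input is entirely contained in $\cite{Biel-Dan}*{\text{Theorem 6.5}}$, so the proof is essentially a citation plus an unwinding of definitions.
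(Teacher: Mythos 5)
The paper does not provide its own proof of this statement; it simply cites \cite{Biel-Dan}*{Theorem 6.5}, and your proposal reaches exactly the same conclusion, namely that the substantive geometric content is the cited theorem of Bielawski–Dancer and everything else is definitional unwinding. Your reduction of $\LL_G^\sgn$ to the quotient $N^\sgn/G$ of a $G$-invariant coordinate subspace, the matching of the GIT data with the slice $\ffv_\RR(\FI)\cap\cH^\sgn$, and the observation that the incidence poset of chambers tracks the incidence poset of components are all the natural unpacking of that citation, so your proposal is correct and in the same spirit as the paper's (implicit) proof.
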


In order to discuss (2-)categories $\cO,$ we will need one further choice of parameter: the attraction parameter $\mass\in \frf_\ZZ,$ 
which we understand as a
cocharacter $\mass:\GG_m\to F$ of the torus $F$ which acts on $\htvar_G(\FI),$ determining a Hamiltonian action of a 1-dimensional torus $\GG_m$ on $\htvar_G(\FI)$.

\begin{definition}\label{defn:bounded-attracting}
    A point $x\in\rT^*(\AA^n/G)$ is \bit{$\mass$-bounded} if 
    there exists a $\GG_m$-equivariant map $\AA^1_\infty\to \htvar_G(\FI)$
    whose image contains $x,$
    where we write $\AA^1_\infty$ for the weight-$(-1)$ representation of $\GG_m.$
\end{definition}
When $\kk=\CC,$ so the variety $\htvar_G(\FI)$ is a complex manifold and $\GG_m=\CC^\times$, then \Cref{defn:bounded-attracting} is just the condition that the limit $\lim_{\lambda\to \infty}m(\lambda)\cdot x$ exists. In general, we refer to \cite{Drinfeld-Gaitsgory-hyperbolic}*{\S 1} for more information about the geometry of $\GG_m$-actions on algebraic spaces.

\begin{lemma}\label{lem:stable-set}
    The $\mass$-bounded locus $\LL_G^{\mass\bdd}$ in the Lagrangian $\LL_G$ is the union of components $\LL_G^\sgn$ corresponding to sign vectors $\sgn$ which are $\mass$-bounded in the sense of \Cref{defn:unstable-unbounded}.
\end{lemma}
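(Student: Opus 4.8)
The plan is to reduce the statement to a coordinatewise computation on $\rT^*\AA^n$ and then track how the $\mass$-bounded condition descends through the quotient by $G$. First I would unwind the definitions: a point $x\in\LL_G^\sgn$ lies over a point of the stratum $X_G^\sgn\subset \AA^n/G$, i.e.\ it is the image of a point $(z,\xi)\in \rT^*\AA^n$ with $z_i=0$ for $i\notin\sgn$ and $\xi_i=0$ for $i\in\sgn$ (the conormal condition). The $\CC^\times$-action induced by $\mass$ on $\rT^*(\AA^n/G)$ lifts, after a choice of splitting of the cocharacter $\mass:\GG_m\to F$ through $D\simeq(\GG_m)^n$, to a linear action on $\rT^*\AA^n$ whose weights on the coordinates $z_i$ (resp.\ $\xi_i$) are given by the $i$-th component $a_i$ of (a lift of) $\mass$ (resp.\ $-a_i$). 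The key point is that $\mass$-boundedness on the quotient, as in Definition~\ref{defn:bounded-attracting}, is detected on $\rT^*\AA^n$: a point has a limit under $\lim_{\lambda\to\infty}\mass(\lambda)\cdot(-)$ in $\htvar_G(\FI)$ iff, after twisting the lifted action by a suitable cocharacter of $G$ (using that $G$-orbits are closed away from the unstable locus, or that the limit in the quotient is witnessed by a limit in $\mu_G^{-1}(0)$ up to the $G$-action — this is the content of the Hilbert–Mumford criterion applied to the $\GG_m\times G$-action), every coordinate that is nonzero on $x$ has non-negative weight.

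Next I would compute, for a point in $\LL_G^\sgn$, exactly which coordinates can be nonzero: the $z_i$ for $i\in\sgn$ and the $\xi_i$ for $i\notin\sgn$. So the $\mass$-boundedness of a generic (smooth) point of $\LL_G^\sgn$ amounts to the existence of a cocharacter $\chi\in \fg_\ZZ$ such that $a_i + \langle i^\vee\chi, e_i\rangle \ge 0$ for $i\in\sgn$ and $-a_i - \langle i^\vee\chi,e_i\rangle \ge 0$ for $i\notin\sgn$, where $a\in\fd_\RR$ is a lift of $\mass$. This is precisely the statement that the affine-linear functional $\mass$ is bounded above on the cone dual to the chamber $\cH_G^\sgn(0)$ — equivalently, bounded on the recession cone of $\cH_G^\sgn(\FI)$, which by Definition~\ref{defn:unstable-unbounded} is exactly the condition that $\sgn$ be $\mass$-bounded. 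I would make this translation explicit by identifying the recession cone of the chamber $\cH_G^\sgn$ inside $\ffv_\RR$ with the cone of weights appearing on the conormal directions of $\LL_G^\sgn$, using the exact sequence $0\to\ffv_\RR\to\fdv_\RR\to\fgv_\RR\to 0$ and its dual; boundedness of a linear functional on a polyhedron depends only on its recession cone, which is why the $\FI$-dependence drops out, matching the parenthetical in Definition~\ref{defn:unstable-unbounded}.

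Finally I would check that this coordinatewise criterion, verified at a smooth point of $\LL_G^\sgn$, propagates to the whole component: since $\LL_G^\sgn$ is irreducible (it is the conormal to the irreducible $X_G^\sgn$) and the $\mass$-bounded locus is closed under the $\GG_m$-flow and closed in $\LL_G$ (the non-existence of a limit is an open condition, cf.\ \cite{Drinfeld-Gaitsgory-hyperbolic}*{\S1}), either all of $\LL_G^\sgn$ is $\mass$-bounded or a dense open is not; combined with the smooth-point computation this forces the dichotomy componentwise. The main obstacle I anticipate is the first reduction step — justifying that $\mass$-boundedness of the image point in the GIT quotient $\htvar_G(\FI)$ can be tested by a Hilbert–Mumford-style criterion on $\mu_G^{-1}(0)^{\FI\text{-}ss}$ simultaneously for the $\GG_m$- and $G$-actions, rather than merely on $\rT^*\AA^n$; this requires care because a limit in the quotient need not lift to a literal limit upstairs, only to a limit along some $G$-translated path, so one genuinely needs the semistability hypothesis (so that the relevant $G$-orbits behave well) to close the argument.
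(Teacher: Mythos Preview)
The paper states this lemma without proof. Your approach---lift the $\GG_m$-action to $\rT^*\AA^n$, reduce $m$-boundedness to a coordinatewise weight condition, and match it against the linear-programming definition of an $m$-bounded sign vector---is exactly the intended one and goes through. Two small corrections and one simplification:

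\emph{Sign slip.} For $\lim_{\lambda\to\infty}\lambda^{w}z$ to exist with $z\neq 0$ one needs $w\le 0$, not $w\ge 0$. So the coordinate condition at a generic point of $\LL^\sgn$ is $(\tilde m')_i\le 0$ for $i\in\sgn$ and $(\tilde m')_i\ge 0$ for $i\notin\sgn$, i.e.\ $-\tilde m'\in\overline{\cH^\sgn}$. Dualizing the exact sequence $0\to\ffv_\RR\to\fdv_\RR\to\fgv_\RR\to 0$, this is equivalent to $-m\in\bigl(\cH_G^\sgn(0)\bigr)^*$, i.e.\ $m\le 0$ on the chamber $\cH_G^\sgn(0)$ itself. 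Your phrase ``bounded above on the cone dual to the chamber'' should read ``bounded above on the chamber''; your subsequent recession-cone formulation is correct.

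\emph{The Hilbert--Mumford worry is unnecessary here.} The lemma concerns $\LL_G\subset\rT^*(\AA^n/G)$ as a stack, and a $\GG_m$-equivariant map $\AA^1_\infty\to \mu_G^{-1}(0)/G$ is precisely the datum of an integral lift $\tilde m'\in\fd_\ZZ$ of $m$ together with a $\tilde m'$-equivariant map $\AA^1_\infty\to\mu_G^{-1}(0)$, because $G$ is a torus and hence every $G$-torsor over $\AA^1$ is trivial. So the reduction to coordinates is immediate; no semistability input is required. (Read the target $\htvar_G(\FI)$ in Definition~\ref{defn:bounded-attracting} as the ambient space containing $x$.)

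\emph{The reverse inclusion.} Your closing argument shows that each $m$-bounded component lies in $\LL_G^{\mass\bdd}$. For the converse, note that if $[p]$ is $m$-bounded via a lift $\tilde m'$, then $[p]\in\LL_G^\beta$ for the sign vector $\beta$ determined by the signs of the $\tilde m'_i$ (resolving any $\tilde m'_i=0$ according to which of $z_i,\xi_i$ vanishes at $p$), and the same $\tilde m'$ witnesses that $\beta$ is $m$-bounded.
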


\begin{definition}\label{defn:cato-skel}
    Fix parameters $\FI\in \fgv_\ZZ, \mass\in \frf_\ZZ.$ Then we write $\LL_G(\FI,\mass):=\LL_G(\FI,0)^{\mass\bdd}\subset \htvar_G(\FI)$ for the 
    subset of $\mass$-bounded points in $\LL_G(\FI,0),$ and we call it the
    \bit{category $\cO$ skeleton} of $\htvar_G(\FI).$
\end{definition}

\begin{remark}
    The space of parameters $(\FI,\mass)$ is divided into chambers by a finite number of rational walls, and none of the quantities we study in this paper changes as $(\FI,\mass)$ moves within a fixed chamber. This suggests that it is reasonable to take $(\FI,\mass)\in\fgv_\RR\times\frf_\RR$ to be continuous and real-valued, and indeed it is possible to make sense of the above constructions for such parameters: $\FI$ still defines a closed subset of unstable points, and $\mass$ defines a Hamiltonian vector field (which may not integrate to a $\CC^\times$-action) which can still be used to define the bounded points. Nevertheless, for simplicity, throughout this paper we restrict our parameters to have integral values.
\end{remark}
\section{Hypertoric categories $\cO$}
Throughout this section, we fix a generic choice of parameters $(\FI,\mass)\in \fg^\vee_\ZZ\times \frf_\ZZ,$ determining a Lagrangian $\LL(\FI,\mass)\subset \rT^*(\CC^n/G).$ Hypertoric category $\cO,$ first studied in \cite{BLPW10,BLPW12}, should be a category of regular DQ-modules or microlocal perverse sheaves on this Lagrangian. We will refer to these two flavors of category $\cO$ as the {\em de Rham} and {\em Betti} categories $\cO,$ respectively. Here we will give a brief review of these categories. 
The proofs of the statements about de Rham category $\cO$ can be found in \cites{BLPW10,BLPW12},
and those about Betti category $\cO$ will appear in \cite{CGH}.

\subsection{de Rham category $\cO$}\label{subsec:derham-cato}
Let $\cE$ be a deformation quantization of the variety $\htvar_G(\FI),$ 
which may be produced as in \cite{Kashiwara-Rouquier} by quantum Hamiltonian reduction of the sheaf of microdifferential operators on $\rT^*\CC^n,$ followed by restriction to $\htvar_G(\FI).$
We write $\cW:=\cE[\hbar^{-\frac{1}{2}}],$ so that $\cW$ is a sheaf of $\CC((\hbar^{\frac{1}{2}}))$-algebras on $\htvar_G(\FI),$ which is moreover 
equivariant for the action of the torus $\SS$ defined in \Cref{defn:scaling-torus},
with $|\hbar|=2.$ 
\begin{definition}
    An $\cE$-module $\cF$ is \bit{coherent} if $\cF/\hbar$ is a coherent sheaf on $\htvar_G(\FI).$
    A $\cW$-module is \bit{good} if it admits a coherent $\cE$-lattice. We write $\Good_{\cW}^{\SS}$ for the derived category of $\SS$-equivariant good $\cW$-modules.
\end{definition}

We refer to \cite{Kashiwara-Rouquier} for the theory of $\SS$-equivariant good $\cW$-modules.
\begin{definition}[\cite{BLPW12}*{Definition 6.2}]\label{defn:derham-cato}
    The \bit{de Rham category $\cO$} is the full subcategory of $\Good_{\cW}^{\SS}$ on those $\cF$ such that
    \begin{itemize}
        \item the support of $\cF$ is contained (set-theoretically) in $\LL_G(\FI,\mass),$ and
        \item $\cF$ admits an $\cE$-lattice that is preserved by the action of $m$.
    \end{itemize}
    We denote this category by $\cO^{\dR}(\htvar_G(\FI),\LL_G(\FI,\mass)).$
\end{definition}

This category is studied extensively in \cite{BLPW12}, where it is shown to be equivalent to the combinatorially-defined category introduced in \cite{BLPW10}. An alternative approach to studying this category is given in \cite{Web-gencat-O}*{\S 3}. We will summarize some facts about this category.

\begin{theorem}[\cite{BLPW12}]
$\cO^{\dR}(\htvar_G(\FI), \LL_G(\FI,\mass))$ is a highest weight category, with simple objects $L^\sgn$ corresponding to components of the skeleton $\LL_G(\FI,\mass)$. Each $L^\sgn$ has a projective cover $P^\sgn.$
\end{theorem}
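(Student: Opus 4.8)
The plan is to realize $\cO^{\dR}(\htvar_G(\FI),\LL_G(\FI,\mass))$ as the category of finite-dimensional graded modules over a finite-dimensional, non-negatively graded algebra, from which the highest-weight structure follows by formal arguments; the geometric input is a local analysis of good $\cW$-modules along the skeleton.

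First I would classify the simples. By \Cref{lem:stable-set} and the $\FI$-semistability cut, the skeleton $\LL_G(\FI,\mass)$ is a union of the finitely many toric components $\LL_G^\sgn$ with $\sgn$ feasible and bounded. On the smooth open part $\mrLL_G^\sgn$ of a single component, an $\SS$-equivariant good $\cW$-module supported there is rigid: the $\hbar$-grading with $|\hbar|=2$ together with the equivariance kills the relevant $H^1$, so there is (up to finite multiplicity) a unique irreducible local model, the canonical $\cW$-module $\cW_{\mrLL_G^\sgn}$. Its intermediate ($j_{!*}$-type) extension along $\mrLL_G^\sgn\hookrightarrow\LL_G^\sgn\hookrightarrow\htvar_G(\FI)$ is a simple object $L^\sgn$, and a d\'evissage on the finite stratification of the support of an arbitrary object --- using Noetherianity of good $\cW$-modules --- shows every simple is of this form, giving the asserted bijection between simples and components. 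The $\mass$-boundedness hypothesis is exactly what guarantees that these extensions, and indeed every object of $\cO^{\dR}$, carry the $m$-stable $\cE$-lattice required by \Cref{defn:derham-cato}: the lattice is produced from the attracting-set geometry of the $\GG_m$-action generated by $\mass$.

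Next I would introduce standard objects and a partial order. For feasible bounded $\sgn$ let $\Delta^\sgn$ be the $*$-extension of $\cW_{\mrLL_G^\sgn}$ (equivalently, the ``Verma'' obtained by quantized pushforward from the attracting set of the corresponding $\GG_m$-fixed component) and $\nabla^\sgn$ the $!$-extension, and order sign vectors by declaring $\tau\preceq\sgn$ when the value of the moment map for $\mass$ at the fixed point indexed by $\tau$ is at most that indexed by $\sgn$, refining the closure order on attracting sets (this is the hypertoric incarnation of the order used in \cite{BLPW10}). Support estimates then give $[\Delta^\sgn:L^\sgn]=1$ and $[\Delta^\sgn:L^\tau]\neq 0\Rightarrow\tau\preceq\sgn$, with the dual statements for $\nabla^\sgn$, as well as the orthogonality $\operatorname{Ext}^{\bullet}(\Delta^\sgn,\nabla^\tau)=\delta_{\sgn\tau}\,\kk$.

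The crux is the finiteness and projectivity step. The $\SS$-equivariance with $|\hbar|=2$ forces all $\operatorname{Ext}$-groups among the finitely many simples to be finite-dimensional and concentrated in degrees $\geq 0$ with only scalars in degree $0$; combined with finite length of every object, this identifies $\cO^{\dR}$ with the category of finite-dimensional graded modules over the Yoneda algebra $A:=\bigoplus_{\sgn,\tau}\operatorname{Ext}^{\bullet}(L^\sgn,L^\tau)^{\op}$, which is finite-dimensional. A finite-dimensional algebra has an indecomposable projective cover $P^\sgn$ of each simple $L^\sgn$, and the orthogonality $\operatorname{Ext}^{\bullet}(\Delta^\sgn,\nabla^\tau)=\delta_{\sgn\tau}\,\kk$ upgrades this to a highest-weight structure: each $P^\sgn$ acquires a $\Delta$-filtration and BGG reciprocity $[P^\sgn:\Delta^\tau]=[\nabla^\tau:L^\sgn]$ holds. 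I expect this finiteness step to be the main obstacle: one must rule out infinitely generated behaviour near the singular loci of the skeleton and show the $\operatorname{Ext}$-algebra is genuinely finite-dimensional. An honest treatment would either import the explicitly presented combinatorial category of \cite{BLPW10} --- where finiteness is manifest and the equivalence with $\cO^{\dR}$ is the main theorem of \cite{BLPW12} --- or redo those microlocal estimates directly.
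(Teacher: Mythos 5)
This statement is not proved in the paper at all: it is imported verbatim from \cite{BLPW12} (the paper says explicitly that proofs of all statements about de Rham category $\cO$ are to be found in \cites{BLPW10,BLPW12}). So the only meaningful comparison is between your sketch and the actual strategy of \cite{BLPW12}, which your outline tracks in broad strokes: classify simples by support on components of the skeleton, build standard objects from attracting sets of the $\mass$-action, order them by the moment map, and reduce the highest-weight axioms to finiteness statements that are ultimately verified by identifying $\cO^{\dR}$ with the combinatorial category of \cite{BLPW10}. You are candid that this last step --- the entire mathematical content of the theorem --- is deferred to the literature, so what you have is a faithful outline rather than a proof.

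Beyond the deferred crux, there is one genuine misstep worth flagging. You propose to obtain projective covers by identifying $\cO^{\dR}$ with finite-dimensional graded modules over the Yoneda algebra $A=\bigoplus_{\sgn,\tau}\operatorname{Ext}^{\bullet}(L^\sgn,L^\tau)^{\op}$ of the simples. That algebra is the \emph{Koszul dual} of the algebra you actually need: an abelian finite-length category is equivalent to $\Modfd_{B}$ for $B=\End(P)^{\op}$ the endomorphism algebra of a projective generator, whereas $\operatorname{Ext}^\bullet(L,L)$ only controls the \emph{derived} category via generation by simples (cf.\ \Cref{thm:derham-o-quotient}, where $\End$ is derived). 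Passing from the Ext algebra of simples back to the abelian heart and its projectives requires formality and Koszulity, which is the main theorem of \cite{BLPW10} and far stronger than the highest-weight structure you are trying to establish; as written, your argument begs that question. The standard repair is to run the construction in the other direction: produce the projectives $P^\sgn$ by descending induction along the partial order as iterated universal extensions of the standards $\Delta^\tau$ (using finite-dimensionality of $\operatorname{Ext}^1$ and $\operatorname{Ext}^2$ between standards to ensure the process terminates), verify the $\Delta$-filtration and BGG reciprocity, and only then conclude that $\End(P)$ is finite-dimensional and quasi-hereditary.
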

\begin{corollary}
    $\cO^{\dR}(\htvar_G(\FI),\LL_G(\FI,\mass))$ is generated by the simple objects $L^\sgn.$
\end{corollary}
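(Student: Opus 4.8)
The statement to prove is that $\cO^{\dR}(\htvar_G(\FI),\LL_G(\FI,\mass))$ is generated, as a (stable/triangulated) category, by its simple objects $L^\sgn$. The plan is to deduce this formally from the immediately preceding theorem, which asserts that this category is a highest weight category with finitely many simples $L^\sgn$ and projective covers $P^\sgn$. The only real content is to translate ``highest weight category'' into a generation statement, so the argument is essentially a diagram chase rather than anything using the specific geometry of hypertoric varieties.

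First I would recall that in a finite-length abelian category --- which a highest weight category is, by definition, together with the finiteness of the poset of weights --- every object admits a finite filtration whose subquotients are among the $L^\sgn$. Hence at the level of the heart $\cO^\heart$, the simple objects generate under finite extensions. Passing to the derived category, any bounded complex is built from its (shifted) cohomology objects by finitely many cofiber sequences, and each such cohomology object, lying in $\cO^\heart$, is in turn built from the $L^\sgn$ by finitely many cofiber sequences. Since the subcategory generated by the $L^\sgn$ is closed under shifts, finite limits, and colimits (in particular cofibers) and is idempotent-complete by our standing conventions on subcategories, it contains every object of $\cO^{\dR}$. This gives the claim. (Alternatively, and perhaps more cleanly in the $\infty$-categorical setting, one may argue that the thick subcategory generated by $\{L^\sgn\}$ contains all the projectives $P^\sgn$ --- since in a highest weight category each $P^\sgn$ has a finite standard filtration and each standard object $\Delta^\sgn$ has a finite composition series with simple subquotients --- and then observe that the $P^\sgn$ already generate $\cO^{\dR}$ because $\cO^{\dR}$ has enough projectives and finite global dimension, so $\Perf$ of the endomorphism algebra of $\bigoplus P^\sgn$ recovers the whole category.)

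The one point requiring a little care, and the place I would expect the ``main obstacle'' to lie, is making sure the finiteness hypotheses are actually in force: one needs that the weight poset indexing the $L^\sgn$ is finite (so that all filtrations are finite and no completion issues arise) and that $\cO^{\dR}$ has finite cohomological amplitude / finite global dimension, so that the passage between ``generates the heart under extensions'' and ``generates the derived category as a thick subcategory'' is unproblematic. Both facts are part of the structure of hypertoric category $\cO$ established in \cite{BLPW12}: the simples are indexed by the (finitely many) $\mass$-bounded $\FI$-semistable sign vectors, i.e.\ by components of $\LL_G(\FI,\mass)$, and a highest weight category with finite poset automatically has finite global dimension. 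Granting these, the corollary follows immediately; no genuinely new input beyond the preceding theorem is needed.
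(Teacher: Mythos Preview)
Your proposal is correct and matches the paper's approach: the paper states this as an immediate corollary of the preceding theorem (the highest-weight structure from \cite{BLPW12}) with no further argument, and your elaboration---finite-length heart plus finite global dimension gives generation of the derived category by the simples---is exactly the standard unpacking of that implication. There is nothing to add.
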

The category $\cO^{\dR}(\htvar_G(\FI),\LL_G(\FI,\mass))$ 
may therefore be described explicitly by calculating the endomorphism algebra of its simple objects.
The main technical tool used in performing this calculation is the \bit{categorical Kirwan surjectivity} theorem \cite{BPW}*{Theorem 5.31}, which says that the functor induced on deformation quantizations by pullback along the open inclusion $\htvar_G(\FI)\hookrightarrow T^*(\CC^n/G)$ admits a fully faithful left adjoint.
As a result, we obtain the following description of the de Rham category:
\begin{proposition}[\cite{Web-gencat-O}*{Proposition 2.6}]
    There is an embedding 
    \begin{equation}\label{eq:derham-quotient}
    \cO^{\dR}(\htvar(\FI),\LL_G(\FI,\mass))
    \hookrightarrow
    \Dmod_{\LL_G^{\mass\bdd}}(\CC^n/G)/\Dmod_{\LL_G^{\mass\bdd,\FI\unstab}}(\CC^n/G)
    \end{equation}
    of the de Rham category $\cO$ into the quotient of the category of D-modules with $\mass$-bounded microsupport by those with purely unstable microsupport.
\end{proposition}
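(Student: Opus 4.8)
The plan is to combine categorical Kirwan surjectivity, which realizes the passage from the ambient cotangent stack to the hypertoric variety as a Verdier localization on the DQ side, with the standard Rees-type dictionary between $\SS$-equivariant DQ-modules on a (stacky) cotangent bundle and D-modules on the base with prescribed singular support, and to check that the support conditions defining $\cO^{\dR}$ match the microsupport conditions appearing on the right-hand side of \Cref{eq:derham-quotient}.

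First I would invoke categorical Kirwan surjectivity \cite{BPW}*{Theorem 5.31}: restriction along the open embedding $j\colon\htvar_G(\FI)\hookrightarrow\rT^*(\CC^n/G)$ on $\SS$-equivariant good modules over the deformation quantization admits a fully faithful left adjoint $j_!$. Hence $\Good_\cW^\SS(\htvar_G(\FI))$ is the Verdier quotient of $\Good_\cW^\SS(\rT^*(\CC^n/G))$ by $\ker(j^*)$, the subcategory of modules set-theoretically supported on the $\FI$-unstable locus. Imposing the two conditions of \Cref{defn:derham-cato} — support in the skeleton, and the existence of an $m$-stable $\cE$-lattice — and using \Cref{lem:stable-set} to rewrite $\LL_G(\FI,\mass)=\LL_G(\FI,0)^{\mass\bdd}$ in terms of the combinatorial $\mass$-bounded components, this presents $\cO^{\dR}(\htvar_G(\FI),\LL_G(\FI,\mass))$ as the Verdier quotient of the category $\cC$ of $\SS$-equivariant good $\cW$-modules on $\rT^*(\CC^n/G)$ that are supported on $\LL_G^{\mass\bdd}$ and carry an $m$-stable coherent lattice, by its full subcategory $\cC^{\FI\unstab}$ of objects supported on $\LL_G^{\mass\bdd,\FI\unstab}=\LL_G^{\mass\bdd}\cap\LL_G^{\FI\unstab}$.

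Next I would dequantize on the cotangent stack. Since $\rT^*(\CC^n/G)$ is a stacky cotangent bundle, its deformation quantization is microlocally the sheaf of microdifferential operators, and the Rees construction (cf. \cite{Kashiwara-Rouquier}) identifies $\SS$-equivariant good $\cW$-modules carrying a coherent lattice with coherent D-modules on $\CC^n/G$, the weight normalization $|\hbar|=2$ recording the filtration, and the support of the $\cW$-module matching the singular support of the D-module; an $m$-stable lattice forces the singular support to avoid the $\mass$-unbounded components. Under this equivalence, $\cC$ is identified with the full subcategory of $\Dmod_{\LL_G^{\mass\bdd}}(\CC^n/G)$ of objects admitting a compatible (in particular regular) lattice, and $\cC^{\FI\unstab}$ with its intersection with $\Dmod_{\LL_G^{\mass\bdd,\FI\unstab}}(\CC^n/G)$. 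As the two kernels match, the quotient functor is fully faithful on $\cO^{\dR}$, and composing with the induced inclusion $\cC/\cC^{\FI\unstab}\hookrightarrow \Dmod_{\LL_G^{\mass\bdd}}(\CC^n/G)/\Dmod_{\LL_G^{\mass\bdd,\FI\unstab}}(\CC^n/G)$ yields \Cref{eq:derham-quotient}; the failure to be an equivalence is precisely the gap between the lattice-carrying (regular) D-modules forming $\cC$ and all of $\Dmod_{\LL_G^{\mass\bdd}}$.

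The hard part will be the second step: making precise that the $m$-stable lattice condition on $\SS$-equivariant $\cW$-modules corresponds exactly to the singular-support bound $\LL_G^{\mass\bdd}$ — equivalently, that $\mass$-boundedness of the geometric support of a DQ-module on $\htvar_G(\FI)$ translates into the combinatorial/microlocal condition of \Cref{defn:unstable-unbounded} — and that the GIT localization and the dequantization equivalence are compatible enough for the two Verdier quotients to be identified, so that full faithfulness survives. (It is the existence of $j_!$ from Kirwan surjectivity that keeps the second quotient from collapsing anything outside the expected kernel.) The remaining ingredients — the Rees dictionary on a cotangent bundle, exactness of Verdier quotients, and the combinatorics of $\LL_G^{\mass\bdd}$ via \Cref{lem:stable-set} — are routine once this dictionary is set up.
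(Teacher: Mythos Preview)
The paper does not give its own proof of this proposition: it is quoted as \cite{Web-gencat-O}*{Proposition 2.6}, with only the remark that the main technical tool is categorical Kirwan surjectivity \cite{BPW}*{Theorem 5.31}. Your proposal is consistent with that hint --- you correctly identify Kirwan surjectivity as what produces the fully faithful left adjoint $j_!$ and hence the Verdier-quotient presentation on the DQ side, and then you pass to D-modules via the Rees/microdifferential dictionary. This is indeed the standard route and matches the logic of \cite{Web-gencat-O}.

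One point to be careful about: in your second step you assert that the $m$-stable lattice condition corresponds \emph{exactly} to the singular-support bound $\LL_G^{\mass\bdd}$, and then separately that the resulting category $\cC$ embeds fully faithfully into $\Dmod_{\LL_G^{\mass\bdd}}(\CC^n/G)$. The first of these is not quite right as stated --- the lattice condition is genuinely extra structure (regularity), not merely a support condition --- but you correct for this at the end when you say the failure of surjectivity is precisely the gap between lattice-carrying modules and all D-modules. So the argument is internally consistent, but the phrasing ``corresponds exactly'' should be softened. Otherwise your outline is sound and is essentially what the cited reference does.
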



In the quotient presentation \Cref{eq:derham-quotient}, the simples in $\cO^{\dR}(\htvar(\FI),\LL_G(\FI,\mass))$ correspond to the pushforwards of functions D-modules on the subvarieties $X^\sgn_G\hookrightarrow \CC^n/G$
for the $\mass$-bounded sign vectors $\sgn.$
Before quotienting out the unstably-supported D-modules, the endomorphism algebra of these objects is given by the convolution algebra
$C^\BM_*(X^{\mass\bdd}\times_{\CC^n/G}X^{\mass\bdd})$
of Borel-Moore chains on fiber products of the varieties $X_G^\sgn$.
The endomorphism algebra of simples in category $\cO$ can therefore be described as a quotient of this algebra:
\begin{theorem}[\cite{Web-gencat-O}*{Corollary 3.3}]\label{thm:derham-o-quotient}
Let $A^{\dR}$ be the quotient of the algebra
$C^\BM_*(X^{m\bdd}\times_{\CC^n/G}X^{m\bdd})$ by the ideal generated by
$C^\BM_*(X^{m\bdd,\FI\unstab}\times_{\CC^n/G} X^{m\bdd,\FI\unstab}).$
Then there is an equivalence
$\cO^{\dR}(\htvar(\FI),\LL_G(\FI,\mass))\simeq\Perf_{A^{\dR}}$ between the de Rham category $\cO$ and the category of perfect modules over $A^{\dR},$ induced by an equivalence $A^{\dR}\simeq\End_{\cO^{\dR}}(\bigoplus L^\sgn)$ between $A^{\dR}$ and the endomorphism algebra of the simple objects in $\cO^{\dR}.$
\end{theorem}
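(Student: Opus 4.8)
The plan is to deduce \Cref{thm:derham-o-quotient} from the quotient presentation \Cref{eq:derham-quotient} together with the categorical Kirwan surjectivity theorem, by identifying the endomorphism algebra of the generating objects before and after taking the quotient. First I would recall that, under \Cref{eq:derham-quotient}, the simple $L^\sgn$ is the image of the pushforward $\cO_{X_G^\sgn}$ of the structure $D$-module along the closed immersion $X_G^\sgn \hookrightarrow \CC^n/G$, for $\sgn$ ranging over the $\mass$-bounded sign vectors; since these generate the category, it suffices to compute $\End$ of their direct sum. In the ambient category $\Dmod_{\LL_G^{\mass\bdd}}(\CC^n/G)$ (i.e.\ before imposing the semistability quotient), the derived endomorphism algebra of $\bigoplus_\sgn \cO_{X_G^\sgn}$ is computed by base change: $\Hom(\cO_{X_G^\sgn}, \cO_{X_G^{\sgn'}})$ is the (de Rham / Borel--Moore) homology of the derived fiber product $X_G^\sgn \times_{\CC^n/G} X_G^{\sgn'}$, so the total endomorphism algebra is the convolution algebra $C^\BM_*(X^{\mass\bdd}\times_{\CC^n/G} X^{\mass\bdd})$, with its convolution product. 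This is the standard identification of $\Ext$ between constant sheaves/$D$-modules on closed subvarieties with a Borel--Moore chain convolution algebra; in the hypertoric setting the relevant fiber products are explicit (unions of coordinate subspaces modulo $G$), so the algebra structure is combinatorially transparent.

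Next I would analyze what happens under the Verdier-type quotient by $\Dmod_{\LL_G^{\mass\bdd,\FI\unstab}}(\CC^n/G)$. The key point is that categorical Kirwan surjectivity guarantees the composite functor from the de Rham category $\cO$ into this quotient is fully faithful, so the endomorphism algebra of the images of the $L^\sgn$ in the quotient \emph{is} the endomorphism algebra in $\cO^{\dR}$. On the other hand, the endomorphism algebra in a quotient category $\cC/\cD$ of objects $c, c'$ can be computed as a localization: morphisms in the quotient are colimits over diagrams with intermediate terms in $\cD$. For the subcategory generated by the unstable $X_G^\sgn$, the upshot (this is the content of \cite{Web-gencat-O}*{Corollary 3.3}) is that the quotient has the effect of killing precisely the two-sided ideal generated by the ``unstable part'' of the convolution algebra, namely the image of $C^\BM_*(X^{\mass\bdd,\FI\unstab}\times_{\CC^n/G} X^{\mass\bdd,\FI\unstab})$. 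Concretely, a morphism factoring through an unstable $X_G^\sgn$ contributes a class in this subalgebra, and conversely such classes become null; since the unstable components form a union closed under the relevant incidences, this subalgebra is a genuine (idempotent-complete, bi-ideal) ideal, so the quotient algebra $A^{\dR}$ is well defined and equals $\End_{\cO^{\dR}}(\bigoplus L^\sgn)$. Finally, since $\cO^{\dR}$ is generated by the $L^\sgn$ and is idempotent-complete, the functor $M \mapsto \Hom(\bigoplus L^\sgn, M)$ induces the desired equivalence $\cO^{\dR}(\htvar(\FI),\LL_G(\FI,\mass)) \simeq \Perf_{A^{\dR}}$.

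The main obstacle I anticipate is the passage from the quotient \emph{category} to the quotient \emph{algebra} --- that is, showing that the localization at morphisms factoring through unstably-supported objects corresponds, on endomorphism algebras of a set of generators, exactly to quotienting the convolution algebra by the two-sided ideal generated by the unstable fiber-product chains, with no higher correction terms. This requires knowing that the relevant $\Ext$-groups vanish or assemble cleanly, which in turn rests on the hypertoric geometry being sufficiently simple (the subvarieties $X_G^\sgn$ and their fiber products being explicit toric pieces, so that the convolution algebras are formal and concentrated in the expected degrees) and on the precise form of categorical Kirwan surjectivity from \cite{BPW}*{Theorem 5.31}, which supplies the fully faithful left adjoint that controls the localization. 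All of this is carried out in \cite{Web-gencat-O}, so for our purposes the theorem can be cited; I would include the argument above only in the form of a recollection, with the combinatorial identification of $A^{\dR}$ spelled out in the hypertoric case to fix conventions for the mirror-symmetry comparison to follow.
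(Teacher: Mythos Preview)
The paper does not give its own proof of this theorem: it is stated with attribution to \cite{Web-gencat-O}*{Corollary 3.3} and treated as a black-box input. Your proposal is consistent with this --- you correctly conclude that the result should be cited, and the sketch you give matches the discussion the paper places around the statement (the quotient presentation \Cref{eq:derham-quotient} and categorical Kirwan surjectivity \cite{BPW}*{Theorem 5.31}). So there is nothing to compare against; your recollection is appropriate in spirit and could serve as an expository gloss, but no proof is expected here.
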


\subsection{Betti category $\cO$ and Riemann-Hilbert}\label{sec:betti-cato}
Following \cite{CGH}, we will understand Betti category $\cO$ as a category of microlocal perverse sheaves: see \cite{NS20} for the definition of microlocal sheaves on a general (stably polarized) Weinstein manifold (extending the original work of \cite{Kashiwara-Schapira} on cotangent bundles) and \cite{perverse-microsheaves} for the definition of the perverse t-structure on a stably holomorphically polarized holomorphic Lagrangian. In this subsection, we give an abbreviated presentation of the calculation of Betti category $\cO$ from \cite{CGH}, and we refer there for more details.
\begin{notation}
    In this section, we take our hypertoric variety $\htvar_G(\FI)$ to be defined over $\CC,$ and we write $\kk$ for the coefficients in which our perverse sheaves are valued.
\end{notation}
\begin{warn}
    As noted in \Cref{sec:notation}, our conventions for the category $(\mu)\Perv_\Lambda$ of (microlocal) perverse sheaves along a Lagrangian $\Lambda$ differ in two ways from the standard ones: 
    \begin{enumerate}
        \item We do not require objects to have finite-dimensional microstalks but instead write $(\mu)\Perv_\Lambda$ for the category of compact objects in the presentable category of perverse sheaves with possibly infinite-dimensional microstalks. For instance, $\Perv_{\CC^\times}(\CC^\times)\simeq \Loc(\CC^\times)$ denotes not the category 
        $\Mod^{\text{perf}/\kk}_{\kk[x^\pm]}$ of
        finite-dimensional
        $\kk[x^\pm]$-modules but rather the category $\Perf_{\kk[x^\pm]}$ of perfect $\kk[x^\pm]$-modules ---
        including the free module $\kk[x^\pm],$ which corresponds to the universal local system on $\CC^\times$ given by the pushforward of the constant sheaf along the universal covering map $\CC\to\CC^\times.$
        \item We denote by $(\mu)\Perv_\Lambda$ not the abelian category of (microlocal) perverse sheaves but rather its derived category. 
    \end{enumerate}
\end{warn}

\begin{definition}[\cite{CGH}]
\label{defn:betti-cato}
    The \bit{Betti category $\cO$} is the category 
    \[
    \cO_{\Bet}(\htvar_G(\FI),\LL_G(\FI,\mass)):=\muPerv_{\LL_G(\FI,\mass)}(\LL_G(\FI,\mass))
    \]
    of microlocal perverse sheaves on the Lagrangian $\LL_G(\FI,\mass),$ with stable polarization induced from the fiber polarization of $\rT^*(\CC^n/G)$ by \cite{NS-higgs}*{Corollary 4.5}.
\end{definition}

%
Recall that there is a locally closed embedding $\LL_G(\FI,\mass)\hookrightarrow \LL_G=\LL/G,$
where $\LL\subset \rT^*\CC^n$ is the union of conormals to intersections of coordinate hyperplanes. This factors as a composition
\[
\LL_G(\FI,\mass) \hookrightarrow \LL_G(\FI,0)\hookrightarrow \LL_G,
\]
of a closed embedding followed by an open embedding. Accordingly, Betti category $\cO$ may be computed in a three-step process, by first understanding the category $\muPerv_{\LL_G}(\LL_G) = \Sh^G_\LL(\CC^n),$ microlocally restricting to the open set of $\FI$-semistable points, and then quotienting by linking disks to $\mass$-unbounded components of $\LL(\FI,0).$

\begin{definition}
    Consider the quiver
    $Q = \left(
    \begin{tikzcd}
        \bullet \ar[r, shift left, "u"] & \ar[l, shift left, "v"] \bullet
    \end{tikzcd}
    \right)
    $
    and write $\kk Q$ for its path algebra. Let $m\in Z(\kk Q)$ denote the central element $m:=1-(uv+vu).$ We define algebras
    \[
    A_1:=\kk Q[m^\pm], \qquad A_n:=(A_1)^{\otimes n}\simeq (\kk Q)^{\otimes n}[m_1^\pm,\ldots,m_n^\pm].
    \]
\end{definition}

\begin{theorem}[\cite{GGM}]\label{cor:perv-cn-quiver}
    There is an equivalence of categories $\Perv_\LL(\CC^n)\simeq \Perf_{A_n}$ between toric-stratified perverse sheaves on $\CC^n$ and perfect modules over the algebra $A_n,$ intertwining the action of $\Loc(\CC^\times)^n$ with the $\CC[m_1^\pm,\ldots,m_n^\pm]$-linear structure induced by the central map $\CC[m_1^\pm,\ldots,m_n^\pm]\to Z(A_n).$
\end{theorem}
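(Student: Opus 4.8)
The plan is to reduce to the one-dimensional case — which is the classical theorem of \cite{GGM} — and then propagate to $\CC^n$ by a K\"unneth argument, using that both the coordinate stratification and the Lagrangian $\LL$ are external products. In dimension one I would recall the description of perverse sheaves on $\CC$ stratified by $\CC = \CC^\times\sqcup 0$: by \cite{GGM} (equivalently, by Beilinson's gluing of nearby and vanishing cycles), the heart of the perverse $t$-structure on sheaves on $\CC$ with singular support in $\LL_1 := \{\text{zero section}\}\cup\rT^*_0\CC$ is equivalent to the abelian category of representations $(\Psi\rightleftarrows\Phi)$ of the quiver $Q$, with $u\colon\Psi\to\Phi$ the canonical map and $v\colon\Phi\to\Psi$ the variation map, such that the monodromy operator — which on $\Psi$ is $1-vu$ and on $\Phi$ is $1-uv$, i.e.\ is exactly the central element $m = 1-(uv+vu)\in Z(\kk Q)$ — is invertible. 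Imposing invertibility of $m$ is precisely restricting to $\Mod_{A_1}^\heart$ for $A_1 = \kk Q[m^\pm]$ (we impose no finiteness on $\Psi,\Phi$, in keeping with the conventions of \Cref{sec:notation}; the free module $\kk[m^\pm]$ is the universal local system on $\CC^\times$). Since $\Perv_{\LL_1}(\CC)$ is by definition the compact objects in the derived category of this heart and $D(\Mod_{A_1}^\heart)\simeq\Mod_{A_1}$, this gives $\Perv_{\LL_1}(\CC)\simeq\Perf_{A_1}$; and the action of $\Loc(\CC^\times) = \Perf_{\kk[x^\pm]}$ by tensoring with a local system on the open stratum rescales the monodromy, hence matches the $\kk[x^\pm]$-linear structure along $x\mapsto m$.

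For the K\"unneth step, recall $\CC^n = \CC^{\times n}$ with the product (coordinate) stratification, and $\LL = \LL_1^{\times n}$ inside $\rT^*\CC^n = (\rT^*\CC)^{\times n}$ — the conormal to $X^\sgn = \{z_i = 0 : i\notin\sgn\}$ is a product of zero sections and copies of $\rT^*_0\CC$, and $\LL$ is their union over all sign vectors $\sgn$. External product then defines a functor $\Perv_{\LL_1}(\CC)^{\otimes n}\to\Perv_\LL(\CC^n)$, which I would argue is an equivalence using two inputs: first, the categorical K\"unneth formula for constructible sheaves on the product, refined by $\mathrm{SS}(F_1\boxtimes\cdots\boxtimes F_n) = \mathrm{SS}(F_1)\times\cdots\times\mathrm{SS}(F_n)$, so that the singular-support constraint $\LL$ is exactly the product constraint and $\Sh_\LL(\CC^n)\simeq\Sh_{\LL_1}(\CC)^{\otimes n}$; and second, that the external product of perverse sheaves for the coordinate stratification is again perverse — true since the strata are affine spaces and middle perversity is additive, so no perverse truncation intervenes — which shows that the perverse $t$-structure on $\Sh_\LL(\CC^n)$ is the tensor-product $t$-structure, so its heart is $\Mod_{A_1^{\otimes n}}^\heart$. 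Concretely, a perverse sheaf on $\CC^n$ in our sense is a vector space for each stratum equipped with $n$ pairwise-commuting pairs $(u_i,v_i)$ of can/var maps, with the $i$th monodromy invertible — that is, exactly an $A_n$-module.

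Combining the two steps with $\Perf_{A_1}^{\otimes n}\simeq\Perf_{A_1^{\otimes_\kk n}} = \Perf_{A_n}$ yields $\Perv_\LL(\CC^n)\simeq\Perf_{A_n}$, and by naturality of external products the $\Loc(\CC^\times)^n = \Perf_{\kk[x_1^\pm,\ldots,x_n^\pm]}$-action passes to the $\CC[m_1^\pm,\ldots,m_n^\pm]$-linear structure along the central map $m_i\mapsto$ (the $i$th monodromy). I expect the main obstacle to be the K\"unneth bookkeeping: checking that the ``compact objects / possibly infinite-dimensional microstalks'' conventions are stable under forming tensor products of stable categories, and that for this very special (smooth strata, normal-crossings) stratification the perverse $t$-structure really is the product $t$-structure, so that external products of perverse sheaves require no truncation. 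Everything else is either the cited theorem of \cite{GGM} or formal.
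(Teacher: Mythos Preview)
The paper does not prove this statement; it is attributed to \cite{GGM} and used as input. The only indication of an argument is the parenthetical remark following the $n=1$ example: ``The general case may be recovered from this one by taking a tensor product.'' Your proposal --- the classical nearby/vanishing-cycles description in dimension one, followed by a K\"unneth/external-product argument exploiting that both the stratification and $\LL$ are products --- is exactly this, spelled out in more detail than the paper offers. So your approach is correct and is essentially the same as (indeed, a fleshing-out of) what the paper gestures at.
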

\begin{definition}
    Let $D^\vee$ be the $n$-dimensional torus whose algebra of functions $\CC[D^\vee]$ is the ring $\CC[m_1^\pm,\ldots,m_n^\pm],$ and let $G\subset D$ be a subtorus, so that $\CC[G^\vee]$ embeds as a subalgebra of $\CC[D^\vee].$
    We write
    \[
    A_G:=A_n\otimes_{\CC[G^\vee]}\CC
    \]
    for the algebra obtained from $A_n$ by trivializing the $G$-monodromies.
\end{definition}
\begin{corollary}
    There is an equivalence of categories $\Perv_{\LL_G}(\CC^n/G)\simeq \Perf_{A_G}.$
\end{corollary}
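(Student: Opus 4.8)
The plan is to deduce this from \Cref{cor:perv-cn-quiver} by a base-change (equivariance-descent) argument along the inclusion $G \hookrightarrow D$. First recall that the category $\Perv_{\LL_G}(\CC^n/G)$ is, by definition of the equivariant constructions involved, the category of $G$-equivariant objects in $\Perv_\LL(\CC^n)$; equivalently, it is obtained by taking $G$-invariants (or, dually, by presenting it as modules over the ``equivariantized'' ring). Under the equivalence $\Perv_\LL(\CC^n)\simeq \Perf_{A_n}$ of \Cref{cor:perv-cn-quiver}, the action of $\Loc(\CC^\times)^n \simeq \Perf_{\CC[D^\vee]}$ (viewed as a symmetric monoidal category acting by convolution) is intertwined with the $\CC[D^\vee] = \CC[m_1^\pm,\dots,m_n^\pm]$-linear structure coming from the central map $\CC[D^\vee]\to Z(A_n)$. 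So the task reduces to a purely algebraic statement: taking $G$-equivariant objects, i.e. trivializing the $G$-monodromies, corresponds on the ring side to the relative tensor product $A_n\otimes_{\CC[G^\vee]}\CC$.

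The key steps, in order, are: (1) Identify $\Perv_{\LL_G}(\CC^n/G)$ with the totalization (limit) of the cosimplicial diagram computing $G$-equivariant perverse sheaves, or equivalently with $\Perv_\LL(\CC^n)\otimes_{\Rep(G)}\Vect$ where $\Rep(G)\simeq \Perf_{\CC[G^\vee]}$ acts through the monomorphism $\CC[G^\vee]\hookrightarrow \CC[D^\vee]$ and the module structure above; here one uses that $G$ acts on $\CC^n$ only through its monodromy/character data, which is exactly the content of the intertwining statement in \Cref{cor:perv-cn-quiver}. (2) Transport this along the equivalence of \Cref{cor:perv-cn-quiver} to get $\Perf_{A_n}\otimes_{\Perf_{\CC[G^\vee]}}\Perf_\CC$. (3) Apply the standard identification of the relative tensor product of module categories over perfect-complex categories with perfect modules over the relative tensor product of rings: $\Perf_{A_n}\otimes_{\Perf_{\CC[G^\vee]}}\Perf_\CC\simeq \Perf_{A_n\otimes_{\CC[G^\vee]}\CC}=\Perf_{A_G}$, using that $A_n$ is a $\CC[G^\vee]$-algebra via the central map and that $\CC[G^\vee]\to\CC$ is the augmentation corresponding to the point $1\in G^\vee$ (trivial monodromy). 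One should check that $A_n$ is flat, or at least that the derived and underived relative tensor products agree sufficiently, so that $A_G$ has the expected underlying complex; since $\CC[G^\vee]\hookrightarrow\CC[D^\vee]\to A_n$ and $\CC[D^\vee]\to A_n$ is (split) flat on each quiver-algebra tensor factor, this is straightforward.

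I expect the main obstacle to be step (1): making precise the claim that $\Perv_{\LL_G}(\CC^n/G)$ is computed by equivariant descent against $\Rep(G)$ in a way compatible with the monoidal $\Loc(\CC^\times)^n$-action, and in particular that passing to the quotient stack $\CC^n/G$ does nothing to the sheaf theory beyond trivializing the $G$-part of the monodromy. This is where one must invoke the setup of \cite{GMH} (and the conventions recalled in \Cref{sec:notation}) for equivariant sheaves of categories and singular support on quotient stacks, and verify that the singular-support condition $\LL_G$ on $\CC^n/G$ pulls back to $\LL$ on $\CC^n$. Once that dictionary is in place, the remaining steps are formal manipulations with module categories over commutative rings and present no real difficulty; the proof is then a one-line consequence of \Cref{cor:perv-cn-quiver} together with the definition of $A_G$.
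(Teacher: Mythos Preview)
Your proposal is correct and follows the same approach as the paper, which states the corollary without proof as an immediate consequence of \Cref{cor:perv-cn-quiver} together with the definition of $A_G$. You have simply made explicit the equivariant-descent reasoning that the paper's intertwining clause in \Cref{cor:perv-cn-quiver} was designed to enable; the one notational wrinkle is that what you call $\Rep(G)$ should be understood as $\Loc(G)\simeq\Perf_{\CC[\pi_1 G]}\simeq\Perf_{\CC[G^\vee]}$ (topological, not algebraic, representations), which you do identify correctly.
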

%
\begin{notation}
    We may denote an $A_n$-module as an 
    $n$-hypercube of vector spaces $\{V_\sgn\mid \sgn\in \sgvect\}$ equipped with two families of linear maps 
    \begin{equation}\label{eq:pervcn-linalg-diagram}
        u_i:V_\sgn \rightleftarrows V_{\sgn\cup\{i\}}: v_i
    \end{equation}
    such that all the operators $u_i$ and $v_j$ commute, and such that $1-u_iv_i$ and $1-v_iu_i$ are invertible. This data will specify an $A_G$-module under the additional condition that the appropriate products of $(1-u_iv_i)$ and $(1-v_iu_i),$ corresponding to monodromies in $G\subset D,$ are trivial.
\end{notation}
\begin{example}
    In case $n=1,$ \Cref{cor:perv-cn-quiver} recover the classical description of perverse sheaves on $(\CC,0)$ in terms of the data
    \[
    \begin{tikzcd}
        \Phi\ar[r, shift left, "var"] & \ar[l, shift left, "can"] \Psi
    \end{tikzcd}
    \]
    of their vanishing and nearby cycles, with canonical and variation maps between them. (The general case may be recovered from this one by taking a tensor product.)
\end{example}

\begin{definition}\label{defn:betti-1cat-projectives}
    For $\sgn\in\sgvect,$ we write $P^\sgn_G$ for the object of $\Perv_{\LL_G}(\CC^n/G)$ which corepresents the functor
    $\Perv_{\LL_G}(\CC^n/G)\to \Mod_\kk$ taking a diagram \Cref{eq:pervcn-linalg-diagram} to the vector space $V_\sgn.$ We simply write $P^\sgn:=P^\sgn_{e}$ when $G$ is trivial.
\end{definition}
\begin{remark}
When $\alpha$ corresponds to the zero-section $\CC^n\subset \LL,$ the object $P^\sgn$ can be constructed geometrically as the !-extension of the universal local system on $(\CC^\times)^n$; the case of general $\alpha$ follows by Fourier transforms. Algebraically, $P^\sgn$ is the projective $A_n$-module $A_n e_\alpha,$ where $e_\alpha$ is the vertex idempotent corresponding to $\alpha$.
\end{remark}
    The object $P=\bigoplus_{\sgn\in \sgvect}P^\sgn$ is a projective generator for the category $\Perv_\LL(\CC^n).$
\begin{example}\label{ex:projectives-formula}
    Let $n=1.$ Then 
    as $A_1$-modules,
    the objects $P^\Phi,P^\Psi$ 
    are given by the respective diagrams
    \[
    \begin{tikzcd}
        \kk[m^\pm] \ar[r,"\sim", shift left]&
        \kk[m^\pm], \ar[l, "1-m", shift left]&
        \kk[m^\pm] \ar[r,"1-m", shift left]&
        \kk[m^\pm]. \ar[l, "\sim", shift left]
    \end{tikzcd}
    \]
    As perverse sheaves, one of these is given by the proper pushforward, along the embedding $\CC^\times\to \CC,$ of the universal local system (with stalk $\kk[m^\pm]$ and monodromy $m$) on $\CC^\times,$ and the other is its Fourier transform.
\end{example}

\begin{remark}\label{rem:dim1-microrestriction}
    As \Cref{ex:projectives-formula} shows, restriction of $\Perv_\LL(\CC)$ to the open subset $\CC^\times\subset \CC\subset \LL$, or microlocal restriction to $T^*_0\CC\setminus\{0\},$ is right adjoint to the inclusion in $\Perv_\LL(\CC)$ of the full subcategory generated by $P^\Psi$ or $P^\Phi.$ Below, we will see that a similar formula applies for more general microlocal restrictions.
\end{remark}

Observe that the unit in $A_G$ is a sum of $2^n$ orthogonal idempotents, corresponding to the vertices in the quiver $Q$ and therefore in bijection with the components $\LL_G^\sgn$ of the Lagrangian $\LL_G.$
\begin{definition}
    Let $e_\cF\in A_G$ be the sum of the idempotents corresponding to vertices $\sgn$ which are $\FI$-semistable. (The $\cF$ is for ``feasible,'' the linear-progreamming condition satisfied by these.) We write $A_G(\FI,0)$ for the algebra 
    \[A_G(\FI,0):=e_\cF A_G e_\cF.\]
\end{definition}

\begin{theorem}[\cite{CGH}*{Corollary 6.1.8}]\label{thm:cgh-microrestriction}
    There is an equivalence of categories
    \[
    \mu\Perv_{\LL_G}(\LL_G(\FI,0))\simeq \Perf_{A_G(\FI,0)}.
    \]
\end{theorem}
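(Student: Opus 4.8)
The plan is to realize microlocal restriction to the open locus $\LL_G(\FI,0)\subset\LL_G$ as the right adjoint to the inclusion of a subcategory generated by the projectives $P_G^\sgn$ — in the spirit of \Cref{rem:dim1-microrestriction} — and then to read off the endomorphism algebra of those projectives from the quiver description of \Cref{cor:perv-cn-quiver}. Recall that, since $\LL_G$ contains the zero section, $\muPerv_{\LL_G}(\LL_G)=\Perv_{\LL_G}(\CC^n/G)$, so this category is the source of the restriction functor.

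First I would fix the relevant subcategory. By \Cref{defn:betti-1cat-projectives}, together with the $G$-equivariant refinement of \Cref{cor:perv-cn-quiver} (the equivalence $\Perv_{\LL_G}(\CC^n/G)\simeq\Perf_{A_G}$), the object $P_G^\sgn=A_Ge_\sgn$ is the projective corepresenting the microstalk at a generic point of the component $\LL_G^\sgn$. Let $\cB\subset\Perv_{\LL_G}(\CC^n/G)$ be the thick subcategory generated by $\{P_G^\sgn\mid \sgn\in\FI\text{-ss}\}$, and set $P_\cF:=\bigoplus_{\sgn\in\FI\text{-ss}}P_G^\sgn$. Since $\Perv_{\LL_G}(\CC^n/G)$ is idempotent-complete and $P_\cF$ is a compact generator of $\cB$, a standard argument gives $\cB\simeq\Perf_{E}$ with
\[
E=\End_{\Perv_{\LL_G}(\CC^n/G)}(P_\cF)=\bigoplus_{\sgn,\tau\in\FI\text{-ss}}\Hom(A_Ge_\sgn,A_Ge_\tau)=\bigoplus_{\sgn,\tau\in\FI\text{-ss}}e_\sgn A_Ge_\tau=e_\cF A_Ge_\cF=A_G(\FI,0),
\]
using that each $P_G^\sgn$ is projective. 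So it remains to identify $\muPerv_{\LL_G}(\LL_G(\FI,0))$ with $\cB$.

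For this I would show that microlocal restriction along the open inclusion $\LL_G(\FI,0)=\LL_G\setminus\bigcup_{\sgn\in\FI\unstab}\LL_G^\sgn\hookrightarrow\LL_G$ is right adjoint to $\cB\hookrightarrow\Perv_{\LL_G}(\CC^n/G)$ and restricts to an equivalence on $\cB$. This is exactly the content of \Cref{rem:dim1-microrestriction} when $n=1$, and in general it should follow from that case by combining it with the product decomposition $A_n\simeq A_1^{\otimes n}$ and the base change $A_n\to A_G$, using that microlocal restriction to a conic open subset is a localization of microsheaf categories (\cite{NS20}) compatible with the perverse $t$-structure (\cite{perverse-microsheaves}). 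Equivalently, microlocal restriction presents $\muPerv_{\LL_G}(\LL_G(\FI,0))$ as the Verdier quotient of $\Perv_{\LL_G}(\CC^n/G)$ by the full subcategory of objects with microsupport in the unstable locus $\bigcup_{\sgn\in\FI\unstab}\LL_G^\sgn$; by the corepresentability of the $P_G^\sgn$, this kernel is precisely the right orthogonal $\cB^\perp$ — a perverse sheaf receives no maps from any $P_G^\sgn$ with $\sgn\in\FI\text{-ss}$ iff all its microstalks along the feasible components vanish, i.e. iff it is microsupported on the unstable components — so the quotient is identified with $\cB$ after idempotent completion, which is harmless since $\muPerv$ is by construction a category of compact objects.

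The only genuinely microlocal step is the previous paragraph, and I expect it to be the main obstacle: one must check that deleting the unstable components of $\LL_G$ annihilates exactly the objects microsupported there, and that this operation is $t$-exact for the perverse structure so that it descends to the ``derived category of the heart'' used to define $\Perv$ and $\muPerv$. This is where the microsheaf formalism of \cite{NS20,perverse-microsheaves} and the hypertoric computations of \cite{CGH} are needed. Everything else — the identification $\cB\simeq\Perf_E$ and the computation $E=e_\cF A_Ge_\cF$ — is formal given \Cref{cor:perv-cn-quiver} and its $G$-equivariant refinement.
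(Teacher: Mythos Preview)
Your endgame is exactly right: once one knows that microlocal restriction identifies $\muPerv_{\LL_G}(\LL_G(\FI,0))$ with the thick subcategory $\cB=\langle P_G^\sgn\mid\sgn\in\FI\text{-ss}\rangle$, the computation $\End(P_\cF)=e_\cF A_G e_\cF=A_G(\FI,0)$ is immediate from the quiver description, and this is how the paper packages the result (your statement is literally \Cref{cor:1cat-kirwansurj}). The gap is in the step you flag as ``the only genuinely microlocal step.''

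Concretely, you assert that microlocal restriction to $\LL_G(\FI,0)$ is a Verdier localization of $\Perv_{\LL_G}(\CC^n/G)$ with kernel $\cB^\perp$, and propose to deduce this from the one-dimensional case via the product decomposition $A_n\simeq A_1^{\otimes n}$ and base change along $A_n\to A_G$. This does not work as stated: the open set $\LL_G(\FI,0)\subset\LL_G$ is \emph{not} in general a product of the basic one-dimensional opens of \Cref{rem:dim1-microrestriction}, so the one-dimensional calculation does not directly globalize. Moreover, the assertion that restriction of the microsheaf sheaf to a conic open is always a Verdier quotient of global sections is precisely the essential surjectivity (Kirwan surjectivity) that is the content of the theorem; it is not a formal consequence of the sheaf property in \cite{NS20}. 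A sheaf of categories gives you restriction maps, not that those maps are localizations.

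The paper closes this gap by a genuinely different mechanism. Using the real moment map $\LL_G(\FI,0)\to\ffv_\RR(\FI)$, one pulls back the natural open cover of $\ffv_\RR(\FI)$ indexed by faces $\Delta$ of the hyperplane arrangement $\cH_G(\FI)$ (the star-open of each face) to obtain a cover of $\LL_G(\FI,0)$ by Lagrangians $\LL_G^\Delta$. Each $\LL_G^\Delta$ \emph{is} obtained from $\LL_G$ by restricting to a product of the basic one-dimensional opens, so the one-dimensional calculation of \Cref{rem:dim1-microrestriction} gives $\muPerv_{\LL_G}(\LL_G^\Delta)\simeq\Perf_{e_\Delta A_G(\FI,0)e_\Delta}$, with the restriction functors realized by the bimodules $e_\Delta A_G(\FI,0)e_{\Delta'}$. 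Descent then expresses $\muPerv_{\LL_G}(\LL_G(\FI,0))$ as the limit $\varprojlim_{\Delta\in\bP(\FI)}\Perf_{e_\Delta A_G(\FI,0)e_\Delta}$, and a separate algebraic lemma (\cite{GMW}*{Lemma C.2}) identifies this limit with $\Perf_{A_G(\FI,0)}$. In short: your reduction to dimension one is the right idea, but it only applies to the pieces $\LL_G^\Delta$ of a cover, not to $\LL_G(\FI,0)$ itself; the missing ingredients are the moment-map cover and the limit computation.
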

\begin{remark}
    We may equivalently describe $\Perf_{A_G(\FI,0)}$ as the full subcategory of $\Perf_{A_G}$ generated by the projective object $P_G^\cF:=\bigoplus_{\sgn\in \cF}P^\sgn.$
\end{remark}
\begin{corollary}\label{cor:1cat-kirwansurj}
    The restriction map
    \[
    \Perv_{\LL_G}(\CC^n/G) = \muPerv_{\LL_G}(\LL_G) \to \muPerv_{\LL_G}(\LL_G(\FI,0))
    \]
    has a fully faithful left adjoint whose essential image is the subcategory generated by the projective objects $P^\sgn$ for $\sgn$ $\FI$-semistable.
\end{corollary}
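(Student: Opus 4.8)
The plan is to deduce this corollary directly from \Cref{thm:cgh-microrestriction} together with its preceding remark, which identifies the microlocal restriction functor with a restriction of generators at the level of module categories. Concretely, under the equivalences $\Perv_{\LL_G}(\CC^n/G)\simeq\Perf_{A_G}$ and $\muPerv_{\LL_G}(\LL_G(\FI,0))\simeq\Perf_{A_G(\FI,0)}$, the restriction map becomes the functor $\Perf_{A_G}\to\Perf_{A_G(\FI,0)}$ induced by the idempotent $e_\cF\in A_G$, i.e. the functor $M\mapsto e_\cF M\simeq e_\cF A_G\otimes_{A_G}M$ corepresented by the bimodule $e_\cF A_G$. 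The first step is to record that this is the standard ``compression by an idempotent'' adjunction: the functor $e_\cF A_G\otimes_{A_G}(-)$ has a left adjoint given by $A_G e_\cF\otimes_{A_G(\FI,0)}(-)$, sending $\Perf_{A_G(\FI,0)}$ into $\Perf_{A_G}$ because $A_G e_\cF$ is a summand of the free module $A_G$, hence perfect.

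The second step is to check full faithfulness of this left adjoint. For the generator $A_G(\FI,0)=e_\cF A_G e_\cF$ itself this amounts to the identity $e_\cF\cdot(A_G e_\cF\otimes_{A_G(\FI,0)}e_\cF A_G e_\cF)\simeq e_\cF A_G e_\cF$, i.e. $e_\cF A_G e_\cF\otimes_{A_G(\FI,0)}e_\cF A_G e_\cF\xrightarrow{\ \sim\ } e_\cF A_G e_\cF$, which is tautological; since both categories are generated under finite colimits and retracts by these compact generators and the adjoint is exact, full faithfulness on all of $\Perf_{A_G(\FI,0)}$ follows. (Alternatively, one can observe that $e_\cF A_G\otimes_{A_G}A_Ge_\cF\to A_G(\FI,0)$ is an equivalence and invoke the general fact that a left adjoint is fully faithful iff the unit is an equivalence, checked on a generator.) The third step is to identify the essential image: the left adjoint sends the vertex projective $e_\sgn A_G(\FI,0)$ for $\sgn$ $\FI$-semistable to $A_G e_\sgn = P^\sgn_G$, so the essential image is the idempotent-complete stable subcategory generated by the $P^\sgn$ with $\sgn$ $\FI$-semistable, which is exactly $\Perf_{A_G(\FI,0)}$ viewed as the subcategory of $\Perf_{A_G}$ on $P_G^\cF=\bigoplus_{\sgn\in\cF}P^\sgn$, matching the Remark after \Cref{thm:cgh-microrestriction}.

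The only genuinely nontrivial input is \Cref{thm:cgh-microrestriction} (proved in \cite{CGH}), which is what licenses replacing the geometric microlocal-restriction functor by the algebraic compression functor; everything after that is formal idempotent-algebra yoga, so I expect no real obstacle here. The one point to be careful about is the compatibility of the two chains of identifications — that the square relating the geometric restriction functor $\muPerv_{\LL_G}(\LL_G)\to\muPerv_{\LL_G}(\LL_G(\FI,0))$ to $\Perf_{A_G}\to\Perf_{A_G(\FI,0)}$ commutes, sending $P^\sgn$ to $A_Ge_\sgn$ compatibly on both sides — but this is already built into the statements of \Cref{thm:cgh-microrestriction} and \Cref{defn:betti-1cat-projectives} (and \Cref{rem:dim1-microrestriction} is precisely the $n=1$ shadow of this), so it suffices to cite those.
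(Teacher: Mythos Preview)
Your proposal is correct and matches the paper's approach: the paper states \Cref{cor:1cat-kirwansurj} without proof, treating it as an immediate consequence of \Cref{thm:cgh-microrestriction} and the remark following it, and you have simply spelled out the standard idempotent-algebra argument that makes this ``immediate'' step explicit. The only comment is that your identification of the restriction functor with compression by $e_\cF$ is indeed implicit in the paper's setup (via \Cref{defn:betti-1cat-projectives} and the bimodule description in the lemmas following \Cref{cor:1cat-kirwansurj}), so your care in flagging that compatibility is well-placed but not an obstacle.
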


We recall briefly the proof of \Cref{thm:cgh-microrestriction}, as we will repeat it one categorical level higher in the following section. First, consider the moment map $\LL_G(\FI,0)\to \ffv_\RR(\FI).$ The latter space is the home of the hyperplane arrangement $\cH_G(\FI),$ whose chambers are the moment map images of the toric components of $\LL_G(\FI,0).$ This space has a natural open cover, indexed by faces of the hyperplane arrangement $\cH_G(\FI)$: to a face $\Delta,$ we associate the union of all faces whose closures contain $\Delta.$ Lifting this open cover along the moment map, we obtain a cover of $\LL_G(\FI,0)$ by Lagrangians $\LL_G^\Delta,$ indexed by the face poset $\bP(\FI)$ of $\cH_G(\FI).$ By descent, we have
\begin{equation}\label{eq:muperv-limit}
\muPerv_{\LL_G}(\LL_G(\FI,0))\simeq \varprojlim_{\Delta\in \bP(\FI)}\muPerv_{\LL_G}(\LL_G^\Delta).
\end{equation}

\Cref{thm:cgh-microrestriction} is now a combination of two facts.
    First, we understand the categories $\muPerv_{\LL_G}(\LL_G^\Delta)$: as described in \Cref{rem:dim1-microrestriction}, in dimension 1 we understand the microlocal restrictions of $\Perv_\LL(\CC)$ to the two natural open subsets $\CC^\times\times \CC, \CC\times \CC^\times \subset \CC\times \CC = T^*\CC,$ and each $\LL_G^\Delta\subset \LL_G$ can be obtained by restricting to a product of such open sets. By considering these product open sets, we obtain the following calculation:
\begin{lemma}[\cite{CGH}*{Corollary 5.3.5}]
    There are equivalences of categories
    \[
        \mu\Perv_{\LL_G}(\LL_G^\Delta) \simeq e_\Delta A_G(\FI,0)e_\Delta,
    \]
    with restriction functors induced by the bimodules $e_\Delta A_G(\FI,0)e_{\Delta'},$ where we write $e_\Delta\in A_G(\FI,0)$ for the sum of the idempotents corresponding to chambers whose closure contains $\Delta.$
\end{lemma}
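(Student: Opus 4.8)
The plan is to reduce the statement to the one-dimensional case, where the relevant microlocal restrictions can be computed explicitly, and then to propagate the answer through the K\"unneth decomposition, the quotient by $G$, and the passage to the $\FI$-semistable locus. For the one-dimensional input: by \Cref{cor:perv-cn-quiver} with $n=1$ we have $\Perv_\LL(\CC)\simeq\Perf_{A_1}$, and the coordinate cross $\LL\subset\rT^*\CC$ has exactly two proper conic open subsets --- the locus over $\CC^\times\subset\CC$, which sees only the zero-section component (vertex $\Psi$), and the punctured cotangent fiber $\rT^*_0\CC\setminus\{0\}$, which sees only the cotangent-fiber component (vertex $\Phi$). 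By \Cref{rem:dim1-microrestriction}, microlocal restriction of $\Perv_\LL(\CC)$ to either open set is a colocalization: its left adjoint is the fully faithful inclusion of the subcategory of $\Perf_{A_1}$ generated by the corresponding projective $P^\Psi$ (resp. $P^\Phi$). Hence the microlocal perverse category on each open set is $\Perf$ over $e_\Psi A_1 e_\Psi$, resp. $e_\Phi A_1 e_\Phi$, each of which one computes directly to be $\kk[m^{\pm}]$, i.e. $\Loc(\CC^\times)$, as expected.

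I would then pass to products. The coordinate cross in $\rT^*\CC^n$ is the external product of $n$ copies of the one-dimensional cross, so $\Perv_\LL(\CC^n)\simeq\Perv_\LL(\CC)^{\otimes n}$, $A_n\simeq A_1^{\otimes n}$, and microlocal restriction is monoidal for external products. A face $\Delta$ of $\cH_G(\FI)$, viewed inside $\fdv_\RR\simeq\RR^n$, lies in the relative interior of a unique face of the coordinate arrangement $\cH$, determined by a partial sign vector $s\in\{+,-,0\}^n$, and --- unwinding the moment-map description --- the open set $\LL_G^\Delta$ is the $\FI$-semistable part of the image in $\rT^*(\CC^n/G)$ of $\LL\cap\prod_i U_{s_i}$, where $U_{+}$ is ``over $\CC^\times$'', $U_{-}$ is ``the punctured cotangent fiber'', and $U_{0}$ is all of $\rT^*\CC$. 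Tensoring the one-dimensional colocalizations, microlocal restriction of $\Perv_\LL(\CC^n)$ to $\prod_i U_{s_i}$ is the colocalization onto the subcategory generated by $\{P^\sgn\mid\sgn_i=s_i\text{ whenever }s_i\neq 0\}$, so that category is $\Perf$ over $e_s A_n e_s$ with $e_s$ the sum of those vertex idempotents.

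Finally I would descend. Trivializing the $G$-monodromies sends $A_n$ to $A_G$ compatibly with idempotent truncation, so the microlocal perverse category on the image of $\prod_i U_{s_i}$ in $\rT^*(\CC^n/G)$ is $\Perf$ over $e_s A_G e_s$; restricting further to the $\FI$-semistable locus colocalizes onto the $\FI$-semistable vertices, so the idempotent $e_s$ gets replaced by the truncation $e_\Delta$ of the statement, and $e_\Delta A_G e_\Delta=e_\Delta A_G(\FI,0)e_\Delta$. This uses the combinatorial identity that a sign vector $\sgn$ is $\FI$-semistable and compatible with $s$ if and only if $\Delta\subset\overline{\cH^\sgn_G(\FI)}$ --- routine arrangement bookkeeping, resting only on the fact that the closure of a relatively open convex chamber meets the affine slice $\ffv_\RR(\FI)$ in the expected way. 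The asserted restriction functors, attached to inclusions $\LL_G^\Delta\subset\LL_G^{\Delta'}$ (that is, $\Delta'$ a face of $\overline\Delta$), are then the localization functors of the colocalizations just produced, which under these identifications are induced by the truncation bimodules $e_\Delta A_G(\FI,0)e_{\Delta'}$; this is formal from the adjunctions.

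The hard part is the one-dimensional microlocal input together with its compatibility with external products and with descent along the quotient by $G$: this is exactly where one needs the theory of microlocal (perverse) sheaves on Weinstein manifolds from \cite{NS20} and \cite{perverse-microsheaves}, and the fact that microlocal restriction to a conic open subset is a ``sheafy'' colocalization commuting with the relevant colimits and monoidal products. Everything downstream --- the K\"unneth bookkeeping, the idempotent truncations, and the arrangement combinatorics identifying $e_s$ with $e_\Delta$ --- is then straightforward. This is carried out in \cite{CGH}*{Corollary 5.3.5}.
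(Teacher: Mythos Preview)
Your proposal is correct and follows essentially the same approach as the paper: reduce to the one-dimensional microlocal restrictions of \Cref{rem:dim1-microrestriction}, pass to products via the K\"unneth decomposition $A_n\simeq A_1^{\otimes n}$, and then descend through the $G$-quotient and the $\FI$-semistable truncation. The paper's own argument is only the brief paragraph preceding the lemma, which says exactly this; the details you supply (the partial sign vector $s\in\{+,-,0\}^n$ indexing the product open set, and the combinatorial identification of $e_s$ with $e_\Delta$ after restricting to semistable vertices) are left implicit there and deferred to \cite{CGH}.
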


Second, we compute the limit \Cref{eq:muperv-limit}:
\begin{lemma}[\cite{GMW}*{Lemma C.2}]
    The natural functor
    \[
    \Mod_{A_G(\FI,0)}\to \varprojlim_{\Delta\in \bP(\FI)}\Mod_{e_{\Delta}A_G(\FI,0)e_{\Delta}}
    \]
    is an equivalence.
\end{lemma}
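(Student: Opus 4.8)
Write $A:=A_G(\FI,0)$ for brevity. The plan is to analyze the canonical comparison functor
\[
F\colon \Mod_A\longrightarrow \varprojlim_{\Delta\in\bP(\FI)}\Mod_{e_\Delta Ae_\Delta},\qquad M\longmapsto (e_\Delta M)_{\Delta},
\]
whose $\Delta$-component is the restriction $e_\Delta(-)=\Map_A(Ae_\Delta,-)$. Each such restriction preserves all limits and colimits and has left adjoint $Ae_\Delta\otimes_{e_\Delta Ae_\Delta}(-)$ and right adjoint $\Map_{e_\Delta Ae_\Delta}(e_\Delta A,-)$; consequently $F$ preserves colimits and admits a right adjoint $G$, computed termwise by $G((M_\Delta)_\Delta)=\varprojlim_\Delta\Map_{e_\Delta Ae_\Delta}(e_\Delta A,M_\Delta)$. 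It therefore suffices to prove that $F$ is fully faithful and that its essential image is everything.

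\textbf{Full faithfulness.} Since $1_A=\sum_C e_C$ over the chambers (maximal faces) $C$ of $\cH_G(\FI)$, the compact projectives $\{Ae_C\}_C$ generate $\Mod_A$ under colimits, so it is enough to check that
\[
e_CAe_{C'}=\Map_A(Ae_C,Ae_{C'})\ \xrightarrow{\ \sim\ }\ \varprojlim_{\Delta\in\bP(\FI)}\Map_{e_\Delta Ae_\Delta}(e_\Delta Ae_C,\,e_\Delta Ae_{C'})
\]
for all chambers $C,C'$. The structural input is that $e_\Delta Ae_C$ is a \emph{projective} $e_\Delta Ae_\Delta$-module --- in fact the indecomposable projective $P_{C(\Delta)}$ at the chamber $C(\Delta)\ge\Delta$ obtained from the sign vector of $C$ by replacing, along each hyperplane not containing $\Delta$, the sign of $C$ with that of $\Delta$. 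This reflects the product-over-coordinates structure of $A_G$, assembled from copies of $A_1=\kk Q[m^\pm]$ in which the canonical and variation maps are non-zero-divisors, and is part of the description of $A_G(\FI,0)$ recorded in \cite{CGH}; geometrically it says that microlocal restriction of a projective perverse schober to the open $\LL_G^\Delta$ stays projective. Granting this, $\Map_{e_\Delta Ae_\Delta}(e_\Delta Ae_C,e_\Delta Ae_{C'})\simeq e_{C(\Delta)}Ae_{C'}$ (concentrated in degree $0$, as both arguments are projective), so the right-hand side above becomes the homotopy limit over $\bP(\FI)$ of the explicit functor $\Delta\mapsto e_{C(\Delta)}Ae_{C'}$, whose transition maps (for $\Delta\le\Delta'$) are right multiplication by the elementary wall-crossing morphisms of $A$. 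The content of the lemma is then that this homotopy limit collapses to the $\Delta=C$ term $e_CAe_{C'}$ in degree $0$.

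\textbf{The combinatorial degeneration, and essential surjectivity.} I would prove the collapse by induction on the number of hyperplanes in $\cH_G(\FI)$: deleting one hyperplane $H$ presents $\bP(\FI)$ as the union of the two half-arrangement posets glued along the face poset of the restriction to $H$, and a Mayer--Vietoris argument reduces the homotopy limit over $\bP(\FI)$ to the limits over the three pieces; the base case $n=1$ is exactly the classical gluing of perverse sheaves on $(\CC,0)$ from their nearby and vanishing cycles along the canonical and variation maps (cf. \Cref{rem:dim1-microrestriction}). The two ingredients that make each inductive step go are (i) that the stars of faces form a good cover of the support of the arrangement, so the subposets over which $\Delta\mapsto e_{C(\Delta)}Ae_{C'}$ is locally constant are contractible, and (ii) that the wall-crossing multiplications are injective --- i.e.\ there is no monodromy obstruction to gluing. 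For essential surjectivity: the essential image of the fully faithful colimit-preserving $F$ is a localizing subcategory of $\varprojlim_\Delta\Mod_{e_\Delta Ae_\Delta}$; the projections to the fibers are jointly conservative and colimit-preserving, and since $e_\Delta Ae_C=P_{C(\Delta)}$ runs over all indecomposable projectives of $e_\Delta Ae_\Delta$ as $C$ varies, the objects $F(Ae_C)$ generate each fiber. A standard argument (or, again, the same induction on $\bP(\FI)$) then identifies a localizing subcategory surjecting onto a generating set of every fiber of a finite conservative diagram with the whole limit.

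\textbf{Main obstacle.} The real work is the degree-$0$ collapse of the homotopy limit in the second and third paragraphs: one must show that a homotopy limit over the (possibly high-dimensional) face poset of a diagram of torsion-free modules over a Laurent polynomial ring degenerates to its bottom cohomology, which requires simultaneously controlling the acyclicity of the relevant subposets of faces and the vanishing of the monodromy-type obstructions built into $A_G(\FI,0)$. Everything else --- the adjunctions, the reduction to generators, and the conservativity bookkeeping --- is formal.
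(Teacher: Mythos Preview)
The paper does not prove this lemma; it is quoted from \cite{GMW}*{Lemma C.2} and used as a black box in assembling \Cref{thm:cgh-microrestriction}. There is therefore no argument in the paper to compare yours against.

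On its own merits, your outline is reasonable and correctly locates where the content lies. The reduction of full faithfulness to the compact projective generators $Ae_C$ is standard, and the identification $e_\Delta Ae_C\simeq P_{C(\Delta)}$ is the right structural input (it is visible already in the $n=1$ picture of \Cref{ex:projectives-formula}: the wall-crossing elements of $A_1$ act by units or nonzerodivisors, so microlocal restriction of a projective stays projective). But the step you yourself flag as the ``main obstacle''---the collapse of the homotopy limit of $\Delta\mapsto e_{C(\Delta)}Ae_{C'}$ over $\bP(\FI)$ to its $\Delta=C$ term in degree $0$---is essentially the entire lemma, and you have only sketched a strategy (induction on the number of hyperplanes, Mayer--Vietoris on the face poset) without executing it. Your ingredients (i) contractibility of the locally-constant subposets and (ii) injectivity of wall-crossing are assertions that themselves require proof, and your essential-surjectivity paragraph defers to ``the same induction'' rather than giving an argument (note that a localizing subcategory hitting generators in every fiber of a limit diagram is not automatically the whole limit---some input from the poset structure is genuinely needed).

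In short: you have correctly diagnosed the architecture of a proof and isolated the hard step, but have not carried it out. Since the paper treats the statement as a citation, there is nothing further to match; completing your sketch would amount to reproving the cited result.
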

This lemma completes the proof of \Cref{thm:cgh-microrestriction}. By applying stop removal, we can complete the description of Betti category $\cO.$

\begin{definition}
    Let $e_{\cF\cap\cB^c}\in A_G(\FI,0)$ be the sum of idempotents corresponding to chambers in $\cH_G(\FI)$ which are $\mass$-bounded, and write $\cI_{\cF\cap \cB^c}\subset A_G(\FI,0)$ for the ideal $\cI_{\cF\cap \cB^c}:=A_G(\FI,0)e_{\cF\cap \cB^c}A_G(\FI,0)$.
    We write $A_G(\FI,\mass)$ for the algebra
    \begin{equation}\label{eq:definition-agtm-quotient}
    A_G(\FI,\mass):=A_G(\FI,0)/\cI_{\cF\cap \cB^c}.
    \end{equation}
\end{definition}

\begin{corollary}\label{cor:betti-cato-algebraic}
    There is an equivalence of categories
    \begin{equation}\label{eq:betti-category-o-is-agtm}
    \mu\Perv_{\LL_G(\FI,\mass)}(\LL_G(\FI,\mass))\simeq \Mod_{A_G(\FI,\mass)}.
    \end{equation}
\end{corollary}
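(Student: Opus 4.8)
The plan is to deduce the corollary from \Cref{thm:cgh-microrestriction} by a single application of stop removal, followed by a formal identification of the resulting quotient category. First I would record, as in \Cref{lem:stable-set} and the discussion around \Cref{eq:muperv-limit}, that $\LL_G(\FI,\mass)$ is the closed subset of $\LL_G(\FI,0)$ obtained by deleting the open union $\bigcup_{\sgn\in\cF\cap\cB^c}\LL_G^\sgn$ of its $\mass$-unbounded components. Deleting an open piece of a Lagrangian skeleton is precisely the operation of stop removal, so --- in the perverse incarnation of the microsheaf formalism of \cites{NS20,perverse-microsheaves,CGH} --- this exhibits $\mu\Perv_{\LL_G(\FI,\mass)}(\LL_G(\FI,\mass))$ as the Verdier quotient of $\mu\Perv_{\LL_G}(\LL_G(\FI,0))$ by the thick subcategory generated by the cocores (linking disks) $P_G^\sgn$ of the deleted components, $\sgn\in\cF\cap\cB^c$. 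If one prefers not to invoke a general stop-removal theorem, the same conclusion can be reached directly by imitating the descent argument behind \Cref{eq:muperv-limit}: present $\mu\Perv_{\LL_G(\FI,\mass)}(\LL_G(\FI,\mass))$ as the limit of the categories $\mu\Perv_{\LL_G}(\LL_G^\Delta)$ over the subposet of $\bP(\FI)$ consisting of faces lying in the closure of the $\mass$-bounded chambers, and then feed this into \cite{CGH}*{Corollary 5.3.5} and \cite{GMW}*{Lemma C.2}.

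The second step is to identify this quotient with $\Mod_{A_G(\FI,\mass)}$. Under \Cref{thm:cgh-microrestriction} the cocore $P_G^\sgn$ is the projective module $A_G(\FI,0)e_\sgn$, so I must compute the Verdier quotient of $\Perf_{A_G(\FI,0)}$ by the thick subcategory generated by $\{A_G(\FI,0)e_\sgn : \sgn\in\cF\cap\cB^c\}$, i.e.\ by $A_G(\FI,0)e_{\cF\cap\cB^c}$. This is a standard algebraic fact: writing $e:=e_{\cF\cap\cB^c}$, the localizing subcategory $\mathrm{Loc}(A_G(\FI,0)e)\subset\Mod_{A_G(\FI,0)}$ is generated by a compact object of the ambient category, so the localization is smashing, and its right orthogonal (computed by $\mathrm{RHom}_{A_G(\FI,0)}(A_G(\FI,0)e,-)=e(-)$) is the full subcategory of modules killed by $e$ --- which, via restriction of scalars along $A_G(\FI,0)\to A_G(\FI,0)/A_G(\FI,0)eA_G(\FI,0)=A_G(\FI,\mass)$, is exactly $\Mod_{A_G(\FI,\mass)}$. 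Passing to compact objects (Thomason--Neeman) then identifies the Verdier quotient of $\Perf_{A_G(\FI,0)}$ with $\Perf_{A_G(\FI,\mass)}$ --- the small category denoted $\Mod_{A_G(\FI,\mass)}$ in the statement --- the quotient functor being $(-)\otimes^L_{A_G(\FI,0)}A_G(\FI,\mass)$. Combined with the first step this yields \Cref{eq:betti-category-o-is-agtm}.

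The part I expect to be the real obstacle is the stop-removal input used in the first step: one must upgrade stop removal from a statement about microsheaves to one about microlocal \emph{perverse} sheaves, checking that the Verdier quotient carries a perverse t-structure with the expected heart and that the microstalk functors descend to it. In the paper this is exactly what is supplied by the companion work \cite{CGH}; granting it, everything else here is the formal computation above together with the combinatorics already encoded in \Cref{thm:cgh-microrestriction}. A minor additional point to check is that strictly-semistable phenomena --- present when $\cH_G(\FI)$ fails to be unimodular, so that $\htvar_G(\FI)$ is only a Deligne--Mumford stack --- do not disturb the identification $P_G^\sgn\leftrightarrow A_G(\FI,0)e_\sgn$, but this is already contained in \Cref{thm:cgh-microrestriction} and \Cref{cor:perv-cn-quiver}.
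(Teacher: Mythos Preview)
Your approach is the same as the paper's: apply stop removal to pass from \Cref{thm:cgh-microrestriction} on $\LL_G(\FI,0)$ to the closed sub-Lagrangian $\LL_G(\FI,\mass)$, then identify the resulting quotient category with modules over $A_G(\FI,\mass)=A_G(\FI,0)/\cI_{\cF\cap\cB^c}$. The paper gives no proof beyond the phrase ``by applying stop removal,'' so your write-up is in fact more detailed.

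There is one point where your argument is looser than you realize, and it is not the one you flag. In your second step you assert that the right orthogonal $\{M:eM\simeq 0\}\subset\Mod_{A_G(\FI,0)}$ is, ``via restriction of scalars,'' exactly $\Mod_{A_G(\FI,\mass)}$. At the derived level this identification holds only when the ideal $\cI_{\cF\cap\cB^c}$ is \emph{stratifying} in the sense of \cite{CPS}; otherwise restriction of scalars $\Mod_{A/AeA}\to\Mod_A$ need not be fully faithful, and the Verdier quotient $\Perf_A/\langle Ae\rangle$ need not agree with $\Perf_{A/AeA}$. This is precisely the issue the paper addresses in the Remark following the corollary: the stratifying condition is established in \cite{CGH}, but the paper also observes that it is not strictly needed for the corollary as stated, because $\mu\Perv$ is \emph{by definition} the bounded derived category of the abelian heart, and at the level of abelian hearts the identification ``$A$-modules killed by $e$'' $=$ ``$A/AeA$-modules'' is tautological. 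Your stated ``real obstacle'' (compatibility of stop removal with the perverse t-structure) is related, but the sharper formulation is the stratifying condition on the ideal; once you know the heart is $\mathrm{mod}\text{-}A_G(\FI,\mass)$, the derived statement is immediate from the paper's conventions.
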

\begin{remark}
    In fact, in \cite{CGH} is proved the significantly stronger statement that the right-hand side of \Cref{eq:betti-category-o-is-agtm} is equivalent to the whole category $\muSh_{\LL_G(\FI,\mass)}(\LL_G(\FI,\mass)).$ This is not automatic from the above calculations, since a priori the quotient \Cref{eq:definition-agtm-quotient} may have to be taken in an appropriately derived sense. To rule this out, it is necessary to check that the ideal $\cI_{\cF\cap \cB^c}$ is {\em stratifying} in the sense of \cite{CPS}. However, we will not need this stronger statement here, as we are only interested in the heart of the perverse t-structure.
\end{remark}

We can give an explicit presentation of the algebra $A_G(\FI,\mass)$ as follows:

\begin{lemma}\label{lem:quiver-algebra}
    Let $Q_\FI$ be the quiver whose vertices are indexed by $\sgn\in \sgvect$ which are $\FI$-stable, and whose arrows come in pairs
    \begin{equation}\label{eq:vertex-pair}
     u_i:\sgn \rightleftarrows \sgn\cup\{i\}:v_i
    \end{equation}
for each pair of vertices of the form $(\sgn,\sgn\cup\{i\}).$

Then $A_G(\FI,\mass)$ is equivalent to the algebra $\kk Q\otimes\kk[\pi_1F]/I,$ where $I$ is the ideal generated by the following relations:
\begin{enumerate}
    \item $e_\sgn = 0$ for each $\sgn$ which is $\mass$-unbounded.
    \item For each square in $Q$ of the form
    \[
    \begin{tikzcd}
        \sgn    \arrow[r, shift right] \arrow[d, shift right] & 
        \sgn'   \arrow[l, shift right] \arrow[d, shift right]\\
        \sgn''  \arrow[r, shift right] \arrow[u, shift right] & 
        \sgn''' \arrow[l, shift right] \arrow[u, shift right],
    \end{tikzcd}
    \]
    the two length-2 paths from a vertex to its diagonal opposite commute.
    \item For each pair of vertices as in \Cref{eq:vertex-pair}, we have $1-u_iv_i = \overline{m}_i e_{\alpha\cup\{i\}},$ $1-v_iu_i = \overline{m}_ie_{\alpha},$ where $\overline{m}_i$ is the image in $\kk[\pi_1F]$ of the $i$th generator in $\kk[\pi_1D]\simeq \kk[\ZZ^n]\simeq\kk[m_1^\pm,\ldots,m_n^\pm].$
\end{enumerate}
\end{lemma}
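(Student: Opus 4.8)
The plan is to compute $A_G(\FI,\mass)$ by unwinding its definition --- $A_G(\FI,\mass)=A_G(\FI,0)/\cI_{\cF\cap\cB^c}$, $A_G(\FI,0)=e_\cF A_G e_\cF$, $A_G=A_n\otimes_{\CC[G^\vee]}\CC$ --- and carrying the quiver presentation of $A_n$ from \Cref{cor:perv-cn-quiver} through each of these three operations. First I would record the presentation of $A_n$ itself: since $A_n=(\kk Q)^{\otimes n}[m_1^\pm,\dots,m_n^\pm]$ and a tensor product of path algebras is the path algebra of the product quiver modulo the relations expressing that each elementary square commutes, $A_n$ is the path algebra of the $n$-cube quiver (vertices $\sgvect$, arrows $u_i:\sgn\rightleftarrows\sgn\cup\{i\}:v_i$) subject to relation (2) of the lemma together with the relations $1-u_iv_i=m_i e_{\sgn\cup\{i\}}$, $1-v_iu_i=m_i e_{\sgn}$ obtained by writing out $m_i:=1-(u_iv_i+v_iu_i)$ and formally inverting this central element. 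Equivalently, over the central subring $\CC[\pi_1 D]=\CC[m_1^\pm,\dots,m_n^\pm]$ each Hom space $e_{\sgn'} A_n e_{\sgn}$ is free of rank one on any monotone path from $\sgn$ to $\sgn'$ (i.e.\ a path of length $|\sgn\triangle\sgn'|$, the number of coordinates in which the two sign vectors differ), and all such paths agree up to a unit by relation (2).

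Next I would pass to $A_G$. The structure map $\CC[G^\vee]\to Z(A_n)$ is the inclusion $\CC[\fg_\ZZ]\hookrightarrow\CC[\fd_\ZZ]=\CC[\pi_1 D]\subset Z(A_n)$ induced by the cocharacter inclusion $\fg_\ZZ\hookrightarrow\fd_\ZZ$ of \Cref{eq:basic-exact-sequence}; since $\fd_\ZZ/\fg_\ZZ=\frf_\ZZ$ is free, $\CC[\pi_1 D]\otimes_{\CC[\fg_\ZZ]}\CC=\CC[\frf_\ZZ]=\CC[\pi_1 F]$. So $A_G$ inherits the presentation of $A_n$ verbatim, with $\CC[\pi_1 F]$ in place of $\CC[\pi_1 D]$ and $m_i$ replaced by its image $\overline{m}_i$ --- in particular $e_{\sgn'} A_G e_{\sgn}$ is free of rank one over $\CC[\pi_1 F]$ on a monotone path --- and this is exactly relations (2) and (3), over all of $\sgvect$.

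The crux is the idempotent truncation $A_G(\FI,0)=e_\cF A_G e_\cF$, where $\cF$ is the set of $\FI$-feasible sign vectors. I would argue that this equals the path algebra of the subquiver $Q_\FI$ on the feasible vertices, modulo (2) and (3), with no extra generators or relations. For the ``no extra generators'' part, the key geometric input is that for feasible $\sgn,\sgn'$ one can find a monotone path from $\sgn$ to $\sgn'$ lying entirely in $Q_\FI$: taking general-position points $p\in\cH_G^\sgn(\FI)$, $q\in\cH_G^{\sgn'}(\FI)$ in the affine slice $\ffv_\RR(\FI)$, the straight segment $[p,q]$ crosses each hyperplane $\{z_i=0\}$ with $i\in\sgn\triangle\sgn'$ exactly once, and the sign vectors of its successive open subsegments give such a path, which generates $e_{\sgn'} A_G e_{\sgn}$ over $\CC[\pi_1 F]$ by the previous step. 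For the ``no extra relations'' part I would give a normal-form argument: using (3) to turn any $u_iv_i$- or $v_iu_i$-backtrack into a power of $\overline{m}_i$, and (2) to reorder, every path in $Q_\FI$ reduces to a $\CC[\pi_1 F]$-multiple of a chosen feasible monotone path between its endpoints, so the presented algebra is free of rank $|\cF|^2$ over $\CC[\pi_1 F]$, matching $e_\cF A_G e_\cF$; hence the tautological surjection between them is an isomorphism.

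Finally, since $\cI_{\cF\cap\cB^c}$ is the two-sided ideal of $A_G(\FI,0)$ generated by the idempotents $e_\sgn$ with $\sgn$ feasible and $\mass$-unbounded, passing to $A_G(\FI,\mass)=A_G(\FI,0)/\cI_{\cF\cap\cB^c}$ simply imposes relation (1); this also kills the arrows incident to those vertices and restricts (2) and (3) to the survivors, with no new relations. That gives the asserted presentation (and, via \Cref{cor:betti-cato-algebraic}, the quiver description of Betti category $\cO$). The hard part will be the third step, and within it the normal-form claim: one has to know that the set of topes of the affine arrangement $\cH_G(\FI)$ is ``convex along segments'' so that feasible monotone paths exist, and then verify that (2) and (3) really do cut the feasible subquiver algebra down to the expected rank over $\CC[\pi_1 F]$ rather than leaving something bigger --- reordering moves that would naively pass through infeasible vertices are unavailable, so the reduction to normal form must be organized to stay inside $Q_\FI$.
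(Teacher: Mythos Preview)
The paper does not prove this lemma; it is stated as an explicit presentation following directly from the chain of definitions $A_n\rightsquigarrow A_G\rightsquigarrow A_G(\FI,0)\rightsquigarrow A_G(\FI,\mass)$, with no argument given. Your four-step unwinding is exactly that computation, and it is correct.

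You have also correctly isolated the only substantive point, in step~3. Surjectivity via straight-line segments in $\ffv_\RR(\FI)$ is fine. For injectivity, the worry you raise is real but resolves cleanly: the chambers, walls, and codimension-2 flats of $\cH_G(\FI)$ are dual to a regular CW decomposition of the contractible affine space $\ffv_\RR(\FI)$, so the $2$-complex with vertices the feasible $\sgn$, edges the feasible adjacent pairs, and $2$-cells the feasible squares is simply connected. Consequently any closed walk $\gamma$ in $Q_\FI$ based at $\sgn$ can be contracted using only feasible instances of (2) together with backtrack eliminations; translating each backtrack elimination via (3) yields $[\gamma]=\prod_i(1-\overline{m}_i)^{k_i}\,e_\sgn$ in the presented algebra, where $k_i$ is the number of $u_i$-steps in $\gamma$ --- a quantity independent of the chosen contraction and agreeing with the value in $A_G$. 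Thus $e_\sgn B e_\sgn\cong\CC[\pi_1 F]$, and the off-diagonal pieces follow by composing with a fixed feasible monotone path. This is the combinatorics underlying the quiver presentation in \cite{BLPW10}, to which the paper's subsequent remark alludes.
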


\begin{remark}
    The algebra of \Cref{lem:quiver-algebra} may be most efficiently described in the language of the hyperplane arrangement $\cH_G(\FI)$ of \Cref{defn:hyperplane-arrangement}. The vertices $\sgn$ which are $\FI$-stable correspond to chambers in $\cH_G(\FI),$ and pairs of $\FI$-stable vertices $\alpha,\alpha\cup\{i\}$ are separated by the $i$th hyperplane in the arrangement. Paths in the quiver $Q$ now correspond to paths between chambers in $\cH_G(\FI),$ and the relation (2) identifies the various shortest paths between any pair of chambers.
\end{remark}

\begin{example}\label{ex:betti-category-o}
    Let $n=2$ and $G=\CC^\times\hookrightarrow(\CC^\times)^2$ be the kernel of the multiplication map 
    \[
    \begin{tikzcd}
        (\CC^\times)^2\arrow[r,"m"]& \CC^\times.
    \end{tikzcd}
    \]
    Then $\Perv_{\LL_G}(\CC^2/G)$ is the category of diagrams
    \[
    \begin{tikzcd}
        V^{\{1\}} \arrow[d, "v_1", shift left] \arrow[r, "u_2", shift left] & 
        V^{\{1,2\}} \arrow[d, "v_1", shift left] \arrow[l, "v_2", shift left] \\
        V^\emptyset \arrow[u, "u_1", shift left] \arrow[r, "u_2", shift left] &
        V^{\{2\}}, \arrow [u, "u_1", shift left] \arrow[l, "v_2", shift left]
    \end{tikzcd}
    \]
    where each vector space $V^\sgn$ is equipped with an automorphism $M_\sgn,$ with the relations 
    \begin{align*}
        1-u_1v_1 &= M, & 1-u_2v_2 &= M^{-1},\\
        1-v_1u_1 &= M, & 1-v_2u_2 &= M^{-1}.
    \end{align*}
    For an appropriate choice of parameters $\FI,\mass,$ the component $\LL_G^{\{2\}}$ becomes $\FI$-unstable, while the smooth points of $\LL_G^{\emptyset}$ become $\mass$-unbounded. Now $\mu\Perv(\LL_G(\FI,\mass))$ is the quotient of $\Perv_{\LL_G}(\CC^2/G)$ by the simple object at $V^{\{2\}}$ and the projective object at $V^{\emptyset}.$ This has the effect of forgetting $V^{\{2\}}$ and setting $V^{\emptyset}$ to 0, so that $\mu\Perv(\LL_G(\FI,\mass))$ is the category of diagrams
    \[
    \begin{tikzcd}
        V^{\{1\}} \arrow[r, "u_2", shift left] \arrow[d, "v_1", shift left]& 
        V^{\{1,2\}}  \arrow[l, "v_2", shift left] \\
        0, \arrow [u, "u_1", shift left] &
        { }
    \end{tikzcd}
    \]
    with monodromies at $V^{\{1\}}$ and $V^{\{1,2\}}$ and the same relations as above (except those involving the bottom-right corner). Setting the bottom-left corner to 0, together with the two relations at the upper-left vertex, forces $v_2u_2=0,$ which also forces the final relation (invertibility of $u_2v_2$), and we conclude that $\mu\Perv(\LL_G(\FI,\mass))$ is the category of diagrams of vector spaces
    \[
    \begin{tikzcd}
        V^{\{1\}} \arrow[r, "u_2", shift left] & 
        V^{\{1,2\}} \arrow[l, "v_2", shift left]
    \end{tikzcd}
    \]
    with the single relation $v_2u_2=0.$
    This is the usual description of the BGG category $\cO$ for $SL(2).$
\end{example}

We may reframe \Cref{cor:betti-cato-algebraic} as giving a presentation of Betti category $\cO$ as a subquotient of the category $\Perv_{\LL_G}(\CC^n/G)$:
\begin{corollary}\label{cor:betti-cato-as-subquotient}
    There is an equivalence
    \[
    \muPerv_{\LL_G(\FI,\mass)}(\LL_G(\FI,\mass))\simeq 
    \frac{\langle P_G^\sgn\mid \sgn\text{ $\FI$-semistable} \rangle}{\langle P_G^\beta \mid \beta \text{ $\FI$-semistable, $\mass$-unbounded}\rangle}.
    \]
\end{corollary}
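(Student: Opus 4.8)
The plan is to deduce this statement directly from \Cref{cor:betti-cato-algebraic} by translating the two operations appearing in the subquotient — generating a full stable subcategory, and passing to a Verdier quotient — into the corresponding ring-theoretic operations on $A_G$. Throughout I would work with the small (compact-object) versions of all categories, so that \Cref{cor:betti-cato-algebraic} reads $\muPerv_{\LL_G(\FI,\mass)}(\LL_G(\FI,\mass))\simeq\Perf_{A_G(\FI,\mass)}$, and it suffices to identify the right-hand side of the claimed equivalence with $\Perf_{A_G(\FI,\mass)}$, matching generators.

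First I would treat the numerator. Under the equivalence $\Perv_{\LL_G}(\CC^n/G)\simeq\Perf_{A_G}$ of \Cref{cor:perv-cn-quiver} and the corollary following it, the object $P_G^\sgn$ is the projective module $A_G e_\sgn$ (\Cref{defn:betti-1cat-projectives}), so $\langle P_G^\sgn\mid\sgn\text{ $\FI$-semistable}\rangle$ is the thick subcategory generated by the single projective $P_G^\cF=A_G e_\cF$, where $e_\cF=\sum_{\sgn\text{ $\FI$-ss}}e_\sgn$. Since $\End_{A_G}(A_G e_\cF)=e_\cF A_G e_\cF=A_G(\FI,0)$ is concentrated in degree $0$, the standard Morita argument for idempotents (via the functor $e_\cF(-)$) identifies this subcategory with $\Perf_{A_G(\FI,0)}$, sending $P_G^\sgn\mapsto A_G(\FI,0)e_\sgn$; this is essentially the remark after \Cref{thm:cgh-microrestriction} and re-proves \Cref{cor:1cat-kirwansurj}.

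Then I would treat the quotient: I want to identify the Verdier quotient of $\Perf_{A_G(\FI,0)}$ by the thick subcategory generated by $\{A_G(\FI,0)e_\beta\mid\beta\text{ $\FI$-ss and $\mass$-unbounded}\}$ with $\Perf_{A_G(\FI,\mass)}$. Passing to ind-completions, this quotient is the Bousfield localization of $\Mod_{A_G(\FI,0)}$ away from the localizing subcategory generated by the projective $A_G(\FI,0)e_{\cF\cap\cB^c}$; its image is the right-orthogonal $\{M\mid e_{\cF\cap\cB^c}M\simeq 0\}$, which on the heart of the perverse $t$-structure is the category of modules over $A_G(\FI,0)/\cI_{\cF\cap\cB^c}=A_G(\FI,\mass)$ of \Cref{eq:definition-agtm-quotient}. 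Comparing perverse $t$-structures and returning to compact objects then identifies the quotient with $\Perf_{A_G(\FI,\mass)}$, and combining with \Cref{cor:betti-cato-algebraic} completes the argument; chasing the generators through the two equivalences above matches each $P_G^\sgn$ with the intended projective.

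The hard part — the one place where something must genuinely be checked rather than formally manipulated — is the penultimate step: a priori the categorical Verdier quotient only computes modules over the \emph{derived} quotient of $A_G(\FI,0)$ by the two-sided idempotent ideal $\cI_{\cF\cap\cB^c}$, and its coincidence with the naive quotient $A_G(\FI,\mass)$ is precisely the assertion that $\cI_{\cF\cap\cB^c}$ is stratifying in the sense of \cite{CPS}. As noted after \Cref{cor:betti-cato-algebraic}, this is established in \cite{CGH}; alternatively, since the statement at hand only compares hearts of the perverse $t$-structures, one can sidestep the derived-quotient subtlety entirely.
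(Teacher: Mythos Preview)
Your proposal is correct and matches the paper's approach: the corollary is stated there as a direct reformulation of \Cref{cor:betti-cato-algebraic} with no separate proof, and your two steps—identifying the numerator with $\Perf_{A_G(\FI,0)}$ via the idempotent $e_\cF$ (exactly the remark after \Cref{thm:cgh-microrestriction}), then the Verdier quotient with $\Perf_{A_G(\FI,\mass)}$—are precisely what that reformulation unpacks to. Your final paragraph correctly isolates the one substantive point, namely that identifying the Verdier quotient with $\Perf$ of the ordinary ring quotient needs the stratifying property of $\cI_{\cF\cap\cB^c}$ from \cite{CGH}; note only that the ``sidestep via hearts'' you mention is what justifies \Cref{cor:betti-cato-algebraic} itself (whose left side is by definition the derived category of a heart), but it does not directly handle the corollary as stated, since the right-hand side there is a Verdier quotient of stable categories rather than an abelian subquotient.
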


Finally, we note that although in \Cref{subsec:derham-cato} we described $\cO^{\dR}$ in terms of the endomorphism algebra of simples, it may also be described in terms of the endomorphism algebra of projectives. (That the projectives may be described in terms of the simples is a strong finiteness property of category $\cO.$) The resulting algebra, described in \cite{BLPW10}*{\S 3}, is a quiver algebra of the same form as $A^\Bet.$
The following is a microlocal form of the Riemann-Hilbert correspondence.
\begin{proposition}[\cite{CGH}*{Corollary 1.2.2}]\label{prop:betti-is-derham-cato}
    Let $\kk=\CC.$ Then there is an equivalence
    \begin{equation}\label{eq:catsO-Riemann-Hilbert}
    \cO^\dR(\htvar_G(\FI),\LL_G(\FI,\mass))\simeq \cO^\Bet(\htvar_G(\FI),\LL_G(\FI,\mass))
    \end{equation}
    between the Betti and de Rham categories $\cO.$ 
\end{proposition}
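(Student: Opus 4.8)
The plan is to reduce the statement to the classical Riemann--Hilbert correspondence on the \emph{stack} $\CC^n/G$, using that the de Rham and Betti categories $\cO$ are produced from the ambient category of perverse sheaves on $\CC^n/G$ --- equivalently, regular holonomic D-modules --- by the same two-step recipe: restrict to the $\FI$-semistable locus via categorical Kirwan surjectivity, then delete the $\mass$-unbounded components by a Serre quotient.

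\textbf{Step 1: ambient Riemann--Hilbert.} For $\kk=\CC$, the $G$-equivariant Kashiwara--Mebkhout equivalence identifies regular holonomic D-modules on $\CC^n/G$, constructible along the toric stratification, with $\Perv_{\LL_G}(\CC^n/G)$. This equivalence is $t$-exact for the perverse t-structures and $\CC[m_1^\pm,\dots,m_n^\pm]$-linear, so under the presentation $\Perv_{\LL_G}(\CC^n/G)\simeq\Perf_{A_G}$ following \Cref{cor:perv-cn-quiver} it sends the corepresenting object $P^\sgn_G$ to the corresponding projective D-module --- the $!$-extension along $X^\sgn_G\hookrightarrow\CC^n/G$ of the universal rank-one monodromic local system --- and hence matches the simple objects $L^\sgn$; moreover it carries $\Dmod_{\LL'_G}(\CC^n/G)$ to $\Perv_{\LL'_G}(\CC^n/G)$ for any union of components $\LL'_G\subseteq\LL_G$, since the relevant support and singular-support conditions are topological.

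\textbf{Step 2: the two constructions coincide.} On the Betti side, the passage from $\Perv_{\LL_G}(\CC^n/G)$ to $\cO^\Bet$ consists of (i) microlocal restriction to $\LL_G(\FI,0)$, which by \Cref{cor:1cat-kirwansurj} and \Cref{thm:cgh-microrestriction} is the full subcategory generated by the $P^\sgn_G$ with $\sgn$ $\FI$-semistable, followed by (ii) the Serre quotient by $\langle P^\beta_G\mid\beta\text{ $\FI$-semistable and $\mass$-unbounded}\rangle$ (\Cref{cor:betti-cato-as-subquotient}). On the de Rham side, \cite{Web-gencat-O}, building on categorical Kirwan surjectivity \cite{BPW}*{Theorem 5.31}, runs the same recipe: restriction along the open immersion $\htvar_G(\FI)=\mu_G^{-1}(0)^{\FI-ss}/G\hookrightarrow\mu_G^{-1}(0)/G$ has a fully faithful left adjoint with the analogous image, and the remaining $\mass$-bounded-but-unstable D-modules are quotiented out as in \Cref{eq:derham-quotient}. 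Since Riemann--Hilbert commutes with restriction to an open substack and with Serre quotients by subcategories of objects with prescribed (singular) support, Step 1 propagates through both steps and produces the equivalence $\cO^\dR\simeq\cO^\Bet$, compatibly with the distinguished simples $L^\sgn$ and hence with their projective covers $P^\sgn$.

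\textbf{The main obstacle} is to make step (i) and its de Rham counterpart agree under Riemann--Hilbert: categorical Kirwan surjectivity for $\cO^\dR$ is proved by deformation-quantization methods (quantum Hamiltonian reduction of microdifferential operators), whereas the microlocal restriction defining $\cO^\Bet$ is purely topological, so one must check that the two fully faithful left adjoints are exchanged by Riemann--Hilbert. Concretely this can be settled by comparison of presentations: after Step 1 the two endomorphism algebras of the projective generators --- the quiver algebra $A_G(\FI,\mass)$ of \Cref{lem:quiver-algebra} and the algebra computed in \cite{BLPW10}*{\S 3} --- are the same quiver with the same commutation and monodromy relations once one translates between feasible bounded sign vectors and chambers of $\cH_G(\FI)$, and this matching concludes the proof. (A minor point: $\cO^\dR$ as defined is $\SS$-equivariant, hence $\ZZ$-graded, while the microlocal perverse side carries no a priori grading; the ungraded statement only requires forgetting this grading, and a graded refinement would call for an extra weight structure on microlocal perverse sheaves.)
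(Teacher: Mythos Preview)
The paper does not prove this statement; it is imported from \cite{CGH}, and the only argument offered in the present text is the sentence preceding the proposition: the endomorphism algebra of projectives in $\cO^{\dR}$, computed in \cite{BLPW10}*{\S 3}, is ``a quiver algebra of the same form as'' $A_G(\FI,\mass)$. That is, the paper's route is precisely the direct algebra comparison you invoke at the end of your argument to resolve the ``main obstacle.''

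Your Steps 1--2 attempt something more conceptual --- tracking ambient Riemann--Hilbert through the two-step subquotient --- but Step 2 does not quite match the paper's de Rham presentation. In \Cref{eq:derham-quotient} the operations are ordered oppositely to the Betti side: one first restricts to $\mass$-bounded singular support and then quotients by $\FI$-unstable objects, whereas \Cref{cor:betti-cato-as-subquotient} first passes to $\FI$-semistable projectives and then quotients by the $\mass$-unbounded ones; moreover \Cref{eq:derham-quotient} is only an embedding, so one must still identify the image. (Your phrase ``the remaining $\mass$-bounded-but-unstable D-modules are quotiented out'' does not parse once you have already restricted to the $\FI$-semistable locus.) Reconciling the two orderings and pinning down the image is exactly what forces the fallback to the explicit quiver-algebra comparison --- so in the end your argument and the paper's hint converge on the same verification.
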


\section{Microlocal perverse schobers}
Our A-side 2-category $\cO$ will be a category of microlocal perverse schobers on the skeleton $\Oskel,$ categorifying the presentation of $\cO^\Bet$ as a category of microlocal perverse sheaves. Unfortunately, microlocal perverse schobers do not yet have a definition. 
However, \Cref{cor:betti-cato-as-subquotient} tells us that microlocal perverse sheaves on $\Oskel$ can be defined as a subquotient of the category of perverse sheaves on $\CC^n/G,$ described in terms of the projective generators of that category. 
We will therefore define microlocal perverse sheaves of categories on $\Oskel$ as the analogous subquotient in the 2-categorical setting. To check that this definition is reasonable, we will show that the Kirwan surjectivity result of \Cref{cor:1cat-kirwansurj} continues to hold in this setting.

\subsection{Spherical functors and perverse schobers}
We begin by recalling that 2-category from \cite{GMH}. We first need the notion of a spherical adjunction. 
\begin{definition}[\cite{Schanuel-Street}]
    The \bit{free adjunction} is the (discrete, i.e., $(2,2)$ rather than $(\infty,2)$) 2-category $\Adj$ generated by 1-morphisms
    \begin{equation}\label{eq:universal-adjunction}
     \begin{tikzcd}[column sep=2cm]
     \Phi
     \arrow[yshift=0.9ex]{r}{L}
     \arrow[leftarrow, yshift=-0.9ex]{r}[swap]{R}
     &
     \Psi
     \end{tikzcd}
    \end{equation}
    and the 2-morphisms $\id_\Phi\to RL, LR\to \id_\Psi$, with the relations
    that the compositions $R\to RLR\to R$ and $L\to LRL\to L$ are the identity maps $\id_R$ and $\id_L,$ respectively.
    \end{definition}
    \begin{theorem}[\cites{RV-adj}]\label{thm:adj-corepresents-adjunctions}
        The 2-category $\Adj$ corepresents adjunctions in 2-categories.
    \end{theorem}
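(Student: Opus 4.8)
The plan is to follow the two-step route of Schanuel--Street and Riehl--Verity: first establish the universal property in the strict $(2,2)$-categorical world, where it is essentially the unwinding of a generators-and-relations presentation, and then upgrade to the homotopy-coherent $(\infty,2)$-categorical world, where the real content is a contractibility (``rigidity of adjoints'') statement. Throughout, ``adjunction in $\cC$'' should be read as adjunction data $(L,R,\eta,\epsilon)$ in the homotopy $2$-category $\mathrm{h}_2\cC$ satisfying the triangle identities, so the assertion is that $\Fun(\Adj,\cC)$ is equivalent to the space (or $\infty$-category) of such data.

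For the strict part, I would begin by recording the explicit combinatorial model of $\Adj$. As a strict $2$-category it has two objects $\Phi,\Psi$; its four hom-categories are the augmented simplex category $\bDelta_{+}$ (for $\Adj(\Phi,\Phi)$), its opposite (for $\Adj(\Psi,\Psi)$), and two variants of $\bDelta_{+}$ consisting of nonempty finite ordinals with a marked extremal element (for the two ``mixed'' hom-categories); horizontal composition is ordinal sum; the generating $1$-morphisms $L,R$ are the objects $[0]$ in the mixed hom-categories; and $\eta,\epsilon$ are the unique maps out of the empty ordinal $[-1]\to[0]$. From this description one reads off that the atomic cells of $\Adj$ --- the generating $1$- and $2$-morphisms and the relations among them --- are in bijection with exactly the constituent data of an adjunction and the two triangle identities, so that a strict $2$-functor $\Adj\to\cD$ into a strict $2$-category is precisely an adjunction in $\cD$. (Equivalently: $\bDelta_{+}$ is the walking monad, and the mixed hom-categories plus the composition functors rigidify a monad on $F\Phi$ into an adjunction generating it.) At this level the statement is bookkeeping; this is \cite{Schanuel-Street}.

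For the $(\infty,2)$-categorical upgrade I would factor the inclusion of the ``minimal data'' into $\Adj$ as a tower of sub-computads --- roughly, the walking right adjoint equipped with a unit $\hookrightarrow$ the walking adjunction data satisfying one triangle identity $\hookrightarrow \Adj$. The heart of \cite{RV-adj} is a rigidity lemma: each of these inclusions is \emph{parental}, meaning $\Adj$ is built from the smaller computad by attaching nondegenerate atomic arrows whose boundaries already lie in the smaller piece, and restriction of $\Fun(-,\cC)$ along a parental inclusion is a trivial fibration. Concretely, extending over each successively attached cell is a horn-filling problem whose space of solutions is forced to be contractible because the lower-dimensional data already exhibits an absolute lifting diagram (this is where the combinatorics of the simplicial set of $n$-arrows of $\Adj$, the ``strictly undulating squiggle'' calculus, is used). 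Granting the lemma, restriction along the composite inclusion is a trivial fibration, hence an equivalence, and its codomain is by construction the space of tuples $(L,R,\eta,\epsilon)$ in $\mathrm{h}_2\cC$ satisfying a triangle identity --- which, by the same absolute-lift rigidity, is equivalent to full adjunction data. This yields the corepresentability statement.

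The main obstacle is precisely that combinatorial rigidity lemma: verifying that the cells one must attach to pass from the triangle-identity-truncated data up to all of $\Adj$ organize into inner-horn-type extension problems with contractible solution spaces. Everything else --- the strict presentation, the identification of the codomain of the restriction functor with the naive notion of adjunction, and the translation between ``space of'' and ``$\infty$-category of'' adjunctions --- is formal. In the present paper this theorem is used as a black box, so in practice one simply cites \cite{RV-adj} (resting on \cite{Schanuel-Street}); the sketch above is the route one would take to reprove it, and is the one carried out in those references.
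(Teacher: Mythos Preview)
Your proposal is correct and matches the paper's treatment: the paper does not supply its own proof of this theorem but simply cites \cite{RV-adj} (building on \cite{Schanuel-Street}) as a black box, exactly as you note in your final paragraph. Your sketch of the Schanuel--Street presentation followed by the Riehl--Verity parental-subcomputad rigidity argument is an accurate summary of the route taken in those references.
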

    \begin{definition}
    Let $\StAdj:=\Sigma^{(\infty,2)}\Adj$ be the $\kk$-linear stabilization of the 2-category $\Adj.$ The {\em universal (co)twist} is the 1-morphism
    \[
    T_\Phi:=\fib (\id_\Phi\to RL) \qquad (\text{resp., } T_\Psi:=\cofib(LR\to \id_\Psi))
    \]
    in $\StAdj$ given by the (co)fiber of the (co)unit of the universal adjunction \Cref{eq:universal-adjunction}.
    
    The  \bit{free spherical adjunction} is the stable 2-category 
    \[
    \Sph:=\StAdj[T_\Phi^{-1},T_\Psi^{-1}] 
    \]
    obtained from $\Adj$ by
    localizing at the 1-morphisms $T_\Phi,T_\Psi.$\footnote{This differs from the notation in \cite{GMH}, where $\Sph$ was synonymous with the 2-category 
    $\PS_\LL(\CC).$}

    Let $\cC\rightleftarrows \cD$ be an adjunction in stable categories, which (by \Cref{thm:adj-corepresents-adjunctions} and the stability of $\cC$) corresponds to a functor $F:\StAdj\to \Cat.$ If $F$ admits a factorization $\StAdj\to\Sph\to\Cat,$ then we say that the adjunction $\cC\rightleftarrows \cD$ is \bit{spherical}, and that each of the adjoint functors is a \bit{spherical functor}.
 %
\end{definition}
As usual, let $\LL=\bigcup_\sgn \rT^*_{X^\sgn}\CC^n\subset T^*\CC^n$ be the union of conormals to intersections of coordinate hyperplanes.
\begin{definition}
The 2-category of \bit{perverse schobers on $\CC^n$}
with singular support in $\LL$
(and coefficients in the 2-category $\St$ of stable categories) is the 2-category $\PS_\LL(\CC^n):=\Fun(\Sph^{\otimes n},\St)$ of $\Sph^{\otimes n}$-diagrams in stable categories.
\end{definition}

\begin{notation}
    An object of $\PS_\LL(\CC^n)$ is specified by the data of a \bit{spherical categorical n-cube} in the sense of \cite{CDW}*{Definition 5.4.3}, namely a functor $2^{[n]}\to \St$ (where we treat $2^{[n]}$ as a partially ordered set) such that each edge of the cube is a spherical functor,
    and each face of the cube satisfies the Beck-Chevalley condition of \cite{CDW}*{Definition 4.5.7}.
    Such an object consists of
    a collection of categories $(\cC^\sgn)_{\sgn\in \sgvect},$ together with spherical adjunctions
    \[
    u_i:\cC_{\sgn}\rightleftarrows\cC_{\sgn\cup\{i\}}:v_i
    \]
    satisfying the appropriate commutativity conditions. We will denote this object by $(\cC^\bullet, u,v).$
\end{notation}

The $D=(\CC^\times)^n$ action on $\CC^n$ should make $\PS_\LL(\CC^n)$ into a topological $D$-2-category (i.e., the underlying 2-category of a functor $BD\to 2\Pr^{L,\st},$ where the target is the 3-category of presentable stable 2-categories), and indeed this structure was described in \cite{GMH}.

\begin{definition}\label{defn:universal-twist}
    The \bit{universal twist} is the central element $\bT\in\Aut(\id_{\Sph})$ which acts on a spherical adjunction
\end{definition}
%
%
    \[
        \begin{tikzcd}[column sep=2cm]
        \cC_\Phi
        \arrow[yshift=0.9ex]{r}{F}
        \arrow[leftarrow, yshift=-0.9ex]{r}[yshift=-0.2ex]{\bot}[swap]{F^R}
        &
        \cC_\Psi~
        \end{tikzcd}
    \]
by the automorphism
    \begin{equation*}
    \begin{tikzcd}[column sep=2cm]
\cC_\Phi
\arrow[yshift=0.9ex]{r}{F}
\arrow[leftarrow, yshift=-0.9ex]{r}[yshift=-0.2ex]{\bot}[swap]{F^R}
\arrow[dashed]{d}[swap]{T_\Phi}
&
\cC_\Psi
\arrow[dashed]{d}{T_\Psi[-2]}
\\
\cC_\Phi
\arrow[yshift=0.9ex]{r}{F}
\arrow[leftarrow, yshift=-0.9ex]{r}[yshift=-0.2ex]{\bot}[swap]{F^R}
&
\cC_\Psi~.
\end{tikzcd}
\end{equation*}

\begin{lemma}[\cite{GMH}, Proposition 3.16]
    The universal twist is the generator of an $\EE_2$-map $\ZZ\to \End(\id_\Sph),$ defining a topological $B\ZZ$-action on $\Sph.$ 
\end{lemma}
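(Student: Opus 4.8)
The claim packages two assertions: that the universal twist $\bT$ assembles into an $\EE_2$-map $\ZZ\to\End(\id_\Sph)$, and that the resulting action is ``topological'' (i.e., arises from a functor $B\ZZ\to$ the relevant $\infty$-category of presentable stable $2$-categories). My plan is to first pin down in what sense $\bT$ is a well-defined central element of $\Aut(\id_\Sph)$, then verify the $\EE_2$-structure by reducing to a universal computation, and finally deduce the topological enhancement from $\EE_2$-centrality.

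\emph{Step 1: $\bT$ is central.} The diagram defining $\bT$ exhibits a natural transformation $\id_\Sph\to\id_\Sph$: on an object (a spherical adjunction) it is the pair $(T_\Phi, T_\Psi[-2])$, and the key point is that these are compatible with $F$ and $F^R$, which is exactly the content of the Beck–Chevalley/spherical-square relations already used to define $\Sph$ as a localization of $\StAdj$. Because $\Sph=\StAdj[T_\Phi^{-1},T_\Psi^{-1}]$, both $T_\Phi$ and $T_\Psi$ become invertible, so $\bT$ is an element of $\Aut(\id_\Sph)$. For centrality (in the $\EE_2$ sense, not merely as a self-map), I would invoke the universal property of $\Sph$: since $\Sph$ corepresents spherical adjunctions and the twist is built functorially from the (co)unit data which is itself central in $\StAdj$ (the free adjunction's (co)unit generates the center of $\End(\id_{\StAdj})$), centrality is inherited under the localization. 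Concretely, I would show $\bT$ lifts to an object of the Drinfeld center $\mathcal{Z}(\Fun(\Sph,\Sph))$ or, more simply, that for every endofunctor $G$ of $\Sph$ the two composites $\bT\circ G$ and $G\circ\bT$ agree coherently — which again follows because $G$ must preserve spherical adjunctions and hence the twist construction.

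\emph{Step 2: the $\EE_2$-structure, i.e. the map $\ZZ\to\End(\id_\Sph)$.} What needs to be produced is a map of $\EE_2$-algebras (in spectra, or in the appropriate symmetric monoidal $\infty$-category of endomorphisms of $\id_\Sph$) from $\ZZ$ to $\End(\id_\Sph)$, sending the generator to $\bT$. The $\EE_2$-structure on the target comes from the fact that $\End(\id_\Sph)$ for a (presentable, stable) $2$-category $\Sph$ is naturally an $\EE_2$-algebra (the ``$2$-categorical center'': composition along the $1$-categorical direction gives one $\EE_1$-structure, horizontal composition of natural transformations gives a second, commuting one). To build the map it suffices, by freeness of $\ZZ$ as an $\EE_2$-algebra on one generator in degree $0$... wait — $\ZZ$ is not free $\EE_2$ on a degree-$0$ generator; rather the relevant statement is that $\bZ$, viewed as the $\EE_\infty$-ring, maps to any $\EE_2$-ring by picking out a central invertible element, and here the target receives $\bT^{\pm 1}$. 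So: exhibit $\bT$ as an invertible element of $\pi_0\End(\id_\Sph)$ that is $\EE_2$-central by Step 1; invertibility was arranged by the localization; this produces the desired $\ZZ\to\End(\id_\Sph)$.

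\emph{Step 3: topological $B\ZZ$-action.} Having an $\EE_2$-map $\ZZ\to\End(\id_\Sph)$ is equivalent, by the usual deloopings, to an $\EE_1$-map $\Omega^\infty(\text{something})$... more precisely, an $\EE_2$-central action of $\ZZ$ on $\Sph$ is the same as an action of $B\ZZ$ (the groupoid with one object and automorphism group $\ZZ$) on $\Sph$ \emph{through} the $2$-categorical structure, i.e. a functor $B\ZZ\to 2\Pr^{L,\st}$ with value $\Sph$; this is the standard ``$\EE_2$-center $\leftrightarrow$ $B(-)$-action'' adjunction one level up. I would cite the appropriate form of this (e.g. the Eilenberg–Mac Lane / Deligne-type statement for $2$-categories, or the treatment in \cite{GMH} itself for the $D$-$2$-category structure) and conclude.

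\emph{Main obstacle.} The genuinely delicate point is Step 2 together with the coherence in Step 1: verifying that the naive twist transformation $(T_\Phi, T_\Psi[-2])$ is not just a self-map of $\id_\Sph$ but carries a full $\EE_2$-central structure, with all higher homotopies. The shift by $[-2]$ on the $\Psi$-side is what makes this subtle — it is precisely the failure of $\bT$ to be ``constant,'' and checking that the two $\EE_1$-multiplications on $\End(\id_\Sph)$ interact correctly with this shift (so that $\bT$ really does generate a $\ZZ$ rather than something larger or obstructed) is where the real work lies. I expect this is handled by reducing everything to the free/universal case — the $2$-category $\Sph$ is built by an explicit localization of $\StAdj=\Sigma^{(\infty,2)}\Adj$, and $\Adj$ is a \emph{discrete} $(2,2)$-category, so all higher coherences can in principle be read off from a finite combinatorial computation in $\Adj$ and then propagated through stabilization and localization, both of which are symmetric-monoidal-functorial operations that preserve $\EE_2$-central elements.
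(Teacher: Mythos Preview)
The paper does not prove this lemma; it is recorded as a citation of \cite{GMH}*{Proposition 3.16}. So there is no ``paper's proof'' to compare against, and your proposal must be evaluated on its own merits.

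Your overall architecture (Step~1: $\bT$ is a well-defined invertible natural automorphism of $\id_\Sph$; Step~2: promote this to an $\EE_2$-map from $\ZZ$; Step~3: deloop to a $B\ZZ$-action) is the right shape, and your ``Main obstacle'' paragraph correctly flags the coherence problem and the right strategy for resolving it (reduce to the discrete $(2,2)$-category $\Adj$ and propagate through stabilization and localization).

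There is, however, a genuine gap in Step~2. You oscillate between ``$\ZZ$ as the initial $\EE_\infty$-ring'' and ``$\ZZ$ as the free abelian group on one generator,'' and the argument you settle on (``exhibit $\bT$ as an invertible element \ldots\ this produces the desired $\ZZ\to\End(\id_\Sph)$'') conflates these. The unit map from the ring $\ZZ$ sends $1\mapsto 1$, not $1\mapsto\bT$; what you actually want is a map of $\EE_2$-\emph{groups} from $\ZZ$ sending the generator to $\bT$. But the free grouplike $\EE_2$-monoid on a point is $\Omega^2 S^2$, not its $0$-truncation $\ZZ$, so picking out an invertible element only gives you a map $\Omega^2 S^2\to\Aut(\id_\Sph)$. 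To descend this to $\ZZ$ you must show that the self-braiding of $\bT$ (and all higher cells coming from $\pi_{>0}\Omega^2 S^2$) is trivialized---exactly the coherence you defer to your final paragraph. In other words, ``invertible and $\EE_2$-central'' is not a sufficient condition; the phrase ``$\EE_2$-central'' is doing no work here since every element of an $\EE_2$-monoid is central in the relevant sense. The actual content is the triviality of the braiding $\bT\otimes\bT\xrightarrow{\sim}\bT\otimes\bT$ and its higher analogues, and this is where the reduction to $\Adj$ (or an appeal to the spectral description from \cite{GMH}) must be invoked, not postponed.
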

By taking products, we obtain a topological $\ZZ^n\simeq D$-action on $\Sph^n,$ which we can restrict along the inclusion $G\to D$ to obtain a $G$-action.
\begin{definition}
    We write $\Sph_G^n$ for the coinvariants of the $G$-action on $\Sph.$ (See \cite{GMH}*{Lemma A.4} for the construction of these coinvariants.)
    The 2-category of \bit{$G$-equivariant perverse schobers} is defined as
    \[
    \PS_{\LL_G}(\CC^n/G):=\Fun(\Sph^n_G,\St).
    \]
\end{definition}
The 2-category $\PS_{\LL_G}(\CC^n/G)$ is therefore obtained as the invariants $\PS_{\LL}(\CC^n)^G$ for the $G$-action on $\PS_{\LL}(\CC^n).$ An object of the resulting 2-category may be specified by the data of a spherical $n$-cube equipped with trivializations of the appropriate compositions of (co)twist automorphisms on each category $\cC^\sgn.$ We will continue to denote such an object by $(\cC^\bullet, u, v),$ leaving the trivializations implicit in our notation.

Certain objects in $\PS_{\LL_G}(\CC^n/G)$ will play a distinguished role. Recall that an object of $\Sph^n$ may be specified by a choice of $\sgn\in\sgvect$; we continue to write $\sgn$ to denote the image of this object in $\Sph^n_G.$
\begin{definition}\label{defn:cat-proj-aside}
    The object $\sgn$ in $\Sph^n_G$ determines a corepresentable functor $\Hom_{\Sph^n_G}(\sgn,-)$; we will refer to this object of 
    $\Fun(\Sph^n_G,\St) = \PS_{\LL_G}(\CC^n/G)$ by $\cP^\sgn_G.$
\end{definition}

The object $\cP^\sgn_G,$ which categorifies the projective $P^\alpha$ described in \Cref{defn:betti-1cat-projectives}, corepresents the functional on $\PS_{\LL_G}(\CC^n/G)$ which takes a $G$-invariant spherical $n$-cube $(\cC^\bullet, u, v)$ to the category $\cC^\sgn.$
The objects $\cP^\sgn_G$ are therefore generators of the 2-category $\PervCat_{\LL_G}(\Sph^n_G,\St)$; the main theorem of \cite{GMH}, which we have stated as \Cref{thm:gmh-main} above, is a calculation of their monoidal category of endomorphisms $\End\left(\bigoplus \cP^\sgn\right),$ or in other words an explicit description of the spherical $n$-cube corresponding to $\sgn.$

\subsection{Stop removal and microrestriction}
We now describe the 2-categories of perverse schobers associated to certain locally closed subsets of $\LL_G.$ First, recall that
in the 1-categorical setting, stop removal --- i.e., passage to a closed subset of the Lagrangian $\LL_G$ obtained by forgetting some of its irreducible components --- has the categorical effect of quotienting by the corresponding projective objects.
%
\begin{definition}
    Let $\sgnsub\subset \sgvect,$ and $\LL^I_G:=\bigcup_{\sgn\in\sgnsub}\LL_G.$ Then the 2-category of perverse schobers on $\CC^n/G$ with singular support in $\LL_G^I$ is the quotient 2-category
    \begin{equation}\label{eq:schober-stop-removal}
    \PS_{\LL_G^I}(\CC^n/G):=
    \frac{\PS_{\LL_G}(\CC^n/G)}{\langle \cP^\sgn\rangle_{\sgn\notin\sgnsub}}.
    \end{equation}
\end{definition}

\begin{remark}
    Suppose that $G$ is trivial, and $\LL_G^I=\CC^n\subset T^*\CC^n.$ Then an object of $\PervCat_{\CC^n}(\CC^n)$ is a spherical $n$-cube with only one nonzero category $\cC$; this implies that all the spherical functors are the zero functor, hence also that all twists are equal to $\id_\cC,$ and the canonical maps between $\id_\cC$ and the twists are the identity. This observation underlies an equivalence
    \[
    \PervCat_{\CC^n}(\CC^n)\simeq \St 
    \]
    between this 2-category and the 2-category of stable categories, confirming our intuition that $\PervCat_{\CC^n}(\CC^n)$ should be equivalent to the 2-category $\LocCat(\CC^n)$ of local systems of stable categories over $\CC^n$ (which is equivalent to $\St$ by contractibility of $\CC^n$).
\end{remark}

Unlike stop removal, microrestriction --- i.e., passage to an open subset of $\LL_G$ obtained by deleting some of its irreducible components --- needs to work differently in the 1-categorical and 2-categorical settings. We illustrate this disjunction in dimension 1.

Let $n=1,$ so $\LL=\CC\cup T^*_0\CC.$ The $\CC^\times$-conic topology (for the action scaling $\CC^2=T^*\CC$) on $\LL$ has three nonempty open sets, namely $\LL, U_+:=\LL\setminus T^*_0\CC,$ and $U_-:=\LL\setminus \CC.$ Let $\cP^1,\cP^2$ be the projective objects of $\PervCat_\LL(\CC)$ corresponding to $\CC$ and $T^*_0\CC,$ respectively. We can thus imitate the 1-categorical microrestrction described in \Cref{rem:dim1-microrestriction} by defining a conic sheaf of 2-categories on $\LL$:

\begin{definition}
    The sheaf $\mu\PervCat_\LL$ of \bit{microlocal perverse schobers} on $\LL$ takes values $\mu\PervCat_\LL(\LL) := \PervCat_\LL(\CC)$ and $\mu\PervCat_\LL(U_\pm):=\langle \cP^\pm\rangle,$ the latter being the full sub-2-category of $\PervCat_\LL(\CC)$ generated by $\cP^\pm$; restriction maps are right adjoint to the evident inclusions.
\end{definition}

\begin{remark}
    We can now illustrate the difference between the 1- and 2-categorical situations: in the 1-categorical case we have an equivalence
    \[
    \muPerv_\LL(\CC^\times)\simeq \mu\Perv_{\CC^\times}(\CC^\times)\simeq \Loc(\CC^\times)\simeq \Mod_{\CC[x^\pm]},
    \]
    but it is no longer true that $\mu\PervCat_\LL(\CC^\times)$ is equivalent to $\LocCat(\CC^\times).$ 
    Rather, the above definition gives
\begin{equation}\label{eq:microrestriction-differs-remark}
    \mu\PervCat_\LL(\CC^\times)\simeq \Mnd^{sph}\not\simeq \Auts\simeq \LocCat(\CC^\times),
\end{equation}
where the left-hand side of \Cref{eq:microrestriction-differs-remark} is the 2-category of spherical monads --- i.e., the 2-category of categories equipped with a monad $M$ whose twist endomorphism is invertible --- whereas the right-hand side is the 2-category of categories equipped with an automorphism. The latter is the {\em non-full} sub-2-category of the former obtained by forgetting the monad (with its unit and multiplication morphisms) and remembering only the twist automorphism. See also \cite{GMH}*{Remark 2.7} for related discussion.
\end{remark}

In the setting of the Betti 1-category $\cO,$ microrestriction to the $\FI$-semistable locus had a fully faithful left adjoint, embedding $\mu\Perv_{\LL_G}(\LL_G(\FI,0))$ as the full subcategory of $\Perv_{\LL_G}(\CC^n/G)$ generated by the $\FI$-feasible projective object $P^\cF:=\bigoplus_{\sgn\in \cF}(P^\sgn).$ We will take this as a definition of our microrestriction in the 2-categorical setting.

\subsection{2-category $\cO$}

Let $\langle \cP^\cF_G\rangle\subset \PervCat_{\LL_G}(\CC^n/G)$ be the sub-2-category generated by the objects $\{\cP_G^\sgn\}_{\sgn\in \cF}$ for feasible sign vectors $\sgn,$ and $\langle \cP_G^{\cF\cap \cB^c}\rangle$ the sub-2-categry generated by objects $\cP_G^\sgn$ corresponding to feasible and unbounded sign vectors.

\begin{definition}\label{defn:betti-2cat-o}
    Fix a stability parameter $\FI$ and attraction parameter $\mass.$ We define the 2-category of \bit{microlocal perverse schobers} on $\LL_G(\FI,\mass)$ as the quotient 2-category
    \begin{equation}
        \mu\PervCat(\LL_G(\FI,\mass)):=\frac{\langle \cP_G^\cF\rangle}{\langle \cP_G^{\cF\cap\cB^c}\rangle}.
    \end{equation}
    For generic $\FI$ and $\mass,$ we will refer to this as the \bit{A-side 2-category $\cO$}.
\end{definition}

\begin{remark}
Categorifying \Cref{lem:quiver-algebra}, we can give an explicit description of objects in the 2-category $\mu\PervCat(\LL_G(\FI,\mass)).$
Consider the following collection of data:
    \begin{itemize}
        \item For each $\sgn$ which is both $\FI$-stable and $\mass$-bounded, a category $\cC^\sgn$.
        \item For each $\cC^\sgn,$ a map $\pi_1(F)\to \Aut(\cC^\sgn)$, which we understand as a map $\pi_1(D)\to \Aut(\cC^\sgn)$ with a trivialization of the composite $\pi_1(G)\to\pi_1(D)\to\Aut(\cC^\sgn).$ We write $T^\sgn_i\in\Aut(\cC^\sgn)$ for the images of the canonical generators of $\pi_1(D)\simeq \ZZ^n.$
        \item On each $\cC^\sgn,$ a monad $M^\sgn_i$ for each $i\in \sgn,$ and a comonad $C^\sgn_i$ for each $i\notin \sgn.$
        \item For each $t$-stable pair $(\sgn,\sgn\cup\{i\}),$ a spherical adjunction
        \[
        \begin{tikzcd}
        \cC^\sgn \arrow[r, "u_i"', shift right] & 
        \cC^{\sgn\cup\{i\}} \arrow[l, shift right, "v_i"']
        \end{tikzcd}
        \]
        together with equivalences $M^{\sgn\cup\{i\}}_i\simeq u_iv_i, C^\sgn_i\simeq v_iu_i$ of monads (resp. comonads), where we take the convention that $\cC^\sgn\simeq 0$ if $\sgn$ is not $\mass$-bounded
        \item For each $\cC^\sgn,$ an equivalence
        $T_i^\sgn\simeq\fib(\id_{\cC^\sgn}\xrightarrow{\eta}M)$ for each $i\in \sgn,$
        and an equivalence
        $T_i^\sgn\simeq\cofib(\id_{\cC^\sgn}\xrightarrow{\eta}M)[-2]$
        for each $i\notin \sgn.$
    \end{itemize}
    Impose moreover the condition that 
    the $u_i$'s and $v_i$'s all commute with each other and with the monads $M_i^\sgn$ and comonads $C_i^\sgn.$
    
Such a collection of data determines an object in the 2-category $\mu\PervCat(\LL_G(\FI,\mass)).$ We expect that it is possible to define a stable 2-category directly from this quiver data, although we have not done this here. If done correctly, this combinatorially defined 2-category will agree with the 2-category $\mu\PervCat(\LL_G(\FI,\mass))$ defined in \Cref{defn:betti-2cat-o}.
\end{remark}

\begin{example}\label{ex:2catO-A}
    Here we consider the categorification of \Cref{ex:betti-category-o}. Once again, let 
    \[
    G\simeq\CC^\times\hookrightarrow (\CC^\times)^2
    \]
    be the kernel of the multiplication map. Before imposing $G$-equivariance, we have that $\PervCat_{\LL}(\CC^2)$ is the 2-category of Beck-Chevalley squares of spherical functors
    \begin{equation}\label{eq:pervcat-c2-example}
    \begin{tikzcd}
        \cC^{\{1\}}\ar[d, "v_1"'] & \cC^{\{1,2\}}\ar[l, "v_2"']\ar[d, "v_1"]\\
        \cC^{\emptyset} & \cC^{\{2\}}\ar[l, "v_2"],
    \end{tikzcd}
    \end{equation}
    Sphericality equips each category $\cC^\sgn$ with two automorphisms, the relevant two of
    \begin{align*}
           \fib(\id_{\cC^\sgn}\to&v_2^Rv_2),&
           \cofib(v_2v_2^R\to&\id_{\cC^\sgn})[-2],&
           \fib(\id_{\cC^\sgn}\to&v_1^Rv_1),&
           \cofib(v_1v_1^R\to&\id_{\cC^\sgn})[-2].
    \end{align*}
    We obtain $\PervCat_{\LL_G}(\CC^2/G)$ from this 2-category by imposing additionally an equivalence between the two automorphisms at each category $\cC^\sgn.$ 
    
    Finally, to produce the 2-category $\mu\PervCat(\LL_G(\FI,\mass)),$ we must quotient by the simple object supported on $\cC^{\{2\}}$ and by the projective $\cP_G^\emptyset.$ The first of these operations has the effect of forgetting the lower-right vertex from the diagram \Cref{eq:pervcat-c2-example} (and the functors ingoing and outgoing from it --- but not the associated monad $v_1^Rv_1$ on $\cC^{\{1,2\}}$). The latter operation sets to 0 the category $\cC^\emptyset$, along with the monad $v_1^Rv_1$ on $\cC^{\{1\}},$ so that the automorphism at $\cC^{\{1\}}$ (which remains identified with $\cofib(v_2v_2^R\to \id_{\cC^{\{1\}}})[-2]$) is identified with $\id_{\cC^{\{1\}}}.$

    In summary, the 2-category $\mu\PervCat(\LL_G(\FI,\mass))$ is the 2-category of spherical functors
    \[
    \begin{tikzcd}
        \cC^{\{1\}}&\ar[l,"v_2"']\cC^{\{1,2\}}
    \end{tikzcd}
    \]
    together with an extra monad $``v_1^Rv_1"$ on $\cC^{\{1,2\}},$ and identifications
    \[
    \fib(\id_{\cC^{\{1,2\}}}\xrightarrow{\eta}``v_1^Rv_1")\simeq \fib(\id_{\cC^{\{1,2\}}}\xrightarrow{\eta}v_2^Rv_2),
    \qquad \cofib(v_2v_2^R\to \id_{\cC^{\{1\}}}) \simeq [2].
    \]
\end{example}

\section{Microlocal coherent schobers}\label{sec:bside}
\subsection{Coherent sheaves of categories}
Beginning with a symplectic variety $\mathfrak{M},$ a proposal is given in \cites{KRS,KR} for a an algebraic 2-category whose objects are supported on Lagrangian subvarieties in $\mathfrak{M},$ and it is further explained that when $\mathfrak{M}=\rT^*Y$ is a cotangent bundle, the part of the category supported on a conical Lagrangian $\LL\subset \rT^*Y$ may be described in terms of 
coherent-sheaf categories of spaces proper over $Y.$ (This may motivate the notation in \cite{Tel-ICM} for this 2-category as $\sqrt{\mathfrak{Coh}}(\rT^*Y).$)

Given a conic Lagrangian $\LL\subset \rT^*Y$ in the cotangent bundle of an algebraic stack $Y$, we are therefore motivated to study the 2-category of coherent sheaves of categories over $Y$ with singular support in $\LL.$ A full mathematical account of this 2-category has not yet appeared in the literature, though it is expected in forthcoming works of Stefanich and Arinkin.\footnote{See \cite{Ari-talk} for motivation for the construction of this 2-category, and \cite{Stefanich-QCoh} for the simpler case of quasicoherent sheaves of ($n$-)categories.)}
Nevertheless, enough is known about this 2-category for our present purposes. 

We will need only the following two facts: first, that if $X$ is a smooth stack with a proper map $X\to Y,$ then $\Coh(X)$ defines an object in this 2-category, and second, that the Hom category between two such objects $\Coh(X)$ and $\Coh(X')$ is the category $\Fun_{\Perf(Y)}^{ex}(\Coh(X),\Coh(X'))$ of exact $\Perf(Y)$-linear functors between the coherent-sheaf categories, with compositions among these Hom categories given by composition of functors.
These Hom categories have been computed directly in \cite{BZNP-kernels}.
\begin{theorem}[\cite{BZNP-kernels}*{Theorem 1.1.3}]\label{thm:BZNP-kernels}
    Let $X,X',Y$ be smooth perfect stacks, and let $X\to Y\gets X'$ be proper maps. Then there is an equivalence
    \[
    \Fun_{\Perf(Y)}(\Coh(X),\Coh(X'))\simeq \Coh(X\times_Y X')
    \]
    between the category of exact $\Perf(Y)$-linear functors and the category of coherent sheaves on the fiber product $X\times_Y X',$ associating to a coherent sheaf $K$ on $\Coh(X\times_Y X')$ the integral transform functor $p_{X',*}(p_X^*(-)\otimes K).$ Moreover, in the case $X=X',$ this is naturally a monoidal equivalence.
\end{theorem}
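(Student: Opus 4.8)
The plan is to bootstrap from the colimit-preserving (quasi-coherent) version of the statement, following the strategy of Ben-Zvi--Nadler--Preygel in \cite{BZNP-kernels}, and then cut out the coherent kernels. First I would recall the $\QCoh$ prototype: since $X,X',Y$ are perfect stacks and all maps are quasi-compact, the functor-tensor formalism of Ben-Zvi--Francis--Nadler applies, so $\QCoh(X)$ is a dualizable, self-dual $\QCoh(Y)$-module category with pairing $(F,G)\mapsto\pi_{X*}(F\otimes G)$ and copairing the diagonal sheaf $\delta_{X*}\cO_X\in\QCoh(X\times_Y X)$, whence
\[
\Fun^{L}_{\QCoh(Y)}(\QCoh(X),\QCoh(X'))\;\simeq\;\QCoh(X)\otimes_{\QCoh(Y)}\QCoh(X')\;\simeq\;\QCoh(X\times_Y X'),
\]
a kernel $K$ corresponding to the integral transform $\Phi_K=p_{X'*}(p_X^{*}(-)\otimes K)$; for $X=X'$ this equivalence is monoidal, matching convolution of kernels with composition of functors.

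Next I would cut down to the coherent setting. Because $X$ and $X'$ are smooth, $\Coh(X)=\Perf(X)$ and $\Coh(X')=\Perf(X')$, so the content is to identify $\Coh(X\times_Y X')\subset\QCoh(X\times_Y X')$ with $\Fun^{ex}_{\Perf(Y)}(\Coh(X),\Coh(X'))$. I would check that $K\in\Coh(X\times_Y X')$ forces $\Phi_K$ to preserve compact objects: for $F\in\Perf(X)$ the pullback $p_X^{*}F$ is perfect, so $p_X^{*}F\otimes K$ is bounded coherent, and since $p_{X'}\colon X\times_Y X'\to X'$ is proper (a base change of $X\to Y$) its pushforward preserves bounded coherence; as $X'$ is smooth the output lies in $\Coh(X')=\Perf(X')$, and $\Phi_K$ is $\QCoh(Y)$-linear by the projection formula. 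Thus restriction along $\Coh(X)\hookrightarrow\QCoh(X)$ defines a functor $\Coh(X\times_Y X')\to\Fun^{ex}_{\Perf(Y)}(\Coh(X),\Coh(X'))$. Full faithfulness then follows by passing to $\Ind$-completions: this embeds $\Fun^{ex}_{\Perf(Y)}(\Coh(X),\Coh(X'))$ fully faithfully into $\Fun^{L}_{\QCoh(Y)}(\QCoh(X),\QCoh(X'))$, identifying mapping spaces, and under the equivalence above our functor becomes the fully faithful inclusion $\Coh(X\times_Y X')\hookrightarrow\QCoh(X\times_Y X')$.

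The essential surjectivity is the main obstacle. Given an exact $\Perf(Y)$-linear $\cF\colon\Coh(X)\to\Coh(X')$, its $\Ind$-extension is colimit-preserving and $\QCoh(Y)$-linear, hence equals $\Phi_K$ for a unique $K\in\QCoh(X\times_Y X')$, recovered by applying $\Ind(\cF)$ to one tensor factor of the bounded coherent diagonal kernel $\delta_{X*}\cO_X$. The hard part is to prove that this $K$ is bounded coherent on $X\times_Y X'$ --- which is exactly where the singularity of the fiber product must be confronted, since $X\times_Y X'$ is only quasi-smooth and $\Coh\neq\Perf$ there, so one cannot simply invoke the small-category tensor product $\Perf(X)\otimes_{\Perf(Y)}\Perf(X')$. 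I would establish coherence of $K$ by reducing to a smooth-local statement over $Y$: on an affine $\Spec R\subset Y$ one presents $\Perf(X)\simeq\Perf_A$, $\Perf(X')\simeq\Perf_{A'}$ for coherent, proper $R$-algebras with compact generators, writes $\cF$ via an $(A,A')$-bimodule perfect over $A'$, and identifies $K$ with that bimodule viewed as an $A\otimes_R A'$-module, i.e. a sheaf on $X\times_Y X'$; properness of $X'\to Y$ together with smoothness of $X'$ then forces bounded coherent cohomology. Equivalently, one tests $K$ against the external products $p_X^{*}E\otimes p_{X'}^{*}E'$, which compactly generate $\QCoh(X\times_Y X')$, using $\mathrm{Map}(p_X^{*}E\otimes p_{X'}^{*}E',K)\simeq\mathrm{Map}_{X'}(E',\Phi_K(E^{\vee}))$ to bound and finitely generate the cohomology of $K$ from that of $\cF(E^{\vee})\in\Coh(X')$. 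Finally, for $X=X'$ the monoidal refinement is immediate: the equivalence in the first step is monoidal and restricts to coherent kernels, since convolution of bounded coherent kernels is again bounded coherent by properness of the projections from $X\times_Y X\times_Y X$.
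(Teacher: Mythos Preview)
The paper does not give a proof of this statement: it is quoted verbatim as \cite{BZNP-kernels}*{Theorem 1.1.3} and used as a black box to motivate \Cref{defn:cohcats-general}. There is therefore no ``paper's own proof'' to compare against; your proposal is a sketch of the argument from the cited reference rather than something the present paper supplies.

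That said, your outline is broadly the right shape --- the quasi-coherent integral-transform theorem of Ben-Zvi--Francis--Nadler plus a coherence cut-down is exactly the strategy of \cite{BZNP-kernels}. One point where your sketch is loose: in the essential surjectivity step you assert that properness of $X'\to Y$ together with smoothness of $X'$ forces the kernel $K$ to have bounded coherent cohomology, but the argument you indicate (testing against external products, or a local bimodule presentation) does not by itself rule out that $K$ is merely ind-coherent rather than coherent. The actual argument in \cite{BZNP-kernels} uses the notion of \emph{functor categories with support} and a careful analysis of t-exactness/boundedness to pin down coherence; your ``reduce to affine and identify with a bimodule'' step would need to be fleshed out substantially to close this gap. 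For the purposes of the present paper none of this matters, since the result is invoked, not reproved.
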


\Cref{thm:BZNP-kernels} justifies the following definition, due to Arinkin.
\begin{definition}\label{defn:cohcats-general}
    Let $X,Y$ be smooth perfect stacks and $f:X\to Y$ a proper map, and write $\overline{\Conorm_XY}$ for the image in $T^*Y$ of the conormal Lagrangian 
    $\Conorm_XY.$
    Then we say the 2-category of \bit{coherent sheaves of categories on $Y$ with singular support $\overline{\Conorm_XY}$} is the 2-category
    \begin{equation}\label{eq:cohcats-general}
    \CohCat_{\overline{\Conorm_XY}}(Y):=\Mod_{\Coh(X\times_YX)}(\St)
    \end{equation}
    of module categories for the monoidal category $\Coh(X\times_Y X).$
\end{definition}

\begin{remark}
    The notation in \Cref{defn:cohcats-general} may seem misleading, as a priori it is unclear that the 2-category defined by \Cref{eq:cohcats-general} depends only on the subset $\overline{\Conorm_XY}\subset\rT^*Y$, rather than on the choice of map $X\to Y$.
    This invariance statement is promised as \cite{Arinkin-notes}*{Theorem 10}, but so far a proof has not appeared in the literature. We provide a proof below as \Cref{prop:invariance-for-cohcats}.
\end{remark}

\subsection{Singular support}
We would like to introduce a modification of \Cref{defn:cohcats-general} to account for non-closed singular support conditions. For this, we will need the theory of coherent singular support developed in \cite{AG}.
Recall that $Z$ a quasi-smooth Artin stack, the {\em stack of singularities} of $Z$ is the classical Artin stack
\[
\Sing(Z):=(\rT^*[-1]Z)^{cl}
\]
obtained by taking the underlying classical stack of the $(-1)$-shifted cotangent bundle.
If $Z$ is affine, there is a map
\[
\Gamma(\Sing(Z),\cO_{\Sing(Z)})\to \HH^{\text{even}}(Z)
\]
from functions on $\Sing(Z)$ to the even Hochschild cohomology of $Z$. As a result, in general, the category $\Coh(Z)$ localizes over $\Sing(Z)$ (in the conic topology).
\begin{definition}
    Let $\Lambda\subset \Sing(Z)$ be a conic closed subset, with open complement $U$. Then we write $\Coh_\Lambda(Z)\subset \Coh(Z)$ for the full subcategory of coherent sheaves supported on $\Lambda,$ and $\Coh_U(Z):=\Coh(Z)/\Coh_\Lambda(Z)$ for the localization away from sheaves supported on $\Lambda.$
\end{definition}
\begin{definition}
    Given a map $g:Z\to W$ of quasi-smooth stacks, consider the correspondence
    \begin{equation}\label{eq:sing-corr-from-map}
    \begin{tikzcd}
        \Sing(Z)&\ar[l, "dg^*"']\Sing(W)\times_W Z\ar[r, "\tilde{g}"]& \Sing(W),
    \end{tikzcd}
    \end{equation}
    which we will use to define a pushforward and pullback of singular support conditions along $g$:
    \begin{enumerate}
        \item For a conic subset $U\subset \Sing(Z),$ we define 
        \[
        g_*U:=\tilde{g}((dg^*)^{-1}(U))\subset \Sing(W).
        \]
        \item For a conic subset $V\subset \Sing(W),$ we define 
        \[
        g^!V:=df^*(Z\times_W V)\subset \Sing(Z).
        \]
    \end{enumerate}
\end{definition}
\begin{lemma}[\cite{AG}*{Proposition 7.1.3}]
    The pushforward and pullback functors preserve singular supports, inducing functors
    \[
    f_*:\Coh_{\Lambda_Z}(Z)\to \Coh_{f_*\Lambda_Z}(W), \qquad f^!:\Coh_{\Lambda_W}(W)\to\Coh_{f^!\Lambda_Z}(Z).
    \]
\end{lemma}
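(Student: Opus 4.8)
The plan is to reduce to the affine case and argue through the explicit description of coherent singular support from \cite{AG}; a fully detailed verification is precisely \cite{AG}*{Proposition 7.1.3}, so I would ultimately invoke that result, but the shape of the argument is as follows. Since $\Lambda\mapsto\Coh_\Lambda(-)$ satisfies smooth descent and the correspondence \Cref{eq:sing-corr-from-map} is local on source and target, one first reduces to the case $Z=\Spec A$, $W=\Spec B$ with $A,B$ quasi-smooth and $f$ (the map written $g$ in the preceding definition) induced by a ring map $B\to A$; quasi-smoothness lets one assume moreover that $Z$ and $W$ are derived complete intersections. Recall that in this situation $\Sing(Z)$ is the relative spectrum over $Z$ of an evenly graded $\cO_Z$-algebra $\cB_Z$ (built from the cotangent complex of $Z$), that $\cB_Z$ acts centrally on the graded endomorphism complex $\bigoplus_k\Hom_{\QC(Z)}(\cF,\cF[k])$ of any $\cF\in\Coh(Z)$, and that the singular support of $\cF$ is by definition the support of this action. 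The statement then reduces to tracking this action through $f_*$ and $f^!$.

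For the pushforward, the key input is that the correspondence \Cref{eq:sing-corr-from-map} furnishes a cospan of graded $\cO$-algebras $\cB_W\to\Gamma(\Sing(W)\times_W Z,\cO)\xleftarrow{\,df^*\,}\cB_Z$ (pullbacks along $\widetilde f$ and $df^*$), and that $f_*$ is compatible with the module structures along it: for $\cF\in\Coh(Z)$ the graded $\cB_W$-module $\bigoplus_k\Hom_W(f_*\cF,f_*\cF[k])$ is, up to the relevant coherence, the pushforward along $\widetilde f$ of the pullback along $df^*$ of the graded $\cB_Z$-module $\bigoplus_k\Hom_Z(\cF,\cF[k])$ --- one compares the two endomorphism complexes using the $(f_*,f^!)$-adjunction and the projection formula. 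Passing to supports yields $\mathrm{SingSupp}(f_*\cF)\subseteq\widetilde f\big((df^*)^{-1}(\mathrm{SingSupp}(\cF))\big)=f_*\mathrm{SingSupp}(\cF)$. (For the target to land in $\Coh$ one takes $f$ proper, so that $f_*\cF$ stays coherent; the containment of singular supports holds in $\IndCoh$ regardless.)

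For the pullback one runs the dual argument along the same correspondence read in the opposite direction: $f^!$ carries the $\cB_W$-module of graded endomorphisms of $\cG\in\Coh(W)$ to a module built from its pullback along $\widetilde f$ and pushforward along $df^*$, whence $\mathrm{SingSupp}(f^!\cG)\subseteq df^*\big((\widetilde f)^{-1}(\mathrm{SingSupp}(\cG))\big)=f^!\mathrm{SingSupp}(\cG)$. As usual one should assume $f^!$ preserves coherence for the stated target to be literally correct; this does not affect the estimate on singular supports.

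I expect the main obstacle to be exactly these compatibilities of the singular-support action with $f_*$ and $f^!$ --- constructing the graded-algebra maps attached to \Cref{eq:sing-corr-from-map} and checking that they intertwine the central actions on the relevant $\Hom$-complexes. This uses in an essential way the functoriality of the $(-1)$-shifted cotangent bundle and its interaction with internal $\Hom$ and (co)base change; once it is available, the containments of singular supports follow formally by taking supports of graded modules, as above.
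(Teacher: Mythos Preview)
The paper does not give its own proof of this lemma; it simply records the statement and cites \cite{AG}*{Proposition 7.1.3}. Your proposal is therefore more than the paper offers: you correctly identify that the result is imported from \cite{AG}, and the sketch you give (reduction to the affine case, tracking the action of the singular-support algebra through the adjunctions and projection formula) is exactly the shape of the argument there. Since the paper treats this as a black box, your explicit invocation of \cite{AG}*{Proposition 7.1.3} is already sufficient.
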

We refer to \cite{AG} for more information about coherent singular support conditions.

\subsection{Monoidal structure}

\begin{notation}
    Throughout this section, we fix smooth perfect stacks $Y, X_i$ and proper maps $f_i:X_i\to Y.$ We write
    $X_{ij}:=X_i\times_Y X_j$ for the fiber products over $Y$, and similarly $X_{ijk}:=X_i\times_Y X_j\times_Y X_k$ for the triple fiber products. 
\end{notation}

In \Cref{thm:BZNP-kernels} and \Cref{defn:cohcats-general} we have made use of the functoriality of $\Coh$ under correspondences:
to a correspondence
\begin{equation}\label{eq:correspondence}
\begin{tikzcd}
Z & C \arrow[l, swap, "f"] \arrow[r, "g"] & Z'
\end{tikzcd}
\end{equation}
with $g$ proper, we associate the functor.
\begin{equation}\label{eq:convolution-functoriality}
g_* f^*: \Coh(Z) \to \Coh(Z').
\end{equation}
In the case where the correspondence \Cref{eq:correspondence} is
\begin{equation}\label{eq:convolution-monoidal-structure}
\begin{tikzcd}
    X_{ij}\times X_{jk}&\arrow[l, "{(p_{ij}, p_{jk})}"'] X_{ijk}\arrow[r, "p_{jk}"]& X_{ik},
\end{tikzcd}
\end{equation}
we recover the rule for composition of integral transforms:
\begin{definition}\label{defn:convolution-monoidal-structure}
Using \Cref{eq:convolution-functoriality}, the correspondence \Cref{eq:convolution-monoidal-structure} (together with the higher fiber products for associativity data) defines
a monoidal structure on the category $\Coh(X_{ii})$ together with a $\Coh(X_{ii}),\Coh(X_{jj})$-bimodule structure on the category $\Coh(X_{ij}).$ We call this the \bit{convolution monoidal structure.}
\end{definition}

We will need to understand how this 
algebraic structure 
on the categories $\Coh(X_{ij})$
interacts with the the theory of coherent singular support. This question has been previously studied in \cites{BZNP,BZCHN}.

\begin{definition}\label{defn:convolution}
    Let $\Lambda_{12}\subset \Sing(X_{12})$ and $\Lambda_{23}\subset \Sing(X_{23}),$ and consider the diagram
    \begin{equation}\label{eq:convolution-correspondence}
    \begin{tikzcd}
        X_{12}\times X_{23}&\ar[l,"\delta_2"']
        X_{123}\ar[r,"p_{13}"]&
        X_{13},
    \end{tikzcd}
    \end{equation}
    where we write $\delta_2$ as shorthand for $(p_{12},p_{23}).$
    The \bit{convolution} of singular support conditions $\Lambda_{ij}$ is defined by
    \begin{equation}\label{eq:def-convolution}
    \Lambda_{12}*\Lambda_{23}:=(p_{13})_*(\delta_2)^!(\Lambda_{12}\boxtimes\Lambda_{23}).
    \end{equation}
    The singular support condition $\Lambda_{ij}$ is \bit{$X_{ii}$-stable} if it is preserved by convolution with $\Sing(X_{ii})$:
    \[
    \Sing(X_{ii})*\Lambda_{ij}\subset \Lambda_{ij}.
    \]
\end{definition}
\begin{remark}
    The operation of convolution may be more succinctly reframed in the language of Lagrangian correspondences: 
    given a correspondence \Cref{eq:correspondence}, the shifted conormal to $C$ defines a Lagrangian correspondence
    \begin{equation}\label{eq:lag-correspondence}\begin{tikzcd}
        \rT^*[-1]X&\arrow[l] \Conorm_C[-1](X\times Y)\arrow[r]&\rT^*[-1]Y.
        \end{tikzcd}
    \end{equation}
    A Lagrangian $\Lambda\to \rT^*[-1]X$ may be understood as a Lagrangian correspondence
    \[
    \begin{tikzcd}
    \pt & \arrow[l]\Lambda\arrow[r] & \rT^*[-1]X,
    \end{tikzcd}
    \]
    which we may compose with the Lagrangian correspondence $\Conorm_C[-1](X\times Y)$ to obtain a new Lagrangian
    \begin{equation}\label{eq:lagrangian-convolution}
    \Lambda\circ C:= \Lambda\times_{\Conorm_C[-1](X\times Y)}\rT^*[-1]X\to \rT^*[-1]Y.
    \end{equation}
    After taking classical part $(\Lambda\circ C)^{cl},$ we recover the singular-support condition $g_*f^!\Lambda$ obtained by transferring the singular-support condition $\Lambda^{cl}$ along the correspondence $C$.

Moreover, the Lagrangian correspondence \Cref{eq:lag-correspondence} may be factored as the composition of a pair of Lagrangian correspondences. Given a map $g:Z\to Z',$ the shifted conormal to the graph of $Z$ is equivalent to the pullback of the shifted cotangent bundle, 
\[
\Conorm_{\Gamma_g}[-1](Z\times Z')\simeq \rT^*[-1]Z'\times_{Z'}Z,
\]
and therefore it defines the Lagrangian correspondence \Cref{eq:sing-corr-from-map}, with the pushforward and pullback $g_*, g^!$ being the respective pushforward and pullback with this Lagrangian correspondence with its reverse.
Given a correspondence \Cref{eq:correspondence}, the convolution \Cref{eq:lagrangian-convolution} is the composition of the pushforward and pulback $g_*f^!,$ implemented by the composed Lagrangian correspondence
\begin{equation}\label{eq:composed-correspondence}
\Conorm_C[-1](X\times Y) \simeq \Conorm_{\Gamma_f}[-1](X\times C)\circ \Conorm_{\Gamma_g}[-1](C\times X).
\end{equation}
In what follows, we will often be able to simplify computations involving convolution using the language of Lagrangian correspondences, with the understanding that the Lagrangians discussed in these terms are derived schemes, and at the end of a computation we must pass to their classical parts to recover a singular support condition in the singularity space $\Sing.$
\end{remark}

We would like to relate the convolution of Lagrangians to the categorical convolution structures of \Cref{defn:convolution-monoidal-structure}.
\begin{definition}
    We will write $L_i:=\Conorm_{X_i}Y$ for the conormal to the map $X_i\to Y,$ which comes equipped with a map $L_i\to \rT^*Y$ We write
    \[
    L_{ij}:=L_i\times_{\rT^*Y} L_j,\qquad L_{ijk}:=L_i\times_{\rT^*Y}L_j\times_{\rT^*Y}L_k
    \]
    for their fiber products over $\rT^*Y,$ and write $q_i$ (resp. $q_{ij}$) for the projections to $L_i$ (resp. $L_{ij}$).
\end{definition}
\begin{lemma}
    There is an equivalence
    $\rT^*[-1]X_{ij}\simeq L_{ij}.$ Taking classical parts, we obtain an equivalence
    $\Sing(X_{ij})\simeq L_{ij}^{cl}.$
\end{lemma}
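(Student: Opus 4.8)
The plan is to prove both equivalences by a direct computation with (co)tangent complexes, the second statement following from the first by passing to classical truncations. First I would recall the description of the conormal: for a map $f\colon X\to Y$ of smooth stacks, $L:=\Conorm_X Y$ is the derived fiber product of $X\times_Y\rT^*Y\xrightarrow{df}\rT^*X$ with the zero section $X\hookrightarrow\rT^*X$, which over $X$ is the relative spectrum $\Spec_X\Sym_{\cO_X}(\Tan_{X/Y}[1])$ with $\Tan_{X/Y}:=\Cotan_{X/Y}^\vee\simeq\fib(\Tan_X\to f^*\Tan_Y)$, and the structure map $L\to\rT^*Y$ is obtained by applying $\Spec_X\Sym$ to the canonical arrow $\Tan_Y\to\Tan_{X/Y}[1]\simeq\cofib(\Tan_X\to f^*\Tan_Y)$. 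Similarly $\rT^*[-1]Z=\Spec_Z\Sym_{\cO_Z}(\Tan_Z[1])$ and $\Sing(Z)=(\rT^*[-1]Z)^{cl}$.

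Next I would compute $\Tan_{X_{ij}}$ using that $X_{ij}=X_i\times_Y X_j$ is a derived fiber product of smooth stacks: the standard formula for the cotangent complex of a pullback square gives $\Cotan_{X_{ij}}\simeq\Cotan_{X_i}|_{X_{ij}}\oplus_{\Cotan_Y|_{X_{ij}}}\Cotan_{X_j}|_{X_{ij}}$, and since the three restricted complexes are vector bundles (smoothness), dualizing yields $\Tan_{X_{ij}}\simeq\Tan_{X_i}|_{X_{ij}}\times_{\Tan_Y|_{X_{ij}}}\Tan_{X_j}|_{X_{ij}}$. The heart of the argument is then a formal manipulation in perfect complexes on $X_{ij}$ (suppressing the restriction from the notation): factoring the defining map $\Tan_{X_i}\oplus\Tan_{X_j}\to\Tan_Y$ as $df_i\oplus df_j$ followed by the difference map $\Tan_Y\oplus\Tan_Y\to\Tan_Y$, the octahedral axiom identifies $\Tan_{X_{ij}}$ as the total fiber and produces a fiber sequence $\Tan_{X_i/Y}\oplus\Tan_{X_j/Y}\to\Tan_{X_{ij}}\to\Tan_Y$; rotating and using $\fib[1]\simeq\cofib$ rewrites this as $\Tan_{X_{ij}}[1]\simeq\Tan_{X_i/Y}[1]\oplus_{\Tan_Y}\Tan_{X_j/Y}[1]$, where the two structure maps are the canonical arrows $\Tan_Y\to\cofib(df_i)$ recalled above.

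Applying $\Spec_{X_{ij}}\Sym_{\cO_{X_{ij}}}(-)$, which (being a left adjoint composed with relative $\Spec$) sends this pushout of modules to a fiber product of relative affine derived stacks over $X_{ij}$, I would obtain $\rT^*[-1]X_{ij}\simeq(L_i\times_{X_i}X_{ij})\times_{\rT^*Y\times_Y X_{ij}}(L_j\times_{X_j}X_{ij})$, with maps to $\rT^*Y\times_Y X_{ij}$ the base changes of $L_i\to\rT^*Y$ and $L_j\to\rT^*Y$. A comparison of functors of points, using that $L_i\to\rT^*Y$ and $L_j\to\rT^*Y$ are compatible with the projections to $Y$ (so the fiber product canonically lies over $X_i\times_Y X_j=X_{ij}$), identifies the right-hand side with $L_i\times_{\rT^*Y}L_j=L_{ij}$. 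Taking classical truncations then gives $\Sing(X_{ij})=(\rT^*[-1]X_{ij})^{cl}\simeq L_{ij}^{cl}$.

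I expect the main obstacle to be purely bookkeeping in the penultimate step: one must check that the two structure maps in the pushout presentation of $\Tan_{X_{ij}}[1]$ really are the canonical maps $\Tan_Y\to\cofib(df_i)$ (this is where the naturality built into the octahedral diagram is used), since otherwise after $\Spec\Sym$ one would recover some twisted variant of $L_i\times_{\rT^*Y}L_j$ rather than $L_{ij}$ on the nose; everything else is formal. It is worth noting that the same computation upgrades the equivalence to one of $(-1)$-shifted symplectic stacks, $\rT^*[-1]X_{ij}$ carrying its canonical form and $L_{ij}$ the one induced from the intersection of the Lagrangians $L_i,L_j$ in $\rT^*Y$.
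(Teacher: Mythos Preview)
Your proposal is correct and follows essentially the same route as the paper: both identify $\rT^*[-1]X_{ij}$ with $L_i\times_{\rT^*Y}L_j$ via the standard pullback formula for the (co)tangent complex of $X_{ij}=X_i\times_Y X_j$ and then pass to total spaces. The paper packages the computation as a $3\times 3$ cocartesian grid of cotangent complexes (whose lower-right square yields the fiber-product identification), while you run the dual tangent-complex version through the octahedral axiom; your closing remark on the shifted-symplectic upgrade is exactly the content of the paper's citation of Calaque.
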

\begin{proof}
Consider the cocartesian diagram
\[
\begin{tikzcd}
\Cotan_Y \arrow[r] \arrow[d] & \Cotan_{X_j} \arrow[r] \arrow[d] & 0 \arrow[d] \\
\Cotan_{X_i} \arrow[r] \arrow[d] & \Cotan_{X_{ij}} \arrow[d] \arrow[r] & \Cotan_{f_i} \arrow[d]  \\
0 \arrow[r] & \Cotan_{f_j} \arrow[r] & \Cotan_Y[1]
\end{tikzcd}
\]
of sheaves on $X_{ij}$. Here the upper left cocartesian square is the standard formula for the cotangent complex of a pullback, the left and upper  cocartesian rectangles define the relative cotangent complexes, and the outer cocartesian square is the definition of the suspension.

The desired map $\rT^*[-1]X_{ij}\to L_{ij}$ comes from noticing that $L_i$ and $L_j$ are the total spaces of $\Cotan_{f_i}[-1]$ and $\Cotan_{f_j}[-1]$ respectively. The fact that this is a symplectomorphism is Remark 2.20 in \cite{Calaque}.
\end{proof}

\begin{lemma}\label{lem:lijk-does-convolution}
    There is an equivalence of Lagrangian correspondences
\[
\begin{tikzcd}
\rT^*[-1]X_{12} \times \rT^*[-1]X_{23} \arrow[d, "\sim" {anchor=south, rotate=90}] & \Conorm_{X_{123}}[-1] (X_{12} \times X_{23} \times X_{13}) \arrow[l] \arrow[r] \arrow[d,"\sim" {anchor=south, rotate=90}] &  \rT^*[-1]X_{13} \arrow[d, "\sim" {anchor=south, rotate=90}] \\
L_{12} \times L_{23} & L_{123} \arrow[l] \arrow[r]  & L_{13}
\end{tikzcd}
\]
    between the correspondence underlying the operation of convolution on Lagrangians and that given by the triple fiber product of conormals.
\end{lemma}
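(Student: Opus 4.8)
The plan is to proceed exactly as in the preceding lemma, reducing the assertion about Lagrangian correspondences to an equivalence of derived schemes which is then upgraded to a symplectomorphism by Remark~2.20 of \cite{Calaque}. The two outer vertical equivalences are immediate: since the $(-1)$-shifted cotangent bundle of a product is the product of the shifted cotangent bundles, the preceding lemma gives $\rT^*[-1](X_{12}\times X_{23})\simeq \rT^*[-1]X_{12}\times\rT^*[-1]X_{23}\simeq L_{12}\times L_{23}$ and $\rT^*[-1]X_{13}\simeq L_{13}$, compatibly with all projections. So the real content is the middle column: identifying $\Conorm_{X_{123}}[-1](X_{12}\times X_{23}\times X_{13})$ with $L_{123}=L_1\times_{\rT^*Y}L_2\times_{\rT^*Y}L_3$ together with its maps to the outer terms.

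For this I would exploit the structural properties of shifted conormals rather than compute cotangent complexes by hand. The embedding $\iota=(p_{12},p_{23},p_{13})\colon X_{123}\hookrightarrow X_{12}\times X_{23}\times X_{13}$ is a closed immersion obtained as an iterated base change of the diagonals $\Delta_{X_i}\colon X_i\to X_i\times X_i$ (for instance $X_{123}\simeq(X_{12}\times X_{13})\times_{X_1\times X_1}X_1$, and similarly for the remaining factors). The graph-conormal formula recalled in the excerpt identifies $\Conorm_{\Delta_{X_i}}[-1](X_i\times X_i)\simeq \rT^*[-1]X_i$, which the preceding lemma relates to the factor $L_i$. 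Combining this with the compatibility of shifted conormals with base change and with composition of correspondences --- the factorization \Cref{eq:composed-correspondence} and the pushforward/pullback calculus of \cite{AG} --- one rewrites $\Conorm_{X_{123}}[-1](X_{12}\times X_{23}\times X_{13})$ as the iterated fiber product of the $L_i$ over $\rT^*Y$, i.e.\ as $L_{123}$. Alternatively, and perhaps more transparently, one can run the cocartesian-diagram computation of the preceding lemma one categorical level up: $\Cotan_{X_{123}}$ pulled back to $X_{123}$ is the pushout $\Cotan_{X_1}\sqcup_{\Cotan_Y}\Cotan_{X_2}\sqcup_{\Cotan_Y}\Cotan_{X_3}$, its pairwise pushouts recover the $\Cotan_{X_{ij}}$, and the resulting total cofiber, after passing to total spaces --- using that $\mathbb{V}(-)$ carries pushouts of perfect complexes to fiber products of affines --- identifies the middle term with the triple fiber product $L_{123}$.

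It then remains to check that these identifications are natural enough to assemble into an equivalence of Lagrangian correspondences, not merely of underlying derived schemes: the maps induced by $d\delta_2^*$ and $dp_{13}^*$ on the middle term must be carried to the tautological projections $L_{123}\to L_{12}\times L_{23}$ and $L_{123}\to L_{13}$. As in the preceding lemma this follows from functoriality of the cotangent-complex pushout diagrams together with Remark~2.20 of \cite{Calaque} for the symplectic refinement. I expect the main obstacle to be precisely this last coherence check: keeping all the shifts straight (the $[-1]$'s, the distinction between $\rT^*[-1]X_{123}$ and the conormal of the embedding $\iota$, and the dualities implicit in passing between cotangent complexes and their total spaces), and verifying that the web of base-change and composition identifications for shifted conormals fits together coherently rather than only objectwise. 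The remainder is a diagram chase.
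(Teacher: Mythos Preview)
Your proposal is correct, and the alternative approach you sketch---running the cocartesian-diagram computation of the preceding lemma with $\Cotan_{X_{123}}$ as an iterated pushout---is precisely what the paper does. The paper makes this concrete: it writes out the full strongly cocartesian cube of cotangent complexes on $X_{123}$ (with vertices $\Cotan_Y$, $\Cotan_{X_i}$, $\Cotan_{X_{ij}}$, $\Cotan_{X_{123}}$), reads off from vanishing of total cofibers that $\cofib(\Cotan_{X_{13}}\to\Cotan_{X_{123}})\simeq\Cotan_{f_2}$, and then manipulates fibers and cofibers to identify the conormal complex with $\fib(\Cotan_{X_{12}}\oplus\Cotan_{X_{23}}\to\Cotan_{f_2})[-1]$, whose total space is $L_{12}\times_{L_2}L_{23}\simeq L_{123}$. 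Your first approach---factoring $\iota$ through iterated base changes of diagonals and invoking compatibility of shifted conormals with base change and the factorization \Cref{eq:composed-correspondence}---is a more structural route that the paper does not take; it would also work, but as you anticipate the coherence bookkeeping is heavier, whereas the explicit cube computation sidesteps that entirely.
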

\begin{proof}
   There is a strongly cocartesian diagram
\[\begin{tikzcd}
\Cotan_{X_2} \arrow[rr] \arrow[dd] & & \Cotan_{X_{23}} \arrow[dd] & \\
& \Cotan_Y \arrow[ul] \arrow[rr, crossing over] & & \Cotan_{X_3} \arrow[ul] \arrow[dd] \\
\Cotan_{X_{12}} \arrow[rr] & & \Cotan_{X_{123}} &  \\
& \Cotan_{X_1} \arrow[from=uu, crossing over] \arrow[ul] \arrow[rr] & & \Cotan_{X_{13}} \arrow[ul]
\end{tikzcd}\]
of sheaves on $X_{123}$. Using the fact that the total cofiber of any face of the cube must vanish, we find
\begin{align*}
\cofib(\Cotan_{X_{13}} \to \Cotan_{X_{123}}) \simeq \cofib(\Cotan_{X_{3}} \to \Cotan_{X_{23}}) \simeq \cofib(\Cotan_Y \to \Cotan_{X_2}) \simeq \Cotan_{f_2}.
\end{align*}
The $(-1)$-shifted conormal bundle $\rT^*_{X_{123}}[-1](X_{12} \times X_{13} \times X_{23})$ is the total space of the complex
\begin{align*}
\fib(\Cotan_{X_{12}} \oplus \Cotan_{X_{13}} \oplus \Cotan_{X_{23}} \to \Cotan_{X_{123}})[-1] &\simeq \cofib(\Cotan_{X_{12}} \oplus \Cotan_{X_{13}} \oplus \Cotan_{X_{23}} \to \Cotan_{X_{123}})[-2] \\
&\simeq \cofib(\Cotan_{X_{12}} \oplus \Cotan_{X_{23}} \to \cofib(\Cotan_{X_{13}} \to \Cotan_{X_{123}}))[-2] \\
& \simeq \cofib(\Cotan_{X_{12}} \oplus \Cotan_{X_{23}} \to \Cotan_{f_2})[-2] \\
& \simeq \fib(\Cotan_{X_{12}} \oplus \Cotan_{X_{23}} \to \Cotan_{f_2})[-1].
\end{align*}
The total space of the last complex is
$
L_{12} \times_{L_2} L_{23} \simeq L_{123},
$
as desired.
\end{proof}

\begin{definition}\label{defn:map-F}
    We write
    \[
    F:\Sing(X_{ij})\to \rT^*Y
    \]
    for the map induced on classical parts by the projection $L_{ij}=L_i\times_{\rT^*Y}L_j\to \rT^*Y.$
\end{definition}

The map $F$ just defined
gives some justification for the relevance of singular support conditions, and the relation to \Cref{defn:cohcats-general}.
\begin{lemma}\label{lem:image-is-conormal}
    The image of the map $F$ coincides with the intersection in $\rT^*Y$ of the images of the conormals $\Conorm_{X_i}Y\to \rT^*Y.$ In particular, when $X_i=X_j=X,$ the image of $F$ coincides with the image of $\Conorm_XY\to \rT^*Y.$
\end{lemma}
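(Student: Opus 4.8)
The plan is to reduce the assertion to a purely topological statement about images of maps of stacks. By the preceding lemma we have an identification $\Sing(X_{ij})\simeq L_{ij}^{\mathrm{cl}}$ with $L_{ij}=L_i\times_{\rT^*Y}L_j$, and by \Cref{defn:map-F} the map $F$ is induced on classical parts by the fiber-product projection $L_{ij}\to\rT^*Y$. Since classical truncation induces a homeomorphism on underlying topological spaces, and since $\rT^*Y$ is already classical ($Y$ being smooth), $\Im(F)$ coincides with the image of $L_{ij}\to\rT^*Y$; similarly $\Conorm_{X_i}Y\to\rT^*Y$ is, by the definition of $L_i$, the map $L_i\to\rT^*Y$. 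So it suffices to prove
\[
\Im\big(L_i\times_{\rT^*Y}L_j\to\rT^*Y\big)=\Im\big(L_i\to\rT^*Y\big)\cap\Im\big(L_j\to\rT^*Y\big).
\]

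The inclusion $\subseteq$ is formal: the projection $L_{ij}\to\rT^*Y$ factors both as $L_{ij}\xrightarrow{q_i}L_i\to\rT^*Y$ and as $L_{ij}\xrightarrow{q_j}L_j\to\rT^*Y$, the two composites agreeing by the very definition of $L_{ij}$ as a fiber product over $\rT^*Y$; hence its image lies in each of $\Im(L_i\to\rT^*Y)$ and $\Im(L_j\to\rT^*Y)$. For the reverse inclusion I would first observe that passing to the derived fiber product does not shrink the underlying space: $\pi_0$ of a derived tensor product of connective rings is the ordinary tensor product, so $|L_{ij}|$ is the underlying space of the classical fiber product $L_i^{\mathrm{cl}}\times^{\mathrm{cl}}_{\rT^*Y}L_j^{\mathrm{cl}}$. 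That classical fiber product surjects onto the set-theoretic fiber product $|L_i|\times_{|\rT^*Y|}|L_j|$ — a pair of points of $L_i$ and $L_j$ with matching image in $\rT^*Y$ is realized, after passing to a common field extension, by an honest point of the fiber product — and this is where one uses that $Y$ and the $X_i$ are smooth perfect stacks with $f_i$ proper, so that all the stacks in sight are quasi-compact and the conormals behave well. Finally, the image of $|L_i|\times_{|\rT^*Y|}|L_j|$ in $|\rT^*Y|$ is by construction exactly the set of $\xi$ lying both in $\Im(L_i\to\rT^*Y)$ and in $\Im(L_j\to\rT^*Y)$, which yields the claimed reverse inclusion.

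The special case $X_i=X_j=X$ is then immediate, since $L_i=L_j=\Conorm_XY$ and a set intersected with itself is that set. I do not expect any genuine obstacle here: the only thing demanding attention is the derived-versus-classical bookkeeping — confirming that neither taking classical parts nor passing to underlying topological spaces disturbs the surjectivity onto the set-theoretic fiber product — and this is routine given the finiteness hypotheses on $Y$ and the $X_i$.
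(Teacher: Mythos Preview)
Your argument is correct and follows the same approach as the paper: the paper's proof is the single sentence ``This is immediate from the identification of $\Sing(X_{ij})$ with the fiber product of conormals in $\rT^*Y$,'' and you have simply unpacked what ``immediate'' means here. One small comment: the surjection $|L_i\times_{\rT^*Y}L_j|\to|L_i|\times_{|\rT^*Y|}|L_j|$ holds for arbitrary algebraic stacks and does not actually require the smoothness or properness hypotheses you invoke, so that aside is harmless but unnecessary.
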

\begin{proof}
    This is immediate from the identification of $\Sing(X_{ij})$ with the fiber product of conormals in $\rT^*Y.$
\end{proof}

We now specialize to the case where $X_i=X$ for all $i$, so that we are studying the monoidal category $\Coh(X\times_YX).$
We will use the map $F$ to pull back conic subsets of $\rT^*Y$ (or of the image of $\Conorm_XY$ in $\rT^*Y$) to singular support conditions on $X\times_YX.$ We must first check that such singular-support conditions are respected by the monoidal structure on $\Coh(X\times_YX)$:
\begin{lemma}
    Let $\Lambda\subset \rT^*Y$ be a closed conic subset. Then the pullback $F^{-1}(\Lambda)$ is stable under convolution with $\Sing(X\times_YX).$
\end{lemma}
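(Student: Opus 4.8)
The plan is to reduce the convolution to purely set-theoretic operations on the classical fiber products of conormals, where $F^{-1}(\Lambda)$ manifestly consists of ``everything lying over $\Lambda$,'' and then to observe that such a condition cannot be enlarged by convolution because every map in sight is compatible with the projection to $\rT^*Y$.

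First I would set up notation using \Cref{lem:lijk-does-convolution} and the preceding identification $\rT^*[-1]X_{ij}\simeq L_{ij}$. Since here $X_i=X$ for every $i$, the spaces $L_{12},L_{13},L_{23}$ are all canonically the fiber product $\Conorm_XY\times_{\rT^*Y}\Conorm_XY$, and $L_{123}=\Conorm_XY\times_{\rT^*Y}\Conorm_XY\times_{\rT^*Y}\Conorm_XY$; under $\Sing(X\times_YX)\simeq L_{ij}^{cl}$ the map $F$ of \Cref{defn:map-F} is the classical part of the structure projection to $\rT^*Y$. In particular $F^{-1}(\Lambda)$ is closed and conic, hence a bona fide singular-support condition. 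The one structural fact I need to record is that the projections $q_{12},q_{13},q_{23}\colon L_{123}\to L_{ij}$ all cover one and the same map $G\colon L_{123}^{cl}\to\rT^*Y$, i.e. $F\circ q_{12}=F\circ q_{13}=F\circ q_{23}=G$; this is immediate from the universal property of fiber products over $\rT^*Y$.

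Next I would invoke \Cref{lem:lijk-does-convolution} to rewrite the convolution of \Cref{defn:convolution}. Under that identification the correspondence $X_{12}\times X_{23}\xleftarrow{\delta_2}X_{123}\xrightarrow{p_{13}}X_{13}$ becomes $L_{12}\times L_{23}\xleftarrow{(q_{12},q_{23})}L_{123}\xrightarrow{q_{13}}L_{13}$, so that $(\delta_2)^!$ becomes ordinary preimage along $(q_{12},q_{23})$ and $(p_{13})_*$ becomes ordinary image along $q_{13}$; concretely, on classical parts $\Lambda_{12}*\Lambda_{23}=q_{13}\big(q_{12}^{-1}(\Lambda_{12})\cap q_{23}^{-1}(\Lambda_{23})\big)$. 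Specializing to $\Lambda_{12}=\Sing(X\times_YX)$, the full singularity stack, and $\Lambda_{23}=F^{-1}(\Lambda)$, the first intersectand is everything, while $q_{23}^{-1}\big(F^{-1}(\Lambda)\big)=(F\circ q_{23})^{-1}(\Lambda)=G^{-1}(\Lambda)$, so $\Sing(X\times_YX)*F^{-1}(\Lambda)=q_{13}\big(G^{-1}(\Lambda)\big)$. Since $G=F\circ q_{13}$, any point with $G$-image in $\Lambda$ has $q_{13}$-image in $F^{-1}(\Lambda)$, whence $q_{13}\big(G^{-1}(\Lambda)\big)\subset F^{-1}(\Lambda)$; this is exactly the asserted stability.

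The step requiring real care — and in fact the only nontrivial one — is the translation in the previous paragraph: one must check that, under \Cref{lem:lijk-does-convolution} together with the naturality of $\Sing(-)$, the abstractly-defined operations $g^!$ and $g_*$ (built from the correspondences $\Sing(Z)\xleftarrow{dg^*}\Sing(W)\times_WZ\xrightarrow{\tilde g}\Sing(W)$) really do go over to honest preimage along $(q_{12},q_{23})$ and image along $q_{13}$. This comes down to checking that the transition maps $dg^*$ and $\tilde g$ for $g=\delta_2$ and $g=p_{13}$ are intertwined with the identifications $\Sing(X_{ij})\simeq L_{ij}^{cl}$ and with the maps to $\rT^*Y$, which I expect to follow formally from the cocartesian-cube computation already carried out in the proof of \Cref{lem:lijk-does-convolution}, at the cost of some careful derived-versus-classical bookkeeping. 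Once that bridge is in place the remainder of the argument is set-theoretic and monotone.
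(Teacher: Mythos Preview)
Your proposal is correct and follows essentially the same approach as the paper: both invoke \Cref{lem:lijk-does-convolution} to replace the convolution correspondence by $L_{12}\times L_{23}\leftarrow L_{123}\to L_{13}$, observe that the preimage step imposes exactly the condition ``projection to $\rT^*Y$ lies in $\Lambda$,'' and note that this condition survives projection to $L_{13}$. Your argument is more explicit (naming the common factorization $G=F\circ q_{ij}$ and flagging the derived-versus-classical bookkeeping), whereas the paper compresses this into two sentences; the minor difference in which factor carries $F^{-1}(\Lambda)$ is immaterial by symmetry since all $X_i=X$.
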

\begin{proof}
    From \Cref{lem:lijk-does-convolution}, we know that the convolution of Lagrangians (or, in this case, coisotropics) may be computed by composition with the Lagrangian correspondence
    \[
    \begin{tikzcd}
        L_{12}\times L_{23}&\arrow[l] L_{123} \arrow[r] & L_{13},
    \end{tikzcd}
    \]
    so that $F^{-1}(\Lambda)*\Sing(X\times_YX)$ is the coisotropic in $L_{123}$ described by the fiber product
    \[
    (F^{-1}(\Lambda)\times L_{23})\times_{L_{12}\times L_{23}}L_{123}\to L_{123}.
    \]
    This fiber product imposes the condition on points of $L_{123}$ that their projection to $\rT^*Y$ lives in $\Lambda,$ and its image in $L_{13}$ again consists of points satisfying the same condition.
\end{proof}

Our main tool in relating convolution singular supports to convolution of coherent sheaf categories will be the following calculation:
\begin{proposition}[\cite{BZCHN}*{Proposition 3.30}]\label{prop:BZCHN-sing-convolution}
    Let $\Lambda_{12}\subset \Sing(Z_{12}),$ $\Lambda_{23}\subset \Sing(Z_{23})$ be $Z_{22}$-stable conic subsets. Then there is an equivalence of categories
    \[
        \Coh_{\Lambda_{12}}(Z_{23})\otimes_{\Coh(Z_{22})}\Coh_{\Lambda_{23}}(Z_{23}) \simeq
        \Coh_{\Lambda_{12}*\Lambda_{23}}(Z_{13}).
    \]
\end{proposition}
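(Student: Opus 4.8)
The plan is to first prove the statement with no singular-support condition, namely $\Coh(Z_{12})\otimes_{\Coh(Z_{22})}\Coh(Z_{23})\simeq\Coh(Z_{13})$ (in the notation of this section, $Z_{ij}=X_i\times_Y X_j$), and then to bootstrap the general case by tracking how this equivalence interacts with the conic localizations over the singularity spaces. For the unrestricted statement I would use \Cref{thm:BZNP-kernels} to identify $\Coh(Z_{22})$ with the monoidal category $\Fun_{\Perf(Y)}(\Coh(X_2),\Coh(X_2))$ and $\Coh(Z_{12})$, $\Coh(Z_{23})$ with the categories of $\Perf(Y)$-linear functors $\Coh(X_2)\to\Coh(X_1)$ and $\Coh(X_3)\to\Coh(X_2)$, with bimodule structures given by composition of kernels. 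Since each $X_i$ is smooth and proper over $Y$, the $\Perf(Y)$-module category $\Coh(X_i)$ is dualizable, compactly generated, and proper, so composition of functor categories computes the relative tensor product over the endomorphism category; this gives the equivalence, realized by the convolution functor attached to \Cref{eq:convolution-monoidal-structure}. The relative tensor product may be computed by a bar construction all of whose terms are coherent categories of iterated fiber products $X_1\times_Y X_2\times_Y\cdots\times_Y X_2\times_Y X_3$, and this simplicial object converges because $p_{13}$ and the other structure maps are proper.

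Next I would upgrade this to an equivalence compatible with singular support. Recall from \cite{AG} that $\Coh(Z)$ localizes conically over $\Sing(Z)$, and that for closed conic $\Lambda\subset\Sing(Z)$ the subcategory $\Coh_\Lambda(Z)\subset\Coh(Z)$ is the (coreflective) full subcategory of objects supported on $\Lambda$. Under the identifications $\Sing(Z_{ij})\simeq L_{ij}^{cl}$, the correspondence $L_{12}\times L_{23}\leftarrow L_{123}\to L_{13}$ of \Cref{lem:lijk-does-convolution} is exactly the recipe by which the action of functions on $\Sing(Z_{12})\times\Sing(Z_{23})$ on the source of the convolution functor is transported to an action of functions on $\Sing(Z_{13})$ on the target: this is the base-change and proper-pushforward compatibility of singular support along $\delta_2$ and $p_{13}$ from \cite{AG}*{Proposition 7.1.3}, applied degreewise to the bar construction above (the discrepancy between $(\delta_2)^*$, appearing in the categorical convolution, and $(\delta_2)^!$, appearing in \Cref{defn:convolution}, is an invertible twist and hence invisible to singular support). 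The role of the $Z_{22}$-stability of $\Lambda_{12}$ and $\Lambda_{23}$ is to guarantee that restricting the bar construction to the subcategories $\Coh_{\Lambda_{12}}(Z_{12})$ and $\Coh_{\Lambda_{23}}(Z_{23})$ still produces a simplicial object landing in the correct singular-support subcategory at each level, so that the restricted relative tensor product is computed there.

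Granting these inputs, the conclusion is formal: the left-hand relative tensor product $\Coh_{\Lambda_{12}}(Z_{12})\otimes_{\Coh(Z_{22})}\Coh_{\Lambda_{23}}(Z_{23})$ is carried by the first equivalence onto the full subcategory of $\Coh(Z_{13})$ on objects supported on $(p_{13})_*(\delta_2)^!(\Lambda_{12}\boxtimes\Lambda_{23})$, which is $\Lambda_{12}*\Lambda_{23}$ by \Cref{defn:convolution} together with \Cref{lem:lijk-does-convolution}, giving the desired $\Coh_{\Lambda_{12}*\Lambda_{23}}(Z_{13})$. I expect the main obstacle to be making precise the compatibility of the second step with the singularity-space module structures — concretely, verifying that the operation $\Coh_\Lambda(-)$ of restricting singular support commutes with the relative tensor product over $\Coh(Z_{22})$, which is exactly where $Z_{22}$-stability must be used and where the derived subtleties of singular-support quotients (of a kind already visible in the $1$-categorical setting) could a priori intervene; all the purely geometric input needed, however, is already packaged in \Cref{lem:lijk-does-convolution} and the transfer results of \cite{AG}.
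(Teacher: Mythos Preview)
The paper does not give its own proof of this proposition: it is stated as a black-box citation of \cite{BZCHN}*{Proposition 3.30}, with no argument supplied. So there is nothing in the paper to compare your proposal against directly.

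That said, your outline is a faithful reconstruction of the strategy used in \cite{BZCHN}. The two-step structure --- first establish the unrestricted equivalence $\Coh(Z_{12})\otimes_{\Coh(Z_{22})}\Coh(Z_{23})\simeq\Coh(Z_{13})$ via the bar resolution and integral-kernel identifications, then track singular support through the convolution correspondence using the pushforward/pullback estimates of \cite{AG} --- is exactly how the cited argument proceeds. Your identification of the main technical point (that restriction to $\Coh_\Lambda$ commutes with the relative tensor product, which is where $Z_{22}$-stability enters) is also correct, and is handled in \cite{BZCHN} essentially as you suggest, by checking compatibility levelwise in the bar construction. The geometric input you invoke from \Cref{lem:lijk-does-convolution} is precisely what the paper provides to make the singular-support bookkeeping transparent in the hypertoric setting.
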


\Cref{prop:BZCHN-sing-convolution} allows us to establish the following invariance result, which justifies the notation of \Cref{defn:cohcats-general}.
\begin{proposition}\label{prop:invariance-for-cohcats}
    Suppose that the images in $\rT^*Y$ of the conormals $\Conorm_{X_1}Y$ and $\Conorm_{X_2}Y$ coincide. Then there is an equivalence of module 2-categories
    \[
    \Mod_{\Coh(X_1\times_Y X_1)}\simeq \Mod_{\Coh(X_2\times_Y X_2)},
    \]
    given by tensoring with the bimodule category $\Coh(X_1\times_Y X_2).$
\end{proposition}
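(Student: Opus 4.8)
The plan is to recognize the statement as a Morita equivalence and to exhibit $\Coh(X_1\times_Y X_2)$ as an invertible bimodule. Write $A:=\Coh(X_1\times_Y X_1)$, $B:=\Coh(X_2\times_Y X_2)$, $M:=\Coh(X_1\times_Y X_2)$ and $N:=\Coh(X_2\times_Y X_1)$. The convolution structure of \Cref{defn:convolution-monoidal-structure} (applied to the relevant members of $\{X_1,X_2\}$) makes $M$ an $(A,B)$-bimodule category and $N$ a $(B,A)$-bimodule category, and produces canonical bimodule functors
\[
\mu_A\colon M\otimes_B N\longrightarrow A,\qquad \mu_B\colon N\otimes_A M\longrightarrow B,
\]
induced by the correspondences $X_1\times_Y X_2\times_Y X_1\to X_1\times_Y X_1$ and $X_2\times_Y X_1\times_Y X_2\to X_2\times_Y X_2$. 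I would then note that if $\mu_A$ and $\mu_B$ are equivalences of bimodule categories, the 2-functors $-\otimes_A M\colon\Mod_A(\St)\to\Mod_B(\St)$ and $-\otimes_B N\colon\Mod_B(\St)\to\Mod_A(\St)$ have both composites identified with the identities through $\mu_A$ and $\mu_B$, hence are mutually quasi-inverse equivalences; so the whole proof reduces to showing $\mu_A$ and $\mu_B$ are equivalences, and $-\otimes_A M$ is the asserted one.

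First I would treat $\mu_A$ by invoking \Cref{prop:BZCHN-sing-convolution} with the triple $(X_1,X_2,X_1)$ and the \emph{maximal} singular-support conditions $\Sing(X_1\times_Y X_2)$ and $\Sing(X_2\times_Y X_1)$ on the two factors (these are automatically $\Coh(X_2\times_Y X_2)$-stable, since any convolution of singular-support conditions lands inside the maximal one). Using $\Coh_{\Sing(Z)}(Z)=\Coh(Z)$, this produces an equivalence
\[
M\otimes_B N\;\simeq\;\Coh_{\Lambda}(X_1\times_Y X_1),\qquad \Lambda:=\Sing(X_1\times_Y X_2)*\Sing(X_2\times_Y X_1),
\]
which is implemented by the convolution functor, i.e.\ by $\mu_A$. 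So it remains only to check that $\Lambda=\Sing(X_1\times_Y X_1)$, which will give $\Coh_\Lambda(X_1\times_Y X_1)=A$ and hence that $\mu_A$ is an equivalence.

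To identify $\Lambda$, I would set $L_i:=\Conorm_{X_i}Y$ and apply \Cref{lem:lijk-does-convolution}: the correspondence underlying this convolution is identified with
\[
L_1\times_{\rT^*Y}L_2\;\longleftarrow\;L_1\times_{\rT^*Y}L_2\times_{\rT^*Y}L_1\;\longrightarrow\;L_1\times_{\rT^*Y}L_1,
\]
and, the input condition being maximal, $\Lambda$ is the classical part of the image of the right-hand map. Writing $L_1\times_{\rT^*Y}L_2\times_{\rT^*Y}L_1\simeq(L_1\times_{\rT^*Y}L_1)\times_{\rT^*Y}L_2$, this map is the base change of $L_2\to\rT^*Y$ along $L_1\times_{\rT^*Y}L_1\to\rT^*Y$, so its image is the set of points of $L_1\times_{\rT^*Y}L_1$ lying over the image of $L_2\to\rT^*Y$. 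By \Cref{lem:image-is-conormal} the image of $L_1\times_{\rT^*Y}L_1\to\rT^*Y$ equals the image of $\Conorm_{X_1}Y\to\rT^*Y$, which by hypothesis equals the image of $\Conorm_{X_2}Y=L_2\to\rT^*Y$; hence the right-hand map is surjective on classical points and $\Lambda=\Sing(X_1\times_Y X_1)$, as wanted. Interchanging the roles of $X_1$ and $X_2$ (again using the hypothesis) gives the analogous statement for $\mu_B$, completing the argument.

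The only step carrying geometric content is the surjectivity just used, which is precisely where the hypothesis $\overline{\Conorm_{X_1}Y}=\overline{\Conorm_{X_2}Y}$ enters; everything else is formal. The main obstacle I anticipate is bookkeeping: one must verify that the equivalence furnished by \Cref{prop:BZCHN-sing-convolution} is genuinely one of $A$- (resp.\ $B$-)bimodule categories realized by $\mu_A$ (resp.\ $\mu_B$), so that the two equivalences really assemble to the invertibility of $M$. This should hold because that equivalence, like the module actions and associativity constraints of \Cref{defn:convolution-monoidal-structure}, is built from the functoriality of $\Coh$ along fiber-product correspondences of the $X_i$, so the needed coherences reduce to base change; alternatively one can use \Cref{thm:BZNP-kernels} to reinterpret $A,B,M,N$ as the $\Perf(Y)$-linear functor categories between $\Coh(X_1)$ and $\Coh(X_2)$, with $\mu_A$ and $\mu_B$ given by composition of functors.
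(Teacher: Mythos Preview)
Your proposal is correct and follows essentially the same approach as the paper: both reduce to showing that $\Coh(X_1\times_Y X_2)$ is an invertible bimodule by using \Cref{prop:BZCHN-sing-convolution} to identify $M\otimes_B N$ with $\Coh_\Lambda(X_1\times_Y X_1)$ for $\Lambda=\Sing(X_{12})*\Sing(X_{21})$, then using \Cref{lem:lijk-does-convolution} and the hypothesis on conormal images to show $L_{121}\to L_{11}$ is surjective on classical points, forcing $\Lambda=\Sing(X_{11})$. You are slightly more explicit than the paper about the formal Morita framework and about the bimodule-coherence bookkeeping (which the paper leaves implicit), but the mathematical content is the same.
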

\begin{proof}
    To see that the bimodule $\Coh(X_{12})$ defines a Morita equivalence between the monoidal categories $\Coh(X_{11})$ and $\Coh(X_{22}),$ we will show that the bimodule $\Coh(X_{12})$ gives the inverse Morita equivalence, by exhibiting an equivalence
    \begin{equation}\label{eq:goal-morita}
    \Coh(X_1\times_Y X_2)\otimes_{\Coh(X_2\times_Y X_2)}\Coh(X_2\times_Y X_1)\simeq \Coh(X_1\times_Y X_1).
    \end{equation}
    By \Cref{prop:BZCHN-sing-convolution}, the left-hand side of \Cref{eq:goal-morita} is equivalent to the category
    $\Coh_{\Lambda^{cl}}(X_1\times_Y X_1),$ where $\Lambda^{cl}=\Sing(X_{12})*\Sing(X_{21})$ is given by convolution of the coisotropic full singular-support conditions for $X_{12}$ and $X_{21}.$

    $\Lambda^{cl}$ is the underlying classical (and reduced) space of the coisotropic $\Lambda$ given by applying the Lagrangian correspondence $L_{121}$ to $L_{12}\times L_{23}$:
    \[
    \Lambda\simeq (L_{12}\times L_{21})\times_{L_{12}\times L_{21}}L_{121}\simeq L_{121}\to L_{11}.
    \]
    By assumption, the images in $\rT^*Y$ of the conormals to $X_1$ and $X_2$ coincide, so that the map on sets induced by $L_{121}\to L_{11}$ is surjective, and we conclude that $\Lambda^{cl}=\Sing(X_{11})$ is the entire singular-support condition for $X_{11},$ so that the left-hand side of \Cref{eq:goal-morita} is equivalent to the whole category of coherent sheaves on $X_1\times_Y X_1.$
\end{proof}

\subsection{Microlocalization}\label{sec:bside-muloc}
We now return to the case of a single map $f:X\to Y$ (where $X$ may be disconnected). Using the map $F:\Sing(X\times_YX)\simeq (\Conorm_XY\times_{\rT^*Y}\Conorm_XY)^{cl}\to \rT^*Y,$
we propose the following definition, allowing us to study coherent sheaves of categories on $Y$ microlocally away from the zero-section in $\rT^*Y.$
\begin{definition}\label{defn:cohcats-microlocal}
    Let $X\to Y$ be as in \Cref{defn:cohcats-general}, 
    and let $U\subset \overline{\Conorm_XY}$ be an open subset of the image in $\rT^*Y$ of the conormal $\Conorm_XY,$ with closed complement 
    $\Lambda = \overline{\Conorm_XY}\setminus U$.
    Then we define the 2-category of \bit{microlocal coherent sheaves of categories} on $U$ to be the 2-category
    \[
        \mu\CohCat(U) := \Mod_{\Coh_{F^{-1}(U)}(X\times_YX)}(\St)
    \]
    of module categories for the monoidal category $\Coh_U(X\times_YX)\simeq\frac{\Coh(X\times_Y X)}{\Coh_{F^{-1}(\Lambda)}(X\times_YX)}$ of coherent sheaves with singular support in $U$.
\end{definition}

The microlocal analogue of \Cref{prop:invariance-for-cohcats} remains true.
\begin{proposition}\label{prop:invariance-for-mucohcats}
    Suppose that the images in $T^*Y$ of $\Conorm_{X_1}Y$ and $\Conorm_{X_2}Y$ coincide. Then there is an equivalence of 2-categories
    \[
    \mu\CohCat(U\cap\overline{\Conorm_{X_1}Y})\simeq
    \mu\CohCat(U\cap\overline{\Conorm_{X_2}Y}).
    \]
\end{proposition}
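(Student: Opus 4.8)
The plan is to mirror the proof of \Cref{prop:invariance-for-cohcats}, now localizing the Morita equivalence produced there. Write $\Lambda_0\subset\rT^*Y$ for the common image $\overline{\Conorm_{X_1}Y}=\overline{\Conorm_{X_2}Y}$; since $X_1,X_2\to Y$ are proper, $\Lambda_0$ is closed, so $\Lambda:=\Lambda_0\setminus U$ is a closed conic subset of $\rT^*Y$. For $a,b\in\{1,2\}$ write $X_{ab}:=X_a\times_Y X_b$ and $F_{ab}\colon\Sing(X_{ab})\to\rT^*Y$ for the map of \Cref{defn:map-F}; by \Cref{lem:image-is-conormal} each $F_{ab}$ has image $\Lambda_0$, so $F_{aa}^{-1}(U\cap\Lambda_0)=F_{aa}^{-1}(U)$ and the two $2$-categories in the statement are $\Mod_{\Coh_{F_{aa}^{-1}(U)}(X_{aa})}(\St)$ for $a=1,2$. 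The first step is thus to reduce the proposition to producing a Morita equivalence between the monoidal categories $\Coh_{F_{11}^{-1}(U)}(X_{11})$ and $\Coh_{F_{22}^{-1}(U)}(X_{22})$; the bimodule I would use is $\cM:=\Coh_{F_{12}^{-1}(U)}(X_{12})$, with inverse candidate $\cN:=\Coh_{F_{21}^{-1}(U)}(X_{21})$.

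The second step is the key computational input: convolution of ``preimage-of-$\Lambda$'' singular support conditions is again of that form. Using the general form of \Cref{lem:lijk-does-convolution} (the three stacks may differ), a convolution $\Lambda_{ab}*\Lambda_{bc}$ is computed along $L_{abc}\to L_{ac}$, and the maps from $L_{abc}$ to $\rT^*Y$ obtained from any of its index-pair projections all coincide with the canonical projection; hence the condition ``image in $\rT^*Y$ lies in $\Lambda$'' imposed on either factor restricts to the same condition on $L_{abc}$, and — because every point of $\Lambda\subseteq\Lambda_0$ lifts to each $L_a$, which is exactly the hypothesis, used just as the surjectivity of $L_{121}\to L_{11}$ is used in the proof of \Cref{prop:invariance-for-cohcats} — it pushes forward onto the full such locus in $L_{ac}$. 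This yields the identities
\[
F_{aa}^{-1}(\Lambda)*\Sing(X_{ab})=F_{ab}^{-1}(\Lambda)=\Sing(X_{ab})*F_{bb}^{-1}(\Lambda),\qquad \Sing(X_{12})*F_{21}^{-1}(\Lambda)=F_{11}^{-1}(\Lambda)
\]
as conic subsets, not merely containments. Feeding the first two into \Cref{prop:BZCHN-sing-convolution}, and using that base change of a module along a monoidal localization $\cA\to\cA/\cI$ agrees with the corresponding Verdier quotient of the module, I would deduce the base-change presentations
\[
\cM\simeq\Coh(X_{12})\otimes_{\Coh(X_{22})}\Coh_{F_{22}^{-1}(U)}(X_{22})\simeq\Coh_{F_{11}^{-1}(U)}(X_{11})\otimes_{\Coh(X_{11})}\Coh(X_{12})
\]
together with the analogue for $\cN$.

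With these presentations in hand, the final step localizes the invertibility computation of \Cref{prop:invariance-for-cohcats}. Using the base-change presentation of $\cM$ and associativity of the relative tensor product, $\cM\otimes_{\Coh_{F_{22}^{-1}(U)}(X_{22})}\cN\simeq\Coh(X_{12})\otimes_{\Coh(X_{22})}\cN$; applying the cofiber-preserving functor $\Coh(X_{12})\otimes_{\Coh(X_{22})}(-)$ to the Verdier quotient sequence $\Coh_{F_{21}^{-1}(\Lambda)}(X_{21})\to\Coh(X_{21})\to\cN$, the middle term becomes $\Coh(X_{11})$ by \Cref{eq:goal-morita}, while $\Coh(X_{12})\otimes_{\Coh(X_{22})}\Coh_{F_{21}^{-1}(\Lambda)}(X_{21})\simeq\Coh_{\Sing(X_{12})*F_{21}^{-1}(\Lambda)}(X_{11})=\Coh_{F_{11}^{-1}(\Lambda)}(X_{11})$ by \Cref{prop:BZCHN-sing-convolution} and the convolution identity above, so the cofiber is $\Coh_{F_{11}^{-1}(U)}(X_{11})$, compatibly with the bimodule structures. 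Exchanging the roles of $X_1$ and $X_2$ gives $\cN\otimes_{\Coh_{F_{11}^{-1}(U)}(X_{11})}\cM\simeq\Coh_{F_{22}^{-1}(U)}(X_{22})$, so $\cM$ is invertible and tensoring with it induces the asserted equivalence. I expect the main obstacle to be the bookkeeping needed to commute the localizations (quotients by singular-support conditions) past the relative tensor products and to verify that the convolutions of singular supports are genuine equalities — this is precisely where the hypothesis on the images of the conormals is needed, and the only point at which the argument goes beyond formal manipulation of \Cref{prop:BZCHN-sing-convolution} and \Cref{prop:invariance-for-cohcats}.
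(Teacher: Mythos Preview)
Your proposal is correct and follows essentially the same route as the paper: both localize the Morita equivalence of \Cref{prop:invariance-for-cohcats} by showing that the singular-support ideals $F^{-1}(\Lambda)$ are matched under the bimodule, with the key input being the convolution identities computed via \Cref{lem:lijk-does-convolution} and fed into \Cref{prop:BZCHN-sing-convolution}. The only difference is organizational: the paper invokes the general principle that a Morita bimodule $\cM$ between $\cA_1,\cA_2$ descends to a Morita bimodule between $\cA_1/\cI_1$ and $\cA_2/\cI_2$ as soon as $\cI_2\cM=\cM\cI_1$, thereby reducing everything to a single convolution check ($F^{-1}(\Lambda)*L_{21}=L_{21}*F^{-1}(\Lambda)=F_{21}^{-1}(\Lambda)$), whereas you verify invertibility of the localized bimodule directly by computing both relative tensor products using several such identities. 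Your approach is slightly more explicit; the paper's is slightly more economical; the content is the same.
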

\begin{proof}
    The monoidal category $\overline{\cA}_i:=\Coh_{F^{-1}}(U)(X_i\times_Y X_i)$ is a quotient of the monoidal category $\cA_i:=\Coh(X_i\times_Y X_i)$ be the ideal $\cI_i$ generated by $\Coh_{F^{-1}(\Lambda)}(X_i\times_Y X_i).$ 
    
    From \Cref{prop:invariance-for-cohcats}, we know that the $\cA_1,\cA_2$-bimodule category $\cM:=\Coh(X_2\times_Y X_1)$ gives a Morita equivalence between $\cA_1$ and $\cA_2.$ To show that the quotient $\cM/\cM\cI_1$ induces a Morita equivalence between the quotient algebras $\overline{\cA}_i,$ it is sufficient to show that $\cI_2\cM$ and $\cM\cI_1$ are equivalent subcategories of $\cM,$ or in other words that the two convolutions
    \[
        \Coh_{F^{-1}(\Lambda)}(X_2\times_Y X_2)\otimes_{\Coh(X_2\times_Y X_2)}\Coh(X_2\times_Y X_1),
        \quad
        \Coh_{F^{-1}(\Lambda)}(X_2\times_Y X_1)\otimes_{\Coh(X_1\times_Y X_1)}\Coh(X_1\times_Y X_1)
    \]
    agree. By \Cref{prop:BZCHN-sing-convolution} we are reduced to checking that the convolutions $F^{-1}(\Lambda)*L_{21}$ and $L_{21}*F^{-1}(\Lambda)$ determine the same singular support condition inside of $\Sing(X_2\times_Y X_1).$ Since both of these convolutions agree with $F^{-1}(\Lambda)\subset \Sing(X_2\times_Y X_1),$ we are done.
\end{proof}

\begin{remark}
    By \cite{BZNP}*{Theorem 1.2.10}, the monoidal center of the monoidal category $\Coh(X\times_Y X)$ is given by the category $\Coh_{prop/Y}(\LBet Y)$ of coherent sheaves on the loop space $\LBet Y:=\Maps(S^1, Y)$ whose pushforward to $Y$ is coherent. If $Y$ is a scheme, then $\Sing(\LBet Y)\simeq \rT^*Y,$ and localization over the monoidal center of $\Coh(X\times_Y X)$ recovers the microlocalization defined above. For $Y$ a stack, $\LBet Y$ offers a refinement of this microlocalization.
\end{remark}

We now specialize to the situation that $X=\bigsqcup_\sgn X_\sgn$ is a disjoint union of components, so that the monoidal category $\Coh(X\times_YX)$ splits (non-monoidally) into its ``matrix entries'': 
\[\Coh(X\times_YX)\simeq \bigoplus_{(\sgn,\sgn')}\Coh(X_\sgn\times_Y X_{\sgn'}). \]
The diagonal entries
\begin{equation}\label{eq:integralkernel-on-xa}
\Coh(X_\sgn\times_Y X_\sgn)\simeq \Fun_{\Perf(Y)}(\Coh(X_\sgn),\Coh(X_\sgn))
\end{equation}
of this matrix are themselves monoidal categories, with unit object (corresponding to the identity functor under the equivalence \Cref{eq:integralkernel-on-xa}) given by the pushforward $(\Delta_\sgn)_*\cO_{X_\sgn}$ under the diagonal map
\[
\Delta_\sgn:X_\sgn\to X_\sgn\times_Y X_\sgn.
\]
This object plays a distinguished role:
\begin{definition}\label{defn:matrix-idempotent}
    With $X=\bigsqcup_\sgn X_\sgn$ as above, we write 
        \[
        E_\sgn:=(\Delta_\sgn)_*\cO_{X_\sgn}
        \]
        for the \bit{matrix idempotent} corresponding to $X_\sgn.$
\end{definition}

\begin{proposition}\label{prop:gen-by-idempotents}
    Let $\sgnsub=\{\sgn_1,\ldots,\sgn_k\}.$ Write $X_\sgnsub:=\bigsqcup_{\sgn_i\in \sgnsub} X_\sgn$ and let $\Lambda_\sgnsub$ be the image in $\rT^*Y$ of the conormal $\Conorm_{X_\sgnsub}.$ Then
    the smallest ideal in $\Coh(X\times_Y X)$ containing $\langle E_{\sgn_i}\rangle_{\sgn_i\in I}$ is $\Coh_{F^{-1}(\Lambda_I)}(X\times_YX).$
\end{proposition}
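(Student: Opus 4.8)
The plan is to collapse the finitely many generators into a single corner unit and then recognize the ideal it generates as the essential image of a convolution functor governed by \Cref{prop:BZCHN-sing-convolution}. First I would set $E_\sgnsub:=\bigoplus_{\sgn_i\in\sgnsub}E_{\sgn_i}$. Since $E_\sgnsub$ is a finite coproduct of the $E_{\sgn_i}$ and each $E_{\sgn_i}$ is a retract of $E_\sgnsub$, the ideal generated by $\{E_{\sgn_i}\}_{\sgn_i\in\sgnsub}$ equals the ideal generated by $E_\sgnsub$. Now $X_\sgnsub=\bigsqcup_{\sgn_i\in\sgnsub}X_{\sgn_i}$ is a clopen subspace of $X$, hence smooth and proper over $Y$, and $X_\sgnsub\times_YX_\sgnsub=\bigsqcup_{\sgn_i,\sgn_j\in\sgnsub}X_{\sgn_i}\times_YX_{\sgn_j}$; comparing with \Cref{defn:matrix-idempotent}, the object $E_\sgnsub=(\Delta_\sgnsub)_*\cO_{X_\sgnsub}$ is exactly the monoidal unit of the corner $\Coh(X_\sgnsub\times_YX_\sgnsub)\subseteq\Coh(X\times_YX)$.

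Next I would compute the ideal generated by this corner unit. From the matrix decomposition $\Coh(X\times_YX)=\bigoplus_{\sgn,\sgn'}\Coh(X_\sgn\times_YX_{\sgn'})$ and the convolution rule of \Cref{defn:convolution-monoidal-structure}, a direct bookkeeping check shows that for $A,B\in\Coh(X\times_YX)$ one has $A*E_\sgnsub*B=(A*E_\sgnsub)*(E_\sgnsub*B)$ with $A*E_\sgnsub\in\Coh(X\times_YX_\sgnsub)$ and $E_\sgnsub*B\in\Coh(X_\sgnsub\times_YX)$. Hence the ideal generated by $E_\sgnsub$ is the closure under finite (co)limits and retracts of the essential image of the convolution functor
\[
\Coh(X\times_YX_\sgnsub)\otimes_{\Coh(X_\sgnsub\times_YX_\sgnsub)}\Coh(X_\sgnsub\times_YX)\longrightarrow\Coh(X\times_YX).
\]
Applying \Cref{prop:BZCHN-sing-convolution} with the (automatically stable) full singular-support conditions on $X\times_YX_\sgnsub$ and on $X_\sgnsub\times_YX$ identifies this relative tensor product, compatibly with convolution, with $\Coh_\Lambda(X\times_YX)$, where $\Lambda:=\Sing(X\times_YX_\sgnsub)*\Sing(X_\sgnsub\times_YX)$. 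Since $\Coh_\Lambda(X\times_YX)$ is already thick, and is an ideal by the fact established above that $F^{-1}(-)$ of a closed conic subset of $\rT^*Y$ is convolution-stable, it coincides with the ideal generated by $E_\sgnsub$; it visibly contains $E_\sgnsub$, the whole corner $\bigoplus_{\sgn_i\in\sgnsub}\Coh(X_{\sgn_i}\times_YX_{\sgn_i})$ having singular support inside $F^{-1}(\Lambda_\sgnsub)$.

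It then remains to identify $\Lambda$ with $F^{-1}(\Lambda_\sgnsub)$. Writing $L_Z:=\Conorm_ZY$, \Cref{lem:lijk-does-convolution} computes $\Sing(X\times_YX_\sgnsub)*\Sing(X_\sgnsub\times_YX)$ through the Lagrangian correspondence
\[
L_X\times_{\rT^*Y}L_{X_\sgnsub}\longleftarrow L_X\times_{\rT^*Y}L_{X_\sgnsub}\times_{\rT^*Y}L_X\longrightarrow L_X\times_{\rT^*Y}L_X,
\]
and, the input conditions being full, $\Lambda$ is the classical part of the image of the triple fiber product inside $\Sing(X\times_YX)\simeq(L_X\times_{\rT^*Y}L_X)^{cl}$. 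A point $(\xi,\xi')$ of $L_X\times_{\rT^*Y}L_X$ lying over $\eta\in\rT^*Y$ is in this image precisely when the fiber of $L_{X_\sgnsub}\to\rT^*Y$ over $\eta$ is nonempty, i.e.\ when $\eta\in\overline{\Conorm_{X_\sgnsub}Y}=\Lambda_\sgnsub$ (\Cref{lem:image-is-conormal}); this says $(\xi,\xi')\in F^{-1}(\Lambda_\sgnsub)$. Thus $\Lambda=F^{-1}(\Lambda_\sgnsub)$, and together with the previous paragraph this proves the proposition.

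The step I expect to require the most care is the middle one: one must be precise about which closures are genuinely needed --- in particular that the essential image of the convolution functor, rather than merely some thick subcategory it generates, is all of $\Coh_\Lambda(X\times_YX)$, and that the equivalence of \Cref{prop:BZCHN-sing-convolution} is the one induced by convolution. Once that is pinned down, the rest is the matrix bookkeeping of the first steps and the set-theoretic image computation of the last.
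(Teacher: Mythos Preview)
Your proposal is correct and follows essentially the same route as the paper: reduce to the single idempotent $E_\sgnsub$, identify the ideal it generates with the relative tensor product $\Coh(X\times_YX_\sgnsub)\otimes_{\Coh(X_\sgnsub\times_YX_\sgnsub)}\Coh(X_\sgnsub\times_YX)$, apply \Cref{prop:BZCHN-sing-convolution} to rewrite this as $\Coh_\Lambda(X\times_YX)$, and then compute $\Lambda=F^{-1}(\Lambda_\sgnsub)$ via \Cref{lem:lijk-does-convolution}. Your worry about the ``middle step'' is already handled by the fact that \Cref{prop:BZCHN-sing-convolution} asserts an equivalence of categories (not merely generation), so no additional thick closure is needed.
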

\begin{proof}
    Let $E_\sgnsub=\bigoplus_{\sgn_i\in \sgnsub}E_{\sgn_i}.$ 
    Since $E_\sgnsub$ is the monoidal unit of $\Coh(X_\sgnsub\times_Y X_\sgnsub),$ the smallest ideal containing $E_\sgnsub$ may be written as the tensor product
    \begin{equation}\label{eq:ideal-as-relative-tensor}
        \Coh(X\times_YX_\sgnsub)\otimes_{\Coh(X_\sgnsub\times_Y X_\sgnsub)}\Coh(X_\sgnsub\times_YX).
    \end{equation}
    More
    By \Cref{prop:BZCHN-sing-convolution}, \Cref{eq:ideal-as-relative-tensor} is equivalent to $\Coh_\Lambda(X\times_Y X),$ where $\Lambda$ is the convolution of $\Sing(X\times_Y X_\sgnsub)$ with $\Sing(X_\sgnsub\times_Y X).$ In the notation of the previous section, writing $X_1=X_3=X$ and $X_2=X_\sgnsub,$ we have
    \[
    \Lambda \simeq L_{12}*L_{23}\simeq (L_{12}\times L_{23})\times_{L_{12}\times L_{23}}L_{123}\simeq L_{123}\to L_{13}.
    \]
    The image of $L_{123}$ in $L_{13}^{cl}\simeq \Sing(X\times_YX)$ agrees with the set of points whose image under $F:\Sing(X\times_YX)\to \rT^*Y$ agrees with the image in $\rT^*Y$ of the conormal $\Conorm_{X_\sgnsub}Y.$
\end{proof}

\begin{definition}\label{defn:cat-simple-bside}
    Let $X=\bigsqcup_\sgn X^\sgn\to Y$ as above. Then we write $\cS^\sgn$ for the $\Coh(X\times_YX)$-module category $\Coh(X^\sgn\times_Y X).$ For $U\subset \overline{\Conorm_XY}$ open, we continue to write $\cS^\sgn$ for the image of $\cS^\sgn$ in $\mu\CohCat(U).$ 
\end{definition}

\subsection{The hypertoric 2-category}

We now specialize to the case of interest to us. 
%
\begin{notation}
As in \Cref{defn:xsgn}, for a sign vector $\sgn,$ we write $X^\sgn_G:=\{\prod_{i\in\sgn} z_i=0\}/G$ for the $G$-equivariant intersection of coordinate hyperplanes in $\AA^n,$ equipped with its closed embedding $X^\sgn_G\hookrightarrow \AA^n/G.$ 
We write $X_G:=\bigsqcup_{\sgn\in\sgvect}X_G^\sgn,$ equipped with the map $X_G\to \AA^n/G$ restricting to the standard embedding on each component. More generally, for $\sgnsub\subset \sgvect$ a collection of sign vectors, we write $X_G^\sgnsub:=\bigsqcup_{\sgn\in\sgnsub}X_G^\sgn.$
\end{notation}
\begin{notation}\label{notation:b-endomorphism-cats}
    Let $\cA_G:=\Coh(X_G\times_{\AA^n/G} X_G),$ equipped with a monoidal structure by convolution. More generally, for $\sgnsub\subset \sgvect$ a collection of sign vectors, we write $\cA_G^\sgnsub:=\Coh(X_G^\sgnsub\times_{\AA^n/G} X_G^\sgnsub).$
\end{notation}

The relevance of $\cA_G^\sgnsub$ is immediate from \Cref{defn:cohcats-general}:
\begin{corollary}
    There is an equivalence of 2-categories
\begin{equation}
    \CohCat_{\LL_G^\sgnsub}(\AA^n/G) \simeq \Mod_{\cA_G^\sgnsub}(\St)
\end{equation}
between $\CohCat_{\LL_G^\sgnsub}(\AA^n/G)$ and the 2-category of module categories for $\cA_G^\sgnsub.$
\end{corollary}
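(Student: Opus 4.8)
The statement is a direct unwinding of \Cref{defn:cohcats-general}. Indeed, by \Cref{notation:b-endomorphism-cats} we have $\cA_G^\sgnsub=\Coh(X_G^\sgnsub\times_{\AA^n/G}X_G^\sgnsub)$, so that \Cref{defn:cohcats-general}, applied to the map $f\colon X_G^\sgnsub\to\AA^n/G$, yields $\CohCat_{\overline{\Conorm_{X_G^\sgnsub}(\AA^n/G)}}(\AA^n/G)=\Mod_{\cA_G^\sgnsub}(\St)$ by definition. The proof therefore reduces to two bookkeeping checks: first, that the hypotheses of \Cref{defn:cohcats-general} are met by $f$; and second, that the singular-support label attached by that definition agrees with $\LL_G^\sgnsub$.

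For the first point, I would note that $\AA^n/G$ is the quotient of the smooth affine scheme $\AA^n$ by the torus $G$, hence a smooth perfect stack, and likewise each $X_G^\sgn=\{z_i=0\mid i\notin\sgn\}/G$ is the quotient of a coordinate linear subspace by $G$, so that the finite coproduct $X_G^\sgnsub=\bigsqcup_{\sgn\in\sgnsub}X_G^\sgn$ is again a smooth perfect stack. The map $f$ restricts on each component to the closed immersion $X_G^\sgn\hookrightarrow\AA^n/G$, which is proper; a finite coproduct of proper maps is proper. For the second point, by \Cref{defn:xsgn} the conormal of each closed immersion $X_G^\sgn\hookrightarrow\AA^n/G$ is $\LL_G^\sgn\subset\rT^*(\AA^n/G)$, so the conormal of $f$ is the coproduct $\bigsqcup_{\sgn\in\sgnsub}\LL_G^\sgn$, whose image in $\rT^*(\AA^n/G)$ is $\bigcup_{\sgn\in\sgnsub}\LL_G^\sgn=\LL_G^\sgnsub$, again by \Cref{defn:xsgn}. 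Combining these gives
\[
\CohCat_{\LL_G^\sgnsub}(\AA^n/G)=\CohCat_{\overline{\Conorm_{X_G^\sgnsub}(\AA^n/G)}}(\AA^n/G)=\Mod_{\cA_G^\sgnsub}(\St).
\]

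For context I would also recall that \Cref{thm:BZNP-kernels} identifies the Hom categories of this $2$-category with the expected categories of integral kernels $\Coh(X_G^\sgn\times_{\AA^n/G}X_G^{\sgn'})$ and their convolution composition, and that \Cref{prop:invariance-for-cohcats} guarantees the right-hand side depends only on $\LL_G^\sgnsub\subset\rT^*(\AA^n/G)$ and not on the chosen presentation $X_G^\sgnsub\to\AA^n/G$, which is what makes the notation $\CohCat_{\LL_G^\sgnsub}$ unambiguous. There is no real obstacle here: the only point requiring any care is the perfectness of the global torus quotients involved, and since we work over the stack $\AA^n/G$ no GIT semistability enters, so the argument is essentially formal.
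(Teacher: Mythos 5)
Your proposal is correct and matches the paper's treatment: the corollary is stated there as an immediate consequence of \Cref{defn:cohcats-general} applied to the proper map $X_G^\sgnsub\to\AA^n/G$, with no further argument given. Your additional bookkeeping (properness of the component closed immersions, smoothness and perfectness of the torus quotients, identification of the conormal image with $\LL_G^\sgnsub$, and the appeal to \Cref{prop:invariance-for-cohcats} for well-definedness of the notation) is exactly the content the paper leaves implicit.
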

\begin{example}
    Let $n=1$ and $G=\{1\}\subset (\GG_m)^n$ be the trivial torus. Then $\CohCat_{\LL_G}(\AA^1)$ is the 2-category of module categories over the monoidal category
    \begin{equation}\label{eq:moncat-example}
    \cA_G =
    \left(\begin{array}{cc}
        \Coh(\AA^1)&\Coh(0)\\
        \Coh(0) & \Coh(0\times_{\AA^1} 0)
    \end{array}\right).
    \end{equation}
    Each entry in the matrix \Cref{eq:moncat-example} admits a single categorical generator, with a commutative ring of endomorphisms, so that we may rewrite the matrix as
    \[
    \left(\begin{array}{cc}
        \Perf_{\kk[x]} & \Perf_{\kk}\\
        \Perf_{\kk} & \Perf_{\kk[\beta]}
    \end{array}\right),
    \]
    where $\beta$ is in cohomological degree $2.$
    In other words, the 2-category $\CohCat_{\LL_G}(\AA^1)$ has two objects, four generating 1-morphisms (namely, the rings $\kk[x],\kk,\kk,$ and $\kk[\beta]$ above) and two generating 2-morphisms (namely, $x$ and $\beta$). The relations among these can be made explicit, so that this 2-category thus admits a completely combinatorial description. Working in the same fashion, such a description can be given for all the 2-categories $\CohCat_{\LL_G}(\AA^n),$ for a general choice of $G$. (However, if $G$ is nontrivial, then the set of generating 1-morphisms in this description must be allowed to be infinite.)
\end{example}

\begin{example}\label{ex:b2cat-bounded}
For $m\in \frf_\ZZ$ a choice of attraction parameter, we may let $\sgnsub = m\bdd\subset \sgvect$ be the collection of sign vectors which are $m$-bounded. In this case, $\LL_G^{m\bdd}=\LL_G(0,m),$ and we recover the 2-category
\begin{equation}\label{eq:bounded-bside-2cat}
\CohCat_{\LL_G(0,m)}(\AA^n/G)\simeq \Mod_{\cA_G^{m\bdd}}(\St)
\end{equation}
associated to the $m$-bounded locus $\LL_G(0,m)=\LL_G^{m\bdd}$ in the Lagrangian $\LL_G.$
\end{example}

We are now ready to study the 2-category $\mu\CohCat(\LL_G(\FI,\mass))$ of microlocal coherent sheaves of categories on the category $\cO$ Lagrangian. To apply \Cref{defn:cohcats-microlocal}, we need to present $\LL_G(\FI,\mass))$ as an open subset of a conormal Lagrangian. There is an obvious choice, namely the open inclusion
\begin{equation}\label{eq:skel-openembedding}
\LL_G(\FI,\mass)\hookrightarrow \LL_G(0,\mass)=\LL_G^{\mass\bdd}.
\end{equation}
Applying definition \Cref{defn:cohcats-microlocal}, we may therefore write
\[
\mu\CohCat(\LL_G(\FI,\mass))\simeq\Mod_{
\Coh_{F^{-1}(\LL_G(\FI,\mass))}
(X^{m\bdd}\times_{\AA^n/G}X^{m\bdd})
}
(\St).
\]
\begin{lemma}\label{lem:closed-singsupp-is-cohcats}
Let 
$
\Lambda:=\LL_G(0,\mass)\setminus \LL_G(\FI,\mass)
$
be the closed complement of the embedding \Cref{eq:skel-openembedding}. Then
    $\Coh_{F^{-1}(\Lambda)}(X^{\mass\bdd}\times_{\AA^n/G}X^{\mass\bdd})\subset
    \Coh(X^{\mass\bdd}\times_{\AA^n/G}X^{\mass\bdd})$
    is the ideal generated by
    $\Coh(X^{\mass\bdd,\FI\unstab}\times_{\AA^n/G}X^{\mass\bdd,\FI\unstab}).$
\end{lemma}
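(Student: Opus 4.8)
The plan is to identify the singular support condition $F^{-1}(\Lambda)$ explicitly and recognize the resulting subcategory as the ideal generated by the matrix idempotents for the $\FI$-unstable components, using Proposition~\ref{prop:gen-by-idempotents}. First I would unwind the definitions: by Lemma~\ref{lem:stable-set} (applied with parameter $\mass$) the Lagrangian $\LL_G(0,\mass) = \LL_G^{\mass\bdd}$ is the union of conormals $\LL_G^\sgn$ over $\mass$-bounded sign vectors, and it is the image in $\rT^*(\AA^n/G)$ of the conormal $\Conorm_{X^{\mass\bdd}}(\AA^n/G)$ where $X^{\mass\bdd} = \bigsqcup_{\sgn\in\mass\bdd}X_G^\sgn$. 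The proposition on $\FI$-unstable points tells us that $\LL_G^{\FI\unstab} = \bigcup_{\sgn\in\FI\unstab}\LL_G^\sgn$, so the closed complement is
\[
\Lambda = \LL_G(0,\mass)\setminus\LL_G(\FI,\mass) = \bigcup_{\sgn\in\mass\bdd\cap\FI\unstab}\LL_G^\sgn,
\]
i.e. $\Lambda$ is the image in $\rT^*(\AA^n/G)$ of the conormal $\Conorm_{X^{\mass\bdd,\FI\unstab}}(\AA^n/G)$, where $X^{\mass\bdd,\FI\unstab} := \bigsqcup_{\sgn\in\mass\bdd\cap\FI\unstab}X_G^\sgn$.

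Next I would apply Proposition~\ref{prop:gen-by-idempotents} with $Y = \AA^n/G$, $X = X^{\mass\bdd}$, and $\sgnsub = \mass\bdd\cap\FI\unstab\subset\mass\bdd$. That proposition states precisely that the smallest ideal in $\Coh(X^{\mass\bdd}\times_{\AA^n/G}X^{\mass\bdd})$ containing the matrix idempotents $E_\sgn$ for $\sgn\in\sgnsub$ equals $\Coh_{F^{-1}(\Lambda_\sgnsub)}(X^{\mass\bdd}\times_{\AA^n/G}X^{\mass\bdd})$, where $\Lambda_\sgnsub$ is the image in $\rT^*Y$ of $\Conorm_{X_\sgnsub}Y$ — which by the computation above is exactly our $\Lambda$. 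So it remains only to match the two descriptions of this ideal: on one side, the ideal generated by the matrix idempotents $\langle E_\sgn\rangle_{\sgn\in\sgnsub}$; on the other side, the ideal generated by $\Coh(X^{\mass\bdd,\FI\unstab}\times_{\AA^n/G}X^{\mass\bdd,\FI\unstab})$ as claimed in the statement. Here the key observation is that $E_\sgnsub = \bigoplus_{\sgn\in\sgnsub}E_\sgn = (\Delta_\sgnsub)_*\cO_{X_\sgnsub}$ is the monoidal unit of the matrix block $\Coh(X_\sgnsub\times_Y X_\sgnsub) = \Coh(X^{\mass\bdd,\FI\unstab}\times_{\AA^n/G}X^{\mass\bdd,\FI\unstab})$; since an ideal containing the unit of a (non-unital) subalgebra contains the whole subalgebra, and conversely the subalgebra generates the same ideal as its unit, the two generating sets produce the same ideal. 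This last point is essentially the content of the displayed equation~\eqref{eq:ideal-as-relative-tensor} in the proof of Proposition~\ref{prop:gen-by-idempotents}, which exhibits the ideal generated by $E_\sgnsub$ as $\Coh(X\times_Y X_\sgnsub)\otimes_{\Coh(X_\sgnsub\times_Y X_\sgnsub)}\Coh(X_\sgnsub\times_Y X)$, a relative tensor product that only sees the block $\Coh(X_\sgnsub\times_Y X_\sgnsub)$ and not the choice of its generators.

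The main obstacle, I expect, is bookkeeping rather than anything deep: one must be careful that the map $F$ appearing in Definition~\ref{defn:cohcats-microlocal} and Definition~\ref{defn:map-F} is the one associated to the \emph{ambient} map $X^{\mass\bdd}\to\AA^n/G$ (not to $X_G\to\AA^n/G$), and that Lemma~\ref{lem:image-is-conormal} correctly identifies the image of $F$ restricted to the relevant blocks with the appropriate union of conormal images — in particular that $\Sing(X^{\mass\bdd,\FI\unstab}\times_{\AA^n/G}X^{\mass\bdd,\FI\unstab})$ maps onto $\Lambda$ under $F$. Once the identification $\Lambda = \Lambda_\sgnsub$ is pinned down via Lemma~\ref{lem:stable-set} and the $\FI$-unstable locus proposition, the rest follows formally from Proposition~\ref{prop:gen-by-idempotents} and the unit-of-a-block argument. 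I would also note in passing that genericity of $(\FI,\mass)$ guarantees the sets $\mass\bdd$ and $\FI\unstab$ behave as expected (e.g. the relevant $X_G^\sgn$ are smooth), though this is not strictly needed for the statement of the lemma itself.
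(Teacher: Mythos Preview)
Your proposal is correct and follows essentially the same route as the paper: apply Proposition~\ref{prop:gen-by-idempotents} with $X=X^{\mass\bdd}$ and $\sgnsub=\mass\bdd\cap\FI\unstab$ to identify $\Coh_{F^{-1}(\Lambda)}$ as the ideal generated by the matrix idempotents $E_\sgn$ for $\sgn\in\sgnsub$, then observe that these idempotents generate the same ideal as the block $\Coh(X^{\mass\bdd,\FI\unstab}\times_{\AA^n/G}X^{\mass\bdd,\FI\unstab})$ since they form its monoidal unit. The paper's proof is simply a two-sentence version of your argument, omitting the explicit identification of $\Lambda$ and the bookkeeping you spell out.
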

\begin{proof}
    By \Cref{prop:gen-by-idempotents}, $\Coh_{F^{-1}(\Lambda)}(X^{\mass\bdd}\times_{\AA^n/G}X^{\mass\bdd})$ is the smallest ideal in $\Coh(X^{\mass\bdd}\times_{\AA^n/G} X^{\mass\bdd})$ containing the matrix idempotents $E_\sgn$ for $\sgn\in \mass\bdd,\FI\unstab.$ But these matrix idempotents are monoidal generators of 
    the monoidal subcategory
    $Coh(X^{\mass\bdd,\FI\unstab}\times_{\AA^n/G}X^{\mass\bdd,\FI\unstab}).$
\end{proof}

\begin{corollary}\label{cor:mucohschob-as-quotient}
    There is an equivalence
    \[
    \mu\CohCat(\LL_G(\FI,\mass))\simeq \Mod_{\frac{\cA_G^{\mass\bdd}}{\cA_G^{\mass\bdd,\FI\unstab}}}(\St)
    \]
    presenting $\mu\CohCat(\LL_G(\FI,\mass))$ as the 2-category of module categories for the monoidal category obtained as the quotient of $\cA_G^{\mass\bdd}$ by the ideal generated by the subcategory $\cA_G^{\mass\bdd,\FI\unstab}.$
\end{corollary}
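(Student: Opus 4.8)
The proof of \Cref{cor:mucohschob-as-quotient} is essentially a bookkeeping exercise that assembles pieces already in place. The plan is to start from \Cref{defn:cohcats-microlocal} applied to the open embedding \Cref{eq:skel-openembedding}, which presents $\mu\CohCat(\LL_G(\FI,\mass))$ as module categories over $\Coh_{F^{-1}(\LL_G(\FI,\mass))}(X^{\mass\bdd}\times_{\AA^n/G}X^{\mass\bdd})$, i.e.\ over the quotient $\cA_G^{\mass\bdd}/\Coh_{F^{-1}(\Lambda)}(X^{\mass\bdd}\times_{\AA^n/G}X^{\mass\bdd})$, where $\Lambda=\LL_G(0,\mass)\setminus\LL_G(\FI,\mass)$ is the closed complement. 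The content of the corollary is then exactly the identification of the subcategory $\Coh_{F^{-1}(\Lambda)}(X^{\mass\bdd}\times_{\AA^n/G}X^{\mass\bdd})$ with the monoidal ideal generated by $\cA_G^{\mass\bdd,\FI\unstab}=\Coh(X^{\mass\bdd,\FI\unstab}\times_{\AA^n/G}X^{\mass\bdd,\FI\unstab})$, which is precisely the statement of \Cref{lem:closed-singsupp-is-cohcats}.

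So the proof reduces to invoking these two inputs in sequence. First I would write down \Cref{defn:cohcats-microlocal} in the present setting, taking $Y=\AA^n/G$, $X=X_G^{\mass\bdd}$, $f$ the restriction of the standard embedding, and $U=\LL_G(\FI,\mass)\subset \overline{\Conorm_{X_G^{\mass\bdd}}(\AA^n/G)}=\LL_G(0,\mass)$ (the latter identification being the content of the open embedding \Cref{eq:skel-openembedding} together with \Cref{lem:image-is-conormal}). This gives
\[
\mu\CohCat(\LL_G(\FI,\mass))\simeq \Mod_{\frac{\cA_G^{\mass\bdd}}{\Coh_{F^{-1}(\Lambda)}(X^{\mass\bdd}\times_{\AA^n/G}X^{\mass\bdd})}}(\St).
\]
Then I would substitute the identification from \Cref{lem:closed-singsupp-is-cohcats}, namely that $\Coh_{F^{-1}(\Lambda)}(X^{\mass\bdd}\times_{\AA^n/G}X^{\mass\bdd})$ is the ideal generated by $\cA_G^{\mass\bdd,\FI\unstab}$, to rewrite the denominator as $\cA_G^{\mass\bdd,\FI\unstab}$-as-an-ideal, yielding the claimed presentation. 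The only genuinely nontrivial input, \Cref{lem:closed-singsupp-is-cohcats}, is itself proved above (via \Cref{prop:gen-by-idempotents}, which identifies $F^{-1}$ of the conormal image of a subunion of components with the ideal generated by the corresponding matrix idempotents $E_\sgn$), so nothing new is needed.

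I do not anticipate a serious obstacle; the one point requiring a little care is making sure the open/closed decomposition is the correct one — that is, that $\Lambda=\LL_G(0,\mass)\setminus\LL_G(\FI,\mass)$ really is the image in $\rT^*(\AA^n/G)$ of the conormal $\Conorm_{X^{\mass\bdd,\FI\unstab}_G}(\AA^n/G)$ intersected with the image of $\Conorm_{X^{\mass\bdd}_G}$ — so that \Cref{prop:gen-by-idempotents} applies with $\sgnsub$ taken to be the $\mass$-bounded, $\FI$-unstable sign vectors. This is where the earlier \Cref{lem:stable-set} and the description of $\FI$-unstable components are used: turning on $\FI$ deletes exactly the components $\LL_G^\sgn$ with $\sgn$ $\FI$-unstable, so within the $\mass$-bounded locus the deleted closed subset is $\bigcup_{\sgn\in\mass\bdd,\FI\unstab}\LL_G^\sgn$, whose image under $F$ is (by \Cref{lem:image-is-conormal}) the conormal image of $X_G^{\mass\bdd,\FI\unstab}$. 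Granting this compatibility of indexing sets, the corollary follows immediately by concatenating \Cref{defn:cohcats-microlocal} and \Cref{lem:closed-singsupp-is-cohcats}.
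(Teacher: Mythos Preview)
Your proposal is correct and follows exactly the paper's approach: the paper's proof is a single sentence stating that the corollary follows immediately from \Cref{defn:cohcats-microlocal} and \Cref{lem:closed-singsupp-is-cohcats}. Your additional care about the open/closed decomposition and the indexing sets is a reasonable elaboration, but the paper treats this as already established by the preceding discussion.
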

\begin{proof}
    This follows immediately from \Cref{defn:cohcats-microlocal} and \Cref{lem:closed-singsupp-is-cohcats}.
\end{proof}

\begin{example}\label{ex:2catO-B}
We return to the situation of \Cref{ex:2catso-intro}, where $F^\vee\simeq \GG_m\hookrightarrow (\GG_m)^2$ is the diagonal torus.

Before turning on the stability or attraction parameters, the skeleton $\LL_{F^\vee}$ is the image in $\rT^*(\AA^2/\GG_m)$ of the union of conormals to the zero section, $x$-axis, $y$-axis, and origin. The coherent 2-category admits a presentation
\[
\CohCat_{\LL_{F^\vee}}(\AA^2/F^\vee) = \Mod_{\cA_F^\vee}(\St),
\]

as modules over the monoidal category $\cA:=\Coh^{\GG_m}(X\times_Y X),$ where we write 
\[X= (\AA^2 \sqcup \AA^1_x \sqcup \AA^1_y \sqcup 0)\to \AA^2 = Y,\]
with the maps given by embeddings of closures of toric strata. In other words, presenting each component in the fiber product $X\times_YX$ as the appropriate shifted normal bundle, we have 
\begin{equation} \label{eq:monoidalcat-bside-example}
\cA \simeq 
\Coh^{\GG_m}\left(
\begin{array}{llll}
\AA^2 & \AA^1_x & \AA^1_y & 0\\
\AA^1_x & 
\rN_{\AA^1_x}[-1]\AA^2 & 0 & \rN_0[-1]\AA^1_y\\
\AA^1_y & 0 & \rN_{\AA^1_y}[-1]\AA^2 & \rN_0[-1]\AA^1_x\\
0 & \rN_0[-1]\AA^1_y & \rN_0[-1]\AA^1_x & \rN_0[-1]\AA^2
\end{array}
\right).
\end{equation}

If we pick the stability parameter $\mass\in \ffv_\ZZ=\ZZ$ to be positive, then the unstable locus in $\rT^*(\AA^2/\GG_m)$ is $\rT^*_0(\AA^2),$ so that passage to the $\mass$-semistable locus is restriction to the open subset $\cU := \rT^*\PP^1 = \rT^*((\AA^2\setminus \{0\})/\GG_m)$ inside $\rT^*(\AA^2/\GG_m).$ 


Passing to the $\mass$-semistable locus is a localization of the monoidal category $\cA,$ so that we can write $\mu\CohCat(\LL_{F^\vee}(\mass,0))\simeq \Mod_{\cA|_\cU}(\St),$
where
\begin{equation} \label{eq:monoidalcat-bside-example-localized}
\cA|_\cU \simeq 
\Coh^{\GG_m}\left(
\begin{array}{llll}
\AA^2\setminus 0 & \AA^1_x\setminus 0 & \AA^1_y\setminus 0 & \emptyset\\
\AA^1_x\setminus 0 & \rN_{\AA^1_x\setminus 0}[-1](\AA^2\setminus 0) & \emptyset & \emptyset\\
\AA^1_y\setminus 0 & \emptyset & \rN_{\AA^1_y\setminus 0}[-1](\AA^2\setminus 0) & \emptyset\\
\emptyset & \emptyset & \emptyset & \emptyset
\end{array}
\right).
\end{equation}

Using the fact that the $\GG_m$-action on the spaces in \Cref{eq:monoidalcat-bside-example-localized} is free, and removing the final row and column (whose values are all the zero category $\Coh(\emptyset)\simeq 0$ --- not to be confused with the category $\Coh(0)\simeq \Mod_\kk$), we may rewrite \Cref{eq:monoidalcat-bside-example-localized} as

\begin{equation} \label{eq:monoidalcat-bside-tp1}
\cA|_\cU \simeq 
\Coh\left(
\begin{array}{lll}
\PP^1 & 0 & \infty \\
0 & T_0[-1]\PP^1 & \emptyset \\
\infty & \emptyset & T_{\infty}[-1](\PP^1)
\end{array}
\right),
\end{equation}
where we have identified the images of the $x$- and $y$-axes with the points $0,\infty \in \PP^1 = (\AA^2\setminus 0)/\GG_m$. This monoidal category evidently admits a convolution presentation as before, where now
\[
\cA|_\cU \simeq \Coh\left((\PP^1 \sqcup 0 \sqcup \infty)\times_{\PP^1}(\PP^1\sqcup 0 \sqcup \infty)\right).
\]

Finally, we impose a nonzero attraction parameter $\FI,$ passing from the ``TIE fighter'' Lagrangian
\[
\LL_{F^\vee}(\mass,0) = \PP^1\cup \rT^*_0\PP^1 \cup \rT^*_\infty\PP^1
\]
to the closed subspace
\[
\LL_{F^\vee}(\mass,\FI) = \PP^1\cup \rT^*_0\PP^1.
\]
At the level of categories, this has the effect of deleting the third row and column of \Cref{eq:monoidalcat-bside-tp1}, so that we have at last
\[
\mu\CohCat(\LL_{F^\vee}(\mass,\FI)) = \Mod_{\cA'}(\St)
\]
where
\begin{equation}\label{eq:2cato-tp1}
\cA' = \Coh\left((\PP^1\sqcup 0) \times_{\PP^1} (\PP^1\sqcup 0)\right) \simeq
\Coh\left(
\begin{array}{ll}
\PP^1 & 0\\
0 & T_0[-1]\PP^1
\end{array}
\right).
\end{equation}
\end{example}

\begin{remark}
    \Cref{ex:2catO-B} is misleading in some ways, due to the simplicity of the stable locus $\rT^*((\AA^2\setminus\{0\})/\GG_m).$ As a result, the theory of coherent singular support conditions was not really necessary, as the singular support condition could be understood just as a usual support condition over $\AA^2/\GG_m.$ 
    However, if we had picked the opposite sign for our stability parameter, then
    \Cref{eq:monoidalcat-bside-example-localized} would be replaced with a ``purely microlocal'' --- that is, disjoint from the zero-section --- localization. The easiest way to compute the result of this localization on \Cref{eq:monoidalcat-bside-example} would be to apply Koszul duality to replace the $(-1)$-shifted normal bundles with $2$-shifted conormal bundles, and then apply the fact that Koszul duality exchanges singular-support conditions (and microlocalizations) with usual support conditions (and usual localizations).
\end{remark}

\section{Proof of Theorems A \& B}
We now prove our first main theorem, an equivalence between the (Gale dual) pair of 2-categories $\cO$ we have defined:
\begin{proof}[Proof of \Cref{mainthm:2cats-O}]
    Our starting point is the equivalence of 2-categories \Cref{eq:gmh-general} proved as \cite{GMH}*{Theorem G}, namely
    \begin{equation}\label{eq:gmh-inproof}
    \PervCat_{\LL_G}(\CC^n/G)\simeq \CohCat_{\LL_{F^\vee}}(\CC^n/F^\vee),
    \end{equation}
    which sends the projective $\cP^\sgn_G$ in the left-hand side to the 
    $\Coh(X_{F^\vee}\times_{\CC^n/F^\vee} X_{F^\vee})$-module category 
    $\cS_{F^\vee}^\sgn:=\Coh(X_{F^\vee}\times_{\CC^n/F^\vee} X_{F^\vee}^\sgn).$
    Observe that for $\sgnsub\subset \sgnsub,$ the subcategory of $\CohCat_{\LL_{F^\vee}}(\CC^n/F^\vee)$ generated by the $\cS_{F^\vee}^\sgn$ for $\sgn\in \sgnsub$ is $\CohCat_{\LL_{F^\vee}^\sgnsub}(\CC^n/F^\vee),$ where we write $\LL_{F^\vee}^\sgnsub=\bigcup_{\sgn\in\sgnsub}\LL_{F^\vee}^\sgn$ for the union of components of $\LL_{F^\vee}$ corresponding to $\sgn\in\sgnsub.$
    
    Now recall that 
    $\mu\PervCat(\LL_G(\FI,\mass))$ is realized as the subquotient 
    \begin{equation}\label{eq:thma-subquotient}
    \mu\PervCat(\LL_G(\FI,\mass))\simeq 
    \frac{\langle \cP_G^\alpha\mid\text{$\alpha$ $\FI$-semistable}\rangle}{\langle \cP_G^\beta\mid \text{$\beta$ $\FI$-semistable, $\mass$-unbounded}\rangle}
    \end{equation}
    of the left-hand side of \Cref{eq:gmh-inproof}.

    By the above observations, \Cref{eq:thma-subquotient} is equivalent to the quotient 2-category 
    \begin{equation}\label{eq:thma-quotientcoh}
    \CohCat_{\LL^{\FI\bdd}}(\CC^n/F^\vee)/\CohCat_{\LL^{\FI\bdd,\mass\unstab}}(\CC^n/F^\vee),
    \end{equation}
    and by \Cref{cor:mucohschob-as-quotient}, the quotient \Cref{eq:thma-quotientcoh} is equivalent to the 2-category $\mu\CohCat(\LL_{F^\vee}(\mass,\FI)).$
    By construction, this equivalence has identified $\cP^\sgn_G$ with $\cS^\sgn_{F^\vee}.$
\end{proof}

    We now proceed to the proof of \Cref{mainthm:hp-decategorification}. This theorem is really two theorems, with equivalences \Cref{eq:mainthm-decat-a} and \Cref{eq:mainthm-decat-b} identifying the decategorifications of the A- and B-side 2-categories, respectively. We will deal with each of these separately, beginning with the B-side.

    \begin{notation}
        From now on, $\kk$ is assumed to be a field of characteristic 0. For a $\kk$-algebra or $\kk$-linear category, we write $(-)_{\ZZ/2}:=(-)\otimes_\kk \kk((u))$ for the 2-periodization, where $u$ is a variable of degree 2.
    \end{notation}

    Our main tool will be an identification of the periodic cyclic homology of coherent-sheaf categories of stacks. 
    From \cite{Feigin-Tsygan}, when $X$ is an affine scheme there is an identification $\HP(\Perf(X))\simeq \sC^*_{\dR}(X)_{\ZZ/2}$ between the periodic cyclic homology of $\Perf(X)$ and the 2-periodized infinitesimal cohomology of $X$.
    In \cite{Preygel-loopspaces}, this result was generalized to the case where we replace $\Perf$ with $\Coh$ (allowing $X$ more generally to be a quasi-compact and separated algebraic space): in this case the statement remains true if we replace de Rham cochains $\sC^*_{\dR}(X)_{\ZZ/2}$ with Borel-Moore chains $\sC_*^{\BM,\dR}(X)_{\ZZ/2}.$
    
   Generalizing from algebraic spaces to stacks presents additional difficulties. This case was studied in \cite{Chen-Koszul}, which gave a periodic-cyclic version of the Atiyah-Segal completion theorem.
   \begin{notation}
       Let $G$ be a reductive group.
       If $A$ is an algebra linear over the 2-periodic representation ring
       $\HP(\Perf(BG))=\cR(G)_{\ZZ/2}$ of $G,$ we write $A_{\hat{e}}$ for the completion of $A$ at the augmentation ideal.
   \end{notation}

    \begin{theorem}[\cite{Chen-localization}*{Theorem 4.3.2}]\label{thm:hp-atiyahsegal}
        Let $G$ be a reductive group acting on a smooth quasi-projective variety $X.$ Then $\HP(\Perf(X/G))$ is linear over $\HP(\Perf(BG)),$ and
        there is an equivalence
        \begin{equation}\label{eq:perf-atiyahsegal}
        \HP(\Perf(X/G))_{\hat{e}}\simeq \sC^*_{\dR}(X/G)_{\ZZ/2}
        \end{equation}
        between the completion of $\HP(\Perf(X/G))$ and 2-periodic de Rham cochains on $X/G.$
    \end{theorem}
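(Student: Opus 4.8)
The plan is to compute both sides through derived loop spaces and reduce to an $S^1$-equivariant refinement of the Hochschild--Kostant--Rosenberg theorem, running the Feigin--Tsygan computation \cite{Feigin-Tsygan} in families over $BG$. First I would recall that for a quasi-compact stack $Z$ with affine diagonal one has $\HH(\Perf(Z))\simeq \Gamma(\cL Z,\cO_{\cL Z})$, where $\cL Z:=\Maps(S^1,Z)$ is the derived loop space, and that this identification is equivariant for loop rotation, so that $\HP(\Perf(Z))$ is computed as the $S^1$-Tate construction applied to $\Gamma(\cL Z,\cO)$. The module structure asserted in the theorem I would obtain formally: the structure map $X/G\to BG$ makes $\Perf(X/G)$ a $\Perf(BG)$-module in $\St$, and applying the symmetric monoidal functor $\HP$ exhibits $\HP(\Perf(X/G))$ as a module over the $\mathbb{E}_\infty$-ring $\HP(\Perf(BG))$. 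Using $\HH(\Perf(BG))\simeq\Gamma(\cL(BG),\cO)$, the fact that the underlying classical stack of $\cL(BG)$ is the adjoint quotient $G/_{\mathrm{ad}}G$, and that restriction along $\pt\to BG$ induces the augmentation, I would identify $\HP(\Perf(BG))\simeq \cR(G)_{\ZZ/2}$ with augmentation ideal (whose completion is $(-)_{\hat{e}}$) equal to the ideal of the identity $e\in G/_{\mathrm{ad}}G$, i.e. of the constant loops.

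Next I would analyze the completion at $\hat{e}$. Functoriality of $\cL$ gives $\cL(X/G)\to\cL(BG)$, and completing $\Gamma(\cL(X/G),\cO)$ at $\hat{e}$ is base change along the formal completion of $\cL(BG)$ at $e$. By the algebraic exponential map near $e$ --- intrinsically, by the HKR/Postnikov filtration on $\cL$ --- this identifies the completed loop space with the formal neighborhood of the constant loops inside $\cL(X/G)$, which is in turn equivalent (via HKR for the self-intersection $X/G\to (X/G)\times(X/G)$, in characteristic $0$) to the formal completion of the $(-1)$-shifted tangent complex $\rT[-1](X/G)$ along its zero section. Taking global functions yields $\HH(\Perf(X/G))_{\hat{e}}\simeq$ the Hodge-completed de Rham complex of $X/G$, with the pieces $\Omega^i_{X/G}[i]$ as associated graded.

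Finally I would turn on the $S^1$-action. Under the identification above, loop rotation acts on the Hodge pieces so that the differential on the Tate construction is exactly the de Rham differential, while the periodicity class $u$ of cohomological degree $2$ records $2$-periodicity; this is precisely the $S^1$-equivariant HKR statement that $\HP$ of a nice scheme is $2$-periodic de Rham cohomology, now applied fiberwise over the formal neighborhood of $e$. Passing to the Tate construction and invoking a completed form of this identity gives $\HP(\Perf(X/G))_{\hat{e}}\simeq (\HH(\Perf(X/G))_{\hat{e}})^{tS^1}\simeq \sC^*_{\dR}(X/G)_{\ZZ/2}$, which is \Cref{eq:perf-atiyahsegal}.

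The main obstacle I anticipate is the interchange of the two inverse limits in play: the Tate construction $(-)^{tS^1}$ and the $\hat{e}$-adic completion do not commute for an arbitrary module, so one must control the finiteness of $\HH(\Perf(X/G))$ as an $\cR(G)_{\ZZ/2}$-complex --- using that $X$ is smooth and quasi-projective, so that $\Perf(X/G)$ is smooth and $\cL(X/G)$ has bounded cohomological amplitude relative to $G/_{\mathrm{ad}}G$ --- ensuring completion is exact and commutes with the relevant limits and with the $S^1$-fixed-point functor. A secondary technical point is making precise, $S^1$-equivariantly and in derived algebraic geometry, the identification of the formal loop space near constant loops with the completed shifted tangent complex; for $G$ a torus this reduces to the affine case by Sumihiro-type linearization, and the general reductive case is either reduced to tori or handled through the full decomposition of the unipotent loop space carried out in \cite{Chen-localization}.
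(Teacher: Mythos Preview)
The paper does not prove this statement: it is quoted verbatim as \cite{Chen-localization}*{Theorem 4.3.2} and used as a black box, with no argument given. There is therefore nothing in the paper to compare your proposal against.

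That said, your sketch is a plausible outline of the standard loop-space approach to results of this type (in the spirit of Ben-Zvi--Nadler and Chen): identify $\HH$ with functions on the derived loop space, interpret completion at the augmentation ideal as restriction to the formal neighborhood of constant loops, invoke an $S^1$-equivariant HKR identification of the formal loop space with $\rT[-1](X/G)^{\wedge}$, and pass to Tate to obtain 2-periodic de Rham cochains. You have correctly flagged the genuine technical point, namely the interchange of the Tate construction with $\hat e$-adic completion; this is exactly where finiteness hypotheses on $X$ and $G$ enter in Chen's argument, and it is not a formality. If you want to turn your outline into an actual proof, that step --- together with making the $S^1$-equivariant HKR identification precise for stacks rather than schemes --- is where the work lies; the rest is essentially packaging.
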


    For a smooth variety, the categories of coherent sheaves and perfect complexes are equivalent. But we need a slight enhancement of this calculation beyond the smooth case.
    \begin{corollary}\label{cor:atiyahsegal-quasismooth}
        Let $X=\bigsqcup X^\sgn\to \CC^n$ as in \Cref{defn:xsgn}, equipped with its action of $G\subset (\GG_m)^n$ as usual, and write $\fX$ for the fiber product $\fX:=X\times_{\CC^n}X$. Then there is an equivalence
        \[
        \HP(\Coh(\fX/G))_{\hat{e}}\simeq \sC_*^{\BM,\dR}(\fX/G)_{\ZZ/2}
        \]
        between the completion of $\HP(\Coh(\fX/G))$ and the 2-periodic Borel-Moore chains on $\fX.$
    \end{corollary}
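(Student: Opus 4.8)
The plan is to deduce this from the smooth case, \Cref{thm:hp-atiyahsegal}, by derived Koszul duality. The key structural point is that, although $\fX/G$ is not smooth, it is quasi-smooth of a very mild kind. Writing $V:=X\times X$, which is a smooth quasi-projective $G$-variety, and letting $s\colon V\to\CC^{n}$ be the difference $\pr_{1}-\pr_{2}$ of the two compositions $X\times X\to X\hookrightarrow\CC^{n}$, the scheme $\fX=X\times_{\CC^{n}}X$ is the derived zero locus of the $G$-equivariant section $s$ of the trivial rank-$n$ bundle $\underline{\CC^{n}}$ on $V$; in particular $\fX$ is quasi-smooth and is a derived complete intersection in the smooth $V$. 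As computed in \Cref{sec:bside}, its singularity stack $\Sing(\fX)\simeq\bigsqcup_{\sgn,\sgn'}\bigl(\Conorm_{X^{\sgn}}(\CC^{n})\times_{\rT^{*}\CC^{n}}\Conorm_{X^{\sgn'}}(\CC^{n})\bigr)^{cl}$ is \emph{itself smooth} --- each component is the total space of a trivial vector bundle over a coordinate subspace --- and carries an auxiliary $\GG_{m}$-action scaling the conormal directions with weight $2$. Derived Koszul duality for complete intersections then provides a $\kk$-linear, $G$-equivariant equivalence $\Coh(\fX)\simeq\Perf(\Sing(\fX)/\GG_{m})$, and passing to $G$-fixed categories,
\[
\Coh(\fX/G)\;\simeq\;\Perf\bigl(\Sing(\fX)/(G\times\GG_{m})\bigr).
\]

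Granting this, I would apply \Cref{thm:hp-atiyahsegal} to the smooth quasi-projective variety $\Sing(\fX)$ with its action of the reductive group $G\times\GG_{m}$ (equivalently, the version of that theorem for smooth quotient \emph{stacks}, applied to $\Sing(\fX)/\GG_{m}$ with its residual $G$-action), obtaining
\[
\HP\bigl(\Coh(\fX/G)\bigr)_{\hat{e}}\;\simeq\;\sC^{*}_{\dR}\bigl(\Sing(\fX)/(G\times\GG_{m})\bigr)_{\ZZ/2},
\]
the $2$-periodization arising precisely from the weight-$2$ $\GG_{m}$, which converts $\cR(\GG_{m})_{\ZZ/2}$-linearity into $\kk((u))$-coefficients (the categorical counterpart of the computation $\HP(\Perf_{\kk[\beta]})\simeq\kk((u))$ underlying \Cref{ex:decategorification-intro}). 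It then remains to identify the right-hand side with $\sC_{*}^{\BM,\dR}(\fX/G)_{\ZZ/2}$. This is the decategorified shadow of the Koszul-duality equivalence above: the $\GG_{m}$-equivariant vector bundle $\Sing(\fX)\to\fX^{cl}$ retracts $\GG_{m}$-equivariantly onto its zero section, and, as recorded in the Remark following \Cref{ex:2catO-B}, Koszul duality exchanges the ``$2$-shifted conormal bundle / de Rham cochains'' description with the ``shifted normal bundle / Borel--Moore chains'' one; concretely this identification is a relative-over-$BG$ instance of \cite{Preygel-loopspaces}. Combining the three displays gives the asserted equivalence.

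The main obstacle is the equivariant bookkeeping, not any new geometric input. One must check that the Koszul-duality equivalence $\Coh(\fX/G)\simeq\Perf(\Sing(\fX)/(G\times\GG_{m}))$ is linear over $\HP(\Perf(BG))\simeq\cR(G)_{\ZZ/2}$, so that the completion $(-)_{\hat{e}}$ on the left of the corollary matches the $\cR(G)$-completion in \Cref{thm:hp-atiyahsegal}, and one must verify that the interplay of the $G$- and $\GG_{m}$-completions leaves $\kk((u))$-coefficients on the nose rather than some further completion thereof --- this last point being the only genuinely delicate computation. An alternative, more self-contained route bypasses \Cref{thm:hp-atiyahsegal} altogether: run the loop-space argument of \cite{Preygel-loopspaces} directly for the quasi-smooth derived scheme $\fX$ (that argument never uses classicality or reducedness, only quasi-smoothness), which gives the statement non-equivariantly, and then install the $G$-equivariance and the Atiyah--Segal completion exactly as in Chen's proof of \Cref{thm:hp-atiyahsegal} --- the completion step there is formal and depends only on the $\cR(G)_{\ZZ/2}$-linear structure. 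Either way, the content of the corollary is the compatibility of Preygel's identification $\HP(\Coh(-))\simeq\sC_{*}^{\BM,\dR}(-)_{\ZZ/2}$ with the equivariant Atiyah--Segal completion.
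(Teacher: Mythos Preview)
Your main line has a gap at the Koszul duality step: the asserted equivalence $\Coh(\fX)\simeq\Perf(\Sing(\fX)/\GG_m)$ is false as stated. Already for $\fX=\pt\times_{\AA^1}\pt$ one has $\Coh(\fX)\simeq\Perf_{\kk[\beta]}$ with $|\beta|=2$, a category with a single indecomposable generator, whereas $\Perf(\AA^1/\GG_m)$ has infinitely many twists $\cO(n)$. Koszul duality yields $\Coh(\AA^b[-1])\simeq\Perf(\AA^b[2])$; passing from $\AA^b[2]$ to $\AA^b$ is a shearing move, but shearing is an autoequivalence of a category that is \emph{already} $\GG_m$-equivariant --- it cannot manufacture a weight grading on $\Coh(\fX)$ that was not there. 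Your auxiliary $\GG_m$ lives only on the $\Sing$ side, so there is nothing on the $\Coh$ side to shear against, and the two completions you worry about at the end are symptoms of this mismatch rather than a separate bookkeeping issue.

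The paper repairs exactly this point by first enlarging the equivariance from $G$ to the full torus $D=(\GG_m)^n$. The $D$-action on each component of $\fX$ provides a separate weight for every derived coordinate, so Koszul duality followed by shearing gives a genuine equivalence
\[
\Coh(\AA^n[-1]/D)\simeq\Perf(\AA^n[2]/D)\simeq\Perf(\AA^n/D),
\]
reducing to the smooth case of \Cref{thm:hp-atiyahsegal}; one then deequivariantizes from $D$ back to $G$. Your alternative route --- run Preygel's quasi-smooth identification non-equivariantly and then install Chen's completion --- is a viable strategy and captures the same content, but the paper's packaging through the full torus is what makes the shearing step legitimate without reopening those arguments.
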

    \begin{proof}
        First we observe that the statement is true when $G=D=(\GG_m)^n,$ since by combining Koszul duality with a shearing equivalence, we have
        \[
        \Coh(\AA^n[-1]/D)\simeq \Perf(\AA^n[2]/D)\simeq \Perf(\AA^n/D),
        \]
        so we are reduced to the case when $\fX$ is smooth. From here, we can reduce to the case of general $G$ by deequivariantization.
    \end{proof}
%

    The only other fact we will need about periodic cylic homology is the following:
    \begin{lemma}
        $\HP(-)$ is a localizing invariant: it takes exact sequences of stable categories to fiber sequences.\footnote{We follow the convention of \cite{Land-Tamme-pullbacks}, which differs from that of \cite{BGT} in not requiring localizing invariants to commute with filtered colimits. In the conventions of \cite{BGT}, Hochschild and cyclic homology are localizing but periodic cyclic homology is not.}
    \end{lemma}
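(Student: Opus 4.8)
The plan is to deduce this from the (stronger) localizing property of Hochschild homology, together with the fact that the passage from $\HH$ to $\HP$ is built out of operations which are exact on the level of $\kk$-module spectra. First, recall that $\HH(-)$, regarded as a functor from small stable $\kk$-linear categories to spectra with $S^1$-action, is a localizing invariant in the strong sense of \cite{BGT}: it preserves filtered colimits and sends an exact sequence $\cC'\to\cC\to\cC''$ to a fiber sequence $\HH(\cC')\to\HH(\cC)\to\HH(\cC'')$, and this fiber sequence is $S^1$-equivariant. (This is the statement we will actually use; the filtered-colimit property will play no role.)

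Next, I would observe that $\HH_{S^1}(-)=\HH(-)_{hS^1}$ and $\HH^{S^1}(-)=\HH(-)^{hS^1}$ are obtained from $\HH(-)$ by applying $\colim_{BS^1}$, respectively $\lim_{BS^1}$, to the $S^1$-action. In a stable category a fiber sequence is simultaneously a finite limit diagram and a finite colimit diagram; since $\colim_{BS^1}$ commutes with all colimits and $\lim_{BS^1}$ with all limits, both functors preserve fiber sequences. Hence $\HH_{S^1}(-)$ and $\HH^{S^1}(-)$ send exact sequences of stable categories to fiber sequences. Finally, using the presentation $\HP(\cC)\simeq \cofib\bigl(\HH_{S^1}(\cC)\to\HH^{S^1}(\cC)\bigr)$ from the definition of $\HP$, one applies this to each term of $\cC'\to\cC\to\cC''$ and forms the evident $3\times 3$ diagram whose first two rows are the fiber sequences coming from $\HH_{S^1}$ and $\HH^{S^1}$; since the termwise cofiber of a map of fiber sequences in a stable category is again a fiber sequence, the bottom row $\HP(\cC')\to\HP(\cC)\to\HP(\cC'')$ is a fiber sequence, as desired.

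The only point requiring care is a matter of convention: as flagged in the footnote, we take a localizing invariant (following \cite{Land-Tamme-pullbacks}) to be merely a functor sending exact sequences to fiber sequences, with no filtered-colimit hypothesis. Indeed $\HP$ genuinely fails to commute with filtered colimits — the obstruction being $\lim_{BS^1}$ and the Tate construction — so it is not localizing in the sense of \cite{BGT}. There is no real mathematical obstacle here: the content is entirely in the classical localizing property of $\HH$ and the exactness of homotopy orbits, homotopy fixed points, and cofibers; the work is in bookkeeping the conventions so that the statement proved is precisely the one invoked later.
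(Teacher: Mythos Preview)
Your proof is correct and follows essentially the same approach as the paper: reduce to the localizing property of $\HH$ and then observe that the passage from $\HH$ to $\HP$ is via exact functors. The paper's proof is terser, invoking directly that $(-)^{tS^1}$ is exact rather than unpacking it as $\cofib((-)_{hS^1}\to(-)^{hS^1})$, but this is the same argument.
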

    \begin{proof}
        For Hochschild homology this is proved in \cite{Keller-cyclic} or \cite{Blumberg-Mandell-localization}*{Theorem 7.1}. The result for $\HP$ follows, as $\HP$ is obtained from $\HH$ by the exact functor $(-)^{tS^1}.$
    \end{proof}
    
\begin{proof}[Proof of \Cref{mainthm:hp-decategorification} (B-side)]
    The canonical generator $\cS$ of $\mu\CohCat(\LL_{F^\vee}(\mass,\FI))$
    has endomorphism category
    $\cA_{F^\vee}^{\FI\bdd}/\cJ$,
    where $\cJ\hookrightarrow \cA_{F^\vee}^{\FI\bdd}$ is the ideal generated by 
    $\cA_{F^\vee}^{\FI\bdd,\mass\unstab}$
    (and the monoidal categories $\cA_{F^\vee}$ are as in \Cref{notation:b-endomorphism-cats}).
    We therefore conclude that 
    $\HP(\End(\cS))$
    is equivalent to the quotient of
    $\HP(\cA_{F^\vee}^{\FI\bdd})$ by the ideal $\HP(\cJ)$ generated by
    $\HP(\cA_{F^\vee}^{\FI\bdd,\mass\unstab})$.
    Observe that the augmentation ideal for $\HP(BF^\vee)$ is contained inside of $\HP(\cJ),$ so that completion at the augmentation ideal does not affect the resulting quotient algebra.
    By \Cref{cor:atiyahsegal-quasismooth},
    the completion of this algebra at the augmentation ideal 
is equivalent to the quotient of the algebra
    $\sC_*^{\BM,\dR}(X^{\FI\bdd}_{F^\vee}\times_{\AA^n/F^\vee}X^{\FI\bdd}_{F^\vee})_{\ZZ/2}$
    by the ideal generated by
    $\sC_*^{\BM,\dR}(X^{\FI\bdd,\mass\unstab}_{F^\vee}\times_{\AA^n/F^\vee}X^{\FI\bdd,\mass\unstab}_{F^\vee})_{\ZZ/2}.$
   This agrees with the description of de Rham category $\cO$ from \Cref{thm:derham-o-quotient}, up to 2-periodization.
\end{proof}

On the A-side, we begin with the non-microlocal computation.
%

\begin{lemma}
    Let $\cP=\bigoplus \cP^\sgn$ be the sum of generating objects of $\PervCat_\LL(\CC^n),$ and similarly $P = \bigoplus P^\sgn$ the sum of the projectives in $\Perv_\LL(\CC^n).$ Then there is an equivalence
    \begin{equation}\label{eq:hp-nonmicrolocal}
    \HP(\End_{\Perv_\LL(\CC^n)}(\cP))\simeq \End_{\Perv_\LL(\CC)}(P)_{\ZZ/2}.
    \end{equation}
    Moreover, this equivalence intertwines the $G$-actions.
\end{lemma}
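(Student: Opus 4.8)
The plan is to transport the question to the coherent side via the equivalence of \cite{GMH}, where periodic cyclic homology is computable.

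First, apply \Cref{thm:gmh-main} with $G$ trivial --- equivalently, take the $n$-fold external product of the $n=1$ equivalence \Cref{eq:gmh-basic}. Since \Cref{thm:gmh-main} is precisely a computation of the endomorphism monoidal category of $\cP$ (the ``spherical $n$-cube corresponding to $\sgn$''), it identifies $\End_{\PervCat_\LL(\CC^n)}(\cP)$ with the convolution monoidal category
\[
\cB:=\Coh\big((X\times_{\CC^n}X)/D^\vee\big),\qquad X=\textstyle\bigsqcup_\sgn X^\sgn,
\]
with $D^\vee\simeq(\GG_m)^n$ acting on the dual copy of $\CC^n$; here the Hom-categories are computed by \Cref{thm:BZNP-kernels} and the monoidal structure is the convolution of \Cref{defn:convolution-monoidal-structure}. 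By \cite{GMH} this equivalence is moreover equivariant for the topological $D$-action of \Cref{defn:universal-twist}, whose coherent-side incarnation is the $\Coh(BD^\vee)$-linear structure on $\cB$. As $\HP(-)$ is symmetric monoidal, $\HP(\cB)$ is then an algebra, linear over $\HP(\Coh(BD^\vee))\simeq\kk[m_1^\pm,\dots,m_n^\pm]((u))$, and since $\End_{\Perv_\LL(\CC^n)}(P)\simeq A_n$ carries the parallel $\kk[m_1^\pm,\dots,m_n^\pm]$-linear structure by \Cref{cor:perv-cn-quiver}, the ``intertwines the $G$-actions'' clause reduces --- by restriction along $\kk[\pi_1 G]\hookrightarrow\kk[m_i^\pm]$ --- to producing the equivalence of \Cref{eq:hp-nonmicrolocal} $\kk[m_i^\pm]((u))$-linearly. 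So it suffices to show $\HP(\cB)\simeq (A_n)_{\ZZ/2}$ as $\kk[m_i^\pm]((u))$-algebras.

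Next I would reduce to $n=1$: because $X^\sgn\times_{\CC^n}X^{\sgn'}=\prod_{i=1}^n(X^{\sgn_i}\times_{\AA^1}X^{\sgn'_i})$ coordinatewise, the convolution category $\cB$ factors as the $\kk$-linear tensor power $\cB_1^{\otimes n}$ of its $n=1$ analogue, so by symmetric monoidality of $\HP$ (applied to the bounded categories in play), and since $A_n=A_1^{\otimes n}$, the claim follows from the $n=1$ case $\HP(\cB_1)\simeq (A_1)_{\ZZ/2}$. Here $\cB_1$ is the $2\times2$ convolution matrix category whose entries are the equivariant coherent-sheaf categories of $X^\sgn\times_{\AA^1}X^{\sgn'}$ for $\sgn,\sgn'\in\{(+),(-)\}$: namely $\Coh([\AA^1/\GG_m])$, two copies of $\Coh(B\GG_m)$, and $\Coh([(0\times_{\AA^1}0)/\GG_m])$. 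Applying Koszul duality and shearing as in the proof of \Cref{cor:atiyahsegal-quasismooth} turns the fourth entry into the first, so all four periodic cyclic homologies are computed by \Cref{cor:atiyahsegal-quasismooth} (or \Cref{thm:hp-atiyahsegal}), and one then matches them with the four corners of $A_1$, together with the structure constants $1-uv$ and $1-vu$ --- computed on the $\HP$-side as the equivariant Euler classes of the normal bundles to the diagonal points $0\subset\AA^1$ --- against the defining monodromy relations $m=1-(uv+vu)$ of \Cref{cor:perv-cn-quiver}.

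The main obstacle is the transition from the completed statement to the uncompleted one: \Cref{cor:atiyahsegal-quasismooth} and \Cref{thm:hp-atiyahsegal} are phrased after completion at the augmentation ideal of $\HP(\Coh(BD^\vee))$, whereas the target $(A_n)_{\ZZ/2}$ carries the whole Laurent algebra of monodromies $\kk[m_i^\pm]$, not merely a formal neighborhood of the trivial one. I expect the clean route is to avoid the completion altogether by working with Hochschild homology directly: one shows that $\HH(\cB)$ is discrete and equal, in cohomological degree $0$, to the quiver algebra $A_n$ (a coherent, equivariant counterpart of the Borel--Moore convolution computations underlying \Cref{thm:derham-o-quotient}), with trivial residual $S^1$-action, so that $\HP(\cB)=\HH(\cB)^{tS^1}=\HH(\cB)\otimes_\kk\kk((u))=(A_n)_{\ZZ/2}$ on the nose and $\kk[m_i^\pm]((u))$-linearly. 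Verifying this discreteness, and the identification of the convolution algebra $\HH(\cB)$ with $A_n$ --- rather than the formal bookkeeping above --- is where the real work lies.
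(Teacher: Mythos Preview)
Your final paragraph is exactly the paper's argument: reduce to $n=1$, pass to the coherent side, observe $\HP(\cA)\simeq\HH(\cA)\otimes_\kk\kk((u))$ (the paper cites \cite{Chen-localization}*{Example 1.0.5} for this), and then identify $\HH(\cA)\simeq A_1$ by defining an explicit map (idempotents $e_\Phi,e_\Psi\mapsto[\id_\Phi],[\id_\Psi]$, $u,v\mapsto$ the generating morphism classes) and checking both sides are $2\times2$ matrices with entries $\kk[T^\pm]$. Your detour through the Atiyah--Segal completion theorems is unnecessary and, as you yourself diagnosed, does not land on the target without further work; the paper bypasses it entirely.
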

\begin{proof}
    It is sufficient to prove the case $n=1,$ since the general case is recovered from this one as a tensor product. Let $\cA=\End(\cP)$ and $A=\End(P).$ From the spectral decomposition
    \begin{equation}\label{eq:spectral-for-basic-inproof}
    \cA\simeq \Coh^{\GG_m}\left((\AA\sqcup 0)\times_{\AA}(\AA\sqcup 0)\right)\simeq \Coh^{\GG_m}\left(
    \begin{array}{ll}  
    \AA^1 & 0  \\
       0  &  \Conorm_0\AA^1
    \end{array}
    \right),
    \end{equation}
    we can see that there is an equivalence $\HP(\cA)\simeq \HH(\cA)\otimes_\kk \kk((u))$ (for which see for instance \cite{Chen-localization}*{Example 1.0.5}), so it is sufficient to describe an isomorphism $A\simeq \HH(\cA).$

    The map $A\to \HH(\cA)$ may be defined in the obvious way: observe the algebra $A$ is generated by the vertex idempotents $e_\Phi,e_\Psi$ and the maps $u,v$ with $1-uv,1-vu$ invertible, and define the map $A\to \HH(\cA)$ by sending the vertex idempotents to the classes of the identity maps $[\id_\Phi],[\id_\Psi],$ and the maps $u,v$ to the classes of the generating maps between $\Phi$ and $\Psi.$ The defining exact triangles for the twist and cotwist ensure that $1-uv,1-vu$ map to these invertible elements, so that the map is well-defined.

    To see that this map is an equivalence, observe that the Hochschild homology of the matrix category $\Cref{eq:spectral-for-basic-inproof}$ may be written as
    \[
    \left(
    \begin{array}{ll}
        \kk[T^\pm] & \kk[T^\pm] \\
        \kk[T^\pm] & \kk[T^\pm]
    \end{array}
    \right).
    \]
    The explicit description of $P_\Phi,P_\Psi$ given in \Cref{ex:projectives-formula} shows that $A$ has an identical expression, and it is easy to see that the obvious bases of $A$ and $\HH(\cA)$ are sent to each other by this map.

    For the statement about $G$-actions, it is necessary to show that the universal twist map $\cT:\kk[\pi_1 G]\to Z(\PervCat_\LL(\CC))$ induces on $\HP(\cA)\simeq A_{\ZZ/2}$ the monodromy automorphism. This follows immediately from the formula for the universal twist, and that our map $A\to \HH(\cA)$ was defined so that the monodromy automorphisms were sent to the classes of the twist and cotwist.
\end{proof}

\begin{corollary}\label{cor:hp-equivariant-nonmicrolocal-a}
    There is an equivalence
    \begin{equation}\label{eq:hp-nonmicrolocal-equivariant}
    \HP(\End_{\PervCat_{\LL_G}(\CC^n/G)}(\cP))\simeq \End_{\Perv_{\LL_G}(\CC^n/G)}(P)_{\ZZ/2}
    \end{equation}
\end{corollary}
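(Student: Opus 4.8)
The plan is to deduce the equivariant statement \eqref{eq:hp-nonmicrolocal-equivariant} from the non-equivariant equivalence \eqref{eq:hp-nonmicrolocal} by taking $G$-invariants, using the fact — established in the proof of the preceding lemma — that the equivalence $\HP(\End_{\PervCat_\LL(\CC^n)}(\cP))\simeq \End_{\Perv_\LL(\CC^n)}(P)_{\ZZ/2}$ intertwines the two $G$-actions: on the A-side, the topological $G$-action on $\Sph^n$ restricted from the $D=(\GG_m)^n$-action via $G\hookrightarrow D$, and on the B-side, the action of $\kk[\pi_1 G]$ through the monodromy automorphisms $m_i$ on the algebra $A_n$. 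First I would recall that $\PervCat_{\LL_G}(\CC^n/G) = \PervCat_\LL(\CC^n)^G$ and $\Perv_{\LL_G}(\CC^n/G)\simeq \Perf_{A_G}$ with $A_G = A_n\otimes_{\kk[G^\vee]}\kk$, so that $\End_{\Perv_{\LL_G}(\CC^n/G)}(P_G)$ is obtained from $\End_{\Perv_\LL(\CC^n)}(P) = A_n$ by trivializing the $G$-monodromies, i.e.\ by the relative tensor product $A_n\otimes_{\kk[\pi_1 G]}\kk$ (equivalently, coinvariants for the $\pi_1 G$-action).

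The key step is then to commute $\HP$ past the formation of $G$-invariants/coinvariants. Concretely, the object $\cP_G = \bigoplus_\sgn \cP^\sgn_G$ in $\PervCat_{\LL_G}(\CC^n/G)$ is the image of $\cP$ under the $G$-invariants, and its endomorphism category is the $\EE_1$-algebra $\End(\cP)^{hG}$ in $\St$ (coinvariants, in the relevant variance, for the topological $BG$-action constructed in \cite{GMH}). Applying the symmetric monoidal functor $\HP(-)$, which commutes with the relevant (co)limits defining this $BG$-(co)invariance — this is where I would invoke that $\HP$ is a localizing invariant and that the $BG$-action here is built from the single central automorphism $\bT$, so the homotopy (co)invariants are computed by a bar-type construction that $\HP$ preserves — we get $\HP(\End(\cP_G))\simeq \HP(\End(\cP))^{hG}$. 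Under the identification $\HP(\End(\cP))\simeq A_n\otimes_\kk\kk((u))$ intertwining the actions, the $G$-(co)invariants become $\big(A_n\otimes_{\kk[\pi_1 G]}\kk\big)\otimes_\kk\kk((u)) = (A_G)_{\ZZ/2} \simeq \End_{\Perv_{\LL_G}(\CC^n/G)}(P)_{\ZZ/2}$, which is exactly \eqref{eq:hp-nonmicrolocal-equivariant}.

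The main obstacle is the interchange of $\HP$ with $G$-(co)invariants: $\HP$ does not commute with arbitrary colimits, so one cannot simply cite that trace/$\HP$ is symmetric monoidal and compatible with colimits. The safe route is to observe that the $G = (\GG_m)^k$-action in question is, after unwinding \cite{GMH}*{Lemma A.4}, a finite iterated construction built from a single $B\ZZ$-action generated by the universal twist $\bT$, so that $\End(\cP)^{hG}$ is computed by a bounded (finite totalization/geometric realization) diagram — or, dually, one works on the B-side where $A_n\otimes_{\kk[\pi_1 G]}\kk$ is an honest small relative tensor product of algebras, and transports the result back along the intertwining equivalence. Since $\HP$ is a localizing invariant it sends the Koszul/bar resolution computing $A_n\otimes_{\kk[\pi_1 G]}\kk$ to the corresponding fiber sequence, and 2-periodization is exact, so the computation goes through; the only care needed is to check the $\kk((u))$-linearity, which is automatic because $\HP(\End(\cP))$ is already $2$-periodic as in the proof of the lemma.
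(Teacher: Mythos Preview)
Your approach is essentially the same as the paper's: both deduce the equivariant statement from the non-equivariant lemma by observing that each side of \eqref{eq:hp-nonmicrolocal-equivariant} is obtained from the corresponding side of \eqref{eq:hp-nonmicrolocal} by trivializing the $\kk[\pi_1 G]$-action, and that the non-equivariant equivalence intertwines these actions. The paper's proof is a one-sentence assertion of exactly this, whereas you additionally flag and sketch a justification for the implicit step of commuting $\HP$ with the trivialization (coinvariants for the $B\ZZ^k$-action), which the paper takes for granted.
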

\begin{proof}
 Each of the algebras in \Cref{eq:hp-nonmicrolocal-equivariant} is obtained from the corresponding algebra in \Cref{eq:hp-nonmicrolocal} by trivializing the respective action of $\kk[\pi_1G],$ and we saw that the equivalence \Cref{eq:hp-nonmicrolocal} intertwined these actions.
\end{proof}

\begin{proof}[Proof of \Cref{mainthm:hp-decategorification} (A-side)]
The theorem now follows immediately from 
\Cref{cor:hp-equivariant-nonmicrolocal-a}
by matching the subquotient presentations of \Cref{cor:betti-cato-as-subquotient} and \Cref{defn:betti-2cat-o} and applying the fact that $\HP$ is a localizing invariant.
\end{proof}

\bibliographystyle{plain}
\bibliography{refs}

\end{document}